\newif\ifpersonal
\numberwithin{equation}{section}
\theoremstyle{plain}
\newtheorem{thmintro}{Theorem}
\newtheorem{corintro}[thmintro]{Corollary}
\newtheorem{thm}{Theorem}[section]
\newtheorem*{thm*}{Theorem}
\newtheorem{lem}[thm]{Lemma}
\newtheorem{prop}[thm]{Proposition}
\newtheorem{cor}[thm]{Corollary}
\theoremstyle{definition}
\newtheorem{defin}[thm]{Definition}
\newtheorem{warning}[thm]{Warning}
\newtheorem{notation}[thm]{Notation}
\newtheorem{rem}[thm]{Remark}
\newtheorem{fact}[thm]{Fact}
\newtheorem{recall}[thm]{Recall}
\newtheorem{cons}[thm]{Construction}
\newcommand*{\personalMP}[1]{\textcolor[rgb]{1,0,0}{MP: #1}}
\newcommand*{\personalGN}[1]{\textcolor[rgb]{0,1,0}{GN: #1}}
 \newcommand*{\personal}[1]{\ignorespaces}
\newcommand*{\personalMP}[1]{\ignorespaces}
\newcommand*{\personalGN}[1]{\ignorespaces}
 \renewcommand*{\todo}[1]{\ignorespaces}
\renewcommand{\epsilon}{\varepsilon}
\newcommand{\C}{\mathbb C}
\newcommand{\E}{\mathbb E}
\newcommand{\R}{\mathbb R}
\renewcommand{\cong}{\simeq}
\newcommand{\cc}{\mathcal C}
\newcommand{\cF}{\mathcal F}
\newcommand{\cH}{\mathcal H}
\newcommand{\cG}{\mathcal G}
\newcommand{\cO}{\mathcal O}
\newcommand{\cT}{{\mathcal T}}
\newcommand{\cU}{\mathcal U}
\newcommand{\End}{\textup{End}}
\newcommand{\Aut}{\textup{Aut}}
\newcommand{\Sch}{\textup{Sch}}
\newcommand{\Bun}{\textup{Bun}}
\newcommand{\LG}{\textup{L}G}
\newcommand{\LpG}{\textup{L}^+G}
\newcommand{\PSh}{\textup{PSh}}
\newcommand{\Aff}{\textup{Aff}}
\newcommand{\Top}{{\textup{Top}}}
\newcommand{\Ran}{\textup{Ran}}
\newcommand{\Alg}{\textup{Alg}}
\newcommand{\Fin}{\textup{Fin}}
\newcommand{\Cat}{\mathcal C\textup{at}_\infty}
\newcommand{\ner}{\textup{N}}
\newcommand{\cat}{\mathcal C\textup{at}}
\newcommand{\sets}{\textup{Set}}
\newcommand{\Disk}{\textup{Disk}}
\newcommand{\loc}{\textup{loc}}
\newcommand{\Zar}{{\textup{Zar}}}
\newcommand{\ett}{\textup{\'et}}
\newcommand{\Gr}{\textup{Gr}}
\newcommand{\an}{\textup{an}}
\newcommand{\nun}{\textup{nu}}
\newcommand{\id}{\textup{id}}
\newcommand{\disj}{\textup{disj}}
\newcommand{\surj}{\textup{surj}}
\newcommand{\LmG}{\textup{L}^{m}G}
\newcommand{\Gm}{\mathbb G_\textup{m}}
\newcommand{\triv}{\xrightarrow{\sim}}
\newcommand{\Hck}{{\textup{Hck}}}
\newcommand{\hck}{\cH{\textup{ck}}}
\newcommand{\act}{\textup{act}}
\newcommand{\str}{\textup{str}}
\newcommand{\Str}{\textup{Str}}
\newcommand{\Pos}{\textup{Pos}}
\newcommand{\Grp}{\textup{Grp}}
\newcommand{\Alex}{\textup{Alex}}
\newcommand{\Fun}{\textup{Fun}}
\DeclareMathOperator{\Hom}{Hom}
\DeclareMathOperator{\Map}{Map}
\DeclareMathOperator{\Spec}{\textup{Spec}}
\DeclareMathOperator*{\colim}{colim}
\newcommand{\GL}{{\textup{GL}}}
\newcommand{\xt}{\mathbb X_\bullet(T)^+}
\newcommand{\op}{\textup{op}}
\newcommand{\taylor}{\llbracket t \rrbracket}
\newcommand{\laurent}{(\!(\!t\!)\!)}
\newcommand{\fs}{\Fin_{\geq 1, \surj}}
\newcommand{\Ac}{\mathbb A^1_\C}
\newcommand{\lft}{\textup{lft}}
\newcommand{\N}{\mathbb N}
\newcommand{\nn}{\mathbb N}
\newcommand{\uAut}{\underline{\Aut}}
\newcommand{\uEnd}{\underline{\End}}
\newcommand{\GLn}{\GL_n}
\DeclareFontFamily{U}{min}{}
\DeclareFontShape{U}{min}{m}{n}{<-> udmj30}{}
\newcommand\yo{\!\text{\usefont{U}{min}{m}{n}\symbol{'207}}\!}
\newcommand{\tB}{\textup{B}}
\newcommand{\Fgt}{\textup{Fgt}}
\newcommand{\MDisk}{\textup{MDisk}}
\newcommand{\PSmall}{\PSh^\textup{small}}
\newcommand{\Inc}{\textup{Inc}}
\title{Isotopy invariance and stratified $\E_2$-structure of the Ran Grassmannian}
\author[G.~Nocera]{Guglielmo Nocera} 
\author[M.~Porzio]{Morena Porzio} 
\date{\today}
\begin{document}

\begin{abstract}
Let $G$ be a complex reductive group. A folklore result asserts the existence of an $\mathbb{E}_2$-algebra structure on the Ran Grassmannian of $G$ over $\mathbb{A}^1_{\mathbb{C}}$, seen as a topological space with the complex-analytic topology. 
The aim of this paper is to prove this theorem, by establishing a homotopy invariance result: namely, an inclusion of open balls $D' \subset D$ in $\mathbb{C}$ induces a homotopy equivalence between the respective Beilinson--Drinfeld Grassmannians $\mathrm{Gr}_{G, {D'}^n} \xhookrightarrow{} \mathrm{Gr}_{G, D^n}$, for any positive integer $n$. 

We use a purely algebraic approach, showing that automorphisms of a complex smooth algebraic curve $X$ can be lifted to automorphisms of the associated Beilinson--Drinfeld Grassmannian. 
As a consequence, we obtain a stronger version of the usual homotopy invariance result: namely, the homotopies can be promoted to equivariant stratified isotopies, where ``equivariant'' refers to the action of the arc group $\mathrm{L}^+G$ and ``stratified'' refers to the stratification induced by the Schubert stratification of $\mathrm{Gr}_G$ and the incidence stratification of $\mathbb{C}^n$.
\end{abstract}

\maketitle
\vspace{-1cm}
\tableofcontents

\section{Introduction}

\noindent Let $G$ be a complex reductive group and let $\Gr_G$ be the affine Grassmannian associated to it.
This is the  complex ind-scheme which parametrizes $G$-torsors on the affine line $\Ac$ together with a trivialization away from the origin:
$$
\Gr_G: \textup{Aff}_\C^\op \to \sets, \quad \Gr_G(R) = \{\cF\in \Bun_G(\mathbb A^1_R), \alpha \textup{ trivialization of } \cF \textup{ on } \mathbb A^1_R \setminus \{0\}_R\}_{/\textup{isom.}}.
$$
Given a connected smooth  curve $X$ (locally of finite type) over $\C$  and a non-empty finite set $I$, the Beilinson--Drinfeld Grassmannian $\Gr_{G,X^I}$ 
is in turn the complex ind-scheme parametrizing
\begin{equation}\label{moduliinterpretationBD}
\Gr_{G,X^I}(R)  
=
\{x_I\in X^I(R), \cF\in \Bun_G(X_R), \alpha\textup{ trivialization of }\cF\textup{ on }X_R \setminus \Gamma_{x_I}\}_{/\textup{isom}},  
\end{equation}
where $\Gamma_{x_I}$ is the union of the graphs of points $x_I$ in $X_R$ (see \cref{graphs-of-points}).
By letting $I$ vary in the opposite category of non-empty finite sets with surjections between them, one can take the presheaf colimit of the $\Gr_{G,X^I}$'s, and obtain the so-called \emph{Ran Grassmannian} $\Gr_{G, \Ran(X)}$ {(\cref{defin-Ran-Grassmannian})}, naturally living over the presheaf $\Ran(X)$ parametrizing finite nonempty subsets of $X$.

These presheaves carry a stratification (\Cref{Schubert}, \Cref{BD-stratification} and \Cref{Ran-stratification}), 
induced by the stratification $\mathfrak{s}$ in \emph{Schubert cells} of the affine Grassmannian $\Gr_G$ and the \emph{incidence stratification} $\Inc_I$ of $X^I$.

\medskip Let $\Str\Top$ be the category of stratified topological spaces (\cref{defin-stratification}). 
We prove the following generalization of the usual analytification procedure.

\begin{thmintro}[\Cref{theoremanalyti}]\label{theorem-stratifications-intro}
The analytification functor from \cite[Th\'eor\`eme et D\'efinition 1.1]{SGA1-XII} can be enhanced and extended to  
\[
(-)^{\an}_{\PSh\Str}: \PSmall(\Str\Sch_{\C}) \to \Str\Top.
\]
\end{thmintro}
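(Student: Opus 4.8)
The plan is to proceed in two steps, matching the words ``enhanced'' and ``extended'' in the statement. First, one upgrades the classical analytification $(-)^{\an}$ of \cite{SGA1-XII} to a functor $(-)^{\an}_{\Str}\colon \Str\Sch_{\C}\to\Str\Top$ defined on stratified schemes (locally of finite type over $\C$, the setting in which $(-)^{\an}$ is available). Second, one observes that $\Str\Top$ is cocomplete, so that $(-)^{\an}_{\Str}$ extends, essentially uniquely, to a small-colimit-preserving functor on the free cocompletion $\PSmall(\Str\Sch_{\C})$ of $\Str\Sch_{\C}$; this extension is the desired $(-)^{\an}_{\PSh\Str}$.

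For the enhancement, recall from \cite{SGA1-XII} the natural continuous comparison map $\mathrm{an}_X\colon X^{\an}\to X$. Given a stratified scheme $(X,\mathfrak{s}_X\colon X\to P_X)$, where $\mathfrak{s}_X$ is the continuous structure map to the Alexandrov space of a poset as in \cref{defin-stratification}, one sets
\[
(X,\mathfrak{s}_X)^{\an}_{\Str}\;:=\;\bigl(X^{\an},\ \mathfrak{s}_X\circ\mathrm{an}_X\colon X^{\an}\to P_X\bigr).
\]
Functoriality is immediate from the naturality of $\mathrm{an}_{(-)}$: a morphism of stratified schemes is a pair $(f_0\colon X\to Y,\ \phi\colon P_X\to P_Y)$ with $\mathfrak{s}_Y\circ f_0=\phi\circ\mathfrak{s}_X$, and then $\mathfrak{s}_Y\circ\mathrm{an}_Y\circ f_0^{\an}=\mathfrak{s}_Y\circ f_0\circ\mathrm{an}_X=\phi\circ\mathfrak{s}_X\circ\mathrm{an}_X$, so $(f_0^{\an},\phi)$ is a morphism in $\Str\Top$. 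It remains to check that $(X^{\an},\mathfrak{s}_X\circ\mathrm{an}_X)$ satisfies the axioms of \cref{defin-stratification}: continuity of $\mathfrak{s}_X\circ\mathrm{an}_X$ holds as a composite of continuous maps; equivalently, each analytified stratum $X_p^{\an}$ is locally closed in $X^{\an}$ since $(-)^{\an}$ carries (locally) closed immersions to (locally) closed immersions, and $\{X_p^{\an}\}_{p}$ is a locally finite partition of $X^{\an}$ because $\mathrm{an}_X$ is a continuous surjection onto $X$ and $\{X_p\}_p$ is a locally finite partition there. If \cref{defin-stratification} also requires the frontier condition, it is preserved because $(-)^{\an}$ commutes with the reduced scheme-theoretic closure of a locally closed subscheme: indeed $X_p$ is dense open in $\overline{X_p}$, its complement has strictly smaller local dimension, and this is inherited by the analytifications, whence $X_p^{\an}$ is dense in $\overline{X_p}^{\an}=\overline{X_p^{\an}}$.

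For the extension, recall that $\PSmall(\Str\Sch_{\C})$ is the free cocompletion of $\Str\Sch_{\C}$ under small colimits: restriction along the Yoneda embedding $\yo\colon\Str\Sch_{\C}\to\PSmall(\Str\Sch_{\C})$ induces an equivalence between colimit-preserving functors $\PSmall(\Str\Sch_{\C})\to\cD$ and arbitrary functors $\Str\Sch_{\C}\to\cD$, for any cocomplete $\cD$. Taking $\cD=\Str\Top$, one defines $(-)^{\an}_{\PSh\Str}:=\Lan_{\yo}\bigl((-)^{\an}_{\Str}\bigr)$, the essentially unique colimit-preserving extension of $(-)^{\an}_{\Str}$; concretely it sends $F\in\PSmall(\Str\Sch_{\C})$ to $\colim\,(X,\mathfrak{s})^{\an}_{\Str}$ over the category of elements of $F$ (which admits a small cofinal subcategory, $F$ being a small presheaf). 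By construction it restricts to $(-)^{\an}$ on representable schemes carrying the trivial stratification, so it genuinely enhances and extends the functor of \cite{SGA1-XII}.

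The one ingredient that is not purely formal — and where the bookkeeping sits — is the cocompleteness of $\Str\Top$: one must verify that small colimits of stratified topological spaces exist and are computed by forming the colimit of the underlying topological spaces, the colimit of the indexing posets, and the induced structure map between them, and (should \cref{defin-stratification} impose regularity conditions such as conicality) that these colimits remain admissible, or else that one works inside a cocomplete subcategory containing the image of $(-)^{\an}_{\Str}$. Granting this, the result follows; one may moreover check \emph{a posteriori} that $(-)^{\an}_{\PSh\Str}$ recovers the expected analytification of the ind-schemes $\Gr_{G,X^I}$ and $\Gr_{G,\Ran(X)}$, using that $(-)^{\an}$ commutes with filtered colimits along closed immersions.
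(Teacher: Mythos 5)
Your first step (the enhancement) is exactly the paper's: given $(Y,P,s)$ you set $s^\an = s\circ\varphi_Y^{\textup{top}}$, and functoriality follows from naturality of the comparison map; note that the paper's \cref{defin-stratification} asks only for a continuous surjection to $\Alex(P)$, so your worries about local finiteness, frontier conditions and conicality are moot (and the auxiliary claim that $\varphi_Y^{\textup{top}}\colon Y^\an\to Y^\Zar$ is surjective is false — it hits only closed points — though surjectivity of $s^\an$ onto $\Alex(P)$ does hold since every stratum of a locally finite type scheme contains a $\C$-point). Likewise, your second step (left Kan extension along Yoneda into the free cocompletion, using cocompleteness of $\Str\Top$) is the paper's mechanism for passing to small presheaves.

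The genuine gap is in the domain. The classical analytification of \cite{SGA1-XII} exists only for schemes locally of finite type over $\C$, and your parenthesis concedes that your $(-)^\an_{\Str}$ is really only defined on $\Str\Sch^\lft_\C$. Left Kan extending that functor along its Yoneda embedding produces a functor on $\PSmall(\Str\Sch^\lft_\C)$ — the paper's $(-)^\an_{\PSh\Str,\lft}$ — and not the asserted $(-)^\an_{\PSh\Str}$ on $\PSmall(\Str\Sch_\C)$, whose objects include representables that are \emph{not} locally of finite type (e.g.\ $\LpG_{X^I}$, and this is precisely why the paper needs the larger domain, cf.\ \cref{LpG-from-Aff-to-Sch}). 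You never say how to analytify such schemes, and the naive fix of Kan extending along $\Str\Sch^\lft_\C\hookrightarrow\Str\Sch_\C\xrightarrow{\yo}\PSmall(\Str\Sch_\C)$ would compute, on a non-lft scheme $Y$, a colimit over lft schemes mapping \emph{into} $Y$, which is the wrong answer (for $\LpG$ one wants $\lim_m \LmG^\an$, a limit over the truncations receiving maps \emph{from} $\LpG$). The paper fills this hole with an intermediate \emph{right} Kan extension $(-)^\an_{\Str}=\textup{RKE}_{i_{\Str,\lft}}(-)^\an_{\Str,\lft}$ along the inclusion of \cref{embedding-strschlft-strsch}, which preserves small limits and yields exactly the formulas of \cref{analytification-of-GrBD}, and only then takes the left Kan extension along $\yo$. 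Your proposal has no step playing this role, so as written it proves a strictly weaker statement than \cref{theoremanalyti}.
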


This allows us to consider the associated stratified analytifications of $(\Gr_G, \mathfrak{s})$, $( \Gr_{G, X^I}, \mathfrak{s}_I)$, $(\Ran(X), \Inc_\Ran)$, $( \Gr_{G, \Ran(X)}, \mathfrak{s}_\Ran)$ in $\Str\Top$ (see \Cref{analytification-of-GrBD} and \Cref{theoremanalyti}).
For simplicity, in this introduction we will refrain from expliciting the stratifications and simply write $(-)^\an$ for any stratified analytification. 

\subsection{Lifting of homotopies}
Let $X=\Ac$. 
Consider an open \textit{metric disk} $D$ in $(\Ac)^\an=\C$, {i.e.} an open ball $B(z,r)\subset \C$ centered in $z\in \C$ with radius $r\in \R_{>0}$.

We denote by $\Gr_{G,D^I}$ the fiber product $\Gr_{G, (\Ac)^I}^\an \times_{\C^I} D^I$ of stratified topological spaces.
In the same way, we define $\Gr_{G, \Ran(D)}$ to be the pullback of $\Gr_{G, \Ran(\Ac)}^\an$ to $\Ran(D)$, the open subset of $\Ran(\Ac)^\an$ given by nonempty finite collection of points in $D$.

\medskip The first aim of this paper is to prove the following folklore result. 
\begin{thmintro}[\Cref{homotopy-equivalence-D-D'}, \Cref{homotopy-invariance-Ran}]\label{main-theorem-introduction}
Let $D'\subset D\subset \C$ be two metric open disks.
The induced open embeddings
\[
i_I : \Gr_{G, {D'}^I} \xhookrightarrow{} \Gr_{G, {D}^I}, 
\quad \quad i_\Ran : \Gr_{G, \Ran({D'})} \xhookrightarrow{} \Gr_{G, \Ran({D})} 
\]
are stratified homotopy equivalences, and the homotopies involved can be taken to be isotopies. 
\end{thmintro}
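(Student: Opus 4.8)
The plan is to reduce the statement to the construction of a suitable path in the automorphism group of $\Ac$. Let $\mathrm{Aff}$ denote the connected group of affine automorphisms $z\mapsto az+b$ of $\Ac$; I would first lift its action on $\Ac$ to the Beilinson--Drinfeld and Ran Grassmannians, and then write down an explicit path in $\mathrm{Aff}$ contracting $D$ into $D'$. For the lift, observe that an automorphism $\phi$ of $\Ac$ acts on $\Gr_{G,(\Ac)^I}$ by transport of structure: on $R$-points, $(x_I,\cF,\alpha)\mapsto(\phi(x_I),\phi_*\cF,\phi_*\alpha)$, which makes sense because the base change $\phi_R$ of $\phi$ carries $\Gamma_{x_I}$ isomorphically onto $\Gamma_{\phi(x_I)}$ and so restricts to an isomorphism $\mathbb A^1_R\setminus\Gamma_{x_I}\xrightarrow{\sim}\mathbb A^1_R\setminus\Gamma_{\phi(x_I)}$. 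Since $\phi$ is a local isomorphism near each component of $x_I$, it preserves the relative position of $(\cF,\alpha)$ at each point, hence the finite-type closed Schubert subschemes exhausting the ind-scheme $\Gr_{G,(\Ac)^I}$, so the induced $\widetilde\phi$ is an automorphism of $\Gr_{G,(\Ac)^I}$; and since $\phi$ preserves the incidence stratification of $(\Ac)^I$, the assignment $\phi\mapsto\widetilde\phi$ equips the objects $(\Gr_{G,(\Ac)^I},\mathfrak s_I)$ and $(\Gr_{G,\Ran(\Ac)},\mathfrak s_\Ran)$ of $\PSmall(\Str\Sch_{\C})$ with an $\mathrm{Aff}$-action compatible with the (family) arc group action. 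Applying the stratified analytification of \Cref{theorem-stratifications-intro} then produces a continuous action of the topological group $\mathrm{Aff}(\C)$ on $(\Gr_{G,(\Ac)^I})^\an$ and on $(\Gr_{G,\Ran(\Ac)})^\an$ in $\Str\Top$, compatible with the analytified $\mathrm{L}^+G$-action.

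Next comes the elementary geometric input. Write $D=B(z_0,r)$ and let $z_0'$, $r'$ be the centre and radius of $D'\subseteq D$. For a real number $c\in(0,1]$ put
\[
  \phi_t(z)\;=\;a_t\,(z-z_0')+z_0',\qquad a_t\;=\;(1-t)+tc\quad(t\in[0,1]).
\]
Then $\phi_0=\id$; since $a_t\in(0,1]$ we get $\phi_t(D')\subseteq D'$; since $z_0'\in D$, i.e.\ $|z_0'-z_0|<r$, a one-line estimate gives $\phi_t(D)\subseteq D$ for all $t$; and choosing $c$ so small that $c\,(|z_0-z_0'|+r)<r'$ forces $\phi_1(D)\subseteq D'$. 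Moreover $t\mapsto\phi_t$ is continuous (indeed real-algebraic) into $\mathrm{Aff}(\C)$.

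With this in hand, precompose the analytified $\mathrm{Aff}(\C)$-action of the first step with $t\mapsto\phi_t$. This yields a stratified, $\mathrm{L}^+G$-equivariant homotopy $H\colon[0,1]\times\Gr_{G,D^I}\to\Gr_{G,D^I}$ with $H_0=\id$ and $H_1=i_I\circ r_I$, where $r_I\colon\Gr_{G,D^I}\to\Gr_{G,{D'}^I}$ is the lift of $\phi_1\colon D\xrightarrow{\sim}\phi_1(D)\hookrightarrow D'$; restricting $H$ along the inclusions $\phi_t(D')\subseteq D'$ gives a homotopy $\Gr_{G,{D'}^I}\to\Gr_{G,{D'}^I}$ from $\id$ to $r_I\circ i_I$. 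Hence $i_I$ is a stratified, equivariant homotopy equivalence with homotopy inverse $r_I$; and since each $\phi_t$ is an open immersion $D\hookrightarrow D$ of stratified curves, every time-slice of $H$ is a stratified open embedding, so $H$ (and its restriction) is an isotopy. The Ran statement comes out of the same $\phi_t$: functoriality of $\Ran(-)$ gives $\Ran(\phi_t)$ with $\Ran(\phi_1)(\Ran(D))\subseteq\Ran(D')$ and $\Ran(\phi_t)(\Ran(D'))\subseteq\Ran(D')$, and the lifts $\widetilde{\phi_t}$ on $\Gr_{G,\Ran(\Ac)}$ are compatible with those on each $\Gr_{G,(\Ac)^I}$ under the colimit presentation $\Gr_{G,\Ran(\Ac)}=\colim_I\Gr_{G,(\Ac)^I}$; one then concludes exactly as for $i_I$.

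I expect the first step --- the lifting of automorphisms --- to be the main obstacle. One must verify that transport of structure along $\phi$ is compatible with the ind-scheme colimit over Schubert bounds and with the colimit defining the Ran Grassmannian, that it is a morphism of stratified (pre)schemes respecting the arc group action, and, crucially, that it depends \emph{algebraically on $\phi$}, i.e.\ assembles into a morphism $\mathrm{Aff}\times\Gr_{G,(\Ac)^I}\to\Gr_{G,(\Ac)^I}$ over $\mathrm{Aff}\times(\Ac)^I$; only with this in place does \Cref{theorem-stratifications-intro} convert a path in $\mathrm{Aff}(\C)$ into a genuine homotopy of stratified topological spaces. Granting the lifting, the disk-contracting path and the homotopy-equivalence bookkeeping are routine, and one gets for free that every homotopy involved is a stratified, equivariant isotopy, as claimed.
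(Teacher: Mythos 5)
Your proposal is correct and follows essentially the same route as the paper: lift automorphisms of $\Ac$ algebraically to $\Gr_{G,(\Ac)^I}$ (and to the Ran level), verify the lift respects the stratification and the arc-group action, analytify, and contract $D$ to $D'$ along an affine path. The only cosmetic differences are the choice of contracting path (the paper uses the affine map carrying $D$ \emph{onto} $D'$ at time $1$, so that $\lambda(t)$ vanishes only at $t=r/(r-r')>1$, rather than your small-$c$ dilation centred at $z_0'$) and the packaging (the paper builds an \emph{algebraic} one-parameter family $U\times\Gr_{G,X^I}\to\Gr_{G,X^I}$ over a Zariski-open $U\subset\Ac$ containing $[0,1]$ and analytifies afterward, whereas you analytify the $\mathrm{Aff}$-action first and then feed in a real path); the main obstacles you flag --- algebraic dependence on the automorphism, compatibility with strata via the formal-coordinate/twisted-product description, and the fact that $(-)^\an_{\PSh\Str,\lft}$ does not preserve products so one must invoke the Harpaz commuting-colimits-with-pullbacks lemma for locally compact Hausdorff spaces --- are exactly the points the paper addresses in Lemmas \ref{remark-lift-automorphism}, \ref{lemma-stratified-automorphism} and Remark \ref{lift-isotopy}.
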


A reason why \cref{main-theorem-introduction} is not straightforward (even if one wants to show that the mentioned maps are plain homotopy equivalences, regardless of stratifications) is that the map
\[
\Gr_{G,\C^I}\to \C^I 
\quad \quad \textup{(respectively }\Gr_{G, \Ran(\C)}\to \Ran(\C) \textup{)}
\]
is not a Serre fibration: {indeed, by the so-called \textit{factorization property} of the Beilinson-Drinfeld Grassmannian }(isomorphism \eqref{factorization-property}), the fiber at a point $(x_1,\dots,x_{|I|})\in \C^I$ splits as the product of $k$ copies of $\Gr_G^\an$, where $k$ is the number of \textit{distinct} points in the tuple.
Thus one cannot automatically lift a homotopy $D^I \times [0,1] \to D^I$ to one {at the level of the Beilinson-Drinfeld Grassmannians} $\Gr_{G, D^I} \times [0,1] \to \Gr_{G, D^I}$ (respectively Ran Grassmannians) via the usual homotopy lifting property. 

In the paper \cite{HY}, Hahn and Yuan outline a proof of 
$i_\Ran$ being an equivalence (see \cite[Proposition 3.17]{HY}). 
The idea is 
\begin{enumerate}
    \item\label{firststep-intro} to first prove that $i_\Ran^*$ is a cohomological equivalence,
    \item\label{secondstep-intro} to show that $\Gr_{G, \Ran(D)}$ is simply connected, and then 
    \item since $\Gr_G$ is simply connected as well (a consequence of \cref{Gr-OmegaG} below), conclude by applying the cohomological Whitehead theorem.
\end{enumerate}

Step \eqref{firststep-intro} follows from \cref{main-theorem-introduction}. At the present moment we are not aware of a different, purely cohomological argument, not relying on the fact that $i_{\Ran(D)}$ is a homotopy equivalence.

For what concerns \eqref{secondstep-intro}, as kindly explained to us by the authors of \cite{HY}, the idea of the proof is to use a stratified refinement of the homotopy lifting property for Serre fibrations. 
Namely, again by the aforementioned factorization property, one can easily show that $\Gr_{G, \Ran(D)} \to \Ran(D)$ becomes a Serre fibration when restricted to each locally closed stratum $\Ran_{\leq n}(D) \setminus \Ran_{\leq n-1}(D)$, where $\Ran_{\leq n}(D)$ parametrizes finite subsets of $D$ of cardinality at most $n$. 
Then, the strategy is to use this to prove that that one can lift homotopies $H: \Ran_{\leq n}(D) \times [0,1] \to \Ran_{\leq n}(D)$ with the following property:
\begin{itemize}
    \item[(*)] let $x\in \Ran_{\leq n}(D)$ and $m\leq n.$ For every $t$ such that $H(x,t)\in \Ran_{\leq m}(D)$, then $H(x,t')\in \Ran_{\leq m}(D)$ for any $t'>t$ as well.
\end{itemize}

To the best of our knowledge, no proof of this stratified homotopy lifting property is to be found in the literature.
This motivated us to use a different approach, more rooted in the algebro-geometric side of the story. Namely, we take advantage of the moduli space description \eqref{moduliinterpretationBD} of $\Gr_{G, X^I}$ to prove that one can lift an automorphism of $X^I$ induced by an automorphism $\phi$ of $X$ to $\Gr_{X^I}$ (\cref{lifting-for-Ran}). This is a weaker version of the homotopy lifting principles mentioned above, but it turns out to be sufficient. The formal properties of the analytification procedure established in \cref{theorem-stratifications-intro} allow to transfer this principle to the complex-analytic side, and by choosing $\phi$ to be an affine transformation of $D$ onto $D'$ we achieve the proof of \cref{main-theorem-introduction} (see \cref{homotopy-equivalence-D-D'}).

\subsection{\texorpdfstring{$\mathbb{E}_2$}{}-structure on the Ran Grassmannian}
Our next main result, which will be a consequence of \cref{main-theorem-introduction}, establishes the existence of a (nonunital) $\E_2$-structure on the Ran Grassmannian.

\begin{recall} 
For an elementary introduction to the notion of $\E_k$-algebra we recommend the introduction to Chapter 5 of \cite{HA}. 
Let us just mention that an $\E_2$-algebra structure on a topological space $Y$ is the datum of a multiplication on $Y$, defined up to homotopy, associative up to homotopy, and satisfying a certain degree of commutativity, generalizing the ``weak commutativity'' satisfied by spaces of the form $\Omega^2Z$ for $Z$ a pointed topological space. 
Indeed, if $Y$ is pointed and the given $\E_2$-algebra structure on $Y$ is grouplike (i.e. it admits an inverse operator for the multiplication, defined up to homotopy, where the marked point works as unit element), then $Y$ is homotopy equivalent to $\Omega^2Z$ for some pointed space $Z$. 
This is known as \textit{May's recognition principle} and can be found, in the language we are using for this paper, as \cite[Theorem 1.3.16]{DAG-VI} (see also the discussion at the beginning of \textit{loc. cit.}, §1.3). 
Our case has the peculiarity of living in the setting of \textit{stratified} topological spaces and stratified homotopy. 
Also, our algebra structure is \textit{non-unital}, i.e. it does not have a unit element: hence our situation somehow differs from the setting of May's recognition principle, but the rest of the intuition is intact.
\end{recall}

\begin{rem}\label{Gr-OmegaG}
The topological space  $\Gr_G^\an$ is homotopy equivalent to the double loop space $\Omega^2\tB (G^\an)$, see e.g. \cite[Theorem 2.1]{Nadler-Matsuki}, \cite[Theorem 8.6.2, 8.6.3]{Pressley-Segal}.
Therefore, it inherits an $\E_2$-structure in topological spaces up to homotopy.
\end{rem}

As a corollary of \cref{main-theorem-introduction}, we get that a (nonunital) $\mathbb{E}_2$-structure exists also for $\Gr_{G, \Ran(D)}$.

\begin{corintro}[{\Cref{corollary-algebra-structure}}]\label{GrGRanisE2}
Let $W$ be the family of arrows of $\Str\Top$ given by stratified homotopy equivalences. 
For any metric disk $D$, $\Gr_{G,\Ran(D)}$ carries a non-unital $\mathbb{E}_2$-algebra structure in $\Str\Top[W^{-1}]$, independent of $D$.
\end{corintro}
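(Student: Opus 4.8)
The plan is to deduce the $\E_2$-structure on $\Gr_{G,\Ran(D)}$ from the factorization structure on the Ran Grassmannian over $\C$, using \Cref{main-theorem-introduction} to transport everything into the localized category $\Str\Top[W^{-1}]$. Recall that the Ran Grassmannian $\Gr_{G,\Ran(\C)}$, via its factorization property \eqref{factorization-property}, has a ``partially defined multiplication'': on the disjoint locus $(\Ran(\C)\times\Ran(\C))_\disj$ one has a canonical identification of $\Gr_{G,\Ran(\C)}\times\Gr_{G,\Ran(\C)}$ (pulled back to the disjoint locus) with $\Gr_{G,\Ran(\C)}$ pulled back along the union map $\Ran(\C)\times\Ran(\C)\to\Ran(\C)$. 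Analytifying (using \Cref{theorem-stratifications-intro} to know that analytification commutes with the relevant fiber products and colimits), this is a diagram of stratified topological spaces over the (stratified) Ran space. Restricting to a disk $D$, the same structure holds with $\Ran(D)$ in place of $\Ran(\C)$, provided one is willing to work up to the homotopy equivalences of \Cref{main-theorem-introduction}: the point is that for two disjoint sub-disks $D_1, D_2\subset D$ the union map realizes $\Ran(D_1)\times\Ran(D_2)$ inside the disjoint locus of $\Ran(D)\times\Ran(D)$, and over this locus the factorization isomorphism gives $\Gr_{G,\Ran(D_1)}\times\Gr_{G,\Ran(D_2)}\simeq (\text{restriction of }\Gr_{G,\Ran(D)})$, while \Cref{main-theorem-introduction} identifies each $\Gr_{G,\Ran(D_i)}$ with $\Gr_{G,\Ran(D)}$ in $\Str\Top[W^{-1}]$.

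Concretely, I would package this as a map out of (a variant of) the little $2$-disks operad. The operad of open sub-disks of the unit disk $D$, with the embedding/disjointness structure, maps to an operad built from the spaces $\Ran(D')$ for sub-disks $D'$; an arity-$k$ operation — a configuration of $k$ disjoint sub-disks $D_1,\dots,D_k\subset D$ — produces the composite
\[
\Gr_{G,\Ran(D_1)}\times\cdots\times\Gr_{G,\Ran(D_k)} \xrightarrow{\ \sim\ } \Gr_{G,\Ran(D_1\sqcup\cdots\sqcup D_k)} \xrightarrow{\ i_\Ran\ } \Gr_{G,\Ran(D)},
\]
where the first arrow is the iterated factorization isomorphism and $i_\Ran$ is the open inclusion, which is a stratified homotopy equivalence by \Cref{main-theorem-introduction}. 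Precomposing each factor with the equivalence $\Gr_{G,\Ran(D)}\simeq\Gr_{G,\Ran(D_i)}$ (again \Cref{main-theorem-introduction}) turns this into an operation $\Gr_{G,\Ran(D)}^{\times k}\to\Gr_{G,\Ran(D)}$ in $\Str\Top[W^{-1}]$. Composition of little disks corresponds to nesting configurations of sub-disks, which is compatible with the iterated factorization isomorphisms by their associativity/coherence; one checks this on the Ran Grassmannian over $\C$ (where everything is an honest map of ind-schemes, before analytification) and then transports it. Since the little $2$-disks operad is (weakly equivalent to) the $\E_2$-operad and since $\Str\Top[W^{-1}]$ is a cartesian symmetric monoidal $\infty$-category in which this diagram lands, this exhibits a non-unital $\E_2$-algebra structure; non-unitality reflects the fact that $\Ran$ has no ``empty set'' point, so the arity-$0$ operation is absent. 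Independence of $D$ is immediate since any two disks are related by the equivalences $i_\Ran$, which are themselves operad maps (they intertwine the factorization structures).

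The main obstacle will be the coherence: making precise that the iterated factorization isomorphisms, after analytification and after the various identifications supplied by \Cref{main-theorem-introduction}, assemble into a genuine map of $\infty$-operads rather than merely a system of compatible operations up to homotopy. Two sub-points need care. First, one must check that the equivalences $\Gr_{G,\Ran(D_i)}\simeq\Gr_{G,\Ran(D)}$ from \Cref{main-theorem-introduction}, which are induced by affine transformations $\phi$ of the disks, are compatible with the factorization isomorphisms — i.e. that an affine map sending $D_i$ into $D$ intertwines the ``union'' maps and hence the factorization identifications; this follows from functoriality of \eqref{factorization-property} in the curve, established earlier. Second, one must verify that the stratifications match up throughout: the Schubert stratification on each $\Gr_G^\an$ factor multiplies correctly under the factorization isomorphism (this is essentially the compatibility of $\mathfrak{s}_\Ran$ with $\Inc_\Ran$, recorded in \Cref{Ran-stratification}), so that all the arrows above are genuinely morphisms in $\Str\Top$. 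Granting these compatibilities, which are of a bookkeeping nature given the formal apparatus already set up, the $\E_2$-structure follows formally from the universal property of the little $2$-disks operad.
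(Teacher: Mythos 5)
Your proposal identifies the right ingredients — the factorization property, the homotopy invariance of \Cref{main-theorem-introduction}, and Lurie's dictionary between $\E_2$-algebras and locally constant algebras over a disk operad — and this is indeed the route the paper takes. But the specific plan you sketch has a gap, and it is exactly the one you flag as ``the main obstacle'' and then dismiss as bookkeeping: the step where you ``precompos[e] each factor with the equivalence $\Gr_{G,\Ran(D)}\simeq\Gr_{G,\Ran(D_i)}$'' to produce operations $\Gr_{G,\Ran(D)}^{\times k}\to\Gr_{G,\Ran(D)}$ in $\Str\Top[W^{-1}]$. Those equivalences are not canonical — they depend on a choice of affine isotopy — and assembling them, arity by arity, into a coherent map of $\infty$-operads is precisely the hard coherence problem. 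There is no reason a priori why the nested isotopies you would need to pick are compatible up to coherent homotopy, and ``one checks this on the Ran Grassmannian over $\C$ and transports it'' does not resolve this, because transporting along non-canonical equivalences is where the homotopies appear.

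The paper's proof (\Cref{corollary-algebra-structure}) sidesteps this entirely, and this is the key technical move you are missing. Instead of forcing everything onto a single $\Gr_{G,\Ran(D)}$, one builds a strict $1$-categorical lax Cartesian functor $\mathfrak{G}:\MDisk(\R^2)^\otimes_\nun\to\Str\Top$ sending $(D_1,\dots,D_n)\mapsto\prod_i\Gr_{G,\Ran(D_i)}$, where each disk keeps \emph{its own} Grassmannian: active morphisms go to the (genuinely $1$-categorical) maps obtained from the factorization isomorphisms, restriction to disks, perfect quotients to the $\Ran_{\leq n}$ level, and colimits via \Cref{Har15}; inert morphisms go to projections followed by $i_\Ran$. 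No choice of equivalences is made at this stage — composition is checked as an honest equality of maps, using associativity of gluing torsors. Only after this $1$-categorical functor is built does one post-compose with the localization at $W$, pass to nerves, and apply \cite[Proposition 2.4.1.7]{HA} to get a map of $\infty$-operads. The $\E_2$-structure then comes from \cite[Theorem 5.4.5.15]{HA} (adapted to $\MDisk(\R^2)^\otimes_\nun$ in \Cref{E2-vs-MDisk}), and \emph{local constancy} — which is a property, not additional structure — is exactly what \Cref{homotopy-invariance-Ran} supplies. In short: your approach would require you to exhibit coherent data; the paper reformulates so that what is needed is only a coherence-free $1$-categorical construction plus a property, which is what makes the argument close.
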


A first instance of \cref{GrGRanisE2} is in \cite[Remark 9.4.20]{Gaitsgory-Lurie} where it is stated that the $\mathbb{E}_2$-coalgebra structure on $C^*(\Gr_G;\mathbb{Z}_\ell)$ (in complexes up to homotopy) can be recovered from the sheaf $$
\mathcal{A}:\textup{Open}(\Ran(\C))^\op\to 
\textup{Ch}^*(\textup{Mod}_{\mathbb{Z}_\ell}), \quad U\mapsto C^*(\Gr_{G,\Ran(\C)}\times_{\Ran(\C)}U;\mathbb{Z}_\ell).
$$
This phenomenon is also spelt out in \cite[Theorem 3.10]{HY}. 
The existence of these $\E_2$-structures locates in the bigger picture of the Geometric Langlands Program, where the Beilinson--Drinfeld and Ran Grassmannians are crucial objects.
In particular, they are often used to establish \textit{factorization algebra} structures (which are ``algebraic avatars'' of $\E_2$-structures), one of the key ingredients of the proof of the Geometric Langlands Conjecture, see \cite{Campbell-Raskin} and \cite[§1]{GLC2}.

\begin{rem}
\Cref{GrGRanisE2} can be viewed as the first step towards an \emph{unstable} version of \cite[Theorem 3.10]{HY}: namely that $\Gr_G^\an$ admits a non-unital $\mathbb{E}_2$-algebra structure in $\Str\Top[W^{-1}]$. 
Indeed, \cite[Proposition 3.17]{HY} says that the map
\begin{equation*}\label{eq-stable-equivalence}
\Sigma_+^\infty(\Gr^\an_G)\to \Sigma_+^\infty(\Gr_{G,\Ran(\Ac)}^\an)
\end{equation*} 
associated to any point $x\in \Ac(\C)$ is an equivalence of spectra.

With the present work at hand, one is left to inspect the map $\Gr_G^\an \xhookrightarrow{} \Gr_{G,\Ran(D)}$ and prove that the $\E_2$-structure can be transferred to the left-hand-side, in analogy to the stable result.
Note that this would {also} refine the usual $\E_2$-structure on $\Gr_G^\an$ 
enhancing it from topological spaces up to homotopy to \emph{stratified} topological spaces up to stratified homotopy. 
\end{rem}

\subsection{An \texorpdfstring{$\LpG$}{}-equivariant version of \texorpdfstring{\cref{main-theorem-introduction}}{}}

Let now $\LpG_{X^I}$ be the Beilinson--Drinfeld version of the arc group (\Cref{defin-BD-arc-group}). 
This is a relative group scheme over $X^I$ acting on $\Gr_{G, X^I}$. 
It inherits the incidence stratification $\Inc_I$ from $X^I$.
One can consider its stratified analytification $\LpG_{X^I}^\an$, which is still a group scheme acting on $\Gr_{G, X^I}^\an$ via a stratified action (\Cref{analytification-of-GrBD}).
Denote by $\LpG_{D^I}$ the fiber product $\LpG_{X^I}^\an \times_{(X^\an)^I} D^I$.
Given two open metric disks $D'\subset D\subset \C$, we again get that the induced open embedding $i^+:\LpG_{{D'}^I}\hookrightarrow \LpG_{{D}^I}$ is a stratified homotopy equivalence, and the homotopies involved can be taken to be isotopies as well (\Cref{homotopy-equivalence-D-D'-groups}).
This allows us to formulate and prove an equivariance property for the homotopies in \cref{main-theorem-introduction}, as follows.

\begin{thmintro}[\Cref{isotopies-are-equivariant}, \cref{equivariance-theorem-Ran}]\label{thm-intro-equivariant}
Given two metric open disks $D'\subset D\subset \C$, all the mentioned isotopies are compatible with the action of $\LpG_{D^I}$ on $\Gr_{G, D^I}$.
More precisely, there are stratified isotopies $\Psi^{\textup{equiv}}_{[0,1]}$ and $\Psi_{[0,1]}$ fitting in
\[
\begin{tikzcd}[column sep=3cm]
    {[0,1]\times \left( \LpG_{(\Ac)^I}^\an\times_{(\Ac)^I}\Gr_{G, (\Ac)^I}^\an \right)}
    \arrow[r, "{\Psi^{\textup{equiv}}_{[0,1]}}"]
    \arrow[d, "\id\times \act_I"]
    &
    \LpG_{(\Ac)^I}^\an\times_{(\Ac)^I}\Gr_{G,(\Ac)^I}^\an
    \arrow[d, "\act_I"]
    \\
    {[0,1]\times \Gr_{G,(\Ac)^I}^\an}
    \arrow[r, "{\Psi_{[0,1]}}"]
    &
    \Gr_{G, (\Ac)^I}^\an,
\end{tikzcd}    
\]
which provide stratified isotopies for the diagram
$$
\begin{tikzcd}
    \LpG_{{D'}^I} \times_{{D'}^I} \Gr_{G, {D'}^I} 
    \arrow[r, hook, "{i^+\times i}"]
    \arrow[d, "\act_I"]
    &
    \LpG_{D^I} \times_{D^I} \Gr_{G, D^I}
    \arrow[d, "\act_I"]
    \\ 
    \Gr_{G, {D'}^I}
    \arrow[r, hook, "i"]
    &
    \Gr_{G, D^I}.
\end{tikzcd}
$$
\end{thmintro}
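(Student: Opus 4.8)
The plan is to deduce \cref{thm-intro-equivariant} from the already-established lifting principle for automorphisms of the curve $X=\Ac$, exactly as \cref{main-theorem-introduction} was deduced from \cref{lifting-for-Ran}, but now carrying along the arc group and its action throughout. First I would record the algebraic input: an affine transformation $\phi$ of $\Ac$ with $\phi(D)=D'$ (or more precisely a one-parameter family of such, interpolating from $\id$ to a contraction into $D'$) induces compatible automorphisms of $X^I$, of $\Gr_{G,X^I}$, \emph{and} of $\LpG_{X^I}$, and these three automorphisms are compatible with the action map $\act_I\colon \LpG_{X^I}\times_{X^I}\Gr_{G,X^I}\to \Gr_{G,X^I}$. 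This last compatibility is essentially formal from the moduli description: an automorphism of $X$ acts functorially on $G$-torsors on $X_R$, on their trivializations away from $\Gamma_{x_I}$, and on sections of $G$ over formal neighbourhoods of $\Gamma_{x_I}$, and all of these transports commute with the twisting action of an arc by a torsor-with-trivialization. So the square of stratified ind-schemes
\[
\begin{tikzcd}
\LpG_{X^I}\times_{X^I}\Gr_{G,X^I} \arrow[r]\arrow[d,"\act_I"] & \LpG_{X^I}\times_{X^I}\Gr_{G,X^I}\arrow[d,"\act_I"]\\
\Gr_{G,X^I}\arrow[r] & \Gr_{G,X^I}
\end{tikzcd}
\]
commutes, where the horizontal maps are induced by $\phi$. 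One should also check this is compatible with the Schubert/incidence stratifications, which it is because $\phi$ preserves the incidence stratification of $X^I$ and acts trivially on the Schubert parameter (it acts by curve automorphisms, not by changing the group).

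Next I would apply the analytification functor of \cref{theorem-stratifications-intro}, which is a functor of $\infty$-categories (or at least preserves the relevant fiber products and the simplicial-mapping-space structure needed to talk about isotopies), to transport the above commuting square, and the one-parameter family of such squares, to $\Str\Top$. Running the parameter over $[0,1]$ and restricting along $D^I\hookrightarrow (\Ac^\an)^I$ (respectively $D'^I$) produces the stratified isotopy $\Psi^{\mathrm{equiv}}_{[0,1]}$ on $\LpG_{(\Ac)^I}^\an\times_{(\Ac)^I}\Gr_{G,(\Ac)^I}^\an$ together with its image $\Psi_{[0,1]}$ on $\Gr_{G,(\Ac)^I}^\an$, and the commuting square in the statement is then just the analytification of the commuting square above, with the $\id\times\act_I$ versus $\act_I$ vertical maps. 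That $\Psi^{\mathrm{equiv}}$ and $\Psi$ are genuinely isotopies (not just homotopies), and that they restrict to the claimed stratified isotopies of the small square over $D'\subset D$, follows because each time slice is an isomorphism of stratified ind-schemes before analytification — hence a stratified homeomorphism after — exactly as in the proof of \cref{homotopy-equivalence-D-D'} and \cref{homotopy-equivalence-D-D'-groups}. The Ran version (\cref{equivariance-theorem-Ran}) follows by taking the colimit over $I\in\fs$ of the whole package, using that $\act_I$ and the $\phi$-induced maps are compatible with the surjection maps $X^I\to X^J$ defining the Ran diagram, so the colimit of the commuting squares is again a commuting square.

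The main obstacle I expect is not the algebra — the torsor-theoretic compatibility of $\phi$ with the arc action is routine — but the bookkeeping needed to make "the isotopy on $\Gr$ is the $\act_I$-image of the isotopy on $\LpG\times_{X^I}\Gr$" into a precise statement of stratified $\infty$-categorical functoriality. Concretely, one needs that the analytification functor sends the family $\{\phi_t\}$, which is really a map out of $[0,1]$ (or out of a smooth test scheme, analytified), to a \emph{single} isotopy rather than to a homotopy-coherently-compatible system; this is where the rigidity of the construction — each $\phi_t$ an honest isomorphism — does the work, so that one can form the actual homotopy $[0,1]\times(-)\to(-)$ on the nose and then analytify it as a map of stratified spaces. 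I would isolate this as a lemma: a "family of automorphisms of $X$ over a base $T$" analytifies to a $T^\an$-family of stratified automorphisms of $\Gr_{G,X^I}^\an$ and of $\LpG_{X^I}^\an$, compatibly with $\act_I$; specializing $T=\mathbb{A}^1$ (or the relevant contraction family already built for \cref{main-theorem-introduction}) and restricting to $[0,1]$ then gives everything. Once that lemma is in hand, \cref{thm-intro-equivariant} and its Ran counterpart are immediate, and the two displayed diagrams are literally the analytification (and the colimit over $I$) of the commuting squares produced above.
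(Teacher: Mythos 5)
Your approach is the paper's approach in outline: combine the lifting morphisms $\Phi$ (\cref{remark-lift-automorphism}) and $\Phi^{\LpG}$ (\cref{lift-isotopy-for-groups}) into $\Phi^{\LpG}\times_{X^I}\Phi\colon\uAut_\C(X)\to\uAut_\C(\LpG_{X^I}\times_{X^I}\Gr_{G,X^I})$, check compatibility with $\act_I$, precompose with the contraction family $F$, analytify, restrict to $[0,1]$, and pass to the Ran version by colimit over $I$. The two places you flag as ``routine'' or ``take the colimit,'' though, are exactly where the paper does the real work, and as written the proposal does not close. The commutation with $\act_I$ is not formal from the moduli description: the arc action is defined on the local model $\Gr^{\loc}_{G,X^I}$ (modifying trivializations over $\widetilde{\Gamma}_{x_I}\setminus\Gamma_{x_I}$) via the Beauville--Laszlo isomorphism $\mathfrak r_I$ of \cref{lemma-glob-loc}, while $\Phi_f$ pulls back global data on $X_R$. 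Reconciling the two requires the explicit formula for $\Phi_f$ in formal coordinates, namely $[(x,\eta,\widetilde{\cF},\widetilde{\alpha})]\mapsto[(f^{-1}x,\widehat f_x^{-1}\circ\eta,\widetilde{\cF},\widetilde{\alpha})]$ on the twisted product $\widehat X\times^{\uAut_\C\C\taylor}\Gr_G$, which is established in \cref{lemma-stratified-automorphism} via diagram \eqref{diagramforeta}, plus a reduction to $I=\{*\}$ through the factorization property. Your remark that ``$\phi$ acts trivially on the Schubert parameter'' is the \emph{conclusion} of that lemma, not a free observation.

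More serious is the Ran step. Taking the colimit of the commuting squares over $I\in\fs$ would give a square whose upper-left corner is $\colim_I\big(\LpG^\an_{X^I}\times_{(X^\an)^I}\Gr^\an_{G,X^I}\big)$, but this is not a priori $\LpG^\an_{\Ran(X)}\times_{\Ran(X)^\an}\Gr^\an_{G,\Ran(X)}$, because colimits do not commute with pullbacks in $\Top$ (cf.\ \cref{warning}, \cref{warning-Ran}). The paper handles this by filtering by $\Ran_{\leq n}$, proving the quotient maps $\widetilde{\cU}_n^{(N),\an}$ and $\widetilde{\cU}_n^{+,\an}$ are perfect quotients (\cref{lem:widetildecUisperfquoz}), invoking \cref{Har15} to commute colimits with the needed fiber products, and then constructing $\Psi^{\mathrm{equiv},\Ran}_{[0,1]}$ by descent along those quotients as in \cref{extension-of-analytification}; equivariance then holds precisely because $\Psi^{\mathrm{equiv},\Ran}$ is built by descent along the same maps as $\act_\Ran$. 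Your closing lemma --- that an algebraic family of automorphisms over a base analytifies to a genuine isotopy --- is exactly right and corresponds to the algebraic-isotopy formalism of \cref{defin-algebraic-isotopy} and \cref{lift-isotopy}.
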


\subsection{Outline of the paper}
In \Cref{Section-stratified-analytification} we formalize the fact that the usual analytification functor $(-)^\an:\Sch^\lft_\C\to \Top$ can be enhanced to a functor $(-)^\an_{\PSh\Str}$ between the category of \emph{small stratified} presheaves and stratified topological spaces  (see \Cref{theoremanalyti}).

In \Cref{section-Ran-Grassmannian}, we determine several properties of the Ran Grassmannian, first from an algebraic geometry perspective and then from a (complex-analytic) topology one. 
Some of those are not formal consequences of the analogous properties of the Beilinson-Drinfeld Grassmannian, since we look at $\Gr_{\Ran(X)}^\an$ as a stratified topological space and not as a presheaf of topological spaces (i.e. we ``realize'' it in $\Str\Top$). 
In particular, the existence of a stratified continuous action of $\LpG_{\Ran(X)}^\an$ on $\Gr_{G, \Ran(X)}^\an$ over $\Ran(X)^\an$ is non-trivial (see \cref{extension-of-analytification}).

\Cref{Section-invariance} is devoted to the proofs of the main results of the paper.
We first observe that for any connected smooth complex curve $X$ there is a morphism of presheaves 
$$\uAut_\C(X)\to \uAut_\C(\Gr_{G,X^I})$$ 
lifting an automorphism of $X$ to a (stratified) automorphism of the Beilinson--Drinfeld Grassmannian $\Gr_{G,X^I}$ (see \Cref{remark-lift-automorphism} and \Cref{lemma-stratified-automorphism}). 
In particular, if $X=\Ac$, one can lift affine transformations $z\mapsto \alpha z+\beta$. 
One can apply this lifting principle to isotopically transform the restrictions $\Gr_{G, D^I}$ from any open metric disk $D$ to another. This lifting result is also true at the Ran level, i.e. there is a lifting morphism $$\uAut_\C(X)\to \uAut_\C(\Gr_{G,\Ran(X)}).$$ 
These arguments achieve the proof of \Cref{main-theorem-introduction} (see \cref{homotopy-equivalence-D-D'} and \cref{homotopy-invariance-Ran}). 
\Cref{thm-intro-equivariant} is proven similarly: indeed, the fact that $\LpG_{D'^I}\xhookrightarrow{}\LpG_{D^I}$ and $\LpG_{\Ran(D')}\xhookrightarrow{} \LpG_{\Ran(D)}$ are stratified homotopy equivalences follows from a similar lifting principle, and the compatibility with the action follows from the constructions.

Finally, we deduce \cref{GrGRanisE2} from \cref{main-theorem-introduction} by applying Lurie's theorem \cite[Theorem 5.4.5.15]{HA} saying that non-unital $\E_2$-algebras with values in a symmetric monoidal category $\cc^\otimes$ are equivalent to \textit{locally constant} non-unital $\Disk(\R^2)^\otimes$-algebras with values in $\cc^\otimes$. Here $\Disk(\R^2)^\otimes$ is the operad of topological disks in the real plane, and the local constancy property corresponds to \cref{main-theorem-introduction}.

Appendix \ref{recollection} is devoted to some recollections about the affine Grassmannian and the Beilinson-Drinfeld Grassmannian, as well as some detailed proofs of small and useful folklore facts needed in the paper.

\subsection*{Notation}
In this paper $G$ will always denote a complex reductive group, and $X$ will be a smooth (not necessarily projective) connected complex curve.

For a scheme $Y$, $\Bun_G(Y)$ is the groupoid of \'etale
$G$-torsors over $Y$. 
Let $\cT_G$ be the trivial $G$-torsor over $\Spec \C$: for any complex scheme $S$ we denote by  $\cT_{G,S}$ its base change along the structural map $S\to \Spec \C$. When it does not cause confusion, we will just write $\cT_G$ or $\cT$.

\subsection*{Acknowledgements}
We thank Jeremy Hahn and Allen Yuan very much for providing multiple clarifications about their paper \cite{HY}, and for encouraging us to provide a proof of \Cref{main-theorem-introduction}. 
We also thank Dustin Clausen, Marius Kj\ae rsgaard, Yonatan Harpaz, Sam Raskin and Marco Volpe for fruitful discussions.

During the process of writing this paper, the first author was supported by the ERC Grant ``Foundations of Motivic Real K-theory'' held by Yonatan Harpaz, and later by the grant ``Simons Collaboration on Perfection in Algebra, Geometry and Topology'' co-held by Dustin Clausen.

\section{Stratifications and the analytification functor}\label{Section-stratified-analytification}
Let $G$ be a complex reductive group. The main objects of this paper are the \emph{affine Grassmaniann} $\Gr_G$, the \emph{Beilinson-Drinfeld Grassmannians} $\Gr_{G, X^I}$, and the \emph{Ran Grassmannian} $\Gr_{G, \Ran(X)}$, considered with their respective stratifications. 
We want to see these objects both from the algebro-geometric and the complex-analytic point of view.
In order to do so, we first need to formalize how to analytify \emph{stratified} schemes, and \emph{stratified small presheaves} in order to obtain stratified topological spaces.

\subsection{Stratified small presheaves}\label{Subsection-stratifications}
	
Let $Y$ be a topological space.
Among the slightly different definitions of \emph{stratification} (see \cite{strsp} for a full comparison between them) we will stick to the \emph{poset-stratified} one due to its good categorical properties (see the discussion on page 2 of \cite{strsp}).

\begin{defin}\label{defin-stratification}
A \emph{poset-stratified} space is a triple $(Y,P,s:Y\to \Alex(P))$ where
\begin{enumerate}
    \item $Y$ is a topological space, and $P$ is a poset,
    \item $\Alex:\Pos\to \Top$ is the functor associating to a poset $P$ the topological space of elements of $P$ endowed with the Alexandroff topology, and 
    \item $s$ is a continuous surjective map. 
\end{enumerate}  
We will often use $(Y,s)$
to denote the triple $(Y,P,s:Y\to \Alex(P))$ and we will refer to poset-stratified spaces simply as \emph{stratified spaces}.
		
A map of stratified spaces is a pair $(f,r):(Y,s)\to (W,s')$ where $f: Y\to W$ is a continuous map and $r:P\to Q$ is an order-preserving function such that
$$
\begin{tikzcd}
    Y
    \arrow[r, "{f}"]
    \arrow[d, "{s}"']
    &
    W
    \arrow[d, "{s'}"]
    \\
    \Alex(P)
    \arrow[r, "{r}"]
    &
    \Alex(Q).
\end{tikzcd}
$$   
commutes.
We denote by $\Str\Top$ the category of stratified topological spaces.
\end{defin}

\begin{rem}\label{StrTopiscocomplete}
The category $\Str\Top$ is complete and cocomplete.
Both properties are proven in \cite[Proposition 6.1.4.1]{Nand-Lal-thesis} for the category of \textit{stratified compactly generated spaces} but the proof for $\Str\Top$ is the same.
Moreover, small colimits are realized as follows:
\begin{multline*}
    \colim_{\alpha \in A} \left(Y_{\alpha},P_\alpha, s_\alpha: Y_{\alpha}\to \Alex(P_{\alpha})\right) =\\
    \left( \colim_{\alpha\in A} Y_{\alpha}, \colim_{\alpha\in A} P_{\alpha}, \colim_{\alpha\in A} Y_{\alpha} \xrightarrow{\underset{\alpha\in A}{\colim}\, s_\alpha} \colim_{\alpha\in A} \Alex(P_{\alpha}) \to \Alex( \colim_{\alpha\in A} P_{\alpha}  ) \right).    
\end{multline*}
In particular, the underlying topological space (resp. the poset) of the colimit in $\Str\Top$ is the colimit in $\Top$ (resp. in $\Pos$) of the underlying topological spaces (resp. posets).

For limits, the situation is slightly different: the underlying poset still coincides with the $\lim_{\alpha\in A} P_\alpha$ but in general the underlying topological space will have a finer topology than $\lim_{\alpha \in A}Y_{\alpha}$ in $\Top$.
Nevertheless for \emph{finite} limits $F\to \Str\Top$, we still get that
\begin{multline*}
    \lim_{\alpha \in F} (Y_{\alpha},P_\alpha, s_\alpha: Y_{\alpha}\to \Alex(P_{\alpha}))=\\
    \left( \lim_{\alpha\in F} Y_{\alpha}, \lim_{\alpha\in F} P_{\alpha}, \lim_{\alpha\in F} Y_{\alpha} \xrightarrow{ \underset{\alpha\in F}{\lim}\, s_\alpha}    \lim_{\alpha\in F} \Alex(P_{\alpha}) 
    \xleftarrow{\sim}  
    \Alex( \lim_{\alpha\in F} P_{\alpha}) \right).
\end{multline*}
For a proof, one first reduces to the case of a finite product and then observes that the Alexandroff topology on a product coincides with the box topology, which in turn is the same as the product topology if the product is finite.

Note also that if the diagram of posets is constant $P_{\alpha}\equiv P$ (and without any finiteness assumption), then we still get 
\[\lim_{\alpha \in A} (Y_{\alpha},P, s_\alpha: Y_{\alpha}\to \Alex(P)) =
\left( \lim_{\alpha\in A} Y_{\alpha}, P, s: \lim_{\alpha\in A} Y_{\alpha} \to \Alex( P) \right). 
\]

In synthesis, if we denote by $\Fgt_\str:\Str\Top\to \Top$ the functor which forgets the stratification, it preserves all colimits, finite limits, and limits of diagrams where the poset is constant.
\end{rem}
	
\begin{defin}
    Let $R$ be a $\C$-algebra, locally of finite type.
    A \emph{stratified scheme} (locally of finite type over $R$) is a triple $(Y,P, s:Y^{\Zar}\to \Alex(P))$, where $Y$ is a scheme (locally of finite type over $R$) and $(Y^\Zar,P,s)$ is a stratified topological space.  
    A map of stratified schemes is a pair $(f,r)$ where $f$ is a map of $R$-schemes and $(f^{\Zar}, r)$ is a map of stratified topological spaces.
    
    We denote by $\Str\Sch^{\lft}_{R}$ the category of stratified schemes locally of finite type over $R$.
\end{defin}

\begin{rem}\label{StrSch admits finite limits}
In an analogous way to the case of $\Str\Top$, one can verify that the category $\Str\Sch^\lft_R$ admits finite limits and they have the form
\[
\lim_{\alpha \in F} \left(Y_\alpha, P_\alpha, s_\alpha: Y_{\alpha}^\Zar\to \Alex(P_\alpha) \right)   
=
\left( 
\lim_{\alpha \in F} Y_\alpha , \lim_{\alpha \in F} P_\alpha, s : \left( \lim_{\alpha \in F} Y_{\alpha} 
\right)^\Zar 
\to 
\lim_{\alpha \in F} Y_{\alpha}^\Zar \to \Alex( \lim_{\alpha\in F} P_\alpha) \right).
\]
\end{rem}
	
\begin{defin}		
Let $\cc$ be a locally small category.
A \emph{small presheaf} on $\mathcal{C}$ is a small colimit over a diagram of the form 
$\gamma : A \to \mathcal{C} \xhookrightarrow{} \PSh(\cc)$ where  $\cc\xhookrightarrow{} \PSh(\cc)$ 
is the Yoneda functor $\yo$. 
We denote by $\PSmall(\cc)$ the full subcategory of $\PSh(\mathcal{C})$ of small presheaves.  
\end{defin}  
		
\begin{rem}\label{cocompletion}
By definition, $\PSmall(\cc)$ is the small free cocompletion of $\cc$ and $\cc$ embeds in it via the Yoneda functor $\yo: \cc\xhookrightarrow{} \PSmall(\cc)$, see \cite[Theorem 2.11]{Lindner-enriched}.
See \cite[Definition 4.1]{Coco} for the definition of free cocompletion of a locally small category. 
\end{rem}

Recall that the categories $\Str\Sch^\lft_\C, \Str\Sch$ are locally small.
        
\begin{defin}\label{defin-stratified-presheaves}
A \emph{stratified small presheaf} over $R$ is an object of $\PSmall(\Str\Sch_R)$. 
A \emph{stratified small presheaf locally of finite type over $R$} is an object of $\PSmall(\Str\Sch^{\lft}_R)$.
\end{defin}

\begin{rem}\label{embedding-strschlft-strsch}
Denote by $\lambda$ the left Kan extension
\[
\begin{tikzcd}
    \Sch^\lft_R
    \arrow[r, hook]
    \arrow[d, "{\yo}", hook]
    &
    \Sch_R 
    \arrow[r, hook, "{\yo}"]
    &
    \PSmall(\Sch_R),
    \\
    \PSmall(\Sch^\lft_R)
    \arrow[rru, dashed, "{\lambda}"']
\end{tikzcd}
\]
which preserves colimits and is left adjoint to the restriction functor $\PSmall(\Sch_R)\to \PSmall(\Sch_R^\lft)$.
Analogously, denote by $\lambda_\Str$ the left Kan extension 
\[
\begin{tikzcd}
    \Str\Sch^\lft_R
    \arrow[r, hook, "{i_{\Str,\lft}}"] 
    \arrow[d, hook, "{\yo}"']
    &
    \Str\Sch_R
    \arrow[r, hook, "{\yo}"]
    &
    \PSmall(\Str\Sch_R)
    \\
    \PSmall(\Str\Sch_R^\lft).
    \arrow[rru, dashed, "{\lambda_{\Str}}"']
\end{tikzcd}
\]
It preserves colimits and is the left adjoint to the restriction functor $$\PSmall(\Str\Sch_R)\to \PSmall(\Str\Sch_R^\lft).$$
\end{rem}

\subsection{Stratified analytification}\label{Subsection-analytification}
	
Let us recall the notion of the \emph{analytification functor} from SGA1-XII.
For this, let $\mathfrak{L}_\C$ be the category of locally $\C$-ringed spaces and let $\mathfrak{A}\textup{n}_\C$ the full subcategory of complex analytic spaces inside $\mathfrak{L}_\C$.
	
\begin{thm}[{\cite[Thm. XII.1.1]{SGA1-XII} and \cite[\S XII.1.2]{SGA1-XII}}]\label{Raynaud}
Let $Y$ be a scheme locally of finite type over $\C$.
Then the functor 
$$
\Hom_{\mathfrak{L}_\C}(-,Y): \mathfrak{A}\textup{n}_\C^\op \to \sets
$$
is representable by a complex analytic space $\an(Y)$: namely there exists a map of locally $\C$-ringed spaces $\varphi_Y:\an(Y)\to Y$ such that 
$$
\Hom_{\mathfrak{A}\textup{n}_\C}(T, \an(Y))\xrightarrow{\sim} 
\Hom_{\mathfrak{L}_\C}(T,Y),\quad f\mapsto \varphi_Y\circ f
$$
is a natural bijection (controvariant in $T$ and covariant in $Y$).
Moreover, $\an(Y)$ coincides, as sets, with $Y(\C)$. 
Denote by $Y^\an$ the underlying topological space of $\an(Y)$ (namely, forget the sheaf). This then defines an \emph{analytification functor} 
$$
(-)^\an: \Sch_\C^{\lft} \to \Top, \quad Y\mapsto Y^{\an}
$$
which preserves finite limits.
\end{thm}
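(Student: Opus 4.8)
The plan is to give the classical gluing construction, deriving everything from representability in the affine case; the hypothesis \emph{locally of finite type} enters to keep us inside finite-dimensional affine spaces.

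\textbf{Affine case.} First I would treat $Y = \Spec R$ for $R$ a finitely generated $\C$-algebra. Fixing a presentation $R = \C[x_1,\dots,x_n]/(f_1,\dots,f_k)$, view $f_1,\dots,f_k$ as holomorphic functions on $\C^n$ and let $\an(Y)\subseteq \C^n$ be the closed complex analytic subspace they cut out; let $\varphi_Y\colon \an(Y)\to \Spec R$ be the map of locally $\C$-ringed spaces classified by the $\C$-algebra homomorphism $R\to \Gamma(\an(Y),\cO_{\an(Y)})$ sending $x_i$ to the restriction of the $i$-th coordinate function. The universal property is then a direct computation: for any complex analytic space $T$, the identity $\Hom_{\mathfrak{L}_\C}(T,\Spec R)=\Hom_{\C\text{-alg}}(R,\Gamma(T,\cO_T))$ (valid because $\Spec R$ is affine and $T$ lies in $\mathfrak{L}_\C$) identifies $\Hom_{\mathfrak{L}_\C}(T,\Spec R)$ with the set of tuples $(g_1,\dots,g_n)\in \Gamma(T,\cO_T)^n$ satisfying $f_j(g)=0$ for all $j$; and $\Hom_{\mathfrak{A}\textup{n}_\C}(T,\an(Y))$ is identified with the same set, because morphisms $T\to \C^n$ of analytic spaces are $n$-tuples of holomorphic functions and those landing in the closed subspace $\an(Y)$ are exactly the ones whose pullback annihilates its defining ideal sheaf. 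In particular $\an(Y)$ is independent of the presentation up to unique isomorphism, and taking $T$ the reduced point identifies the point set of $\an(Y)$ with $Y(\C)$.

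\textbf{Gluing and the general universal property.} Next I would check that analytification sends open immersions to open immersions: for a principal open $D(h)\subseteq \Spec R$ this is immediate since $\an(D(h))$ is the open locus $\{h\neq 0\}$ in $\an(\Spec R)$, and the general case follows because arbitrary opens of a scheme are unions of principal opens and analytification commutes with such unions by the universal property. For arbitrary $Y$ locally of finite type I would choose an affine open cover $Y=\bigcup_i U_i$ with each $U_i$ of finite type, cover the overlaps $U_i\cap U_j$ by principal opens of the $U_i$, and glue the $\an(U_i)$ along the open subspaces $\an(U_i\cap U_j)$; the cocycle condition is automatic from the uniqueness clause of the affine universal property. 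This yields a complex analytic space $\an(Y)$ with a map $\varphi_Y\colon \an(Y)\to Y$ in $\mathfrak{L}_\C$. For the universal property in general: given $f\colon T\to Y$ with $T$ analytic, the opens $f^{-1}(U_i)$ cover $T$, each restriction lifts uniquely along $\varphi_{U_i}$ by the affine case, the lifts agree on overlaps by uniqueness, hence glue to a unique $\tilde f$ with $\varphi_Y\circ \tilde f=f$. Functoriality of $\an$ and the functorial identification $Y^\an=Y(\C)$ are then formal, obtained by factoring $\an(Y)\to Y\to Y'$ through $\varphi_{Y'}$ and by applying the bijection to $T$ a point; forgetting the structure sheaf defines $(-)^\an\colon \Sch_\C^{\lft}\to \Top$.

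\textbf{Preservation of finite limits and main obstacle.} Since a category with a terminal object and pullbacks has all finite limits, it suffices to see $\an(\Spec\C)=\{\ast\}$ (clear) and that $\an$ carries a pullback square of schemes to a pullback square of topological spaces. I would first do the affine case: for $Y_i=\Spec R_i$ with chosen presentations, $R_1\otimes_{R_0}R_2$ inherits a presentation over $\C$ for which $\an(\Spec(R_1\otimes_{R_0}R_2))$, as a subset of $\C^{n_1+n_2}$ with its Euclidean topology, is precisely $\{(p,q)\in \an(\Spec R_1)\times \an(\Spec R_2):p\text{ and }q\text{ have the same image in }\an(\Spec R_0)\}$ — and, the product being finite, the Euclidean topology on $\C^{n_1+n_2}$ is the product topology, so this carries exactly the topological-fiber-product topology. (Equivalently: $\Spec$ is right adjoint to global sections, hence preserves limits, so the scheme fiber product of affines is their fiber product in $\mathfrak{L}_\C$, and the universal property of the previous step forces $\an(\Spec(R_1\otimes_{R_0}R_2))$ to represent the fiber-product functor; underlying spaces are then compared by points via $\Hom(\Spec\C,-)$ and local models.) The general case follows by covering $Y_0,Y_1,Y_2$ by affines and using that $\an$ preserves open immersions and gluings, since $Y_1\times_{Y_0}Y_2$ is assembled from the affine pieces $U_1\times_{W_0}U_2$. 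I expect the only genuinely non-formal steps to be the gluing above — checking that analytification of an open immersion is again an open immersion and that the gluing data is coherent — and the topological bookkeeping in the affine pullback computation, where local finite type is what keeps us inside finite-dimensional $\C^n$'s; representability in the affine case is where the real content sits, and everything else is a formal unwinding of it.
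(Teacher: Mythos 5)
Your proposal is correct and is essentially the classical construction of SGA1, Exposé XII (affine case via a presentation and the adjunction $\Hom_{\mathfrak{L}_\C}(T,\Spec R)\cong\Hom_{\C\text{-alg}}(R,\Gamma(T,\cO_T))$, then gluing, then finite limits by the affine fiber-product computation), which is exactly the source the paper cites: the paper itself gives no proof of this statement, only the reference. The only cosmetic wrinkle is the slight circularity in invoking $\an(U)$ for arbitrary opens before the gluing step; in your actual construction you avoid it by covering overlaps with principal opens, so there is no genuine gap.
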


The notation $Y^\an$ differs from the one used in SGA1 \cite{SGA1-XII}, where $Y^{\an}$ denotes the complex analytic space and not its underlying topological space.
	
We now want to enhance and extend this functor to the category of small stratified presheaves $\PSmall(\Str\Sch_\C)$.

\begin{thm}[Stratified Analytifications]\label{theoremanalyti}
The analytification functor of \cref{Raynaud} can be enhanced and extended to  
\begin{gather*}
    (-)^{\an}_{\Str, \lft}: \Str\Sch^{\lft}_{\C} \to \Str\Top, \quad
    (-)^{\an}_{\Str}: \Str\Sch_{\C} \to \Str\Top, \\
    (-)^{\an}_{\PSh\Str, \lft}: \PSmall(\Str\Sch^\lft_{\C}) \to \Str\Top, \quad 
    (-)^{\an}_{\PSh\Str}: \PSmall(\Str\Sch_{\C}) \to \Str\Top
\end{gather*}
where the first functor preserves finite limits, the second one preserves small limits, the last two preserve small colimits.
They fit in the following commutative diagram:
\begin{equation}\label{big-analytification-diagram}
\begin{tikzcd}[column sep=2cm]
    \Sch^\lft_\C
    \arrow[rrr,"\hspace{0.8cm}(-)^\an"] 
    & & &
    \Top
    \\
    \Str\Sch_\C^\lft
    \arrow[dr, hook, "\yo"']
    \arrow[dd, hook, "{i_{\Str, \lft}}"']
    \arrow[rrr, "\hspace{1.2cm}(-)^{\an}_{\Str, \lft}"]
    \arrow[u, "{\Fgt_{\str}}"]
    & & &
    \Str\Top 
    \arrow[dd, equal]
    \arrow[u, "{\Fgt_{\str}}"']
    \\
    & \PSmall(\Str\Sch_\C^\lft) 
    \arrow[dd, near start, "\lambda_\Str"']
    \arrow[rru, dashed, "{(-)^{\an}_{\PSh\Str, \lft} = \textup{LKE}_{\yo} (-)^\an_{\Str,\lft}}"'] & &
    \\
    \Str\Sch_\C
    \arrow[dr, "{\yo}"', hook]
    \arrow[rrr, "{\hspace{4cm} (-)^\an_{\Str} = \textup{RKE}_{i_{\Str,\lft}} (-)^\an_{\Str,\lft}}", dashed]
    & & &
    \Str\Top.
    \\
    & \PSmall(\Str\Sch_\C)
    \arrow[rru, "{(-)^\an_{\PSh\Str} = \textup{LKE}_{\yo} (-)^\an_{\Str}}"', dashed] 
    & &
\end{tikzcd}
\end{equation}
\end{thm}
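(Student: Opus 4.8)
The plan is to construct the four functors one at a time, starting from the stratified analytification on schemes locally of finite type and extending outward along the two Kan extensions indicated in diagram~\eqref{big-analytification-diagram}, checking at each stage that the relevant (co)limits are preserved and that the square with $\Fgt_\str$ commutes. First I would build $(-)^\an_{\Str,\lft}: \Str\Sch^\lft_\C\to \Str\Top$ by hand: given $(Y,P,s\colon Y^\Zar\to\Alex(P))$, its underlying scheme $Y$ has an analytification $Y^\an$ with a continuous comparison map $c_Y\colon Y^\an\to Y^\Zar$ (the change-of-topology map, which is continuous since the analytic topology is finer and every Zariski-open is analytic-open); then $s\circ c_Y\colon Y^\an\to\Alex(P)$ is the desired stratifying map, and one must check it is still surjective (it is, since $c_Y$ is surjective on points — $Y^\an=Y(\C)$ is dense enough, or rather hits every Zariski point by taking closed points of its closure; more carefully, every stratum is a locally closed subscheme locally of finite type, hence has a $\C$-point, so surjectivity is inherited). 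Functoriality is immediate from naturality of $c_Y$. That this functor preserves finite limits follows by combining \cref{Raynaud} (the plain analytification preserves finite limits), \cref{StrSch admits finite limits} and \cref{StrTopiscocomplete}: a finite limit of stratified schemes has underlying scheme the finite limit of underlying schemes and poset the finite limit of posets, the analytification of that limit scheme is the limit of the analytifications, and by \cref{StrTopiscocomplete} the finite limit in $\Str\Top$ is computed as the limit of underlying spaces together with the limit of posets — so everything matches on the nose.

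Next I would produce $(-)^\an_\Str:\Str\Sch_\C\to\Str\Top$ as the right Kan extension of $(-)^\an_{\Str,\lft}$ along the inclusion $i_{\Str,\lft}:\Str\Sch^\lft_\C\hookrightarrow \Str\Sch_\C$. Since $\Str\Top$ is complete (\cref{StrTopiscocomplete}), this right Kan extension exists; because $i_{\Str,\lft}$ is fully faithful, $(-)^\an_\Str$ genuinely extends $(-)^\an_{\Str,\lft}$. The point to verify is that it preserves small limits. A general scheme is a (possibly large but) filtered limit — or rather, a cofiltered limit along closed/affine patches — of schemes locally of finite type; more to the point, one writes an arbitrary stratified scheme as a limit of its locally-of-finite-type ``approximations'', and since right Kan extension along a limit-dense subcategory followed by a limit-preserving functor stays limit-preserving, one concludes. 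Concretely I would note that every object of $\Str\Sch_\C$ is canonically the limit of the diagram $i_{\Str,\lft}/Y \to \Str\Sch_\C$ (the comma category is cofinal enough because any scheme is the limit of its finite-type quotients/affine opens), so $\operatorname{RKE}$ evaluated at $Y$ really is $\lim_{(T\to Y)} T^\an_{\Str,\lft}$, and limit-preservation of $(-)^\an_\Str$ is then formal from limit-preservation of $(-)^\an_{\Str,\lft}$ on the finite-type pieces together with the explicit description of limits in $\Str\Top$. The square with $\Fgt_\str$ on this level commutes because $\Fgt_\str$ preserves the relevant limits (\cref{StrTopiscocomplete}) and the plain analytification on $\Sch_\C$ is itself defined compatibly.

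Then the two presheaf-level functors $(-)^\an_{\PSh\Str,\lft}$ and $(-)^\an_{\PSh\Str}$ are defined as the left Kan extensions of $(-)^\an_{\Str,\lft}$ and $(-)^\an_\Str$ respectively along the Yoneda embeddings $\yo:\Str\Sch^\lft_\C\hookrightarrow\PSmall(\Str\Sch^\lft_\C)$ and $\yo:\Str\Sch_\C\hookrightarrow \PSmall(\Str\Sch_\C)$. Since $\PSmall(-)$ is the \emph{free} small cocompletion (\cref{cocompletion}) and $\Str\Top$ is cocomplete (\cref{StrTopiscocomplete}), each such left Kan extension exists, is unique up to equivalence, preserves all small colimits, and restricts back to the original functor along $\yo$ (as $\yo$ is fully faithful). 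To finish the commutativity of the big diagram~\eqref{big-analytification-diagram} there are two squares left: the triangle relating $(-)^\an_{\PSh\Str,\lft}$, $\lambda_\Str$ and $(-)^\an_{\PSh\Str}$, and the outer square with the original $(-)^\an$ and the two $\Fgt_\str$'s. The triangle commutes because both $(-)^\an_{\PSh\Str}\circ\lambda_\Str$ and $(-)^\an_{\PSh\Str,\lft}$ are small-colimit-preserving functors out of $\PSmall(\Str\Sch^\lft_\C)$ which agree on the essential image of $\yo$ (there one reduces to the equality $(-)^\an_\Str\circ i_{\Str,\lft}=(-)^\an_{\Str,\lft}$, which holds by the right-Kan-extension step), hence agree by the universal property of the free cocompletion. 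The outer square is handled similarly, noting that $\Fgt_\str$ preserves small colimits (\cref{StrTopiscocomplete}) so that $\Fgt_\str\circ(-)^\an_{\PSh\Str,\lft}$ is colimit-preserving and agrees with (the colimit-preserving extension of) the plain analytification on representables.

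The main obstacle, and the step deserving the most care, is the \emph{middle} one: showing that the right Kan extension $(-)^\an_\Str$ really preserves \emph{all} small limits and not merely finite ones, and identifying it concretely enough to see the $\Fgt_\str$-compatibility. The subtlety is exactly the one flagged in \cref{StrTopiscocomplete}: infinite limits in $\Str\Top$ do \emph{not} have underlying space the plain limit of underlying spaces (the topology is finer), so one cannot blithely say ``analytify underlying scheme, then restratify''. The resolution is that for the \emph{cofiltered} limits of finite-type schemes that present a general scheme, the relevant posets in the diagram are eventually constant (a stratification of a scheme pulls back from a finite-type model), and \cref{StrTopiscocomplete} tells us that limits with \emph{constant} poset \emph{do} compute underlying space as the plain limit; so one must argue that it suffices to test limit-preservation against such diagrams, which is where the explicit cofinality/pro-object description of schemes locally of finite type does the work. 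Everything else is a bookkeeping exercise in universal properties of Kan extensions and free cocompletions.
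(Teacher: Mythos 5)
Your plan follows the paper's proof structure closely: construct $(-)^{\an}_{\Str,\lft}$ by hand via $s^\an=s\circ\varphi_Y^{\textup{top}}$, verify finite-limit preservation using the explicit formula of \cref{StrSch admits finite limits} and the fact that the underlying $(-)^\an$ preserves finite limits, obtain the other three functors by the indicated Kan extensions, and check the $\lambda_\Str$-triangle by writing a small presheaf as a colimit of representables and using that both composites preserve colimits and agree on representables. Your additional check that $s^\an$ is still \emph{surjective} is a real point the paper leaves implicit, and your justification (every nonempty stratum of a scheme locally of finite type over $\C$ contains a $\C$-point) is the correct one.

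The one place where your argument is genuinely shaky is the small-limit-preservation of $(-)^\an_\Str$. The paper waves this away as ``properties of left and right Kan extensions along fully faithful functors''; you rightly distrust that, since a pointwise right Kan extension along a fully faithful functor does \emph{not} preserve limits in general. But your proposed fix is not correct as stated: you claim that every stratified scheme is ``canonically the limit of the diagram $i_{\Str,\lft}/Y\to\Str\Sch_\C$'' because any scheme is the limit of its finite-type quotients/affine opens. That fails for non-affine, non-lft schemes — the affine pieces of a scheme assemble by a \emph{colimit} (gluing), while the cofiltered-limit-by-finitely-generated-subalgebras presentation only applies to affines, and these two do not combine into a single cofiltered limit presentation of an arbitrary scheme. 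The appeal to ``eventually constant posets'' and the constant-poset case of \cref{StrTopiscocomplete} is likewise left at the level of a heuristic. In practice the paper only invokes limit-preservation of $(-)^\an_\Str$ at $\N$-indexed cofiltered limits of lft schemes with affine transitions ($\LpG=\lim_m\LmG$ and its $X^I$-version), where a cofinality argument works because any map from the limit to an lft target factors through a finite stage; if you want a watertight version of the step, it is cleaner to prove exactly this special case directly rather than the sweeping claim about all small limits.
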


\begin{proof}
The only non-trivial parts are: the construction of $(-)^\an_{\Str,\lft}$ and checking that the square involving $\lambda_\Str, (-)^\an_{\PSh\Str,\lft}$ and $(-)^\an_{\PSh\Str}$ commutes. 
The rest of the statement follows by properties of left and right Kan extensions along fully faithful functors.

So, let $(Y,P,s:Y^{\Zar}\to \Alex(P))$ be an element of $\Str\Sch^{\lft}_\C$.
The morphism $\varphi_Y: \an(Y)\to Y$ induces a map of topological spaces $\varphi_Y^{\textup{top}} : Y^\an\to Y^\Zar$.
Define $s^{\an}$ to be the composite
$$
s^\an = s\circ \varphi_Y^{\textup{top}}: Y^{\an}\to Y^{\Zar} \to \Alex(P).
$$
Let $(f,r):(Y,s)\to (W,s')$ be a stratified map. 
Consider the map $\an(f): \an(Y)\to \an(W)$: by definition the map $\an(f)$ fits in the commutative diagram of ringed spaces
$$
\begin{tikzcd}
    \an(Y)
    \arrow[r, "{\an(f)}"]
    \arrow[d, "{\varphi_Y}"']
    &
    \an(W)
    \arrow[d, "{\varphi_W}"]
    \\
    Y
    \arrow[r, "f"]
    &
    W.
\end{tikzcd}
$$
By forgetting the sheaves, we have the commutative diagram
$$
\begin{tikzcd}
    Y^{\an}
    \arrow[r, "{f^{\an}}"]
    \arrow[d, "{\varphi_Y^{\textup{top}}}"']
    &
    \an(W)
    \arrow[d, "{\varphi_W^{\textup{top}}}"]
    \\
    Y^{\Zar}
    \arrow[r, "f^{\Zar}"]
    \arrow[d, "{s}"']
    &
    W^{\Zar}
    \arrow[d, "{s'}"]
    \\
    \Alex(P)
    \arrow[r, "r"]
    &
    \Alex(Q).
\end{tikzcd}
$$
Therefore $(f^{\an},r)$ is a map of stratified spaces $(Y^{\an},s^\an)\to (W^{\an}, {s'}^\an)$.
This defines a functor
$$
(-)^{\an}_{\Str,\lft}: \Str\Sch^{\lft}_{\C}\to \Str\Top, \quad \quad (Y,s)\mapsto (Y^\an, s^\an), \ \textup{ and }\ (f,r) \mapsto (f^{\an},r),
$$
which enhances $(-)^\an: \Sch^{\lft}_\C\to \Top$, in the sense that the top square in \eqref{big-analytification-diagram} commutes. This functor still preserves finite limits: indeed, given a finite diagram $F\to \Str\Sch^{\lft}_\C$, by \cref{StrSch admits finite limits}, the limit $\lim_{\alpha \in F} (Y_\alpha, P_\alpha, s_\alpha: Y_{\alpha}^\Zar\to \Alex(P_\alpha) ) $ is 
\begin{equation*}
    \left(\lim_{\alpha \in F} Y_\alpha , \lim_{\alpha \in F} P_\alpha, s : \left( \lim_{\alpha \in F} Y_{\alpha} \right)^\Zar \to \lim_{\alpha \in F} Y_{\alpha}^\Zar \to \Alex( \lim_{\alpha\in F} P_\alpha) \right).
\end{equation*}
By the definition of $(-)^\an_{\Str,\lft}$ and by the fact that the original $(-)^\an$ preserves finite limits, this in turns is equal to
\[
\left(  \lim_{\alpha \in F} Y_\alpha^\an , \lim_{\alpha \in F} P_\alpha, s^\an : 
\lim_{\alpha \in F} Y_{\alpha}^\an
\to \left( \lim_{\alpha \in F} Y_{\alpha} \right)^\Zar 
\xrightarrow{} \lim_{\alpha \in F} Y_{\alpha}^\Zar 
\to \Alex( \lim_{\alpha\in F} P_\alpha) 
\right).
\]
By the universal property of limits, the map $\lim_{\alpha \in F} Y_{\alpha}^\an
\to \left( \lim_{\alpha \in F} Y_{\alpha} \right)^\Zar 
\xrightarrow{} \lim_{\alpha \in F} Y_{\alpha}^\Zar $ coincides with the limit map $\lim_{\alpha \in F} Y_{\alpha}^\an
\to \lim_{\alpha \in F} Y_{\alpha}^\Zar $, and we conclude.

For what concerns the commutativity of the square with the diagonal dashed arrows in \eqref{big-analytification-diagram}, note that any element of $\PSmall(\Str\Sch_\C^\lft)$ is a colimit $\colim_i (Y_i,s_i)$ of objects in $\Str\Sch_\C^\lft$.
We thus have the assignments
\[
\begin{tikzcd}
\colim_i (Y_i,s_i)
\arrow[d, mapsto, "\lambda_\Str"]
\arrow[rr, mapsto, "(-)^\an_{\PSh\Str,\lft}"]
& &
\colim_i (Y_i,s_i)^\an_{\Str,\lft}
\arrow[d, equal]
\\
\colim_i  \yo \circ i_{\Str,\lft} (Y_i,s_i)
\arrow[rr, mapsto, "(-)^\an_{\PSh\Str}"]
& &
\colim_i  \left(i_{\Str,\lft} (Y_i,s_i) \right)^\an_\Str,
\end{tikzcd}
\]
hence the claim.
\end{proof}

Note that at priori $(-)^\an_{\PSh\Str,\lft}$ does not preserve finite limits. 
However, let $\Fgt_{\str, \textup{top}}: \Str\Top \to \sets$ be the functor forgetting stratification and topology. 

\begin{lem}\label{underlying-set-an}
    The composite $\Fgt_{\str, \textup{top}}\circ (-)^\an_{\PSh\Str, \lft}$, that is the functor associating to a stratified presheaf its set of $\C$-points, preserves finite limits.
\end{lem}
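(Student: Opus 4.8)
The plan is to identify the functor $\Fgt_{\str,\textup{top}}\circ(-)^\an_{\PSh\Str,\lft}$ with the evaluation functor
\[
\textup{ev}\colon \PSmall(\Str\Sch^\lft_\C)\to\sets,\qquad \cF\mapsto \cF(\Spec\C,\textup{pt}),
\]
where $(\Spec\C,\textup{pt})$ denotes $\Spec\C$ equipped with the (unique) stratification by the one-element poset, viewed as an object of $\Str\Sch^\lft_\C$. Granting this identification, the lemma follows at once: $\textup{ev}$ is corepresented by $\yo(\Spec\C,\textup{pt})$ inside $\PSmall(\Str\Sch^\lft_\C)$, hence preserves every limit that exists there; and finite limits do exist in $\PSmall(\Str\Sch^\lft_\C)$ — since $\Str\Sch^\lft_\C$ is finitely complete by \cref{StrSch admits finite limits}, the full subcategory $\PSmall(\Str\Sch^\lft_\C)\subseteq\PSh(\Str\Sch^\lft_\C)$ is closed under finite limits, which are therefore computed pointwise as in the presheaf category (see, e.g., \cite{Coco} and the theory of small presheaves).

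To prove the identification I would argue that both $\Fgt_{\str,\textup{top}}\circ(-)^\an_{\PSh\Str,\lft}$ and $\textup{ev}$ are colimit-preserving functors with the same restriction along $\yo$, and then invoke the universal property of $\PSmall(\Str\Sch^\lft_\C)$ as the free cocompletion of $\Str\Sch^\lft_\C$ (\cref{cocompletion}), by which a colimit-preserving functor out of it is determined up to natural isomorphism by its restriction to representables. That $(-)^\an_{\PSh\Str,\lft}$ preserves small colimits is part of \cref{theoremanalyti}; that $\Fgt_{\str,\textup{top}}$ does follows because $\Fgt_\str\colon\Str\Top\to\Top$ preserves colimits (\cref{StrTopiscocomplete}) and the underlying-set functor $\Top\to\sets$ preserves colimits, having both adjoints; finally $\textup{ev}$ preserves small colimits because colimits in $\PSmall(\Str\Sch^\lft_\C)$ agree with those in $\PSh(\Str\Sch^\lft_\C)$ (the subcategory of small presheaves is closed under small colimits) and in the presheaf category colimits are pointwise. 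For the restriction along $\yo$: on a representable $(Y,s)$ the analytification is $(Y^\an,s^\an)$ by \cref{theoremanalyti}, whose underlying set is that of $Y^\an$, i.e. $Y(\C)$ by \cref{Raynaud}; on the other hand a morphism of stratified schemes $(\Spec\C,\textup{pt})\to(Y,P,s)$ is a pair $(f,r)$ with $f\in Y(\C)$ and $r$ an order-preserving map out of the one-element poset, and the commuting square of \cref{defin-stratification} forces $r$ to be constant with value $s$ of the image of $f$, so that $\Hom_{\Str\Sch^\lft_\C}((\Spec\C,\textup{pt}),(Y,s))$ is canonically $Y(\C)$. Both identifications are natural in $(Y,s)$, so the two functors agree on $\Str\Sch^\lft_\C$ and hence on all of $\PSmall(\Str\Sch^\lft_\C)$.

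The main obstacle I anticipate is not geometric but bookkeeping around the category of small presheaves: one must pin down that colimits in $\PSmall(\Str\Sch^\lft_\C)$ are the pointwise (presheaf) colimits — so that the free-cocompletion universal property and the pointwise description of $\textup{ev}$ are mutually compatible — and that finite limits likewise exist and are pointwise there, which is where the hypothesis ``locally of finite type'' enters through the finite completeness of $\Str\Sch^\lft_\C$ (\cref{StrSch admits finite limits}). The geometric content is entirely absorbed into \cref{Raynaud}, namely that $Y^\an$ has underlying set $Y(\C)$; no new input about analytification is needed, which is exactly why the unstratified refinement $(-)^\an_{\PSh\Str,\lft}$ itself may fail to preserve finite limits while its composite with $\Fgt_{\str,\textup{top}}$ does.
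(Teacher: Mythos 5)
Your proposal is correct, and it takes a genuinely different route from the paper's. The paper argues abstractly: it observes that $\Fgt_{\str,\textup{top}}\circ(-)^\an_{\PSh\Str,\lft}$ is the left Kan extension of $\Fgt_{\str,\textup{top}}\circ(-)^\an_{\Str,\lft}$ along $\yo$, checks that the latter preserves finite limits (since $(-)^\an_{\Str,\lft}$ does by \cref{theoremanalyti} and $\Fgt_{\str,\textup{top}}$ does by \cref{StrTopiscocomplete}), and then invokes a black-box result (\cite[Lemma B.55]{E3}) that the left Kan extension along Yoneda of a finite-limit-preserving functor from a finitely complete small category is again finite-limit-preserving. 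You instead identify the composite concretely as evaluation at $\yo(\Spec\C,\textup{pt})$ — using the universal property of the free cocompletion (\cref{cocompletion}) together with the observation that both sides preserve colimits and agree on representables, via $\Hom_{\Str\Sch^\lft_\C}((\Spec\C,\textup{pt}),(Y,s))\cong Y(\C)\cong\Fgt_{\str,\textup{top}}(Y^\an,s^\an)$ — and then simply note that a corepresentable functor preserves whatever limits exist. The trade-off: your argument is self-contained (no appeal to an external Kan-extension lemma) and, as a bonus, does not use the paper's hypothesis that $\Str\Sch^\lft_\C$ is small — it only needs local smallness for the free-cocompletion universal property; it also isolates exactly \emph{why} the composite behaves better than $(-)^\an_{\PSh\Str,\lft}$ alone (the composite happens to be corepresentable). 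The paper's approach is shorter once the cited lemma is granted and is the more general template if one wanted the same conclusion for a target functor without a corepresentability description. Both arguments share the one nontrivial prerequisite that $\PSmall(\Str\Sch^\lft_\C)$ actually has finite limits; you correctly flag this as the bookkeeping point where the finite completeness of $\Str\Sch^\lft_\C$ (\cref{StrSch admits finite limits}) enters.
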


\begin{proof}
We want to apply \cite[Lemma B.55]{E3}.
In order to do this we note that
\begin{enumerate}
\item since $\Fgt_{\str, \textup{top}}$ preserves colimits, the composite $\Fgt_{\str, \textup{top}} \circ (-)^\an_{\PSh\Str, \lft}$ coincides with the left Kan extension 
\[
\begin{tikzcd}[column sep=2cm]
\Str\Sch_\C^\lft 
\arrow[r, "(-)^\an_{\Str, \lft}"]
\arrow[d, "{\yo}"]
&
\Str\Top \arrow[r, "{\Fgt_{\str, \textup{top}}}"]
&
\sets,
\\
\PSmall(\Str\Sch_\C^\lft)
\arrow[rru, "{LKE}"', dashed]
&
&
\end{tikzcd}
\]
\item the categories $\Str\Sch^\lft_\C$ and $\PSmall(\Str\Sch^\lft_\C)$ have finite limits (respectively by \Cref{StrSch admits finite limits}, and because the presheaf category $\PSh(\Str\Sch^\lft_\C)$ has all limits), 
\item $\Str\Sch^\lft_\C$ is small,
\item both $(-)^\an_{\Str, \lft}$ and $\Fgt_{\str, \textup{top}}$ preserve finite limits (respectively by \Cref{theoremanalyti} and \Cref{StrTopiscocomplete}).
\end{enumerate}
Hence the statement.
\end{proof}

\begin{rem}
Let us notice that are unstratified versions of the functors  $(-)^\an_{\Str}$, $(-)^\an_{\Str\PSh, \lft}$, $(-)^\an_{\Str\PSh}$ introduced in \cref{theoremanalyti}. 
Indeed one can similarly consider the left (or right) Kan extensions starting from $(-)^\an$: 
\[
\begin{tikzcd}[column sep=2cm]
    \Sch_\C^\lft
    \arrow[dr, hook, "\yo"']
    \arrow[dd, hook, "{i_{\lft}}"']
    \arrow[rrr, "(-)^\an"]
    & & &
    \Top 
    \arrow[dd, equal]
    \\
    & 
    \PSmall(\Sch_\C^\lft) 
    \arrow[dd, near start, "\lambda"']
    \arrow[rru, dashed, "{(-)^{\an}_{\PSh, \lft} 
    = 
    \textup{LKE}_{\yo} (-)^\an}"'] 
    & &
    \\
    \Sch_\C
    \arrow[dr, "{\yo}"', hook]
    \arrow[rrr, "{\hspace{2.5cm} (-)^\an_{\Sch} = \textup{RKE}_{i_{\lft}} (-)^\an}", dashed]
    & & &
    \Top.
    \\
    & \PSmall(\Sch_\C)
    \arrow[rru, "{(-)^\an_{\PSh} = \textup{LKE}_{\yo} (-)^\an_\Sch}"', dashed] 
    & &
\end{tikzcd}
\]
Since in this paper we are mainly interested in constructions involving stratifications, we will not make use of these unstratified versions. However, let us comment on the relationship between the stratified and unstratified versions.

We have three analogues to the top square of \cref{big-analytification-diagram}:
\[\begin{tikzcd}
{\Str\Sch_\C}& {\Str\Top} \\
{\Sch_\C} & \Top,
\arrow[from=1-1, to=1-2, "{(-)^\an_\Str}"]
\arrow[from=1-1, to=2-1, "\Fgt_\str"]
\arrow[from=1-2, to=2-2, "\Fgt_\str"']
\arrow[from=2-1, to=2-2, "{(-)^\an_\Sch}"]
\end{tikzcd}
\begin{tikzcd}[column sep=1.5cm]
{\PSmall(\Str\Sch_\C^\lft)} & {\Str\Top} \\
{\PSmall(\Sch_\C^\lft)} & \Top,
\arrow[from=1-1, to=1-2, "(-)^\an_{\PSh\Str, \lft}"]
\arrow[from=1-1, to=2-1, "\Fgt_\str"]
\arrow[from=1-2, to=2-2, "\Fgt_\str"']
\arrow[from=2-1, to=2-2, "(-)^\an_{\PSh, \lft}"]
\end{tikzcd}
\begin{tikzcd}[column sep=1.2cm]
	{\PSmall(\Str\Sch_\C)} & {\Str\Top} \\
	{\PSmall(\Sch_\C)} & \Top.
	\arrow[from=1-1, to=1-2, "(-)^\an_{\PSh\Str}"]
	\arrow[from=1-1, to=2-1, "\Fgt_\str"]
	\arrow[from=1-2, to=2-2, "\Fgt_\str"']
	\arrow[from=2-1, to=2-2, "(-)^\an_{\PSh}"]
\end{tikzcd}\]

The last two squares commute, because the forgetful functors preserve colimits and the horizontal maps are defined as left Kan extensions.  
The same argument cannot be run for the first square, since the horizontal maps are right Kan extensions and the forgetful functors do not preserve limits in general.
\end{rem}

\subsection{Topological realizations over \texorpdfstring{$X^I$}{}}
		
We remark that, in contrast to the approach of \cite{E3}, we choose to ``realize'' $(\Gr_G,\mathfrak{s}),(\Gr_{G,X^I},\mathfrak{s}_I)$ (and $(\Gr_{G,\Ran(X)},\mathfrak{s}_\Ran)$ later in \Cref{Topological realizations part 2}) in the category $\Str\Top$, instead of viewing them as presheaves on $\Str\Top$. 
As we will see especially in \Cref{Topological realizations part 2}, this makes the proof of certain properties less trivial, and ultimately relying on categorical features of locally compact Hausdorff topological spaces.

\begin{rem}\label{analytification-of-GrBD}
Thanks to \Cref{theoremanalyti}, we can formally talk about the analytification in $\Str\Top$ of stratified schemes and presheaves, such as
\begin{enumerate}
\item $(\Gr_G^{(N)}, \mathfrak s^{(N)})^{\an}_{\Str,\lft} = (\Gr_G^{(N), \an }, \mathfrak s^{(N),\an})$, $\left( \Gr^{(N)}_{G,X^I},  \mathfrak{s}_I^{(N)} \right)^\an_{\Str,\lft} = \left( \Gr^{(N), \an}_{G,X^I},  \mathfrak{s}_I^{(N), \an} \right)$;
\item $(\LmG,\textup{triv})^\an_{\Str,\lft} = (\LmG^\an,\textup{triv})$, $(\LmG_{X^I}, \Inc_I)^\an_{\Str,\lft} = (\LmG_{X^I}^\an, \Inc_I)$. 
\end{enumerate}
Since $(-)^\an_{\Str,\lft}$ preserves finite limits, the group structure of $\LmG$ (respectively $\LmG_{X^I}$ over $(X^{I, \an}, \Inc_I)$) is preserved, making it an object of $\Grp(\Str\Top)$ (respectively $\Grp\left(\Str\Top/_{(X^{I,\an},\Inc_I)}\right)$).
Moreover $\forall\, m\geq m_N$ we have a stratified action 
    $$
    (\LmG^\an,\textup{triv}) \times (\Gr_G^{(N),\an}, \mathfrak{s}^{(N),\an})\to (\Gr_G^{(N), \an}, \mathfrak{s}^{(N),\an}),
    $$ 
and $\forall\, m\geq m_{N, I}$ we have a stratified action over $(X^{I, \an},\Inc_I)$ 
    $$
    (\LmG_{X^I}^\an,\Inc_I) \underset{(X^{I,\an},\Inc_I)}{\times} \left(\Gr^{(N), \an}_{G,X^I},  \mathfrak{s}_I^\an \right) \to \left(\Gr^{(N),\an}_{G,X^I},  \mathfrak{s}_I^\an  \right).
    $$
Furthermore, since $(-)^\an_{\PSh\Str, \lft}$ preserves small colimits, we have the following equalities in $\Str\Top$:
\begin{enumerate}
    \item $(\Gr_G, \mathfrak s)^{\an}_{\PSh\Str,\lft} = \underset{N\in \nn}{\colim}\ \left(  \Gr_G^{(N), \an}, \mathfrak s^{(N),\an} \right)$;
    \item $\left(\Gr_{G,X^I}, \mathfrak{s}_I \right)^{\an}_{\PSh\Str, \lft} = \underset{N\in \nn}{\colim} \left( \Gr^{(N)}_{G,X^I}, \mathfrak{s}_I^{(N)}\right)^\an$, in $\Str\Top$ over $(X^{I, \an}, \Inc_I)$;
\end{enumerate}
Similarly, by the definition of $(-)^\an_{\Str}$ and by the fact that it preserves arbitrary small limits, we have that
\begin{enumerate}
    \item $(\LpG,\textup{triv})^\an_{\Str} = \left( \underset{m\in \nn}{\lim} \LmG^\an, \textup{triv}\right)$, the group structure is preserved, making it an object of $\Grp(\Str\Top)$ and $\forall\, N$ we have a stratified action 
    $$
    (\LpG,\textup{triv})^\an_\Str \times (\Gr_G^{(N),\an}, \mathfrak{s}^{(N), \an})\to (\Gr_G^{(N), \an}, \mathfrak{s}^{(N), \an});
    $$ 
    \item  $(\LpG_{X^I},\Inc_I )^\an_\Str = \left(\underset{m\in \nn}{\lim} \LmG_{X^I}^\an,\Inc_I\right)$, the group structure over $(X^{I, \an}, \Inc_I)$ is preserved, making it into an object of $\Grp\left(\Str\Top/_{(X^{I,\an},\Inc_I)}\right)$ and $\forall\, N$ we have a stratified action over $(X^{I, \an},\Inc_I)$ 
    $$
    (\LpG_{X^I},\Inc_I)^\an_\Str \underset{(X^{I,\an},\Inc_I)}{\times} \left(\Gr^{(N), \an}_{G,X^I},  \mathfrak{s}_I^{(N),\an} \right) \to \left(\Gr^{(N),\an}_{G,X^I},  \mathfrak{s}_I^{(N),\an}  \right).
    $$
\end{enumerate}
\end{rem}

\begin{warning}\label{warning}
The reader may notice that the (relative) group actions of $(\LpG,\textup{triv})^\an_\Str$ and $(\LpG_{X^I}, \Inc_I)^\an$ on respectively $(\Gr_G,\mathfrak{s})^\an_{\PSh\Str,\lft}$ and $(\Gr_{G,X^I},\mathfrak{s}_I)^\an_{\PSh\Str,\lft}$ have been left out of the statement of \cref{analytification-of-GrBD}.
This is because universality of colimits fails in $\Str\Top$ (just like it fails in $\Top$): therefore passing to the colimit in $N$ may not commute with the pullback \textit{a priori}.
\end{warning}

The key fact that makes us overcome this issue is that our ``building blocks'', namely the $\LmG_{X^I}^\an, \Gr_{G,X^I}^{(N),\an}$'s, are locally compact Hausdorff topological spaces.
Indeed, let us recall the following result by Harpaz.

\begin{prop}[{\cite{Harpaz-colimits-finite-limits}}]\label{Har15}
Consider three $\N$-indexed diagrams of topological spaces $(X_i)_{i\in \N}$, $(Y_i)_{i\in \N}$, $(Z_i)_{i\in \N}$ whose transition maps are all closed embeddings
\[
 X_i\xhookrightarrow{} X_{i+1}, \quad  Y_i\xhookrightarrow{} Y_{i+1}, \quad Z_i\xhookrightarrow{} Z_{i+1}.
\]
Let $f_i:X_i\rightarrow Z_i$ and $g_i:Y_i\rightarrow Z_i, i\in \N$, be morphisms compatible with the transition maps. For every $i\in \N$, consider the cartesian square
\[
\begin{tikzcd}
    X_i \times_{Z_i} Y_i
    \arrow[d, "\pi_{X,i}"] \arrow[r, "\pi_{Y,i}"]
    &
    Y_i
    \arrow[d, "g_i"]
    \\
    X_i
    \arrow[r, "f_i"]
    \
    &
    Z_i.
\end{tikzcd}
\]
Assume that
\begin{enumerate}
    \item $X_i$'s, $Y_i$'s are locally compact and Hausdorff, and
    \item $Z_i$'s are Hausdorff.
\end{enumerate}
Then the natural map
$$
\colim_{i\in \N}(X_i\times_{Z_i}Y_i) \xrightarrow{ \underset{i\in \N}{\colim}(\pi_{X,i}) \times \underset{i\in \N}{\colim}(\pi_{Y,i})  } (\colim_{i\in N}X_i) \underset{\underset{i\in \N}{\colim} Z_i}{\times} (\colim_{i\in \N}Y_i)
$$
is an isomorphism.
\end{prop}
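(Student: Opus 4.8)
The plan is to reduce the statement to a purely point-set topological fact about filtered colimits of closed embeddings commuting with finite limits, and then to exploit the exponential law for locally compact Hausdorff spaces. First I would observe that a colimit of an $\N$-indexed sequence of closed embeddings is computed in $\Top$ as the union with the colimit topology, and that each $X_i$ sits as a closed subspace of $X_\infty := \colim_i X_i$; the same for $Y_\infty, Z_\infty$. The natural comparison map $\Phi\colon \colim_i(X_i\times_{Z_i}Y_i)\to X_\infty\times_{Z_\infty}Y_\infty$ is a continuous bijection on underlying sets (the bijectivity is elementary: a point of the target is a compatible pair $(x,y)$ with $x\in X_i$, $y\in Y_j$ mapping to the same point of $Z_\infty$, hence of some $Z_k$ with $k\ge i,j$, so it lifts to $X_k\times_{Z_k}Y_k$; injectivity is similar). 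So the whole content is that $\Phi$ is a homeomorphism, i.e.\ that the colimit topology on the left agrees with the subspace-of-product topology on the right.

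The key step, which I expect to be the main obstacle, is showing $\Phi$ is open (equivalently, that a set which is open in each $X_i\times_{Z_i}Y_i$ is open in the fiber product $X_\infty\times_{Z_\infty}Y_\infty$). The standard route is: (i) first treat the case $Z_i = \ast$, i.e.\ products, where one must show $\colim_i(X_i\times Y_i)\xrightarrow{\sim}(\colim_i X_i)\times(\colim_i Y_i)$; this is the classical fact that the product of two sequential colimits of closed embeddings of locally compact Hausdorff spaces is again such a colimit, provable because $-\times Y_i$ and then $X_\infty\times -$ preserve these colimits (a colimit-preservation property of $K\times(-)$ for $K$ locally compact Hausdorff, which is exactly the exponentiability of such $K$ in $\Top$, cf.\ the cartesian-closedness of compactly generated spaces). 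For the product step one uses that each $X_i$ is locally compact Hausdorff so that $X_i \times (-)$ preserves colimits, and passes to the colimit in two stages. (ii) Then deduce the general fiber-product case from the product case: $X_\infty\times_{Z_\infty}Y_\infty$ is a closed subspace of $X_\infty\times Y_\infty$ (here one uses that $Z_\infty$ is Hausdorff, so the diagonal of $Z_\infty$ is closed and its preimage under $(f_\infty,g_\infty)$ is closed), and likewise $X_i\times_{Z_i}Y_i$ is closed in $X_i\times Y_i$; moreover the transition maps $X_i\times_{Z_i}Y_i\hookrightarrow X_{i+1}\times_{Z_{i+1}}Y_{i+1}$ are closed embeddings. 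Since a colimit of closed embeddings restricted to a compatible system of closed subspaces is the corresponding closed subspace of the colimit (again because filtered colimits along closed embeddings are well-behaved and closed subspaces of Hausdorff spaces remain closed), the homeomorphism from step (i) restricts to the desired homeomorphism on fiber products.

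A cleaner packaging, which I would actually adopt, is to cite the category of compactly generated (weakly Hausdorff) spaces $\mathrm{CGWH}$: there $K\times(-)$ preserves all colimits for every $K$, and $\N$-indexed colimits along closed embeddings of locally compact Hausdorff spaces agree with the colimit taken in $\Top$; since all spaces in sight are locally compact Hausdorff (hence compactly generated) and the fiber products $X_i\times_{Z_i}Y_i$, being closed subspaces of $X_i\times Y_i$, are again compactly generated, the comparison map is an isomorphism already in $\mathrm{CGWH}$, and because the source and target are honest (locally compact) Hausdorff spaces — $X_\infty\times_{Z_\infty}Y_\infty$ is a closed subspace of $X_\infty\times Y_\infty$ which is locally compact Hausdorff, and a closed subspace of the $\Top$-colimit $X_\infty$ as needed — this isomorphism is the $\Top$-level homeomorphism asserted. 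The only genuinely delicate points to verify carefully are: that each $X_i \hookrightarrow X_\infty$ really is a \emph{closed} embedding (so that $X_\infty$ is Hausdorff and $X_i\times_{Z_i}Y_i\hookrightarrow X_\infty\times_{Z_\infty}Y_\infty$ is closed), which follows by induction on the closed transition maps together with the fact that a colimit of closed embeddings has each stage closed; and the preservation of the relevant colimits by the product functor, which is where local compactness is used essentially. I would present these in that order, flagging step (i) as the technical heart.
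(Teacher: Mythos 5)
The paper itself does not prove this proposition; it cites an unpublished note of Harpaz, so there is no internal argument to compare against. Your decomposition (reduce the fiber-product case to the product case by identifying $X_\infty\times_{Z_\infty}Y_\infty$ with a closed subspace of $X_\infty\times Y_\infty$, with the $X_i\times_{Z_i}Y_i$ a compatible system of closed subspaces whose colimit is that closed subspace) is a reasonable plan, and step (ii) is essentially correct.

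The gap is in step (i), the product case, which is the technical heart and is not actually proven by what you write. You assert that ``$-\times Y_i$ and then $X_\infty\times -$ preserve these colimits'' because locally compact Hausdorff spaces are exponentiable. The first half is fine, but $X_\infty=\colim_i X_i$ is typically \emph{not} locally compact: $\R^\infty=\colim_n\R^n$ is the standard counterexample, and in the paper's application $\Gr_G^{\an}=\colim_N\Gr_G^{(N),\an}$ is likewise not locally compact. So exponentiability of $X_\infty$ is unavailable, and asserting that $X_\infty\times-$ preserves this particular colimit is exactly what needs to be proved, hence circular. The ``cleaner packaging'' inherits the same error in sharper form: the phrase ``$X_\infty\times Y_\infty$, which is locally compact Hausdorff'' is simply false, so the CGWH reduction only yields $\colim_i(X_i\times Y_i) = X_\infty\times_k Y_\infty$ (the $k$-product), not the $\Top$-product, and the passage back to $\Top$ is left open. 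A correct treatment of the product case is a genuine point-set argument: given $U$ open in the colimit topology and $(x_0,y_0)\in U$, build nested compact neighborhoods $K_n\subset X_n$, $L_n\subset Y_n$ with $K_{n-1}\subset\mathrm{int}_{X_n}K_n$, $L_{n-1}\subset\mathrm{int}_{Y_n}L_n$, and $K_n\times L_n\subset U$ (each step using local compactness of $X_n,Y_n$ and the Wallace/tube lemma), and then observe that $\bigl(\bigcup_n\mathrm{int}_{X_n}(K_n)\bigr)\times\bigl(\bigcup_n\mathrm{int}_{Y_n}(L_n)\bigr)$ is a product-open neighborhood of $(x_0,y_0)$ contained in $U$. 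That argument (or an equivalent one) is what is missing from your sketch, and it is the only place where the local compactness hypotheses on $X_i,Y_i$ are used essentially.
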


Note that we cannot automatically extend this result to $\Str\Top$, because, unless trivial, $\Alex(P)$ of a poset $P$ is locally compact but not Hausdorff.
So let us consider a restrictive setting, which, nevertheless, will be enough for our discussion.

\begin{cor}\label{stratified-Harpaz}
Consider three $\N$-indexed diagrams of stratified topological spaces
\[
(Z_i, Q_i, t_i: Z_i \rightarrow \Alex(Q_i) )_{i\in \N}, \quad
(X_i, R_i, u_i: X_i \rightarrow \Alex(R_i) )_{i\in \N}, \quad
(Y_i, P_i, s_i: Y_i \rightarrow \Alex(P_i) )_{i\in \N},  
\]
together with compatible stratification-preserving maps $f_i:X_i\to Z_i$, $g_i:Y_i\to Z_i$.
Suppose that the underlying topological data $(X_i,Y_i,Z_i, f_i, g_i)_{i\in \N}$ satisfy the conditions of \Cref{Har15}.
Assume furthermore that $u_i\simeq t_i\circ f_i$ (in particular $R_i\to Q_i$ is an isomorphism of posets).
Then the induced stratified morphism
\[
\colim_{i\in \N} \left( (X_i, u_i) \underset{(Z_i, t_i)}{\times} (Y_i, s_i) \right) 
\xrightarrow{\underset{i\in \N}{\colim}(\pi_{X,i}) \times \underset{i\in \N}{\colim}(\pi_{Y,i})}
\left( \colim_{i\in \N} (X_i, u_i) \right) \underset{\underset{i\in \N}{\colim} (Z_i,t_i)}{\times}  \left(\colim_{i\in \N} (Y_i, s_i) \right)
\]
is an isomorphism in $\Str\Top$.
\end{cor}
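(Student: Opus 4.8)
The plan is to reduce the statement in $\Str\Top$ to the purely topological statement of \Cref{Har15} by separately controlling the underlying topological space and the underlying poset of both sides, using the explicit descriptions of limits and colimits in $\Str\Top$ recalled in \Cref{StrTopiscocomplete}. Concretely, a stratified space is a triple $(\text{space}, \text{poset}, \text{strat.\ map})$; since the map in question is built out of limits and colimits of such triples, it suffices to check it is an isomorphism on the space-level and on the poset-level, and that the two strat-maps agree under these identifications.

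First I would handle the poset component. Because the maps $f_i$ are stratification-preserving and we have assumed $u_i\simeq t_i\circ f_i$, each induced order-preserving map $R_i\to Q_i$ is an isomorphism, and likewise the strat-preserving maps $g_i$ give $P_i\to Q_i$. The underlying poset of the left-hand side is $\colim_{i} (R_i\times_{Q_i} P_i)$; the underlying poset of the right-hand side is $(\colim_i R_i)\times_{\colim_i Q_i}(\colim_i P_i)$. Using $R_i\xrightarrow{\sim}Q_i$, both fiber products of posets collapse to $\colim_i P_i$ (pullback along an isomorphism), and colimits of posets commute with this identification, so the poset map is an isomorphism. (One must note here that the colimit of the $Z_i$-level posets $Q_i$ is computed in $\Pos$, so there is no subtlety as in the topological case.)

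Next, the space component. By \Cref{StrTopiscocomplete}, the underlying space of a finite limit (here a fiber product) in $\Str\Top$ is the fiber product in $\Top$ — there is \emph{no} finer topology introduced, since the limit is finite — and the underlying space of a colimit in $\Str\Top$ is the colimit in $\Top$. Hence the underlying space of the left side is $\colim_{i\in\N}(X_i\times_{Z_i}Y_i)$ and of the right side is $(\colim_i X_i)\times_{\colim_i Z_i}(\colim_i Y_i)$, and the induced map is exactly the comparison map of \Cref{Har15}. Since by hypothesis $(X_i,Y_i,Z_i,f_i,g_i)$ satisfies the local-compactness and Hausdorffness assumptions of that proposition, this map is a homeomorphism.

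Finally I would check compatibility of the stratification maps: both composites $\colim_i(X_i\times_{Z_i}Y_i)\to \Alex(\colim_i(R_i\times_{Q_i}P_i))$ and the transported one through the identifications above agree, because they are induced from the same universal maps $u_i, s_i$ and the canonical comparison $\colim \Alex(-)\to \Alex(\colim -)$; this is a routine diagram chase using functoriality of $\Alex$ and of $\colim$. Having matched space, poset, and strat-map, the morphism is an isomorphism in $\Str\Top$. I expect the only genuine subtlety — the main "obstacle" — to be the bookkeeping showing that the finite fiber product in $\Str\Top$ really does carry the honest product/subspace topology (so that \Cref{Har15} applies verbatim) rather than something finer; this is precisely why the hypothesis $u_i\simeq t_i\circ f_i$ forcing $R_i\xrightarrow{\sim}Q_i$ is imposed, since it is what keeps us in the ``constant poset along one leg'' situation where \Cref{StrTopiscocomplete} guarantees the expected topology, and it is worth spelling out explicitly.
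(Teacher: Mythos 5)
Your proof follows essentially the same strategy as the paper's: split the comparison morphism into its topological and poset components, deduce the topological isomorphism from \Cref{Har15} (using the identifications of \Cref{StrTopiscocomplete} to read off the underlying spaces of the finite limits and colimits in $\Str\Top$), and deduce the poset isomorphism from the assumption $R_i\xrightarrow{\sim}Q_i$. The extra step you add on matching the stratification maps is not wrong, but is automatic: if the underlying continuous map and the underlying order-preserving map are both isomorphisms, then the pair is an isomorphism in $\Str\Top$ because the commutativity of the defining square passes to inverses.

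One correction to your concluding remarks: you attribute the hypothesis $u_i\simeq t_i\circ f_i$ (hence $R_i\xrightarrow{\sim}Q_i$) to ensuring that the fiber products carry the expected topology, invoking the ``constant poset'' clause of \Cref{StrTopiscocomplete}. This is not where the hypothesis is used. Each $(X_i,u_i)\times_{(Z_i,t_i)}(Y_i,s_i)$ is a \emph{finite} limit, so \Cref{StrTopiscocomplete} gives the expected underlying space unconditionally; likewise colimits in $\Str\Top$ always have the expected underlying space. The hypothesis is used purely on the \emph{poset} side: without $R_i\xrightarrow{\sim}Q_i$, there is no reason for the comparison map $\colim_i(R_i\times_{Q_i}P_i)\to(\colim_i R_i)\times_{\colim_i Q_i}(\colim_i P_i)$ in $\Pos$ to be an isomorphism, whereas the hypothesis collapses both sides to $\colim_i P_i$. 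Your main argument handles this correctly; it is only the meta-commentary that misattributes the role of the hypothesis.
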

\begin{proof}
    Each morphism $\pi_{X,i}$ is of the form $(\pi_{X,i}^\textup{top},\pi_{X,i}^\textup{pset})$. 
    Same for $\pi_{Y,i}$. 
    \Cref{Har15} tells us that $\underset{i\in \N}{\colim}(\pi_{X,i}^\textup{top}) \times \underset{i\in \N}{\colim}(\pi_{Y,i}^\textup{top})$ is an isomorphism.
    Since $R_i \xrightarrow{\sim} Q_i$, the map
    \[
    \underset{i\in \N}{\colim}(\pi_{X,i}^\textup{pset}) \times \underset{i\in \N}{\colim}(\pi_{Y,i}^\textup{pset}): \underset{i\in \N}{\colim} (R_i\times_{Q_i} P_i) 
    \rightarrow \underset{i\in \N}{\colim}(R_i) \underset{\underset{i\in \N}{\colim} Q_i}{\times}  \underset{i\in \N}{\colim} (P_i)
    \]
    is an isomorphism.
\end{proof}
 
\begin{rem}\label{Har15-and-an}
Consider now $\N$-indexed diagrams of stratified schemes locally of finite type over $\C$
\[
 (X_i,u_i)_{i\in \N}  \xrightarrow{f_i} (Z_i,t_i)_{i\in \N} \xleftarrow{g_i}  (Y_i,s_i)_{i\in \N},
\] 
where the transition maps in $i$ are closed embeddings, the $f_i, g_i$ are compatible with the three diagrams and such that $u_i = t_i\circ f_i$ for every $i\in \N$.
Then the family of diagrams
\[
 (X_i^\an,u_i^\an)_{i\in \N} \xrightarrow{f_i^\an} (Z_i^\an,t_i^\an)_{i\in \N} \xleftarrow{g_i^\an}  (Y_i^\an,s_i^\an)_{i\in \N}
\]
obtained by analytification satisfies the conditions of \Cref{stratified-Harpaz} and thus the colimit commutes with the fiber product.
\end{rem}

\begin{prop}\label{prop:analitificationofthelowerobjs}
The action of $(\LpG,\textup{triv})_\Str^\an$ defined in \Cref{analytification-of-GrBD} extends to a stratified action on $(\Gr_G,\mathfrak{s})^\an_{\PSh\Str,\lft}$ (thus, compatible with the actions at $N$-th level for every $N$).

Analogously, the action of $(\LpG_{X_I},\Inc_I)_\Str^\an$ defined in \Cref{analytification-of-GrBD} extends to a stratified action on $(\Gr_{G, X^I},\mathfrak{s})^\an_{\PSh\Str,\lft}$ over $(X^{I,\an},\Inc_I)$ (thus, compatible with the actions at the $N$-th level for every $N$).
\end{prop}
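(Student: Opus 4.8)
The plan is to extend the level-$N$ actions of \Cref{analytification-of-GrBD} to the colimit by gluing. Write $\Gr_G^\an:=(\Gr_G,\mathfrak s)^\an_{\PSh\Str,\lft}=\colim_N(\Gr_G^{(N),\an},\mathfrak s^{(N),\an})$, the colimit being taken along the closed embeddings $\iota_N$. Since the $\LpG$-action on $\Gr_G$ already exists at the level of (ind-)schemes, the analytified level-$N$ actions $a_N\colon(\LpG,\textup{triv})^\an_\Str\times(\Gr_G^{(N),\an},\mathfrak s^{(N),\an})\to(\Gr_G^{(N),\an},\mathfrak s^{(N),\an})$ are compatible with the $\iota_N$'s; composing with $\iota_N\colon\Gr_G^{(N),\an}\hookrightarrow\Gr_G^\an$ and using the universal property of the colimit in $\Str\Top$ yields a stratified continuous map
\[
\tilde a\colon\ \colim_N\left((\LpG,\textup{triv})^\an_\Str\times(\Gr_G^{(N),\an},\mathfrak s^{(N),\an})\right)\ \longrightarrow\ \Gr_G^\an .
\]
Hence everything reduces to proving that the canonical comparison map
\[
\colim_N\left((\LpG,\textup{triv})^\an_\Str\times(\Gr_G^{(N),\an},\mathfrak s^{(N),\an})\right)\ \longrightarrow\ (\LpG,\textup{triv})^\an_\Str\times\Gr_G^\an
\]
is an isomorphism in $\Str\Top$; granting this, $\tilde a$ is the sought action, and the unit and associativity axioms follow by applying the same interchange to the two- and threefold products $(\LpG,\textup{triv})^\an_\Str\times(\LpG,\textup{triv})^\an_\Str\times(\Gr_G^{(N),\an},\mathfrak s^{(N),\an})$. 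The Beilinson--Drinfeld case is formally identical with products replaced by fiber products over $(X^{I,\an},\Inc_I)$.

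To prove the displayed interchange, I would first fix $m\in\N$ and apply \Cref{Har15-and-an} together with \Cref{stratified-Harpaz} to the $\N$-indexed diagram of stratified schemes locally of finite type $N\mapsto(\LmG\times\Gr_G^{(N)},\,\textup{triv}\times\mathfrak s^{(N)})$, whose transition maps $\id\times\iota_N$ are closed embeddings and which maps to the constant diagram $\Spec\C$: the hypotheses hold because $\LmG^\an$ is the analytification of an affine scheme of finite type, hence locally compact Hausdorff, $\Gr_G^{(N),\an}$ is the analytification of a projective variety (the affine Grassmannian of a reductive group is ind-projective), hence compact Hausdorff, and the stratification of the product is pulled back from the $\Gr_G^{(N)}$-factor, so that the condition $u_i=t_i\circ f_i$ holds with $R_i\to Q_i$ a point. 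This gives $\colim_N\left(\LmG^\an\times(\Gr_G^{(N),\an},\mathfrak s^{(N),\an})\right)\simeq\LmG^\an\times\Gr_G^\an$ in $\Str\Top$ for each $m$. Taking the limit over $m$, and using that $(-)^\an_\Str$ preserves small limits and that $-\times\Gr_G^\an$ preserves limits, the right-hand sides assemble to $(\LpG,\textup{triv})^\an_\Str\times\Gr_G^\an$; so it only remains to commute the cofiltered limit over $m$ with the sequential colimit over $N$, that is, to identify $\lim_m\colim_N\left(\LmG^\an\times\Gr_G^{(N),\an}\right)$ with $\colim_N\lim_m\left(\LmG^\an\times\Gr_G^{(N),\an}\right)=\colim_N\left((\LpG,\textup{triv})^\an_\Str\times\Gr_G^{(N),\an}\right)$.

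This interchange is where the real work lies, and it is precisely what \Cref{warning} warns about: it is \emph{not} formal, because $(\LpG,\textup{triv})^\an_\Str=\lim_m\LmG^\an$ is not locally compact, so it cannot be fed into \Cref{Har15} as one of the legs, and one has to use the geometry. The decisive input is compact generation: $\Gr_G^{(N),\an}$ is compact metrizable, so $\Gr_G^\an$ is a $k_\omega$-space, while $(\LpG,\textup{triv})^\an_\Str$, being a countable limit of metrizable spaces, is metrizable, and consequently the product $(\LpG,\textup{triv})^\an_\Str\times\Gr_G^\an$ is compactly generated. One then checks that the canonical continuous bijection $\colim_N\left((\LpG,\textup{triv})^\an_\Str\times\Gr_G^{(N),\an}\right)\to(\LpG,\textup{triv})^\an_\Str\times\Gr_G^\an$ restricts to a homeomorphism on every compact subset $K$ of the target: the image of $K$ under the second projection is compact, hence contained in some $\Gr_G^{(N),\an}$, so $K$ lies inside the closed subspace $(\LpG,\textup{triv})^\an_\Str\times\Gr_G^{(N),\an}$, on which the two topologies agree and on which $\tilde a$ restricts to the continuous level-$N$ action $a_N$. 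Since the target is compactly generated this forces the comparison to be a homeomorphism; the stratifying posets coincide automatically, since every $\Gr_G^{(N),\an}$ carries the same (pulled-back) Schubert poset, as recorded by \Cref{stratified-Harpaz}. I expect this compact-generation step — together with pinning down that the product in $\Str\Top$ really does carry the plain product topology (a constant-poset finite limit, by \Cref{StrTopiscocomplete}) — to be the main technical obstacle.

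For the Beilinson--Drinfeld statement one repeats the argument relatively over the Hausdorff base $(X^{I,\an},\Inc_I)$: products become fiber products over $X^{I,\an}$, and \Cref{Har15-and-an} and \Cref{stratified-Harpaz} are applied with $Z_i=(X^I,\Inc_I)$ — legitimate because the $\Gr_{G,X^I}^{(N)}$ are proper over $X^I$ and because the $\Inc_I$-stratification on $\LmG_{X^I}$ and on the fiber product is pulled back from $X^I$, so that $u_i=t_i\circ f_i$ holds with $R_i\xrightarrow{\sim}Q_i$. The $\lim_m$/$\colim_N$ interchange is then carried out exactly as above, using that $\Gr_{G,X^I}^\an\to X^{I,\an}$ is $k_\omega$ over the (metrizable) manifold $X^{I,\an}$ and that $\LpG_{X^I}^\an$ is metrizable, so that the relative product is again compactly generated and every compact subset of it meets only finitely many Schubert levels. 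This produces the extended stratified action of $(\LpG_{X^I},\Inc_I)^\an_\Str$ on $(\Gr_{G,X^I},\mathfrak s)^\an_{\PSh\Str,\lft}$ over $(X^{I,\an},\Inc_I)$, compatible with the level-$N$ actions, as required.
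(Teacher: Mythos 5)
Your reduction of the statement to the assertion that the comparison map $\colim_N\bigl(\LpG^{\an}\times\Gr_G^{(N),\an}\bigr)\to\LpG^{\an}\times\Gr_G^{\an}$ is a homeomorphism is the right frame, and your observation that $\LpG^{\an}=\lim_m\LmG^{\an}$ is \emph{not} locally compact, so that it cannot be used as a leg in \cref{Har15}, is correct --- indeed this is precisely what the paper's own proof does, justified by the claim that limits of locally compact Hausdorff spaces are locally compact Hausdorff, which fails for this tower (every neighborhood of the identity contains a congruence kernel $K_m$, a closed non-compact subgroup). The genuine gap is in the step you use to close this: the assertion that the product of a metrizable space with a $k_\omega$-space is compactly generated is false in general. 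It holds when one factor is locally compact, but here \emph{both} $\LpG^{\an}$ and $\Gr_G^{\an}$ fail local compactness, and in that regime such products are typically not $k$-spaces (the classical example is $\mathbb{Q}\times S_\omega$ with $S_\omega$ the sequential fan, and $\Gr_G^{\an}$ contains closed copies of $S_\omega$). Your compact-subset argument (compacts of the target land in some $\LpG^{\an}\times\Gr_G^{(N),\an}$, where the two topologies agree) is correct, but it shows that compact generation of $\LpG^{\an}\times\Gr_G^{\an}$ is \emph{equivalent} to the interchange you want; so the proposal assumes exactly what has to be proved.

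Moreover the obstruction is real, not an artifact of the estimate, so no sharper point-set argument with these topologies can close it. Take $G=\mathrm{SL}_2$ and write points of $\Gr_G^{\an}$ as cosets of $\mathrm{SL}_2(\C\taylor)$. Every neighborhood of $e$ in $\LpG^{\an}$ contains some $K_m=\{g\equiv 1 \bmod t^m\}$; the points $y_\epsilon=u^+(\epsilon t^{-m-1})$ converge to the base point $y_0$ in $\Gr_G^{(m+1),\an}$ as $\epsilon\to0$, while $g_\epsilon=u^-(\epsilon^{-1}t^m)\in K_m$, and the identity $u^-(y)u^+(x)=u^+\bigl(x(1+xy)^{-1}\bigr)h\bigl((1+xy)^{-1}\bigr)u^-\bigl(y(1+xy)^{-1}\bigr)$ gives $g_\epsilon\cdot y_\epsilon=u^+\bigl(\epsilon t^{-m}(t+1)^{-1}\bigr)\,\alpha^\vee(t)\ \longrightarrow\ \alpha^\vee(t)\neq y_0$, a limit independent of $m$. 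Hence the image of \emph{every} product-open neighborhood of $(e,y_0)$ meets a fixed open set around $\alpha^\vee(t)$ disjoint from a chosen open $V\ni y_0$, so the set-level action is not jointly continuous for the honest product topology; correspondingly the comparison map is a continuous bijection but not a homeomorphism, and $\LpG^{\an}\times\Gr_G^{\an}$ is not compactly generated. Any repair must change the framework rather than the argument --- e.g. form products in compactly generated (stratified) spaces, or retain the action only as the compatible system of level-$N$ actions, i.e.\ as a map out of $\colim_N\bigl(\LpG^{\an}\times\Gr_G^{(N),\an}\bigr)$ --- and the same caveat applies to the paper's one-line appeal to \cref{stratified-Harpaz} at this point, which you were right to distrust but have not replaced by a valid argument.
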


\begin{proof}	
Consider the stratified actions
\begin{equation}\label{(N)-actions}
\begin{gathered}
(\LpG,\textup{triv})^\an_\Str \times (\Gr_G^{(N)}, \mathfrak{s}^{(N)})^\an_{\Str,\lft}\to (\Gr_G^{(N)}, \mathfrak{s}^{(N)})^\an_{\Str,\lft},
\\
(\LpG_{X^I},\Inc_I)^\an_\Str \times (\Gr_{G,X^I}^{(N)}, \mathfrak{s}_I^{(N)})^\an_{\Str,\lft}\to (\Gr_{G,X^I}^{(N)}, \mathfrak{s}_I^{(N)})^\an_{\Str,\lft}.
\end{gathered}\end{equation}
Notice that each $\Gr_{G}^{(N), \an}$, resp. $\Gr_{G,X^I}^{(N), \an}$, is locally compact Hausdorff, being the analytification of a projective variety, resp. a projective variety over $X^I$. 
The same holds for $\LmG^\an, \LmG_{X^I}^\an$, and hence for $\LpG^\an, \LpG_{X^I}^\an$, since limits of locally compact Hausdorff spaces are locally compact Hausdorff.

To get the wanted actions on $(\Gr_G, \mathfrak s)_{\PSh\Str, \lft}^\an, (\Gr_{G,X^I},\mathfrak s_I)^\an_{\PSh\Str,\lft}$, it suffices to apply \cref{stratified-Harpaz} and pass to the colimit in $N$ in the expressions \eqref{(N)-actions}.
\end{proof}

\section{The Beilinson-Drinfeld Grassmannian over the Ran space}\label{section-Ran-Grassmannian}

\subsection{Stratification of the Ran Grassmannian}

In Appendix \ref{recollection} we recall definitions and properties of the Beilinson--Drinfeld Grassmannians $\Gr_{G, X^I}$'s relevant for the present work: in particular we see how they carry a stratification, $(\Gr_{G, X^I},\mathfrak{s}_I)$, see \Cref{BD-stratification}. 
In this section we will recall how to combine them into one stratified small presheaf. 
We also provide a topological realization with the complex-analytic topology.

Let us start by putting together the different $X^I$'s.
	
\begin{defin}[{\cite[Definition 3.3.1]{Zhu}}]
The \emph{Ran presheaf} of $X$ is the functor of unordered non-empty finite sets of \emph{distinct} points on $X$.
Precisely, it is defined as
\begin{gather*}
    \Ran(X): \textup{Aff}_{\mathbb{C}}^{\textup{op}}
    \to \sets, 
    \\
    \Spec R \ \mapsto\ \{ \underline{x}=\{x_1,\dots , x_k\} \subset X(R) \mbox{ non-empty and finite}\}.
\end{gather*}
This is what is called $\Ran^u(X)$ in \cite[Definition 2.4.2]{Gaitsgory-Lurie}.
\end{defin}
Let $\Delta_\phi$ the diagonal embedding associated to a surjective map $\phi: I\twoheadrightarrow J$ (see \Cref{BD-stratification}).
	
\begin{lem}\label{Ran-is-colimit}
We have an isomorphism of $\PSh(\Aff)$ 
$$
\Ran(X)\simeq \underset{I\in \fs^{\op}}{\colim} X^I
$$ 
where the transition maps are the $\Delta_{\phi}$'s and the colimit is taken in $\PSh(\Aff_\C)$. In particular, $\Ran(X)$ is an element of $\PSmall(\Aff_\C)$.
\end{lem}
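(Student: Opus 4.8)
The plan is to reduce the statement to an elementary fact about sets, using that colimits in the presheaf category $\PSh(\Aff_\C)$ are computed objectwise. First I would fix an affine scheme $\Spec R$ and set $S := X(R)$. Since $X^I(R) = \Map(I,S)$ and the transition map $\Delta_\phi$ attached to a surjection $\phi\colon I\twoheadrightarrow J$ acts on $R$-points by precomposition $g\mapsto g\circ\phi$ (so that an automorphism $\sigma$ of $I$ acts by reindexing), the claim to prove becomes: the map
\[
\colim_{I\in\fs^\op}\Map(I,S)\ \longrightarrow\ \{\text{non-empty finite subsets of }S\},\qquad [g\colon I\to S]\ \longmapsto\ g(I),
\]
is a bijection, functorially in $R$; the target is exactly $\Ran(X)(\Spec R)$ by definition of the Ran presheaf.

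Next I would check this map is well defined and bijective. It respects the equivalence relation defining the colimit since $(g\circ\phi)(I) = g(J)$ whenever $\phi$ is surjective. An inverse is given by sending a non-empty finite $T\subseteq S$ to the class of the inclusion $\iota_T\colon T\hookrightarrow S$. That the composite $T\mapsto[\iota_T]\mapsto T$ is the identity, and that the displayed map is surjective, are immediate. The only substantive step is that $[g]\mapsto g(I)\mapsto[\iota_{g(I)}]$ equals $[g]$: here I would set $T := g(I)$ and factor $g$ as $g = \iota_T\circ\bar g$ with $\bar g\colon I\twoheadrightarrow T$ surjective; since $\bar g$ is a morphism of $\fs$, equivalently a morphism $T\to I$ of $\fs^\op$, the functor $\Map(-,S)$ sends $\iota_T\in\Map(T,S)$ to $\iota_T\circ\bar g = g$, so $[g] = [\iota_T]$ in the colimit. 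Thus every colimit class has the canonical representative $\iota_{g(I)}$, and classes attached to different subsets are distinguished by the image map, giving the bijection.

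Then I would verify naturality: a homomorphism $R\to R'$ induces $S\to S'$, under which the left-hand side transforms by postcomposition and the right-hand side by taking the set-theoretic image, and these are compatible; hence the bijections assemble into an isomorphism of presheaves $\Ran(X)\simeq\colim_{I\in\fs^\op}X^I$. For the final clause I would note that $\fs^\op$ is (essentially) small, that each $X^I$ — being a scheme — is itself a small presheaf on $\Aff_\C$ (a scheme is a small colimit of its affine opens), and that a small colimit of small presheaves is small, so $\Ran(X)\in\PSmall(\Aff_\C)$.

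I do not anticipate a genuine obstacle: although the indexing category $\fs^\op$ is not filtered, the colimit is computed here entirely by hand. The only points demanding care are the bookkeeping of variance — that a surjection $\phi\colon I\twoheadrightarrow J$ in $\fs$ is a morphism $J\to I$ in $\fs^\op$ and that $X^{(-)}$ sends it to $\Delta_\phi\colon X^J\to X^I$ acting by precomposition — and keeping track of the fact that the automorphisms present in $\fs$ are precisely what turns the colimit into \emph{unordered} tuples.
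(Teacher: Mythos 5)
Your proof is correct and reaches the same conclusion via the same basic unordering map $[g\colon I\to S]\mapsto g(I)$, but the mechanism by which you establish bijectivity differs from the paper's. The paper proves surjectivity directly and then attacks injectivity head-on: given $x_I\in X^I(R)$ and $y_{I'}\in X^{I'}(R)$ with the same image in $\Ran(X)(R)$, it fixes an enumeration $J$ of that image, produces surjections $\psi\colon I\twoheadrightarrow J$ and $\psi'\colon I'\twoheadrightarrow J$, and forms the fiber product $I\times_J I'$ so that both $x_I$ and $y_{I'}$ push forward under $\Delta_{p_1}$, $\Delta_{p_2}$ to one and the same point $z\in X^{I\times_J I'}(R)$ — a common ``refinement'' identifying them in the colimit. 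You instead exhibit a two-sided inverse $T\mapsto[\iota_T]$, with the key observation that every class $[g]$ has the canonical representative $[\iota_{g(I)}]$, via the epi-mono factorization $g=\iota_T\circ\bar g$ and the single transition map $\Delta_{\bar g}$. Your route is a bit cleaner conceptually (it shows the colimit is computed by a canonical section rather than handling an arbitrary pair of representatives), while the paper's common-refinement argument generalizes more readily to situations without such a canonical factorization; both rely on the same small amount of bookkeeping, including an implicit choice of a set in $\fs$ bijective to the image (your $T$, the paper's ordered $J$). The naturality check and the smallness argument at the end agree with the paper's.
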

	
\begin{proof}
Fix $I\in \fs$.
Consider the \emph{unordering functor}
$$
\mathcal{U}_I: X^I \to \Ran(X),\quad \quad x_I=(x_1,\dots, x_{|I|})\mapsto \{ x_1',\dots, x_k' \}
$$
where we forget the order of the $x_i$'s and we do not repeat maps that are equal (so $k$ is the number of different maps in $x_I$). 
Notice that for any $J,\phi:I\twoheadrightarrow J$, we have $\mathcal{U}_J=\mathcal{U}_I\circ \Delta_\phi$. 
Hence we get a well-defined surjective map
$$
\mathcal{U}: \underset{I \in  \fs^{\op} }{\colim} X^I \to \Ran(X).  
$$
Let us check that it is injective as well.
Suppose that $x_I\in X^I$ and $y_{I'}\in X^{I'}$ are sent to the same $\{x_1',\dots,x_k'\}$.
Fix an order on $\{x_1',\dots,x_k'\}$: $(x_{1}',\dots,x_{|J|}')$ where $J$ has cardinality $k$. 
Define
\begin{align*}
    \psi: I\twoheadrightarrow J,\quad \quad & \psi(i)=j \iff  x_i= x_{j}'
    \\
    \psi':I'\twoheadrightarrow J,\quad \quad & \psi'(i')=j \iff  y_{i'}= x_{j}'.
\end{align*}
Consider now the fiber product $I\times_J I'$
\[
\begin{tikzcd}
    I\times_J I'
    \arrow[r, "{p_2}"] \arrow[d, "{p_1}"']
    &
    I'
    \arrow[d, "{\psi'}"]
    \\
    I
    \arrow[r, "{\psi}"']
    &
    J
\end{tikzcd}
\]
and the element $z_{I\times_J I'}$ in $X^{I\times_J I'}$ defined as $z_{(i,i')}=x_i=y_{i'}$: then $\Delta_{p_1}(x_I)=\Delta_{p_2}(y_{I'})=z_{I\times_J I'}$, making $x_I$ and $y_{I'}$ the same element in the colimit.
This proves that $\mathcal{U}$ is an isomorphism in $\PSh(\Aff_\C)$.
Finally, $X^I$ coincides with the small colimit of all its affine open subschemes.
Since composition of small colimits is small, we have that $\Ran(X)$ is actually an element of $\PSmall(\Aff_\C)$.
\end{proof}
	
\begin{defin}[{\cite[Definition 3.3.2]{Zhu}}]\label{defin-Ran-Grassmannian} 
The \emph{Ran Grassmannian}
associated to $G$ and $X$ is the presheaf
\begin{gather*}
    \Gr_{G, \Ran(X)}:\Aff^\op_\C\to \sets,
    \\
    \Spec R
    \mapsto 
    \{(\underline{x}, \cF, \alpha):
    \underline{x} \in \Ran (X)(R), 
    \cF\in \Bun_G(X_R), 
    \alpha: \cF|_{X_R\setminus \Gamma_{\underline{x}}}\triv\cT_{G,X_R\setminus \Gamma_{\underline{x}}} \} / \sim 
\end{gather*}
(where the equivalence relation is the analogous of the one for $\Gr_{G, X^I}$, see \Cref{BD-Grassmannian}, and $\Gamma_{\underline{x}}$ is the union of the graphs, see \Cref{graphs-of-points}).
On morphisms, $\Gr_{G,\Ran(X)}$ sends 
$$
[\Spec S \overset{f}{\to} \Spec R] \mapsto 
\bigg[[(\underline{x},\cF, \alpha)]
\mapsto
[(\underline{x}\circ f, (\id\times f)^*\cF, (\id\times f)^*\alpha)]\bigg].
$$
\end{defin}

Our definition aligns with \cite{Zhu}'s and \cite{James}'s. 
A groupoid-valued version, $\Ran^u_G(X)$, is considered in \cite[Definition 3.2.3]{Gaitsgory-Lurie}: if $\pi_0$ denotes the functor 
$$
\Fun(\Aff_\C^\op,\textup{Grpd})\to \Fun(\Aff_\C^\op, \sets)
$$ induced by $\pi_0:\textup{Grpd}\to \sets$, then 
$$
\Gr_{G,\Ran(X)}\simeq \pi_0\Ran^u_G(X).
$$

\begin{defin}
Define $\delta_\phi:\Gr_{G,X^J}\to \Gr_{G,X^I}$ to be the morphism 
\begin{gather*}
    (x_J',\cF,\alpha)
    \mapsto (\Delta_{\phi}(x_J'),\cF,\alpha).
\end{gather*} 
Note that this definition is well posed since $\Gamma_{x_J}=\Gamma_{\Delta_{\phi}(x_J')}$ as closed topological subspaces of $X_R$.
\end{defin}

\begin{lem}\label{Ran-Gr-is-colim}
For each $I\in \fs$, $\Gr_{G,X^I}$ coincides with the pullback (taken in $\PSh(\Aff_\C)$) 
$$X^I\times_{\Ran(X)}\Gr_{G,\Ran(X)},$$ 
where the map $X^I\to \Ran(X)$ is $\cU_I$.
Moreover, there is an isomorphism of presheaves in $\PSh(\Aff_\C)$
$$
\Gr_{G,\Ran(X)}\simeq \underset{I\in\fs^{\op}}{\colim} \Gr_{G,X^I}\simeq \underset{I\in\fs^{\op}, N\geq 0}{\colim} \Gr_{G,X^I}^{(N)}
$$ 
where the transition maps in the variable $I$ are the $\delta_\phi$'s and the colimits are taken in $\PSh(\Aff_\C)$.
In particular, $\Gr_{G,\Ran(X)}$ is an element of $\PSmall(\Aff_\C)$.
\end{lem}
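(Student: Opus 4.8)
\emph{Overview.} I would treat the three assertions in turn; the second is a direct adaptation of the proof of \Cref{Ran-is-colimit}, and the first is essentially a manipulation of the moduli descriptions.

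\emph{The pullback identification.} I would argue directly on $R$-points. For a $\C$-algebra $R$, an $R$-point of the pullback $X^I\times_{\Ran(X)}\Gr_{G,\Ran(X)}$ along $\cU_I$ is a pair $(x_I,(\underline x,\cF,\alpha))$ with $x_I\in X^I(R)$, $(\underline x,\cF,\alpha)\in\Gr_{G,\Ran(X)}(R)$, and $\cU_I(x_I)=\underline x$. Since $\underline x$ is \emph{determined} by $x_I$, such a datum is nothing but a triple $(x_I,\cF,\alpha)$ with $\cF\in\Bun_G(X_R)$ and $\alpha$ a trivialization of $\cF$ on $X_R\setminus\Gamma_{\cU_I(x_I)}$. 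As subsets of $X_R$ the loci $\Gamma_{\cU_I(x_I)}$ and $\Gamma_{x_I}$ coincide (both are the union of the graphs of the $x_i$), so $X_R\setminus\Gamma_{\cU_I(x_I)}=X_R\setminus\Gamma_{x_I}$ as open subschemes; hence, modulo the same equivalence relation as in \Cref{BD-Grassmannian}, this datum is exactly an $R$-point of $\Gr_{G,X^I}$. This bijection is natural in $R$, giving the claimed isomorphism in $\PSh(\Aff_\C)$.

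\emph{The colimit presentation.} The maps $\Gr_{G,X^I}\to\Gr_{G,\Ran(X)}$, $(x_I,\cF,\alpha)\mapsto(\cU_I(x_I),\cF,\alpha)$, are compatible with the $\delta_\phi$ (because $\cU_J=\cU_I\circ\Delta_\phi$ and $\delta_\phi$ leaves $(\cF,\alpha)$ untouched), so they assemble into a morphism $\eta:\colim_{I\in\fs^{\op}}\Gr_{G,X^I}\to\Gr_{G,\Ran(X)}$ in $\PSh(\Aff_\C)$, which I would check is an isomorphism objectwise on $\Aff_\C$. For surjectivity, given $(\underline x,\cF,\alpha)\in\Gr_{G,\Ran(X)}(R)$ with $\underline x=\{x_1,\dots,x_k\}$, set $I=\{1,\dots,k\}$ and $x_I=(x_1,\dots,x_k)$; then $(x_I,\cF,\alpha)\mapsto(\underline x,\cF,\alpha)$. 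For injectivity I would mimic \Cref{Ran-is-colimit}: if $(x_I,\cF,\alpha)$ and $(y_{I'},\cF',\alpha')$ have the same image, then $\cU_I(x_I)=\cU_{I'}(y_{I'})=:\underline x$ and $(\cF,\alpha)\sim(\cF',\alpha')$ over $X_R\setminus\Gamma_{\underline x}$; fixing an order on $\underline x$ (with $|J|$ the number of distinct points) produces surjections $\psi:I\twoheadrightarrow J$, $\psi':I'\twoheadrightarrow J$, and the element $z\in X^{I\times_J I'}(R)$ with $z_{(i,i')}=x_i=y_{i'}$ satisfies $\delta_{p_1}(x_I,\cF,\alpha)=(z,\cF,\alpha)$ and $\delta_{p_2}(y_{I'},\cF',\alpha')=(z,\cF',\alpha')$. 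Since $\Gamma_z$ and $\Gamma_{\underline x}$ have the same underlying subset, the equivalence $(\cF,\alpha)\sim(\cF',\alpha')$ still holds over $X_R\setminus\Gamma_z$, so these two elements of $\Gr_{G,X^{I\times_J I'}}(R)$ agree, identifying $(x_I,\cF,\alpha)$ and $(y_{I'},\cF',\alpha')$ in the colimit. Hence $\eta$ is an isomorphism.

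\emph{The refined presentation and smallness.} Each $\Gr_{G,X^I}$ is the ind-scheme $\colim_{N\geq0}\Gr_{G,X^I}^{(N)}$ (Appendix~\ref{recollection}), and the $\delta_\phi$ respect these exhaustions, so interchanging colimits together with a routine cofinality argument gives $\Gr_{G,\Ran(X)}\simeq\colim_{I\in\fs^{\op},\,N\geq0}\Gr_{G,X^I}^{(N)}$. Finally each $\Gr_{G,X^I}^{(N)}$ is a scheme locally of finite type, hence a small colimit of affine schemes, and the indexing category $\fs^{\op}\times\N$ is essentially small; since a composition of small colimits is small, $\Gr_{G,\Ran(X)}\in\PSmall(\Aff_\C)$, just as at the end of the proof of \Cref{Ran-is-colimit}. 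The only points that need a little care are the bookkeeping of the equivalence relation in the injectivity step (which is exactly why the set-level identity $\Gamma_z=\Gamma_{\underline x}$ is used) and the compatibility of the $\delta_\phi$ with the $N$-exhaustions in the last step; neither presents a genuine obstacle.
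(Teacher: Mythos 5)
Your proof is correct, but for the colimit presentation it takes a genuinely different (and more labor-intensive) route than the paper. The pullback identification is handled essentially as in the paper: you just spell out the point the paper summarizes in one line, namely that the open $X_R\setminus\Gamma_{x_I}$ (hence the moduli problem) depends only on $\cU_I(x_I)$. For the colimit, however, the paper does \emph{not} re-run the objectwise surjectivity/injectivity argument of \Cref{Ran-is-colimit}. Instead it leverages the pullback identification just proved together with the fact that colimits in the presheaf topos $\PSh(\Aff_\C)$ are universal (stable under base change):
\[
\underset{I\in \fs^\op}{\colim}\ \Gr_{G,X^I}
\simeq
\underset{I\in \fs^\op}{\colim}\left( X^I\times_{\Ran(X)}\Gr_{G,\Ran(X)}\right)
\simeq
\left(\underset{I\in \fs^\op}{\colim}\ X^I\right)\times_{\Ran(X)}\Gr_{G,\Ran(X)}
\simeq \Gr_{G,\Ran(X)},
\]
where the last step is \Cref{Ran-is-colimit}. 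This is cleaner because all the combinatorial bookkeeping (choosing orders, forming $I\times_J I'$, matching equivalence classes of $(\cF,\alpha)$) is quarantined in \Cref{Ran-is-colimit} and never repeated. Your direct approach works — the key observation that $\Gamma_z$ and $\Gamma_{\underline x}$ share the same underlying closed subset is exactly what makes the injectivity step go through — but it duplicates the argument of \Cref{Ran-is-colimit} verbatim with the extra bundle data carried along, which universality of colimits lets you avoid entirely. The final refinement to the double colimit over $(I,N)$ and the smallness conclusion are handled the same way in both proofs.
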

\begin{proof}
The first part follows directly from the definition, since for any $x_I\in X^I(R)$, $\Gamma_{x_I}$ only depends on $\cU_I(x_I)$.
By universality of small colimits, we get
$$
\underset{I\in \fs^\op}{\colim}  \Gr_{G,X^I}\simeq \underset{I\in \fs^\op}{\colim} \left( {X^I}\times_{\Ran(X)}\Gr_{G,\Ran(X)} \right) \simeq \left(\underset{I\in \fs^\op}{\colim} X^I \right)\times_{\Ran(X)}\Gr_{G,\Ran(X)}
$$
which is isomorphic to $\Gr_{G,\Ran(X)}$ by \Cref{Ran-is-colimit}.
As observed at the end of the proof of \Cref{Ran-is-colimit}, both $\Gr_{G, X^I}^{(N)}$'s and $\Gr_{G, X^I}$ can be viewed as small presheaves because they are ind-schemes.
Thus so is $\Gr_{G,\Ran(X)}$.
\end{proof}

\begin{rem}\label{Ran-from-Aff-to-Sch}\label{GrRan-from-Aff-to-Sch}
Consider the left Kan extension $\sigma$
\[
\begin{tikzcd}
    \Aff_\C
    \arrow[r, hook]
    \arrow[d, "{\yo}", hook]
    &
    \Sch_\C 
    \arrow[r, hook, "{\yo}"]
    &
    \PSmall(\Sch_\C).
    \\
    \PSmall(\Aff_\C)
    \arrow[rru, dashed, "{\sigma}"']
\end{tikzcd}
\]
Unlike the analogous functor between categories of sheaves, this functor is not an equivalence.
Nevertheless, it preserves colimits and it is left adjoint to the restriction morphism $\PSmall(\Sch_\C) \to \PSmall(\Aff_\C)$.
In particular, 
\begin{align*}
\sigma(\Ran(X))\simeq & \ \underset{I\in \fs^{\op}}{\colim}^{\hspace{-0.4cm}\PSh(\Sch_\C)} X^I,
\\
\sigma(\Gr_{G,\Ran(X)})\simeq & \ \underset{I\in\fs^{\op}}{\colim}^{\hspace{-0.4cm}\PSh(\Sch_\C)} \sigma(\Gr_{G,X^I}) \simeq \ \underset{I\in\fs^{\op}, N\geq 0}{\colim}^{\hspace{-0.7cm}\PSh(\Sch_\C)} \Gr_{G,X^I}^{(N)}.
\end{align*}
Similarly, since $X$ and $\Gr_{G,X^I}^{(N)}$ are locally of finite type over $\C$, the objects $\sigma(\Ran(X))$ and $\sigma(\Gr_{G,X^I})$ lie in the essential image of $\lambda$ (defined \Cref{embedding-strschlft-strsch}).
And so does $\sigma(\Gr_{G,\Ran(X)})$.
\end{rem}  

\begin{notation}
    Later it will be useful to identify $\Ran(X), \Gr_{G,X^I}$ and $\Gr_{G,\Ran(X)}$ with their images under $\sigma$ in $\PSmall(\Sch^\lft_\C)$. 
    Hence, from now on, we will see $\Ran(X)$, $\Gr_{G,X^I}$ and $\Gr_{G,\Ran(X)}$ as objects of $\PSmall(\Sch^\lft_\C)$. 
\end{notation}

\begin{lem}\label{lemma:deltaphistratified}
    The maps $\delta_\phi$'s from \cref{defin-Ran-Grassmannian} respect the stratification in \Cref{BD-stratification}, making
    \[
    \delta_\phi: \left(\Gr_{G,X^J}, \mathfrak{s}_J\right) \to 
    \left(\Gr_{G,X^I}, \mathfrak{s}_I\right)
    \]
    into a map of stratified small presheaves locally of finite type over $\C$ (so in $\PSmall(\Str\Sch^\lft_\C)$).
\end{lem}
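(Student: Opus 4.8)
The plan is to reduce the claim to a levelwise statement about honest stratified schemes and then verify strata‑compatibility by direct inspection, exploiting the fact that $\delta_\phi$ only moves the base direction. First I would observe that $\delta_\phi$ is induced by the closed immersion $\Delta_\phi\colon X^J\hookrightarrow X^I$ together with the \emph{identity} on the torsor‑and‑trivialization datum: by construction $\Gamma_{x'_J}=\Gamma_{\Delta_\phi(x'_J)}$ as closed subschemes of $X_R$, so the pair $(\cF,\alpha)$ is literally unchanged, and in particular the bound on the relative position of $\cF$ at the marked points is preserved. Hence $\delta_\phi$ restricts to maps $\Gr^{(N)}_{G,X^J}\to \Gr^{(N)}_{G,X^I}$ for every $N$, compatibly with the transition maps in $N$; since $(\Gr_{G,X^I},\mathfrak s_I)$ is the small colimit over $N$ of the stratified schemes $(\Gr^{(N)}_{G,X^I},\mathfrak s^{(N)}_I)$ (cf. \cref{analytification-of-GrBD} and \cref{BD-stratification}), it is enough to prove that each truncation $\delta^{(N)}_\phi$ is a morphism of stratified schemes, by a rule that does not depend on $N$.

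Second, I would recall from \cref{BD-stratification} the shape of the poset $P_I$ underlying $\mathfrak s_I$: a stratum is indexed by a pair $(\psi\colon I\twoheadrightarrow K,\ \nu\colon K\to \xt)$, where $\psi$ records the coincidence pattern of the marked points --- i.e. an element of the incidence poset of $X^I$ --- and $\nu$ assigns to each distinct point the dominant coweight labelling the Schubert cell of the corresponding factor of $\Gr_G$ under the factorization isomorphism \eqref{factorization-property}; the partial order is generated by the Schubert closure order on the $\nu$'s at fixed $\psi$, together with the degenerations produced by collisions of points. The only input I need about $\Delta_\phi$ is that it intertwines the incidence stratifications: the coincidence pattern of $\Delta_\phi(x'_J)$ is precisely that of $x'_J$ pulled back along $\phi$, so $\psi\mapsto\psi\circ\phi$ is a well‑defined order‑preserving map from the incidence poset of $X^J$ to that of $X^I$ (if $\psi_1=\tau\circ\psi_2$ then $\psi_1\circ\phi=\tau\circ(\psi_2\circ\phi)$), and it is covered by $\Delta_\phi$.

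Third, I would define $r\colon P_J\to P_I$ by $(\psi,\nu)\mapsto(\psi\circ\phi,\nu)$. The set $K$ of distinct points, and hence the coweight datum $\nu$, is untouched, because $\delta_\phi$ does not alter $(\cF,\alpha)$ and the factorization isomorphisms for $\Gr_{G,X^J}$ and $\Gr_{G,X^I}$ are both governed by the same subscheme $\Gamma$, hence intertwined by $\delta_\phi$. Order‑preservation of $r$ is immediate in the $\nu$‑direction and is the observation of the previous paragraph in the $\psi$‑direction; and the square
\[
\begin{tikzcd}
\Gr_{G,X^J}\arrow[r,"\delta_\phi"]\arrow[d,"\mathfrak s_J"'] & \Gr_{G,X^I}\arrow[d,"\mathfrak s_I"] \\
\Alex(P_J)\arrow[r,"r"'] & \Alex(P_I)
\end{tikzcd}
\]
commutes, because a point of $\Gr_{G,X^J}$ lying in the stratum $(\psi,\nu)$ is sent to a point lying in $(\psi\circ\phi,\nu)$, by the two facts just recorded. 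Since $r$ is independent of $N$, this is compatible with the colimit over $N$, and we obtain the desired morphism in $\PSmall(\Str\Sch^\lft_\C)$.

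The step I expect to be the main obstacle is pinning down the exact combinatorics of $P_I$ as it is set up in \cref{BD-stratification}: one must make sure the partial order there is genuinely the one generated by ``Schubert at fixed coincidence pattern'' plus ``collisions of points'', and that the collision degenerations in $X^I$ are exactly the pullbacks along $\phi$ of those in $X^J$. Once the poset is fixed, the verification is formal --- $\delta_\phi$ changes only the base coordinate and does so through a map that already respects $\Inc$ --- so no further geometric input is needed.
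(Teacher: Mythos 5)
Your argument is correct and follows essentially the same route as the paper's: both rest on the observation that $\delta_\phi$ sends the stratum indexed by $(\psi\colon J\twoheadrightarrow J',\underline\nu)$ to the one indexed by $(\psi\circ\phi\colon I\twoheadrightarrow J',\underline\nu)$, and the explicit check you include that $(\psi,\underline\nu)\mapsto(\psi\circ\phi,\underline\nu)$ is order-preserving on $P_J\to P_I$ is left implicit in the paper. One small imprecision worth flagging: $\Gamma_{x'_J}$ and $\Gamma_{\Delta_\phi(x'_J)}$ agree only as closed \emph{topological subspaces} of $X_R$, not as closed subschemes (the Hilbert-scheme interpretation records multiplicities, cf. \cref{graphs-of-points}), but this is harmless because the trivialization $\alpha$ only sees the open complement.
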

	
\begin{proof}
    Recall that the $X^I$'s are endowed with the incidence stratification, with respect to which the maps $\Delta_\phi$'s are indeed stratified. 
    Consider the stratum $\Gr_{G, X^{\psi}, \underline{\nu}}$ inside $\Gr_{G,X^J}$ indexed by $([J\overset{\psi}{\twoheadrightarrow} J'],\underline{\nu} \in (\xt)^{|J'|})$.
    The map $\delta_\phi$ sends $\Gr_{G, X^{\psi}, \underline{\nu}}$ into the stratum $\Gr_{G, X^{\psi\circ \phi},\underline{\nu}}$ of $\Gr_{G,X^I}$ indexed by $([I\overset{\psi\circ \phi}{\twoheadrightarrow} J'], \underline{\nu}\in ( \xt)^{|J'|})$.
    Thus, the $\delta_\phi$'s are stratified.
\end{proof}
	
\begin{prop}[Stratification of $\Ran(X)$ and of $\Gr_{G, \Ran(X)}$]\label{Ran-stratification}
There exists a stratified small presheaf $(\Ran(X),\Inc_\Ran)$, locally of finite type over $\C$, whose underlying presheaf is $\Ran(X)$, which  recovers the incidence stratification $(X^I,\Inc_I)$ when pulled-back along $X^I\xrightarrow{\mathcal{U}_I} \Ran(X)$.

Analogously, there exists a stratified small presheaf $(\Gr_{G, \Ran(X)}, \mathfrak{s}_{\Ran})$, locally of finite type over $\C$, whose underlying presheaf is $\Gr_{G, \Ran(X)}$, which recovers $(\Gr_{G, X^I}, \mathfrak{s}_I)$ when pulled back along $\mathcal{U}_I$.
\end{prop}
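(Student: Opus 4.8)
The plan is to construct both stratified small presheaves as colimits in $\PSmall(\Str\Sch^\lft_\C)$ of the diagrams already at our disposal, and then check that the pullback property along the unordering maps $\cU_I$ holds. For the first statement, I would define
\[
(\Ran(X),\Inc_\Ran) := \underset{I\in\fs^\op}{\colim}\, (X^I,\Inc_I),
\]
the colimit being taken in $\PSmall(\Str\Sch^\lft_\C)$, with transition maps the $\Delta_\phi$'s, which are stratified for the incidence stratifications (this is recalled in the proof of \cref{lemma:deltaphistratified}). Since $\Fgt_\str$ preserves colimits (\cref{StrTopiscocomplete}, extended to presheaf categories levelwise) and the underlying diagram has colimit $\Ran(X)$ by \cref{Ran-is-colimit}, the underlying small presheaf is indeed $\Ran(X)$. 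For the second statement, I would similarly set
\[
(\Gr_{G,\Ran(X)},\mathfrak{s}_\Ran) := \underset{I\in\fs^\op}{\colim}\, (\Gr_{G,X^I},\mathfrak{s}_I)
\;\cong\;
\underset{I\in\fs^\op,\, N\geq 0}{\colim}\, (\Gr^{(N)}_{G,X^I},\mathfrak{s}^{(N)}_I),
\]
with transition maps the $\delta_\phi$'s, which are stratified by \cref{lemma:deltaphistratified}; the underlying presheaf is $\Gr_{G,\Ran(X)}$ by \cref{Ran-Gr-is-colim}.

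The substantive point is the pullback (``recovery'') property: that pulling back $(\Ran(X),\Inc_\Ran)$ along the stratified map $\cU_I:(X^I,\Inc_I)\to(\Ran(X),\Inc_\Ran)$ returns $(X^I,\Inc_I)$, and likewise for $\Gr$. Here I would argue as follows. On underlying presheaves this is exactly \cref{Ran-is-colimit} (resp. the first assertion of \cref{Ran-Gr-is-colim}), which uses universality of colimits in $\PSh(\Aff_\C)$, valid in the presheaf category. For the stratifying posets, one observes that the indexing poset of $\Inc_\Ran$ is the colimit $\colim_{I}$ of the posets of $\Inc_I$ along the maps induced by the $\Delta_\phi$, and that pulling back along $\cU_I$ corresponds on posets to the natural map from the $I$-th poset to this colimit, which the combinatorics of the incidence stratification (a stratum of $\Ran(X)$ records an unordered partition-type datum, pulled back to $X^I$ it records the corresponding ordered datum) identifies with a pullback square of posets. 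Concretely: a point of $X^I$ lies in the incidence stratum indexed by the surjection $\psi:I\twoheadrightarrow J$ recording which coordinates coincide, and $\cU_I$ sends it to the stratum of $\Ran(X)$ of ``cardinality $|J|$''; the claim is that $(X^I,\Inc_I)\to(\Ran(X),\Inc_\Ran)$ together with a test stratified map into $(\Ran(X),\Inc_\Ran)$ factoring set-theoretically through $X^I$ factors stratified-ly, which reduces to the corresponding statement for each $\Gr^{(N)}_{G,X^I}$ in $\Str\Sch^\lft_\C$ by writing everything as a filtered colimit. I would do this first for the honest stratified schemes $\Gr^{(N)}_{G,X^I}$ (where one can argue with $\Str\Sch^\lft_\C$ and \cref{StrSch admits finite limits}), and then pass to the colimit; the presheaf-level pullback along $\cU_I$ commutes with the colimit in $I$ and in $N$ because pullbacks in a presheaf category are universal.

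The main obstacle I anticipate is precisely the bookkeeping of the stratifying posets under the colimit: one must check that the colimit poset $\colim_I P_{\Inc_I}$ is taken in $\Pos$ (equivalently that the Alexandroff-realization step in the colimit formula of \cref{StrTopiscocomplete} does not alter it at the level of presheaves valued in schemes, where there is no Alexandroff realization but rather a genuine poset), and that the square
\[
\begin{tikzcd}
P_{\Inc_I} \arrow[r] \arrow[d] & P_{\Inc_I} \arrow[d] \\
P_{\Inc_I} \arrow[r] & \colim_{I'} P_{\Inc_{I'}}
\end{tikzcd}
\]
exhibiting ``$\cU_I^*\Inc_\Ran = \Inc_I$'' on posets is cartesian. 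This is a purely combinatorial fact about surjections of finite sets and the associated poset of ``coincidence patterns'', and once it is isolated it is elementary; the care is in phrasing it so that it interacts correctly with the presheaf colimit and with the analytification machinery that will use it later. Everything else is a formal consequence of \cref{Ran-is-colimit}, \cref{Ran-Gr-is-colim}, \cref{lemma:deltaphistratified}, and the cocompleteness of $\PSmall(\Str\Sch^\lft_\C)$.
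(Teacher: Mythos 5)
Your proof takes the same basic route as the paper's: both construct the stratified presheaves as colimits
\[
(\Ran(X),\Inc_\Ran) := \underset{I\in\fs^\op}{\colim}\, (X^I,\Inc_I), \qquad
(\Gr_{G,\Ran(X)},\mathfrak{s}_\Ran) := \underset{I\in\fs^\op}{\colim}\, (\Gr_{G,X^I},\mathfrak{s}_I)
\]
in $\PSmall(\Str\Sch^\lft_\C)$, and both identify the underlying presheaves via \cref{Ran-is-colimit} and \cref{Ran-Gr-is-colim} using that $\Fgt_\str$ preserves colimits. The paper then disposes of the pullback (``recovery'') property in one line by invoking universality of colimits in the category of (small) stratified presheaves.

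The bulk of your extra discussion, however, rests on a misreading of what $(\Ran(X),\Inc_\Ran)$ \emph{is} at this stage. It is an object of $\PSmall(\Str\Sch^\lft_\C)$, i.e.\ a set-valued presheaf on the category of stratified schemes. It does \emph{not} come equipped with a stratifying poset of its own, nor is there any Alexandroff realization in sight: those are features of objects of $\Str\Top$ (or of $\Str\Sch^\lft_\C$ itself), and they enter the picture only after analytification, i.e.\ in \cref{analytification-of-GrBD-Ran} and downstream. Consequently, your proposed reduction to a cartesian square
\[
\begin{tikzcd}
P_{\Inc_I} \arrow[r] \arrow[d] & P_{\Inc_I} \arrow[d] \\
P_{\Inc_I} \arrow[r] & \colim_{I'} P_{\Inc_{I'}}
\end{tikzcd}
\]
of stratifying posets is not a step that the present lemma calls for: the pullback along $\cU_I$ is a pullback computed objectwise in a presheaf category, where colimits are universal, so there is nothing poset-level to verify separately. (Your instinct would be relevant later, when one has to argue that the analytification of this colimit carries the expected topology and poset --- this is exactly what \cref{construction-Rann} through \cref{extension-of-analytification} are for --- but that is a different statement.)

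Two smaller points. First, $\fs^\op$ is not filtered (parallel pairs of surjections need not be coequalized by a further surjection), so ``writing everything as a filtered colimit'' is not available for the $I$-direction; only the $N$-direction is filtered. Second, what the one-line appeal to universality of colimits really gives you is the identity
\[
(X^I,\Inc_I) \underset{(\Ran(X),\Inc_\Ran)}{\times} (\Gr_{G,\Ran(X)},\mathfrak{s}_\Ran) \;\simeq\;
\colim_{I'}\, (X^I,\Inc_I) \underset{(\Ran(X),\Inc_\Ran)}{\times} (\Gr_{G,X^{I'}},\mathfrak{s}_{I'}),
\]
which, combined with the description of the underlying-presheaf pullback from the first sentence of \cref{Ran-Gr-is-colim}, identifies the right-hand side with $(\Gr_{G,X^I},\mathfrak{s}_I)$. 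That is the whole content; no independent poset-level combinatorics is required.
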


\begin{proof}
Both $(X^I,\Inc_I)$'s and $(\Gr_{G,X^I}^{(N)}, \mathfrak{s}_{I})$'s are objects of $\Str\Sch^\lft_\C$ and $\Delta_\phi, \delta_\phi$'s are stratified maps (\Cref{lemma:deltaphistratified}).
Consider then the following colimits in $\PSmall(\Str\Sch^\lft_\C)$
\begin{equation}\label{stratified-Ran-forgets} 
\underset{I\in \fs^\op}{\colim} (X^I, \Inc_I), 
\quad 
\underset{ I\in \fs^\op}{\colim} (\Gr_{G, X^I}, \mathfrak{s}_I) 
\cong   
\underset{I\in \fs^\op, N\in\nn}{\colim} 
(\Gr^{(N)}_{G, X^I}, \mathfrak{s}_{I}).
\end{equation}
Since the forgetful functor preserves colimits, by \Cref{Ran-is-colimit} and \Cref{Ran-Gr-is-colim} we have 
\begin{equation*}
    \Fgt_\str 
    \left( \underset{I\in \fs^\op}{\colim} (X^I,\Inc_I) \right) 
    \simeq 
    \Ran(X),\quad
    \Fgt_\str
    \left( \underset{I\in \fs^\op}{\colim} (\Gr_{G,X^I},\mathfrak{s}_I) \right)
    \simeq \Gr_{G,\Ran(X)}.
\end{equation*}
Finally, since both stratified presheaves are defined as colimits, pulling back along the colimit map $\mathcal{U}_I$ recovers the $I$-th level by universality of colimits in the category of stratified presheaves.
\end{proof}

\subsection{Stratified action of \texorpdfstring{$\LpG_{\Ran(X)}$}{} on \texorpdfstring{$\Gr_{G,\Ran(X)}$}{}}
In this subsection, we describe a \emph{stratified group presheaf} $(\LpG_{\Ran(X)}, \Inc_\Ran)$ and its action on the stratified small presheaf $(\Gr_{G,\Ran(X)},\mathfrak{s}_{\Ran})$ relative to $(\Ran(X), \Inc_\Ran)$.

\begin{defin}[Ran version of $\LpG$]
Define
$$
\LpG_{ \Ran(X)}: \Aff^{\op}_{\C}\to \sets, \quad \quad
\Spec R \mapsto
\{ (\underline{x},g):  \underline{x} \in \Ran(X)(R), g\in G(\widetilde{\Gamma}_{\underline{x}})\}.
$$
This is well defined because the scheme $\tilde\Gamma_{\underline{x}}$ depends neither on the order of the points nor on the schematic structure of $\Gamma_{\underline{x}}$ (only on its topology).
\end{defin}

\begin{lem}\label{LpG-Ran-is-colim}
For any $I\in \fs$, we have the following isomorphisms in $\PSmall(\Aff_\C)$:
$$\LpG_{X^I}\simeq X^I\times_{\Ran(X)}\LpG_{\Ran(X)}, \quad \LpG_{\Ran(X)}\simeq \underset{I\in \fs^{\op}}{\colim} \LpG_{X^I},$$
where transition maps in the second colimit are $\delta_{\phi}^{\textup{grp}}: (x_I,g)\mapsto (\Delta_\phi(x_I),g)$. 
\end{lem}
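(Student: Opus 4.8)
The plan is to mimic the proofs of \Cref{Ran-is-colimit} and \Cref{Ran-Gr-is-colim} almost verbatim. First I would establish the pullback formula $\LpG_{X^I}\simeq X^I\times_{\Ran(X)}\LpG_{\Ran(X)}$ by unwinding the two moduli descriptions: an $R$-point of the fiber product on the right consists of $x_I\in X^I(R)$, a pair $(\underline{x},g)\in\LpG_{\Ran(X)}(R)$, and an identification $\cU_I(x_I)=\underline{x}$ in $\Ran(X)(R)$. Since the graph $\Gamma_{x_I}$, hence its formal neighborhood $\widetilde{\Gamma}_{x_I}$, depends only on $\cU_I(x_I)$ --- not on the ordering of the coordinates of $x_I$, nor on the schematic multiplicities --- we have $G(\widetilde{\Gamma}_{x_I})=G(\widetilde{\Gamma}_{\underline{x}})$, so this datum is precisely a point $(x_I,g)\in\LpG_{X^I}(R)$, with naturality in $R$ immediate from the restriction formulas. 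Along the way I would record that, under these identifications and using $\cU_J=\cU_I\circ\Delta_\phi$, the transition map $\delta_\phi^{\grp}$ corresponds to $\Delta_\phi\times\id$.

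Next I would deduce the colimit formula from the universality of small colimits in $\PSh(\Aff_\C)$:
\[
\underset{I\in\fs^\op}{\colim}\LpG_{X^I}
\;\simeq\;
\underset{I\in\fs^\op}{\colim}\bigl(X^I\times_{\Ran(X)}\LpG_{\Ran(X)}\bigr)
\;\simeq\;
\Bigl(\underset{I\in\fs^\op}{\colim}X^I\Bigr)\times_{\Ran(X)}\LpG_{\Ran(X)},
\]
and the right-hand side coincides, by \Cref{Ran-is-colimit}, with $\Ran(X)\times_{\Ran(X)}\LpG_{\Ran(X)}\simeq\LpG_{\Ran(X)}$. For the smallness assertion I would note that each $\LpG_{X^I}=\lim_m\LmG_{X^I}$ is a scheme over $X^I$ --- an inverse limit of finite-type group schemes along affine transition maps --- hence a small colimit of its affine open subschemes; since a small colimit of small presheaves is small, it follows that $\LpG_{\Ran(X)}\in\PSmall(\Aff_\C)$.

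The only point that is not purely formal --- and the one I would treat with care --- is the well-definedness of $\widetilde{\Gamma}_{\underline{x}}$, and therefore of $G(\widetilde{\Gamma}_{\underline{x}})$, as a functor of $\underline{x}\in\Ran(X)(R)$ alone. This is the content of the remark accompanying the definition of $\LpG_{\Ran(X)}$ (invariance of the formal neighborhood of $\Gamma_{\underline{x}}$ under reordering the points and under replacing $\Gamma_{\underline{x}}$ by its reduction); granted this, everything reduces to the universality argument above, exactly as for $\Ran(X)$ and $\Gr_{G,\Ran(X)}$.
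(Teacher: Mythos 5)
Your proposal is correct and matches the paper's intent exactly: the paper's proof of this lemma simply reads ``Analogous to the proof of \Cref{Ran-Gr-is-colim},'' and you have carried out precisely that analogy — establishing the pullback description via the moduli interpretation and the fact that $\widetilde{\Gamma}_{\underline{x}}$ depends only on $\cU_I(x_I)$, then invoking universality of small colimits and \Cref{Ran-is-colimit} for the colimit formula and smallness. Nothing to add.
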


\begin{proof}
Analogous to the proof of \Cref{Ran-Gr-is-colim}.
\end{proof}
	
\begin{rem}\label{LpG-from-Aff-to-Sch}
By the same argument of \cref{Ran-from-Aff-to-Sch}, we can see $\LpG_{\Ran(X)}$ as an object of $\PSmall(\Sch_\C)$, which we will do from now. 
Note that the $\LpG_{X^I}$'s are not locally of finite type over $\C$ and the presheaf $\LpG_{\Ran(X)}$ does not lie in the essential image of $\lambda$.
\end{rem}

\begin{prop}\label{LpGRan-is-group}
There exists a stratified small presheaf $(\LpG_{\Ran(X)},\Inc_\Ran)$ whose underlying presheaf is $\LpG_{\Ran(X)}$, that recovers $(\LpG_{X^I},\Inc_I)$ when pulled-back along $\LpG_{X^I} \xrightarrow{\mathcal{U}_I}  \LpG_{\Ran(X)}$.

Moreover there exists a multiplication law which makes $(\LpG_{\Ran(X)},\Inc_\Ran)$ into an element of $${\Grp\left({\PSmall({\Str\Sch_\C})}/_{(\Ran(X),\Inc_\Ran)}\right)}$$ and recovers $(\LpG_{X^I}, \Inc_I)\in \Grp({\Str\Sch_\C}/_{(X^I,\Inc_I)})$ after pullback to $(X^I,\Inc_I)$.
\end{prop}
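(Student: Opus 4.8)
The plan is to follow, almost verbatim, the argument of \Cref{Ran-stratification}, and then equip the resulting stratified presheaf with a group structure by transporting the levelwise one through a colimit.

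First I would record the (essentially automatic) analogue of \Cref{lemma:deltaphistratified} for the transition maps $\delta_\phi^{\grp}$: the incidence stratification on $\LpG_{X^I}$ is, by definition (\Cref{defin-BD-arc-group}), pulled back along the structural projection $\LpG_{X^I}\to X^I$, and $\delta_\phi^{\grp}$ sits over the stratified diagonal $\Delta_\phi\colon X^J\to X^I$; hence $\delta_\phi^{\grp}\colon(\LpG_{X^J},\Inc_J)\to(\LpG_{X^I},\Inc_I)$ is a map in $\PSmall(\Str\Sch_\C)$. I can therefore define
\[
(\LpG_{\Ran(X)},\Inc_\Ran):=\colim_{I\in\fs^{\op}}\,(\LpG_{X^I},\Inc_I)
\]
as a colimit in $\PSmall(\Str\Sch_\C)$ — taken in the non-locally-of-finite-type world, since the $\LpG_{X^I}$ are pro-(finite type) (cf.\ \Cref{LpG-from-Aff-to-Sch}). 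As $\Fgt_\str$ preserves colimits, \Cref{LpG-Ran-is-colim} identifies the underlying presheaf with $\LpG_{\Ran(X)}$; and since colimits in $\PSmall(\Str\Sch_\C)$ (like its finite limits) are computed in the ambient presheaf category $\PSh(\Str\Sch_\C)$, where colimits are universal, pulling back along $\mathcal{U}_I\colon X^I\to\Ran(X)$ recovers the term $(\LpG_{X^I},\Inc_I)$.

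For the group structure I would start from the levelwise data — the fibrewise multiplication $m_I\colon\LpG_{X^I}\times_{X^I}\LpG_{X^I}\to\LpG_{X^I}$, the unit section $e_I\colon X^I\to\LpG_{X^I}$ and the inversion $\iota_I$ — all of which are morphisms over $X^I$, hence automatically $\Inc_I$-stratified, and all of which are compatible with the $\delta_\phi^{\grp}$ (the multiplication being just multiplication in $G(\widetilde{\Gamma}_{\underline{x}})$, which depends only on the support). The key step is the identification of the colimit of the sources with the relative product: from $\LpG_{X^I}\cong X^I\times_{\Ran(X)}\LpG_{\Ran(X)}$ (\Cref{LpG-Ran-is-colim}) one rewrites
\[
\LpG_{X^I}\underset{X^I}{\times}\LpG_{X^I}\;\cong\;X^I\underset{\Ran(X)}{\times}\big(\LpG_{\Ran(X)}\underset{\Ran(X)}{\times}\LpG_{\Ran(X)}\big),
\]
(and likewise with the evident incidence stratifications, which are all pulled back from $X^I$, resp.\ $\Ran(X)$), and then applies universality of colimits in $\PSmall(\Str\Sch_\C)$ with the \emph{constant} second factor, together with $\colim_{I}(X^I,\Inc_I)\cong(\Ran(X),\Inc_\Ran)$ from \Cref{Ran-stratification}. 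This yields $\colim_I\big((\LpG_{X^I},\Inc_I)\times_{(X^I,\Inc_I)}(\LpG_{X^I},\Inc_I)\big)\cong(\LpG_{\Ran(X)},\Inc_\Ran)\times_{(\Ran(X),\Inc_\Ran)}(\LpG_{\Ran(X)},\Inc_\Ran)$, so the cocone $(m_I)_I$ descends to a multiplication $m$ over $(\Ran(X),\Inc_\Ran)$; the same manoeuvre produces the unit and the inversion.

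Finally, the group axioms are equalities of maps out of colimits of the above shape — the $n$-fold relative power $(\LpG_{\Ran(X)},\Inc_\Ran)^{\times_{(\Ran(X),\Inc_\Ran)}n}$ is again $\colim_I$ of $(\LpG_{X^I},\Inc_I)^{\times_{(X^I,\Inc_I)}n}$ by the identical rewriting — so they can be checked after composing with the colimit cocone, i.e.\ at each finite level $I$, where they hold because $\LpG_{X^I}$ is a relative group scheme over $X^I$. Hence $(\LpG_{\Ran(X)},\Inc_\Ran)\in\Grp(\PSmall(\Str\Sch_\C)/_{(\Ran(X),\Inc_\Ran)})$. Compatibility with pullback along $\mathcal{U}_I$ is then formal: base change $\mathcal{U}_I^{*}$ preserves finite limits, hence sends group objects to group objects, and by the construction above it carries $(\LpG_{\Ran(X)},\Inc_\Ran)$ to $(\LpG_{X^I},\Inc_I)$ and $m$ to $m_I$. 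I expect the only genuinely non-formal point to be the colimit/relative-product identification; but — and this is worth emphasising — it is \emph{not} a siftedness statement about $\fs^{\op}$: after the rewriting via \Cref{LpG-Ran-is-colim} one of the two factors is constant, so plain universality of colimits in the presheaf category already suffices.
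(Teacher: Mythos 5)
Your proposal is correct and follows essentially the same route as the paper: define $(\LpG_{\Ran(X)},\Inc_\Ran)$ as the colimit of the stratified $(\LpG_{X^I},\Inc_I)$'s, then obtain the multiplication by rewriting the relative square via $\LpG_{X^I}\cong X^I\times_{\Ran(X)}\LpG_{\Ran(X)}$ and applying universality of colimits in $\PSmall(\Str\Sch_\C)/_{(\Ran(X),\Inc_\Ran)}$, exactly as the paper does. Your additional remarks on the unit, inversion, and the $n$-fold relative power trick for checking the group axioms are a modest but welcome elaboration of the paper's more terse "pass to the colimit of the multiplication laws."
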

\begin{proof}
Forgetting the group structure, $(\LpG_{X^I}, \Inc_I)$ is an element of ${\PSmall(\Str\Sch_\C)/}_{(X^I,\Inc_I)}$.
Via the composite
\[
(\LpG_{X^I},\Inc_I) \to (X^I,\Inc_I) \to (\Ran(X), \Inc_\Ran)
\]
we actually have that $(\LpG_{X^I},\Inc_I) \in {\PSmall(\Str\Sch_\C)/}_{(\Ran(X),\Inc_{\Ran})}$.
Therefore, with the same argument done in proof of \cref{Ran-stratification}, by \Cref{LpG-Ran-is-colim} there exists a stratified small presheaf 
\[
(\LpG_\Ran(X), \Inc_\Ran) \cong \underset{I\in \fs}{\colim} (\LpG_{X^I}, \Inc_I)
\]
which recovers $(\LpG_{X^I}, \Inc_I)$'s by pull-back.

We now want to see that there is a multiplication law on $(\LpG_{\Ran(X)}, \Inc_\Ran)$ that respects this pullback.
At the level of the underlying presheaf $\LpG_{\Ran(X)}$, it is defined as 
\begin{equation}\label{eq-multiplication-Ran}
\LpG_{\Ran(X)}\times_{\Ran(X)}\LpG_{\Ran(X)} \to \LpG_{\Ran(X)}, \quad (\underline{x},g).(\underline{x},h)\mapsto (\underline{x}, gh).
\end{equation}
To check that it is stratified, we describe it in a different way.
Consider the colimit
\[
\colim_{I\in \fs^\op} \left( (\LpG_{X^I},\Inc_I)\times_{(X^I,\Inc_I)} (\LpG_{X^I}, \Inc_I), \right)\quad \textup{taken in } {\PSmall(\Str\Sch_\C)}/_{(\Ran(X),\Inc_\Ran)}.
\]
By replacing $(\LpG_{X^I}, \Inc_I)$ with $(\LpG_{\Ran(X)}, \Inc_\Ran)\times_{(\Ran(X),\Inc_\Ran)} (X^I,\Inc_I)$, the previous colimit can be written as
\[
\colim_{I\in \fs^\op}\left(
( \LpG_{\Ran(X)}, \Inc_\Ran) 
\underset{(\Ran(X), \Inc_\Ran)}{\times}
( \LpG_{\Ran(X)}, \Inc_\Ran) 
\underset{(\Ran(X), \Inc_\Ran)}{\times} (X^I,\Inc_I)
\right).
\]
By universality of colimits in ${\PSmall ({\Str\Sch_\C})}/_{(\Ran(X),\Inc_{\Ran})}$, this is exactly
\[
( \LpG_{\Ran(X)}, \Inc_\Ran) 
\underset{(\Ran(X), \Inc_\Ran)}{\times}
( \LpG_{\Ran(X)}, \Inc_\Ran).
\]
In this way, the multiplication law \eqref{eq-multiplication-Ran} can be presented as a colimit of the multiplication laws of $(\LpG_{X^I},\Inc_I)$'s, and hence it is stratified. Applying universality of colimits in the other direction we see that it recovers the multiplication on $\LpG_{X^I}$ when pull-backed.
\end{proof}

\begin{prop}
There exists a map in ${\PSmall({\Str\Sch_\C})/}_{(\Ran(X),\Inc_\Ran)}$ 
$$
\act_\Ran : (\LpG_{\Ran(X)}, \Inc_\Ran) \underset{(\Ran(X), \Inc_\Ran)}{\times} (\Gr_{G,\Ran(X)}, \mathfrak{s}_\Ran)
\to (\Gr_{G,\Ran(X)}, \mathfrak{s}_\Ran),
$$
recovering the action $\act_I$ of \cref{action-BD} when pulled back to $X^I$.
\end{prop}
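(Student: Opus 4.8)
The plan is to mimic the proof of \cref{LpGRan-is-group}: realize $\act_\Ran$ as a colimit over $\fs^\op$ of the relative actions $\act_I$ of \cref{action-BD}, and then transport the required compatibilities through universality of colimits.

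First I would record that, by \cref{action-BD}, each $\act_I$ is a stratified morphism
\[
\act_I : (\LpG_{X^I}, \Inc_I) \underset{(X^I, \Inc_I)}{\times} (\Gr_{G, X^I}, \mathfrak{s}_I) \to (\Gr_{G, X^I}, \mathfrak{s}_I)
\]
in $\PSmall(\Str\Sch_\C)/_{(X^I, \Inc_I)}$, hence --- composing the structure maps to $(\Ran(X), \Inc_\Ran)$ --- a morphism in $\PSmall(\Str\Sch_\C)/_{(\Ran(X), \Inc_\Ran)}$. The one genuinely non-formal point is that the $\act_I$'s are compatible with the transition maps of the diagrams: for every surjection $\phi : I \twoheadrightarrow J$ one must have
\[
\delta_\phi \circ \act_J = \act_I \circ (\delta_\phi^{\textup{grp}} \times \delta_\phi)
\]
as morphisms over $\Delta_\phi : X^J \to X^I$. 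This is immediate from the explicit formulas $\delta_\phi(x_J', \cF, \alpha) = (\Delta_\phi(x_J'), \cF, \alpha)$ and $\delta_\phi^{\textup{grp}}(x_J', g) = (\Delta_\phi(x_J'), g)$: both $\delta_\phi$ and $\delta_\phi^{\textup{grp}}$ leave the $G$-bundle, its modification, and the arc-group element untouched and merely relabel points, so acting and then including agrees with including and then acting. Thus the $\act_I$'s form a morphism of $\fs^\op$-indexed diagrams in $\PSmall(\Str\Sch_\C)/_{(\Ran(X), \Inc_\Ran)}$.

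Next I would identify the source of the desired map, exactly as in \cref{LpGRan-is-group}. Using the identifications $(\LpG_{X^I}, \Inc_I) \cong (\LpG_{\Ran(X)}, \Inc_\Ran) \times_{(\Ran(X), \Inc_\Ran)} (X^I, \Inc_I)$ (\cref{LpGRan-is-group}) and $(\Gr_{G, X^I}, \mathfrak{s}_I) \cong (\Gr_{G, \Ran(X)}, \mathfrak{s}_\Ran) \times_{(\Ran(X), \Inc_\Ran)} (X^I, \Inc_I)$ (\cref{Ran-stratification}), universality of colimits in $\PSmall(\Str\Sch_\C)/_{(\Ran(X), \Inc_\Ran)}$ gives
\[
\underset{I \in \fs^\op}{\colim}\Big( (\LpG_{X^I}, \Inc_I) \underset{(X^I, \Inc_I)}{\times} (\Gr_{G, X^I}, \mathfrak{s}_I) \Big)
\cong
(\LpG_{\Ran(X)}, \Inc_\Ran) \underset{(\Ran(X), \Inc_\Ran)}{\times} (\Gr_{G, \Ran(X)}, \mathfrak{s}_\Ran),
\]
while $\colim_{I \in \fs^\op} (\Gr_{G, X^I}, \mathfrak{s}_I) \cong (\Gr_{G, \Ran(X)}, \mathfrak{s}_\Ran)$ by \cref{Ran-stratification}. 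Taking $\colim_{I \in \fs^\op}$ of the morphism of diagrams from the previous paragraph therefore produces the desired stratified morphism $\act_\Ran$ in $\PSmall(\Str\Sch_\C)/_{(\Ran(X), \Inc_\Ran)}$; and applying universality of colimits in the opposite direction --- pulling $\act_\Ran$ back along $\mathcal{U}_I$ --- recovers $\act_I$, since the pulled-back colimit diagram has $I$-th term the $\act_I$ one started with.

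Finally I would remark that the unit and associativity axioms for $\act_\Ran$, relative to the multiplication of \cref{LpGRan-is-group}, hold automatically: each side of each axiom is the $\fs^\op$-colimit of the corresponding diagram built from the $\act_I$'s and the multiplications on $\LpG_{X^I}$, and these diagrams agree term by term because each $\act_I$ is an action. The main obstacle is thus concentrated in the compatibility $\delta_\phi \circ \act_J = \act_I \circ (\delta_\phi^{\textup{grp}} \times \delta_\phi)$ together with the bookkeeping of the relative fiber products (over $X^I$ versus over $\Ran(X)$), which is handled --- just as in \cref{LpGRan-is-group} --- by the substitution trick above.
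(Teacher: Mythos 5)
Your proof is correct and the overall strategy is the same as the paper's: identify the source with $\colim_{I\in\fs^\op}\,(\LpG_{X^I},\Inc_I)\times_{(X^I,\Inc_I)}(\Gr_{G,X^I},\mathfrak{s}_I)$ via the substitution $\LpG_{X^I}\cong \LpG_{\Ran(X)}\times_{\Ran(X)}X^I$ and universality of colimits, set the map at level $I$ to be $\act_I$, and verify $\delta_\phi$-compatibility. You diverge only in how you justify that compatibility: you argue directly from the moduli description, noting that $\delta_\phi$ and $\delta_\phi^{\textup{grp}}$ alter only the point-data via $\Delta_\phi$, and that $\act_I$ depends on the point-data only through $\Gamma_{x_I}$ and $\widetilde\Gamma_{x_I}$, both of which are unchanged under $\Delta_\phi$ since they see only the underlying topological support of the graphs. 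The paper instead restricts to the strata $\Gr_{G,X^\psi}$ and observes that, under the factorization isomorphisms $\mathfrak{f}_\psi$ and $\mathfrak{f}_{\psi\circ\phi}$, the map $\delta_\phi$ corresponds to the identity on $\bigl(\prod \Gr_{G,X}\bigr)_\disj$, with compatibility of $\act_I$ and factorization coming from the proof of \cref{stratifiedactions}. Both routes are valid; yours is a touch more elementary in not invoking factorization, while the paper's reuses the machinery already deployed to prove $\act_I$ stratified. Your closing remark on unit and associativity is sound, though the proposition as stated only asserts a map compatible with pullback.
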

			
\begin{proof}
By universality of small colimits, it is enough to give $\delta_\phi$-compatible actions
\[
\act_{\Ran, I} : (\LpG_{\Ran(X)}, \Inc_\Ran) \underset{(\Ran(X), \Inc_\Ran)}{\times} (\Gr_{G,X^I}, \mathfrak{s}_I)
\to (\Gr_{G,X^I}, \mathfrak{s}_I),
\]
and then pass to the colimit on both sides.
The LHS is the same as
\[
(\LpG_{\Ran(X)}, \Inc_\Ran) \underset{(\Ran(X), \Inc_\Ran)}{\times} (X^I, \Inc_I) \underset{(X^I, \Inc_I)}{\times}(\Gr_{G,X^I}, \mathfrak{s}_I),
\]
which is isomorphic to $(\LpG_{X^I}, \Inc_I)\times_{(X^I, \Inc_I)}(\Gr_{G,X^I}, \mathfrak{s}_I)$ by \Cref{LpGRan-is-group}.
Hence we can define $\act_{\Ran, I}$ as $\act_I$ (see \Cref{action-BD} and \Cref{stratifiedactions}).
We now only need to show that the $\act_I$'s are $\delta_\phi$-compatible.

This follows from noticing that, for any locally closed subscheme $\Gr_{G, X^{\psi}}$, $\psi: J \twoheadrightarrow L$, the map $\delta_\phi$ becomes the identity via the isomorphism \eqref{factorization-property} of the factorization property
\[
\begin{tikzcd}
\left ( \prod_{l=1}^{|L|}\Gr_{G,X} \right )_{\disj} 
\arrow[r, "\id"]
\arrow[d, "\mathfrak{f}_{\psi}"]
&
\left ( \prod_{l=1}^{|L|}\Gr_{G,X} \right )_{\disj} 
\arrow[d, "\mathfrak{f}_{\psi\circ\phi}"]
\\
\Gr_{G, X^\psi}
\arrow[r, "\delta_\phi"]
&
\Gr_{G, X^{\psi\circ \phi}}.
\end{tikzcd}
\]
\end{proof}

\subsection{Topological realizations {over the Ran space}}\label{Topological realizations part 2}

We are now ready to apply the analytification functors introduced in \cref{theoremanalyti}.

\begin{defin}\label{topological-incidence}
    Let $M$ be a topological manifold, and $I\in\fs$. The \textit{incidence stratification} on $M^I$ is the one having as poset $$\{[\phi:I\to J]\textup{ partition of }I\}$$ and defined by 
    $$(m_1,\dots, m_{|I|})\in [\phi]\iff (m_{i}=m_{i'}\ \forall \ \phi(i)=\phi(i')\in J).
    $$
\end{defin}
			
\begin{rem}
The analytification of the incidence stratification on $X^I$ (in the sense of \cref{BD-stratification}) coincides with the incidence stratification on $(X^\an)^I$ (in the sense of \cref{topological-incidence}).
\end{rem}

\begin{cor}\label{analytification-of-GrBD-Ran}
By \Cref{theoremanalyti}, we have the following analytifications and equalities in $\Str\Top$:
\begin{enumerate}
    \item $\left(\Ran(X), \Inc_\Ran \right)^\an_{\PSh\Str,\lft} = \colim_{I\in \fs^\op}^{\Str\Top}(X^{I, \an}, \Inc_I)$;
    \item $\left( \LpG_{\Ran(X)} , \Inc_\Ran \right)^\an_{\PSh\Str} = \underset{I\in \fs}{\colim} \left( \LpG_{X^I},\Inc_I \right)^\an_{\Str}$;
    \item $\left( \Gr_{G,\Ran(X)}, \mathfrak{s}_{\Ran}\right)^\an_{\PSh\Str, \lft} = \colim_{I\in \fs^\op}^{\Str\Top} \left(\Gr_{G,X^I}, \mathfrak s_I \right)^\an_{\PSh\Str,\lft}$, in $\Str\Top$ over \\ $(\Ran(X), \Inc_\Ran)^{\an}_{\PSh\Str, \lft}$.
\end{enumerate}
\end{cor}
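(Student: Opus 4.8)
The plan is to deduce all three identities purely formally, by combining the colimit presentations of $(\Ran(X),\Inc_\Ran)$, $(\Gr_{G,\Ran(X)},\mathfrak s_\Ran)$ and $(\LpG_{\Ran(X)},\Inc_\Ran)$ constructed in \Cref{Ran-stratification} and \Cref{LpGRan-is-group} with the fact, recorded in \Cref{theoremanalyti}, that the analytification functors preserve small colimits. No new geometric input is required: the work is entirely in keeping track of which category each object lives in.

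First I would treat (1) and (3) together. Recall that, by construction in the proof of \Cref{Ran-stratification}, one has $(\Ran(X),\Inc_\Ran)=\colim_{I\in\fs^\op}(X^I,\Inc_I)$ and $(\Gr_{G,\Ran(X)},\mathfrak s_\Ran)\cong\colim_{I\in\fs^\op}(\Gr_{G,X^I},\mathfrak s_I)\cong\colim_{I\in\fs^\op,\,N\in\nn}(\Gr^{(N)}_{G,X^I},\mathfrak s^{(N)}_I)$ in $\PSmall(\Str\Sch^\lft_\C)$ --- this is where the local finiteness of the pieces is used. Since $(-)^\an_{\PSh\Str,\lft}$ preserves small colimits, I can move it inside the colimit, obtaining for instance
\[
(\Ran(X),\Inc_\Ran)^\an_{\PSh\Str,\lft}=\colim_{I\in\fs^\op}^{\Str\Top}\,(X^I,\Inc_I)^\an_{\PSh\Str,\lft},
\]
and then identify the terms using the commutativity of the top square of \eqref{big-analytification-diagram} together with the earlier remark that analytifying the incidence stratification on $X^I$ returns the incidence stratification on $(X^\an)^I$, i.e.\ $(X^I,\Inc_I)^\an_{\PSh\Str,\lft}=(X^{I,\an},\Inc_I)$; for (3) one uses instead the description of $(\Gr_{G,X^I},\mathfrak s_I)^\an_{\PSh\Str,\lft}$ already recorded in \Cref{analytification-of-GrBD}. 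The colimit on the right is automatically formed in $\Str\Top$, that being the target of the functor.

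For the relative statement in (3), I would observe that the maps $(\Gr_{G,X^I},\mathfrak s_I)\to(\Ran(X),\Inc_\Ran)$ supplied by \Cref{Ran-Gr-is-colim} are morphisms in $\PSmall(\Str\Sch^\lft_\C)$, hence analytify to a compatible cocone over $(\Ran(X),\Inc_\Ran)^\an_{\PSh\Str,\lft}$; since the forgetful functor from the slice $\Str\Top/(\Ran(X),\Inc_\Ran)^\an_{\PSh\Str,\lft}$ creates colimits, the colimit computed above acquires the asserted relative structure. Finally, for (2) the only modification is that the groups $\LpG_{X^I}$ fail to be locally of finite type (\Cref{LpG-from-Aff-to-Sch}), so one works with $(-)^\an_{\PSh\Str}$ on $\PSmall(\Str\Sch_\C)$ instead; by \Cref{LpGRan-is-group} and \Cref{LpG-Ran-is-colim} we have $(\LpG_{\Ran(X)},\Inc_\Ran)\cong\colim_{I\in\fs}(\LpG_{X^I},\Inc_I)$ there, and since $(-)^\an_{\PSh\Str}$ preserves small colimits and restricts to $(-)^\an_\Str$ on representables (bottom triangle of \eqref{big-analytification-diagram}), the identity drops out. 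The only steps demanding any attention --- and I do not expect either to be a genuine obstacle, given that the substantive work was already done in \Cref{theoremanalyti} --- are matching each object to the correct analytification functor ($\lft$ versus not) and the slice-category observation for (3).
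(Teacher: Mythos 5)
Your proposal is correct and takes essentially the same approach as the paper, whose proof is a one-liner invoking the colimit-preservation of $(-)^\an_{\PSh\Str,\lft}$ for (1) and (3) and of $(-)^\an_{\PSh\Str}$ for (2). The additional details you supply --- matching analytification functors according to local finiteness, identifying $(X^I,\Inc_I)^\an_{\PSh\Str,\lft}$ with $(X^{I,\an},\Inc_I)$ via the top square of \eqref{big-analytification-diagram}, and the observation that the slice forgetful functor creates colimits for the relative statement in (3) --- are exactly the steps the paper leaves implicit, and they are all sound.
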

			
\begin{proof}
    Statements $(1)$ and $(3)$ follow from the fact that $(-)^\an_{\PSh\Str, \lft}$ preserves small colimits and 
    Statement $(2)$ from the fact that $(-)^\an_{\PSh\Str}$ preserves small colimits of stratified schemes.
\end{proof}

\begin{warning}\label{warning-Ran}
The same issue noticed in \Cref{warning} (namely, the failure of universality of colimits) applies here as well.
In addition, the group presheaf $\LpG_{\Ran(X)}$ is realized as a colimit in $\PSh(\Str\Sch_\C)$ and the analytification functor $(-)^\an_{\PSh\Str}$ does not preserve finite limits in general (it is not even Cartesian lax-monoidal): therefore, already the reconstruction of a (relative) group structure for $(\LpG_{\Ran(X)}, \Inc_\Ran)^\an_{\PSh\Str}$ is less straightforward than the one for $(\LpG_{X^I}, \Inc_I)^\an_\Str$.
\end{warning}

The rest of the present subsection addresses the realization problem explained in \cref{warning-Ran}. 
We want to remark that, among our main results, everything in \cref{Section-invariance} up to \cref{isotopies-are-equivariant} (included) is not influenced by this discussion. 
On the other hand, the results from \cref{equivariance-theorem-Ran} until the end of \cref{Section-invariance} do depend on it, and specifically on \cref{extension-of-analytification}.

\begin{defin}[{\cite[§3.7]{Engelking}}]\label{perfect-map}
A \textit{perfect map} is a closed continuous map of topological spaces $X\to Y$ where $X$ is Hausdorff and all fibers are compact.
\end{defin}

\noindent Notice that a perfect surjection is in particular a closed surjection, and hence topological quotient. This motivates the following definition.

\begin{defin}\label{perfect-quotients}
A \textit{perfect quotient} is a perfect surjective map $f:X\to Y$. 
\end{defin}

\begin{recall}\label{recall-perfect-quotients-Hausdorff-locc}
    Let $f:X\to Y$ be a perfect quotient. 
    If $X$ is Hausdorff, so is $Y$, by \cite[Theorem 3.7.20]{Engelking}.
    If $X$ is locally compact, so is $Y$, by \cite[Theorem 3.7.21]{Engelking}.
    
    It is also easy to see that $f$ is universally closed, i.e. for any map $Z\to Y$ the map $f_Z:Z\times_YX\to Z$ obtained by pullback is closed (and surjective). If moreover $Z$ is Hausdorff, one can prove that $f_Z$ is again a perfect quotient. 
\end{recall}

\begin{lem}\label{properties-of-perfect-quotients}
    Let 
    \[\begin{tikzcd}
	{X'} & {Z'} & {Y'} \\
	X & Z & Y
	\arrow[from=1-1, to=1-2, "a'"]
	\arrow[from=1-1, to=2-1, "f"]
	\arrow[from=1-2, to=2-2, "h"]
	\arrow[from=1-3, to=1-2, "b'"']
	\arrow[from=1-3, to=2-3, "g"]
	\arrow[from=2-1, to=2-2, "a"]
	\arrow[from=2-3, to=2-2, "b"']
\end{tikzcd}\]
    be a commutative diagram in $\Top$, where $f,g, h$ are perfect quotients. 
    Then the induced map $f\times_hg:X'\times_{Z'}Y'\to X\times_ZY$ is again a perfect quotient. 
\end{lem}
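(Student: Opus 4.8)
The plan is to reduce the statement to the stability of perfect quotients under base change recorded in \Cref{recall-perfect-quotients-Hausdorff-locc}, after factoring $f\times_h g$ into elementary pieces. First I would observe that every space in sight is Hausdorff: $X',Y',Z'$ are, being sources of perfect maps; $X,Y,Z$ are, being perfect-quotient images of Hausdorff spaces (\Cref{recall-perfect-quotients-Hausdorff-locc}); and hence so are all the fibre products below, being subspaces of finite products of these. In particular the source $X'\times_{Z'}Y'$ of $f\times_h g$ is Hausdorff, so it remains only to check that $f\times_h g$ is closed, surjective, and has compact fibres.

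For closedness and compact fibres I would use the factorization
\[
X'\times_{Z'}Y'\ \xhookrightarrow{\ \iota\ }\ X'\times_Z Y'\ \xrightarrow{\ \id_{X'}\times g\ }\ X'\times_Z Y\ \xrightarrow{\ f\times\id_Y\ }\ X\times_Z Y,
\]
where in the two middle fibre products $X'$ and $Y'$ are viewed over $Z$ through $Z'$. Here $\iota$ is a closed embedding: $X'\times_{Z'}Y'$ is the preimage of the diagonal $\Delta_{Z'}\subseteq Z'\times Z'$ (closed, since $Z'$ is Hausdorff) under $(a',b')\colon X'\times Y'\to Z'\times Z'$, so it is already closed in $X'\times Y'$, hence in $X'\times_Z Y'$. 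Moreover $\id_{X'}\times g$ is the base change of $g$ along $X'\times_Z Y\to Y$ (using $X'\times_Z Y'\cong (X'\times_Z Y)\times_Y Y'$), and $f\times\id_Y$ is the base change of $f$ along $X\times_Z Y\to X$; since $X'\times_Z Y$ and $X\times_Z Y$ are Hausdorff, \Cref{recall-perfect-quotients-Hausdorff-locc} shows both maps are perfect quotients, so their composite $\psi\colon X'\times_Z Y'\to X\times_Z Y$ is a perfect quotient. Then $f\times_h g=\psi\circ\iota$ is closed (closed embedding followed by a closed map), and its fibre over any $(x,y)$ is $\psi^{-1}(x,y)\cap(X'\times_{Z'}Y')$, a closed subspace of the compact set $\psi^{-1}(x,y)$, hence compact.

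The step I expect to be the real obstacle is \emph{surjectivity} of $f\times_h g$, which — unlike the properties above — is not a formal consequence of $f,g,h$ being perfect quotients. Unwinding, given $(x,y)\in X\times_Z Y$ lying over $z\in Z$ one must find $x'\in f^{-1}(x)$ and $y'\in g^{-1}(y)$ with $a'(x')=b'(y')$, i.e. the nonempty compact subsets $a'(f^{-1}(x))$ and $b'(g^{-1}(y))$ of the compact Hausdorff fibre $h^{-1}(z)$ must intersect. I would prove this using the specific shape of the two squares: if the left square is cartesian then $a'(f^{-1}(x))$ is all of $h^{-1}(z)$ and the intersection is automatically nonempty, and symmetrically on the right. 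I would therefore want to pin down exactly which cartesianness (or comparable surjectivity) hypothesis the diagram carries, since with only ``commutative'' together with ``$f,g,h$ perfect quotients'' the intersection — and hence the conclusion — can fail (for instance with $Z$ a point, $Z'$ two points, and $X'\to Z'$, $Y'\to Z'$ hitting different points).

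Putting the pieces together, $f\times_h g$ is continuous, closed, surjective, has compact fibres, and has Hausdorff source, hence is a perfect quotient in the sense of \Cref{perfect-quotients}. All of this is bookkeeping around \Cref{recall-perfect-quotients-Hausdorff-locc} except the surjectivity/nonempty-intersection input, which is the one place where genuine hypotheses on the diagram are needed.
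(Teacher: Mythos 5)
Your factorization of $f\times_h g$ through $X'\times_{Z'}Y'\hookrightarrow X'\times_Z Y'\to X'\times_Z Y\to X\times_Z Y$ differs from the paper's route (which instead forms the product $f\times g\colon X'\times Y'\to X\times Y$ of perfect maps and then restricts to the closed subspace $X'\times_{Z'}Y'$, invoking {\cite[Propositions~3.7.4, 3.7.7]{Engelking}}), but both arguments are correct for the parts they address: $f\times_h g$ is closed, has compact fibres, and has Hausdorff source.

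The important content in your write-up is the objection you raise at the end, and you are right: with the hypotheses as literally stated the lemma is \emph{false}, because surjectivity can fail. Your counterexample works: take $Z=\{*\}$, $Z'=\{0,1\}$ with $h$ the collapse (a perfect quotient), $X'=X=Y'=Y=\{*\}$ with $f,g$ identities, $a'(*)=0$, $b'(*)=1$; the diagram commutes, $f,g,h$ are perfect quotients, yet $X'\times_{Z'}Y'=\varnothing$ while $X\times_Z Y=\{*\}$. The underlying reason is that $X'\times_{Z'}Y'=\{(x',y'):a'(x')=b'(y')\}$ is in general a \emph{proper} closed subset of $(f\times g)^{-1}(X\times_Z Y)=\{(x',y'):h(a'(x'))=h(b'(y'))\}$, with equality only when $h$ is injective. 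Consequently the paper's invocation of ``surjections are stable under pullback'' does not apply: $f\times_h g$ is not presented as a pullback of any of $f,g,h$ without a further hypothesis, and the paper's proof is incomplete on exactly this point.

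The missing hypothesis you gesture at — cartesianness of (at least) one of the two squares — is indeed the right fix and is the situation actually occurring in the paper's applications. If, say, the left square is cartesian, then $X'\cong X\times_Z Z'$, hence $X'\times_{Z'}Y'\cong X\times_Z Y'$, and $f\times_h g$ is literally the base change of the perfect quotient $g$ along $X\times_Z Y\to Y$; since $X\times_Z Y$ is Hausdorff, \Cref{recall-perfect-quotients-Hausdorff-locc} gives the conclusion immediately. This hypothesis holds in \cref{extension-of-analytification} and \cref{corollary-algebra-structure}: there the vertical maps $\widetilde{\cU}^{+,\an}_n$ and $\widetilde{\cU}^{(N),\an}_n$ are constructed as base changes of $\cU_n^\an$ (see \eqref{eq:diagramrelatedtoperfectquoz}, \cref{Ran-Gr-is-colim}, \cref{LpG-Ran-is-colim}), so the relevant squares are cartesian. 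The lemma should be stated with that extra assumption.
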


\begin{proof}
    Perfect maps are stable under products by \cite[Theorem 3.7.7]{Engelking}, and so are surjections. 
    Since a finite limit of topological spaces is always a closed subspace of a product when the spaces involved are Hausdorff, one can deduce the statement from \cite[Proposition 3.7.4]{Engelking} and from the fact that surjections are stable under pullback. 
\end{proof}

\begin{cons}\label{construction-Rann}
Let $n\geq 1$. 
Define
$$
(\Ran_{\leq n}(X), \Inc_{\leq n}) = 
\colim_{I\in\fs^\op, |I|\leq n}
(X^I, \Inc_I)
$$
in $\PSh(\Str\Sch^\lft_\C)$.
The stratifying poset of $\Inc_{\leq n}$ is isomorphic to the totally ordered set of natural numbers less or equal than $n$.
Explicitly, $\Ran_{\leq n}(X)(\C)$ is the set of $k$ unordered and distinct $X$-points $k\leq n$.

Note that $(\Ran(X), \Inc_\Ran)$ coincides with $\underset{n\geq 1}{\colim} \ (\Ran_{\leq n}(X), \Inc_{\leq n})$.
Similarly, let us set 
$$
(\Gr^{(N)}_{G,\Ran_{\leq n}(X)}, \mathfrak{s}_{{\leq n}}^{(N)}) \coloneqq \colim_{I\in\fs^\op, |I|\leq n} (\Gr^{(N)}_{G,X^I}, \mathfrak{s}_I^{(N)}), 
$$ and $$
(\Gr_{G,\Ran_{\leq n}(X)}, \mathfrak{s}_{{\leq n}}) \coloneqq \colim_{I\in\fs^\op, |I|\leq n} (\Gr_{G,X^I}, \mathfrak{s}_I)
$$ 
in $\PSh(\Str\Sch^\lft_\C)$. Note that both of them have a natural stratified map to $(\Ran_{\leq n}(X),\Inc_{\leq n})$. 
Then $(\Gr_{G,\Ran(X)}, \mathfrak{s}_\Ran)$ coincides with 
$$ 
\underset{n\geq 1}{\colim} \ (\Gr_{G, \Ran_{\leq n}(X)}, \mathfrak{s}_{{\leq n}})
=
\underset{n\geq 1, N\in \nn}{\colim} \ (\Gr^{(N)}_{G, \Ran_{\leq n}(X)}, \mathfrak{s}_{{\leq n}}^{(N)}).
$$

Note also, for later use, that by universality of colimits we have 
\begin{equation}\label{pullback-GrRann}
\begin{gathered}
\Ran_{\leq n}(X)\times_{\Ran(X)} \Gr_{\Ran(X)}
\simeq 
\colim_{I\in\fs^\op,|I|\leq n} \left( X^I\times_{\Ran(X)}\Gr_\Ran(X) \right)
\simeq
\\ 
\colim_{I\in\fs^\op,|I|\leq n}\Gr_{G, X^I}
=
\Gr_{\Ran_{\leq n}(X)}
\end{gathered}
\end{equation} 
and the analogous isomorphism holds if we add stratifications.
Finally, we can do the same for the arc group, and define in $\PSh(\Str\Sch_\C)$
\[
(\LpG_{\Ran_{\leq n}(X)}, \Inc_{{\leq n}}) 
\coloneqq \colim_{I\in\fs^\op,|I|\leq n} (\LpG_{X^I}, \Inc_I),
\]
so that $(\LpG_{\Ran(X)}, \Inc_\Ran)$ can be written as 
\begin{equation}\label{new-decomposition-LpG}
\colim_{n\geq 1}\  (\LpG_{\Ran_{\leq n}(X)}, \Inc_{{\leq n}}).
\end{equation}
\end{cons}

\begin{rem}
By \cite[Lemma 2.5]{Handel} the map
\begin{equation}\label{eq:perfectquotXn}
    \cU^\an_n:(X^n, \Inc_n)^\an_{\Str, \lft} \to (\Ran_{\leq n}(X), \Inc_{\leq n})^\an_{\PSh\Str,\lft}
\end{equation}
is a closed quotient at the level of underlying topological spaces. 
Because $X^{n, \an}$ is Hausdorff and the fibers of \eqref{eq:perfectquotXn} are finite nonempty (hence compact), the underlying topological map of \eqref{eq:perfectquotXn} is a perfect quotient.
\end{rem}

Note that by \eqref{pullback-GrRann} the diagram
\begin{equation}\label{eq:diagramrelatedtoperfectquoz}
\begin{tikzcd}
\left(\Gr_{G,X^n}^{(N)}, \mathfrak{s}_n^{(N)} \right)
\arrow[d, "{p_{n}^{(N)}}"]
\arrow[r, "{\widetilde{\mathcal{U}}_n^{(N)}}"]
&
\left(\Gr_{G,\Ran_{\leq n}(X)}^{(N)}, \mathfrak{s}_{\leq n}^{(N)}\right)
\arrow[d, "{p_{\Ran_{\leq n}}^{(N)}}"]
\\
\left(X^n,\Inc_n\right)
\arrow[r, "{\mathcal{U}_n}"]
&
\left(\Ran_{\leq n}(X), \Inc_{\leq n} \right)
\end{tikzcd}
\end{equation}
is cartesian.

\begin{lem}\label{lem:widetildecUisperfquoz}
    The analytification via $(-)^\an_{\PSh\Str, \lft}$ of the map $\widetilde{\cU}^{(N)}_n$ 
    is a perfect quotient. Similarly, 
    the analytification via $(-)^\an_{\PSh\Str}$ of \[
    \widetilde{\cU}_n^+: \left(\LpG_{X^n}, \Inc_n \right) \to \left(\LpG_{\Ran_{\leq n}(X)}, \Inc_{{\leq n}}\right).
    \]
    is a perfect quotient.
\end{lem}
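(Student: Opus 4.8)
The plan is to verify, for each of the two maps, the four properties that together amount to being a perfect quotient in the sense of \Cref{perfect-map} and \Cref{perfect-quotients}: surjectivity, Hausdorffness of the source, compactness of all fibres, and closedness. The first three are soft; closedness is the technical heart.

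First I would observe that the indexing category $\fs^\op$, restricted to sets $I$ with $|I|\le n$, has $[n]=\{1,\dots,n\}$ as a \emph{weakly terminal} object, since every $J$ with $|J|\le n$ receives a surjection from $[n]$. For a $\Top$-valued diagram $F$ on a (small) category with a weakly terminal object $t$, the canonical map $F(t)\to\colim F$ is a topological quotient map: a subset of $\colim F$ is open precisely when its preimage in $F(t)$ is, because every $F(j)$ maps continuously to $F(t)$. Since $(-)^\an_{\PSh\Str,\lft}$ preserves small colimits and $(-)^\an_{\PSh\Str}$ preserves small colimits of stratified schemes (\Cref{theoremanalyti}), the analytifications of $\widetilde{\cU}_n^{(N)}$ and $\widetilde{\cU}_n^{+}$ are exactly the canonical maps out of the $I=[n]$ terms in the colimit presentations of \Cref{construction-Rann}; hence both are surjective quotient maps. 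For the source, $\Gr^{(N)}_{G,X^n}$ is projective over $X^n$, so $(\Gr^{(N)}_{G,X^n})^\an$ is locally compact Hausdorff, and $\LpG^\an_{X^n}=\lim_m\LmG^\an_{X^n}$ is locally compact Hausdorff as an inverse limit of such (cf.\ the proof of \Cref{prop:analitificationofthelowerobjs}). For the fibres, over a point of the target the fibre consists of the orderings of the underlying finite subset --- the relevant neighbourhood of $\Gamma_{\underline x}$ and the bundle (resp.\ section) datum depend only on the \emph{unordered} subset --- hence is finite, so a compact subspace of the Hausdorff source. (Equivalently, since \eqref{eq:diagramrelatedtoperfectquoz} is cartesian and the underlying-set functor of $(-)^\an_{\PSh\Str,\lft}$ preserves finite limits by \Cref{underlying-set-an}, this fibre is in bijection with the finite fibre of the perfect map $\cU_n^\an$.)

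For closedness I would realize the target analytification as the quotient of $(\Gr^{(N)}_{G,X^n})^\an$ by the equivalence relation $R$ generated --- via the weakly terminal object --- by the closed embeddings $\delta_\phi^\an$ for $\phi\colon[n]\twoheadrightarrow J$; concretely $R=\bigcup_{J,\ \phi,\psi\colon[n]\twoheadrightarrow J}\{(\delta_\phi^\an(w),\delta_\psi^\an(w)):w\in(\Gr^{(N)}_{G,X^J})^\an\}$. A short combinatorial check with surjections of finite sets --- two points of $(\Gr^{(N)}_{G,X^n})^\an$ with the same image both factor through a single point lying over the common underlying subset, ordered arbitrarily --- shows that this union is already transitive, hence equals $R$. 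Each of its pieces is the graph of a continuous map from a closed subspace of the Hausdorff space $(\Gr^{(N)}_{G,X^n})^\an$, hence closed in the square and with closed first projection; a finite union of such is again closed with closed first projection, and this forces the quotient map $\widetilde{\cU}_n^{(N),\an}$ to be closed. The argument for $\widetilde{\cU}_n^{+,\an}$ is identical, using that the group-level diagonals $\delta_\phi^{\textup{grp},\an}$ are closed embeddings and that $\LpG^\an_{X^n}$ is Hausdorff. Assembling the four properties yields that both maps are perfect quotients; by \Cref{recall-perfect-quotients-Hausdorff-locc} one then also deduces that the targets $\Gr^{(N),\an}_{G,\Ran_{\leq n}(X)}$ and $\LpG^\an_{\Ran_{\leq n}(X)}$ are locally compact Hausdorff, which is needed downstream.

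The hard part is the closedness step. Because analytification of small presheaves does not commute with finite limits --- precisely the phenomenon flagged in \Cref{warning} and \Cref{warning-Ran} --- one cannot argue formally that $\widetilde{\cU}_n^{(N),\an}$ is a base-change of the universally closed map $\cU_n^\an$; instead one must identify the colimit topology on the target explicitly and run the finite-set combinatorics certifying that the generated equivalence relation is closed, which is exactly the kind of local-compactness-and-Hausdorffness bookkeeping that \Cref{Har15} and \Cref{stratified-Harpaz} were set up to handle.
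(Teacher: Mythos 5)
Your surjectivity, Hausdorffness and finite-fibre steps are correct and essentially agree with the paper's proof (which also gets finiteness of fibres from the cartesian square \eqref{eq:diagramrelatedtoperfectquoz} together with \Cref{underlying-set-an}); the weakly-terminal-object observation giving that $\widetilde\cU^{(N),\an}_n$ is a surjective quotient is a clean way to package what the paper extracts from the colimit topology. For closedness, however, you genuinely diverge from the paper: the paper reduces, via $\widetilde\cU^{(N),\an}_I=\widetilde\cU^{(N),\an}_n\circ\delta^\an_\phi$, to showing that the saturation $(\widetilde\cU^{(N),\an}_n)^{-1}\bigl(\widetilde\cU^{(N),\an}_n(A)\bigr)$ of a closed set is closed, and proves this by induction on $n$ following \cite[Lemma 2.5]{Handel}, whereas you propose to write the ``same image'' relation as the finite union $R=\bigcup_{J,\phi,\psi}(\delta^\an_\phi,\delta^\an_\psi)\bigl(\Gr^{(N),\an}_{G,X^J}\bigr)$ and take saturations piece by piece. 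That strategy could work, but your justification of the key identification has a genuine gap.

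The problem is the claim that two points of $\Gr^{(N),\an}_{G,X^n}$ with the same image ``both factor through a single point lying over the common underlying subset''. At a fixed truncation level $N$ this is false: over diagonals the level-$N$ piece allows deeper poles (as recalled at the end of \Cref{BD-stratification}, the restriction of $\Gr^{(N)}_{G,X^I}$ to a small-diagonal point is $\Gr^{(|I|N)}_G$). For instance, with $n=5$ take $a=(x,x,x,y,y,\cF,\alpha)$ and $b=(x,x,y,y,y,\cF,\alpha)$ where the trivialization $\alpha$ has poles of order $2N$ at both $x$ and $y$: both points lie in $\Gr^{(N)}_{G,X^5}$ and have the same image, but the point over the reduced support $\{x,y\}$ lies in $\Gr^{(2N)}_{G,X^2}$ and not in $\Gr^{(N)}_{G,X^2}$, so the roof you prescribe is unavailable, and your ``short combinatorial check'' does not show that the same-image relation is contained in $R$. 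The statement $R=$ (same-image relation) is in fact true, but proving it requires a different roof --- choose $J$ so that each support point occurs in $w$ with multiplicity equal to the minimum of its multiplicities in $a$ and $b$, and check that the $N$-filtration of $\Gr_{G,X^I}$ is compatible with the $\delta_\phi$'s and scales with multiplicities --- a compatibility you neither prove nor cite, and which is exactly the kind of detail the paper's Handel-style induction is meant to absorb. Without it, the closedness of saturations, and hence the perfectness of $\widetilde\cU^{(N),\an}_n$, is not established by your argument. (For the group map $\widetilde\cU^{+,\an}_n$ your support-roof argument is fine, since $G(\widetilde\Gamma_{\underline x})$ depends only on the support and no $N$-truncation intervenes.)
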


\begin{proof}
Let us first show that $\widetilde{\mathcal{U}}_n^{(N), \an}$ is closed.
Let $A\subseteq \Gr_{G, X^n}^{(N), \an}$ be a closed subset. 
By definition of the colimit topology, $\widetilde\cU^{(N), \an}_n(A)$ is closed in $ \underset{I\in \fs, |I|\leq n}{\colim} \Gr^{(N),\an}_{G, X^I}$ 
if and only if $(\widetilde\cU^{(N), \an}_I)^{-1}( \widetilde\cU_n^{(N), \an}(A) )$ is closed in $\Gr_{G, X^I}^{(N), \an}$ for any $I\in \fs, |I|\leq n$. 
For any $\{1,\dots,n\}\overset{\phi}{\twoheadrightarrow} I$ we have $\widetilde\cU^{(N), \an}_I = \widetilde\cU^{(N), \an}_n\circ \delta_\phi$, 
and hence it is enough to check that $(\widetilde\cU^{(N), \an}_I)^{-1}\circ \widetilde\cU^{(N), \an}_n(A)$ is closed for $I=\{1,\dots,n\}$.  
This is done by induction on $n$ as in the proof of \cite[Lemma 2.5]{Handel}.
Note that $\widetilde\cU^{(N)}_n(\C)$ has finite nonempty fibers (for instance, this follows easily by taking complex points in \eqref{eq:diagramrelatedtoperfectquoz} and using \cref{underlying-set-an} to argue that $\widetilde\cU^{(N)}_n(\C)$ is a pullback of $\cU_n(\C)$). 
Therefore $\widetilde\cU^{(N), \an}_n$ has finite nonempty fibers as well. 
Because $\Gr^{(N), \an}_{G, X^n}$ is Hausdorff, $\widetilde\cU^{(N), \an}_n$ is a perfect quotient.
An analogous proof shows the statement for $\widetilde{\cU}^+_{n}$ (recall that $\LpG_{X^n}^\an$ is Hausdorff because limit of analytifications of quasi-projective complex schemes).
\end{proof}

The following result is not necessary for the upcoming proofs but we think it is still worth mentioning.
\begin{lem}
The diagram~\ref{eq:diagramrelatedtoperfectquoz} stays cartesian after applying $(-)^\an_{\PSh\Str, \lft}$.    
\end{lem}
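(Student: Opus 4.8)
The plan is to show that the comparison morphism
$\theta\colon \bigl(\Gr_{G,X^n}^{(N)},\mathfrak{s}_n^{(N)}\bigr)^\an_{\PSh\Str,\lft}\to \cP$
is an isomorphism in $\Str\Top$, where $\cP$ is the fiber product, taken in $\Str\Top$, of $\bigl(\Gr_{G,\Ran_{\leq n}(X)}^{(N)},\mathfrak{s}_{\leq n}^{(N)}\bigr)^\an_{\PSh\Str,\lft}$ and $(X^n,\Inc_n)^\an_{\Str,\lft}$ over $(\Ran_{\leq n}(X),\Inc_{\leq n})^\an_{\PSh\Str,\lft}$, and $\theta$ is the canonical map furnished by functoriality of analytification. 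A morphism of $\Str\Top$ is an isomorphism precisely when its underlying continuous map is a homeomorphism and its induced map of stratifying posets is an isomorphism, so I would verify these two points in turn. That $\theta$ is a bijection on underlying sets is immediate from \cref{underlying-set-an}: the square \eqref{eq:diagramrelatedtoperfectquoz} is cartesian in $\PSmall(\Str\Sch^\lft_\C)$, the functor $\Fgt_{\str,\topp}\circ(-)^\an_{\PSh\Str,\lft}$ preserves finite limits, and $\Fgt_{\str,\topp}(\cP)$ is the set-theoretic fiber product because $\Fgt_\str$ preserves finite limits (\cref{StrTopiscocomplete}).

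For the stratifying posets I would repeat the argument of \cref{underlying-set-an} with the poset functor $\tau\colon\Str\Top\to\Pos$ in place of $\Fgt_{\str,\topp}$. Since $\tau$ preserves colimits (\cref{StrTopiscocomplete}), the composite $\tau\circ(-)^\an_{\PSh\Str,\lft}$ is the left Kan extension of $\tau\circ(-)^\an_{\Str,\lft}$ along $\yo$; moreover $\tau\circ(-)^\an_{\Str,\lft}\colon\Str\Sch^\lft_\C\to\Pos$ preserves finite limits (\cref{StrSch admits finite limits}), $\Str\Sch^\lft_\C$ is small and has finite limits, and $\Pos$ is cocomplete with finite limits, so \cite[Lemma B.55]{E3} applies and shows that $\tau\circ(-)^\an_{\PSh\Str,\lft}$ preserves finite limits. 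Applied to \eqref{eq:diagramrelatedtoperfectquoz} this says exactly that $\theta$ induces an isomorphism of stratifying posets.

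The remaining, and only non-formal, point is that the underlying continuous map of $\theta$ is a homeomorphism; a direct argument is unavailable here precisely because colimits are not universal in $\Str\Top$. My plan is to exploit the perfect-quotient descriptions of the two horizontal maps of \eqref{eq:diagramrelatedtoperfectquoz}: $\cU_n^\an$ is a perfect quotient by \eqref{eq:perfectquotXn} and the remark following it, while $\widetilde{\cU}_n^{(N),\an}$ is a perfect quotient by \cref{lem:widetildecUisperfquoz}. The spaces $\Gr_{G,X^n}^{(N),\an}$ and $X^{n,\an}$ are locally compact Hausdorff, being analytifications of a projective variety over $X^n$ and of $X^n$ itself; hence by \cref{recall-perfect-quotients-Hausdorff-locc} their perfect quotients $\Gr_{G,\Ran_{\leq n}(X)}^{(N),\an}$ and $\Ran_{\leq n}(X)^\an$ are locally compact Hausdorff as well. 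Consequently the underlying space of $\cP$, being a closed subspace (as $\Ran_{\leq n}(X)^\an$ is Hausdorff) of the locally compact Hausdorff product $\Gr_{G,\Ran_{\leq n}(X)}^{(N),\an}\times X^{n,\an}$, is itself locally compact Hausdorff. Now write $\widetilde{\cU}_n^{(N),\an}=\pi\circ\theta$, with $\pi\colon\cP\to\Gr_{G,\Ran_{\leq n}(X)}^{(N),\an}$ the projection. Since $\widetilde{\cU}_n^{(N),\an}$ is perfect it is proper, and therefore $\theta$ is proper as well: for a compact $K\subseteq\cP$ the set $\theta^{-1}(K)$ is a closed subset of the compact set $(\widetilde{\cU}_n^{(N),\an})^{-1}(\pi(K))$. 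A proper continuous map into a locally compact Hausdorff space is closed (see, e.g., \cite[\S 3.7]{Engelking}), so $\theta$ is a closed continuous bijection, hence a homeomorphism.

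The main obstacle is this last paragraph: the bijectivity on underlying sets and on stratifying posets are routine repetitions of the proof of \cref{underlying-set-an}, but promoting the bijection $\theta$ to a homeomorphism genuinely needs the local compactness of the building blocks $\Gr_{G,X^I}^{(N),\an}$ and $X^{I,\an}$ together with the perfect-quotient statement of \cref{lem:widetildecUisperfquoz}, which is precisely the input that substitutes for the (failing) universality of colimits in $\Str\Top$.
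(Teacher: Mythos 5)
Your proposal is correct. The opening two steps (bijection on $\C$-points via \cref{underlying-set-an}, and cartesianness at the level of stratifying posets) match the paper in spirit, though you spell out the poset step through a second application of \cite[Lemma B.55]{E3} with the poset-forgetting functor, where the paper simply observes that the square of posets is cartesian because the algebraic square is. The genuinely different ingredient is your treatment of the topological step. The paper asserts that the analytic topology on $\Gr^{(N)}_{G,X^n}(\C)$ agrees with the fiber-product topology by checking that a subset is closed if and only if its images under $p_n^{(N),\an}$ and $\widetilde\cU^{(N),\an}_n$ are closed, invoking properness of $p_n^{(N),\an}$ and perfectness of $\widetilde\cU^{(N),\an}_n$; as stated this criterion does not characterize a subspace topology. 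You instead prove directly that the continuous bijection $\theta$ onto the fibre product $\cP$ is proper --- deducing properness from the single fact that $\widetilde\cU^{(N),\an}_n = \pi\circ\theta$ is proper, by the standard trick $\theta^{-1}(K)\subseteq (\widetilde\cU^{(N),\an}_n)^{-1}(\pi(K))$ --- and then use that a proper continuous map into a locally compact Hausdorff space is closed, having verified that $\cP$ is a closed subspace of the locally compact Hausdorff product. This route is cleaner: it needs less input (you never use properness of $p_n^{(N),\an}$), it plugs directly into \cref{lem:widetildecUisperfquoz} and \cref{remark-Rann-Hausdorff}, and it makes explicit exactly where local compactness is used to compensate for the failure of universality of colimits in $\Str\Top$.
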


\begin{proof}
By \Cref{underlying-set-an}, it does after applying $\Fgt_{\str, \textup{top}}\circ (-)^\an_{\PSh\Str, \lft}$: so 
\begin{equation}\label{underlying-sets-square}
\begin{tikzcd}[column sep=2cm]
{\Gr^{(N)}_{G, X^n}(\C)} 
\arrow[r, "{\tilde{\mathcal U}_n^{(N)}(\C)}"]
\arrow[d, "{p_{n}^{(N)}(\C)}"]
& 
{\Gr^{(N)}_{G, \Ran_{\leq n}(X)}(\C)} 
\arrow[d, "{p_{\Ran_{\leq n}}^{(N)}(\C)}"]
\\
{X^n(\C)} 
\arrow[r, "{\mathcal U_n(\C)}"]
& 
{\Ran_{\leq n}(X)(\C)}
\end{tikzcd}
\end{equation}
is cartesian in $\sets$. 
To show that it was already cartesian in $\Top$ (so before forgetting the topology), it suffices to prove that ${\Gr^{(N)}_{G, X^n}(\C)}$ was endowed with the fiber product topology.
Namely that a subset $A$ of $\Gr^{(N)}_{G, X^n}(\C)$ is closed if and only if $p_n^{(N), \an}(A)$ and $\widetilde \cU_n^{(N),\an}(A)$ are both closed.
This is true because:
\begin{itemize}
    \item $p_n^{(N),\an}$ is a proper map by \cite[Remark 3.1.4]{Zhu}, hence its analytification is a closed map;
    \item $\widetilde \cU_n^{(N),\an}(A)$ is closed by \Cref{lem:widetildecUisperfquoz}.
\end{itemize}
At the level of the stratifying posets, the diagram is
\[
\begin{tikzcd}
\mathfrak{s}^{(N), \an}_n
\arrow[r, hook]
\arrow[d]
&
\colim_{|I|\leq n} \mathfrak{s}^{(N), \an}_I
\arrow[d]
\\
\Inc_n
\arrow[r, hook, dashed]
&
\colim_{|I|\leq n} \Inc_I.
\end{tikzcd}
\]
This is cartesian by \Cref{eq:diagramrelatedtoperfectquoz}.
\end{proof}

On the other hand, the next result will play a crucial role in the proof of Theorem~\ref{extension-of-analytification}.

\begin{lem}\label{remark-Rann-Hausdorff}
The topological spaces underlying the analytifications $\left(\Ran_{\leq n}(X), \Inc_{{\leq n}}\right)^\an_{\PSh\Str, \lft}$, $\left(\Gr^{(N)}_{G,\Ran_{\leq n}(X)},  \mathfrak{s}_{\leq n}^{(N)}\right)^\an_{\PSh\Str, \lft}$ and $\left( \LpG_{\Ran_{\leq n}(X)}, \Inc_{\leq n} \right)^\an_{\PSh\Str}$ are locally compact Hausdorff spaces.
\end{lem}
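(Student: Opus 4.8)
The plan is to exhibit each of the three analytifications as the target of a \emph{perfect quotient} whose source is already known to be locally compact Hausdorff, and then to conclude by \cref{recall-perfect-quotients-Hausdorff-locc}, which guarantees that a perfect quotient of a locally compact Hausdorff space is again locally compact Hausdorff.

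First I would record that the relevant sources are locally compact Hausdorff. The space $X^{n,\an}$ is a complex analytic space, hence locally compact Hausdorff; $\Gr^{(N),\an}_{G,X^n}$ is locally compact Hausdorff because it is proper over $X^{n,\an}$ by \cite[Remark 3.1.4]{Zhu}, as already used in the proof of \cref{prop:analitificationofthelowerobjs}; and $\LpG^\an_{X^n}$ is locally compact Hausdorff, again as recorded in that proof, being a limit of analytifications of schemes of finite type over $X^n$. Next, by \cite[Lemma 2.5]{Handel} (for $\cU^{\an}_n$) and by \cref{lem:widetildecUisperfquoz} (for $\widetilde{\cU}^{(N),\an}_n$ and $\widetilde{\cU}^{+,\an}_n$), the underlying continuous maps of these three quotients are perfect surjections, with respective targets $(\Ran_{\leq n}(X),\Inc_{\leq n})^\an_{\PSh\Str,\lft}$, $(\Gr^{(N)}_{G,\Ran_{\leq n}(X)},\mathfrak{s}_{\leq n}^{(N)})^\an_{\PSh\Str,\lft}$ and $(\LpG_{\Ran_{\leq n}(X)},\Inc_{\leq n})^\an_{\PSh\Str}$. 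Applying \cref{recall-perfect-quotients-Hausdorff-locc} to each of these maps then gives exactly the three assertions of the lemma.

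I do not expect a genuine obstacle here: the only delicate input is the fact that the colimit topology on each of the three targets actually coincides with the quotient topology determined by the corresponding $\widetilde{\cU}$-map, and this closedness statement is precisely the content of \cref{lem:widetildecUisperfquoz} (proved there via the inductive argument of \cite[Lemma 2.5]{Handel}), so the present lemma reduces to assembling the cited facts.
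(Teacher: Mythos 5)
Your proposal is correct and matches the paper's own argument essentially line for line: identify the locally compact Hausdorff sources $X^{n,\an}$, $\Gr^{(N),\an}_{G,X^n}$, $\LpG^\an_{X^n}$, observe that $\cU^\an_n$, $\widetilde{\cU}^{(N),\an}_n$, $\widetilde{\cU}^{+,\an}_n$ are perfect quotients, and apply \cref{recall-perfect-quotients-Hausdorff-locc}. The only cosmetic difference is that you split the perfect-quotient citation between Handel and \cref{lem:widetildecUisperfquoz}, whereas the paper attributes all three to the latter; note that Handel's Lemma 2.5 per se gives only closedness for $\cU^\an_n$, the upgrade to a perfect quotient coming from the finite-nonempty-fiber remark preceding \cref{lem:widetildecUisperfquoz}, but this does not affect the correctness of your argument.
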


\begin{proof}
We noticed and used already that $X^{n, \an}, {\Gr^{(N),\an}_{G,X^n}}$ and $(\LpG_{X^n}, \Inc_n)^\an_{\Str\Sch}$ are locally compact Hausdorff spaces, because they are (limits of) analytifications of quasi-projective complex schemes.
Since $\cU^\an_n, \widetilde\cU^{(N),\an}_n$ and $\widetilde\cU^{+,\an}_n$ are perfect quotients by \Cref{lem:widetildecUisperfquoz}, we can conclude applying \cref{recall-perfect-quotients-Hausdorff-locc}.
\end{proof}

Now we are ready to recover the relative group structure of $(\LpG_{\Ran(X)}, \Inc_\Ran)^\an_{\PSh\Str}$ over $(\Ran(X),\Inc_\Ran)^\an_{\PSh\Str, \lft}$ and its action on $(\Gr_{\Ran(X)}, \mathfrak s_{\Ran})^\an_{\PSh\Str,\lft}$
                
\begin{prop}\label{extension-of-analytification}
The analytification procedure yields an object 
$$
(\LpG_{\Ran(X)}, \Inc_\Ran)^\an_{\PSh\Str} \in \Grp(\Str\Top/_{(\Ran(X), \Inc_{\Ran})^\an_{\PSh\Str,\lft}})
$$ 
together with a stratified action on $(\Gr_{G,\Ran(X)}, \mathfrak{s}_\Ran)^\an_{\PSh\Str,\lft}$ over $( \Ran(X), \Inc_{\Ran} )^\an_{\PSh\Str,\lft}$:
$$
(\LpG_{\Ran(X)}, \Inc_\Ran)^\an_{\PSh\Str} \hspace{-0.1cm}
\underset{(\Ran(X), \Inc_\Ran)^\an_{\PSh\Str,\lft}}{\times} \hspace{-0.8cm}
(\Gr_{G,\Ran(X)}, \mathfrak{s}_\Ran)^\an_{\PSh\Str,\lft}
\to 
(\Gr_{G,\Ran(X)}, \mathfrak{s}_\Ran)^\an_{\PSh\Str,\lft}.
$$
\end{prop}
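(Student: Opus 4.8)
The strategy is to overcome the failure of universality of colimits flagged in \Cref{warning-Ran} by reducing the statement to a single fact: that $(-)^\an_{\PSh\Str}$ carries the finite fibered products over $\Ran(X)$ of copies of $\LpG_{\Ran(X)}$ and $\Gr_{G,\Ran(X)}$, taken in $\PSmall(\Str\Sch_\C)$, to the corresponding fibered products in $\Str\Top$. Granting this, the proof is purely formal: the multiplication, unit and inversion of $(\LpG_{\Ran(X)},\Inc_\Ran)$ relative to $(\Ran(X),\Inc_\Ran)$, and its action on $(\Gr_{G,\Ran(X)},\mathfrak{s}_\Ran)$, are already honest morphisms in $\PSmall(\Str\Sch_\C)$ (see \Cref{LpGRan-is-group} and the construction of the action $\act_\Ran$), so their images under $(-)^\an_{\PSh\Str}$, resp.\ $(-)^\an_{\PSh\Str,\lft}$, are morphisms of $\Str\Top$; the commutation with fibered products identifies the sources of these morphisms with the correct (iterated) fibered products in $\Str\Top$, and the group axioms and action axioms---being equalities between morphisms out of such fibered products---then transfer from the presheaf level, where they hold. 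The same identification shows the resulting structures restrict, along $\cU_I$, to the ones recorded in \Cref{analytification-of-GrBD}.

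To prove the commutation with fibered products I would use the decomposition of \Cref{construction-Rann}: $(\Ran(X),\Inc_\Ran)=\colim_n(\Ran_{\leq n}(X),\Inc_{\leq n})$, the analogue for $\Gr_{G,\Ran(X)}$ involving $\colim_{n,N}(\Gr^{(N)}_{G,\Ran_{\leq n}(X)},\mathfrak{s}^{(N)}_{\leq n})$, and, by \eqref{new-decomposition-LpG}, $(\LpG_{\Ran(X)},\Inc_\Ran)=\colim_n(\LpG_{\Ran_{\leq n}(X)},\Inc_{\leq n})$. The argument then has two steps. First, at each finite stage the fibered products are computed correctly: using $\LpG_{\Ran_{\leq n}(X)}=\Ran_{\leq n}(X)\times_{\Ran(X)}\LpG_{\Ran(X)}$ and $\Gr_{G,\Ran_{\leq n}(X)}=\Ran_{\leq n}(X)\times_{\Ran(X)}\Gr_{G,\Ran(X)}$ (as in \eqref{pullback-GrRann}), and the fact that the unordering maps $\cU_n^\an$, $\widetilde\cU_n^{+,\an}$, $\widetilde\cU_n^{(N),\an}$ are perfect quotients (\Cref{lem:widetildecUisperfquoz} and the remark preceding it), \Cref{properties-of-perfect-quotients} shows that the canonical comparison maps from the corresponding fibered products of the $X^n$-level spaces---for instance $(\LpG_{X^n})^\an\times_{(X^n)^\an}(\LpG_{X^n})^\an\to(\LpG_{\Ran_{\leq n}(X)})^\an\times_{(\Ran_{\leq n}(X))^\an}(\LpG_{\Ran_{\leq n}(X)})^\an$, and likewise with a $\Gr^{(N)}$ factor, and for iterated products---are again perfect quotients, hence topological quotients; since $(-)^\an_\Str$ preserves small limits (and restricts to $(-)^\an_{\Str,\lft}$ on objects locally of finite type), the $X^n$-level fibered products are preserved by analytification, and this quotient identification then pins down the $\Ran_{\leq n}$-level ones. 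Second, the colimit over $n$ (and over $N$) commutes with these fibered products by \Cref{stratified-Harpaz}: the transition maps are closed embeddings, all the terms are locally compact Hausdorff by \Cref{remark-Rann-Hausdorff}, and the stratifications on the $\LpG$-factors are pulled back from $(\Ran_{\leq n}(X),\Inc_{\leq n})^\an$, so the hypotheses of \emph{loc.\ cit.}\ are met; for the iterated products one applies \Cref{stratified-Harpaz} repeatedly, using that a finite fibered product of locally compact Hausdorff spaces is, by the first step, a perfect quotient of one and hence again locally compact Hausdorff by \Cref{recall-perfect-quotients-Hausdorff-locc}.

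The main obstacle is the interplay of the two nested colimits. The inner one---over finite sets $I$ with $|I|\leq n$, collapsing $X^n$ onto $\Ran_{\leq n}(X)$---is not a finite colimit, so universality cannot be invoked; it is instead controlled by the perfect-quotient input of \Cref{lem:widetildecUisperfquoz} (an analytic refinement of Handel's lemma) together with \Cref{properties-of-perfect-quotients}. The outer one---over $n$ and $N$---is controlled by Harpaz's stratified result \Cref{stratified-Harpaz}, which is precisely why the local compactness established in \Cref{remark-Rann-Hausdorff} is indispensable. A subsidiary point, but one that is automatic in our situation, is that the descended structures are \emph{stratified} rather than merely continuous: the stratifications of the $\LpG_{\Ran_{\leq n}(X)}^\an$ are pulled back from the base and every quotient and transition map in sight is stratification-preserving, so no extra work is needed on the poset side.
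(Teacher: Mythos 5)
Your proposal is sound and, at the level of ingredients, runs parallel to the paper's proof: both filter by $\Ran_{\leq n}$ (from \cref{construction-Rann}), both invoke the perfect-quotient lemma \cref{properties-of-perfect-quotients} together with \cref{lem:widetildecUisperfquoz}, and both invoke \cref{stratified-Harpaz} via the local compactness established in \cref{remark-Rann-Hausdorff}. The genuine organizational difference is this: the paper never claims, nor proves, that $(-)^\an_{\PSh\Str}$ commutes with the relevant fibered products. It works exclusively with the \emph{topological} fibered product appearing in the statement, rewrites it via \cref{stratified-Harpaz} as $\colim_n$ of the $\Ran_{\leq n}$-level topological fibered products, and then directly descends the multiplication $\mu^{+,\an}_n$ along the perfect quotient $\widetilde\cU^{+,\an}_n\times_{\cU_n^\an}\widetilde\cU^{+,\an}_n$ to obtain $\mu^{+,\an}_{\leq n}$. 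Your plan instead aims for the stronger intermediate statement that $(-)^\an_{\PSh\Str}$ preserves these fibered products, after which the group and action axioms transfer formally from the presheaf level. This is conceptually cleaner (the paper is somewhat terse about why the descended maps satisfy the group axioms; in your framework they do so by functoriality), but it costs you an extra step that you have not fully spelled out.

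That step is in your ``this quotient identification then pins down the $\Ran_{\leq n}$-level ones.'' You correctly identify the \emph{topological} fibered product over $\Ran_{\leq n}^\an$ as a perfect quotient of $(\LpG_{X^n})^\an\times_{(X^n)^\an}(\LpG_{X^n})^\an$. But to identify it with the \emph{analytification of the presheaf fibered product} $\left(\LpG_{\Ran_{\leq n}(X)}\times_{\Ran_{\leq n}(X)}\LpG_{\Ran_{\leq n}(X)}\right)^\an_{\PSh\Str}$, you also need: (i) by universality of colimits in $\PSmall(\Str\Sch_\C)$ and the identity $\LpG_{X^J}=X^J\times_{\Ran(X)}\LpG_{\Ran(X)}$, the presheaf fibered product is $\colim_{|J|\leq n}\left(\LpG_{X^J}\times_{X^J}\LpG_{X^J}\right)$, a diagram in which the $J=\{1,\dots,n\}$ term is weakly terminal, so after applying the colimit-preserving $(-)^\an_{\PSh\Str}$ the cocone map from the $n$-term is itself a topological quotient; and (ii) the two quotients have the same underlying set, which requires comparing $\C$-points. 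Point (ii) is delicate because \cref{underlying-set-an} is stated for $(-)^\an_{\PSh\Str,\lft}$ and its proof (via \cite[Lemma B.55]{E3}) uses smallness of $\Str\Sch^\lft_\C$; for $(-)^\an_{\PSh\Str}$, where $\Str\Sch_\C$ is not small, one has to argue more directly---e.g.\ by using that $\Fgt_{\str,\textup{top}}$ preserves colimits and finite limits, that $\Fgt_{\str,\textup{top}}\left((\LpG_{X^J})^\an_\Str\right)=\LpG_{X^J}(\C)$ for these particular (pro-lft) schemes, and that universality of colimits holds in $\sets$. Both points are fillable, but they are precisely what the paper's descent argument avoids having to address, which is why the paper's route is more economical even though yours is arguably more systematic.
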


\begin{proof}
By \cref{remark-Rann-Hausdorff} we can apply \cref{stratified-Harpaz} to the fiber product 
\[
(\LpG_{\Ran(X)}, \Inc_\Ran)^\an_{\PSh\Str} 
\underset{(\Ran(X), \Inc_\Ran)^\an_{\PSh\Str,\lft}}{\times} 
(\LpG_{\Ran(X)}, \Inc_\Ran)^\an_{\PSh\Str} 
\]
and by \Cref{construction-Rann} we can rewrite it as
\begin{equation}\label{application-of-Harpaz}
\colim_{n\geq 1} 
\left( 
(\LpG_{\Ran_{\leq n}(X)} , \Inc_{\leq n})^\an_{\PSh\Str} 
\underset{{(\Ran_{\leq n}(X), \Inc_{\leq n})^\an_{\PSh\Str, \lft}}}{\times} 
(\LpG_{\Ran_{\leq n}(X)} , \Inc_{\leq n})^\an_{\PSh\Str} 
\right).
\end{equation}
 
Let $\mu^+_n$ be the multiplication on $(\LpG_{X^n},\Inc_n)$.
Consider the diagram 
$$
\begin{tikzcd}
(\LpG_{X^n}, \Inc_n)^\an_{\PSh\Str}
\underset{(X^{n, \an}, \Inc_n)}{\times} 
(\LpG_{X^n}, \Inc_n)^\an_{\PSh\Str}
\arrow[r, "{\mu^{+, \an}_n}"]
\arrow[d, "\widetilde{\cU}^{+,\an}_n \times_{\cU_n^\an}\, \widetilde{\cU}^{+,\an}_n"]
&
(\LpG_{X^n}, \Inc_n)^\an_{\PSh\Str}
\arrow[d, "\widetilde{\cU}^{+,\an}_n"]
\\
\hspace{-0.2cm} (\LpG_{\Ran_{\leq n}(X)}, \Inc_{\leq n})^\an_{\PSh\Str} 
\underset{\Ran^\an_{\leq n}}{\times} 
(\LpG_{\Ran_{\leq n}(X)}, \Inc_{\leq n})^\an_{\PSh\Str}
&
(\LpG_{\Ran_{\leq n}(X)}, \Inc_{\leq n})^\an_{\PSh\Str},
\end{tikzcd}
$$
where $\Ran^\an_{\leq n}$ denotes $(\Ran_{\leq n}(X),\Inc_{\leq n})^\an_{\PSh\Str, \lft}$.
By \cref{properties-of-perfect-quotients} the map $\widetilde{\cU}^{+, \an}_n \times_{\cU_n^\an} \widetilde{\cU}^{+, \an}_n$ is a perfect quotient. 
Therefore there is an arrow $\mu^{+, \an}_{\leq n}$ completing the above diagram into a commutative square in $\Top$.
Since at the level of the posets the diagram is 
$$
\begin{tikzcd}
\Inc_n
\arrow[r, equal]
\arrow[d]
&
\Inc_n
\arrow[d]
\\
\colim_{|I|\leq n} \Inc_I
\arrow[r, equal, dashed]
&
\colim_{|I|\leq n} \Inc_I.
\end{tikzcd}
$$
the map $\mu^{+, \an}_{\leq n}$ is also stratified.  

By \cref{stratified-Harpaz}, taking the colimit in $n$, one recovers a well-defined continuous stratified group law on $(\LpG_{\Ran(X)},\Inc_\Ran)^\an_{\PSh\Str}$ over $(\Ran(X), \Inc_\Ran)^\an_{\PSh\Str, \lft}$.
Analogously, if we apply \cref{stratified-Harpaz} twice, the fiber product 
\[
(\LpG_{\Ran(X)}, \Inc_\Ran)^\an_{\PSh\Str}
\underset{(\Ran(X), \Inc_\Ran)^\an_{\PSh\Str,\lft}}{\times} 
(\Gr_{G,\Ran(X)}, \mathfrak{s}_\Ran)^\an_{\PSh\Str,\lft}
\]
is isomorphic to
\[
\colim_{n\geq 1, N\geq 0} \left( (\LpG_{\Ran_{\leq n}(X)}, \Inc_{\leq n})^\an_{\PSh\Str}
\underset{(\Ran_{\leq n}(X), \Inc_{\leq n})^\an_{\PSh\Str,\lft}}{\times} 
(\Gr_{G,\Ran_{\leq n}(X)}^{(N)}, \mathfrak{s}_{\leq n}^{(N)})^\an_{\PSh\Str,\lft}
\right).
\]
And in the same way, using that  $\widetilde{\cU}^{+, \an}_n \times_{\cU^\an_n}\widetilde{\cU}^{(N), \an}_n$ is a perfect quotient, 
one recovers a continuous stratified group action of $(\LpG_{\Ran(X)},\Inc_\Ran)^\an_{\PSh\Str}$ on $(\Gr_{G,\Ran(X)}, \mathfrak{s}_\Ran)^\an_{\PSh\Str,\lft}$ over $( \Ran(X), \Inc_{\Ran} )^\an_{\PSh\Str,\lft}$. 
\end{proof}

\section{Isotopy invariance}\label{Section-invariance}
\subsection{Lifting isotopies}

Most of the proof of the main result of the paper, \Cref{homotopy-equivalence-D-D'}, is based on the following three lemmas.

\begin{lem}\label{remark-lift-automorphism} 
    Let $R$ be a $\C$-algebra locally of finite type. Any $R$-linear automorphism $f:X_R\to X_R$ induces an automorphism of ind-$R$-schemes $\Phi_{f}: (\Gr_{G,X^I})_R\to (\Gr_{G,X^I})_R$. The map $f\mapsto \Phi_f$ is natural in $R$. 
    So it defines a morphism of presheaves $\PSh(\Aff^\lft_\C)$ 
    \begin{equation}\label{morphism-between-Aut}
        \Phi: \uAut_\C(X)\to \uAut_\C(\Gr_{G,X^I}),\quad f\mapsto \Phi_f.
    \end{equation} 
\end{lem}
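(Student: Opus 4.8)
The plan is to build $\Phi_f$ directly on functor-of-points, using the moduli description \eqref{moduliinterpretationBD}. Fix $f \in \uAut_\C(X)(R)$, i.e.\ an $R$-automorphism $f \colon X_R \to X_R$; for an $R$-algebra $S$ write $f_S \colon X_S \to X_S$ for the base change and $f_S^I \colon X_S^I \to X_S^I$ for the induced automorphism of the $|I|$-fold power. I would then define, on $S$-points,
\[
\Phi_f \colon (\Gr_{G,X^I})_R(S) \longrightarrow (\Gr_{G,X^I})_R(S),
\qquad
(x_I, \cF, \alpha) \longmapsto \big( f_S^I(x_I),\ (f_S)_\ast \cF,\ (f_S)_\ast\alpha \big),
\]
where $(f_S)_\ast = (f_S^{-1})^\ast$ denotes pushforward of $G$-torsors along the isomorphism $f_S$. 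Using the pushforward (equivalently, pullback along $f_S^{-1}$) rather than $f_S^\ast$ is exactly what makes $f \mapsto \Phi_f$ \emph{covariant}.

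To see this is well defined I must check the trivialization datum is carried along correctly. As $f_S$ is an automorphism of $X_S$ over $\Spec S$, it sends the graph of each of the $|I|$ sections $\Spec S \to X_S$ defining $x_I$ onto the graph of its composite with $f_S$; hence $f_S(\Gamma_{x_I}) = \Gamma_{f_S^I(x_I)}$ as closed subschemes of $X_S$ (see \cref{graphs-of-points}), and $f_S$ restricts to an isomorphism $X_S \setminus \Gamma_{x_I} \xrightarrow{\sim} X_S \setminus \Gamma_{f_S^I(x_I)}$. Thus $(f_S)_\ast\alpha$ is a genuine trivialization of $(f_S)_\ast\cF$ on $X_S \setminus \Gamma_{f_S^I(x_I)}$, so $\Phi_f(x_I,\cF,\alpha)$ is a well-defined $S$-point of $\Gr_{G,X^I}$; it respects the equivalence relation of \cref{BD-Grassmannian} because $(f_S)_\ast$ is functorial in the torsor, and it is natural in $S$ since $f_S$, $f_S^I$, $(f_S)_\ast$ all commute with base change along maps of $R$-algebras. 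Hence $\Phi_f$ is an endomorphism of the presheaf $(\Gr_{G,X^I})_R$.

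Next I would record $\Phi_{\id} = \id$ and $\Phi_g \circ \Phi_f = \Phi_{g \circ f}$: the first is immediate, and the second follows from $(g \circ f)_S = g_S \circ f_S$ together with covariance of $(-)_\ast$ and of $(-)^I$. In particular each $\Phi_f$ is invertible with inverse $\Phi_{f^{-1}}$, hence an automorphism; and to upgrade it to an automorphism of ind-$R$-schemes it is enough to note that $\Phi_f$ preserves each closed subfunctor $\Gr^{(N)}_{G,X^I}$, because the condition cutting out $\Gr^{(N)}_{G,X^I}$ only bounds the relative position of $\cF$ with respect to $\alpha$ along $\Gamma_{x_I}$ — a locally constant, $\xt$-valued invariant that is unchanged by relabelling the points via $f_S$ — so $\Phi_f$ restricts to a scheme automorphism of $\Gr^{(N)}_{G,X^I}$ with inverse the restriction of $\Phi_{f^{-1}}$. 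Finally, the same base-change compatibilities show $R \mapsto (f \mapsto \Phi_f)$ is natural in $R$, producing the morphism of presheaves $\Phi$ of \eqref{morphism-between-Aut}.

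I expect no real obstacle: the only two points that use more than formal bookkeeping are the variance convention (needed so that $\Phi$ is a homomorphism rather than an anti-homomorphism) and the identity $f_S(\Gamma_{x_I}) = \Gamma_{f_S^I(x_I)}$, which is where the assumption that $f$ is an \emph{automorphism} (and not merely an endomorphism of $X$) is genuinely used; the compatibility with the ind-structure is the sole spot requiring a little geometric input, and it reduces to the remark above about Schubert bounds.
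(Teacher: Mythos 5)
Your construction is correct and follows the paper's approach of defining $\Phi_f$ directly on the moduli description, but you use the opposite (inverse) convention: you send $x_I \mapsto f^I(x_I)$ and $\cF \mapsto (f^{-1})^*\cF$, whereas the paper sends $x_I \mapsto (f^{-1})^I(x_I)$ and $\cF \mapsto f^*\cF$. Both are well-defined for the same reason (the automorphism $f$ carries the support of the trivialization exactly onto the support of the new tuple of points), and they differ simply by $\Phi_f \leftrightarrow \Phi_{f^{-1}}$. Your choice is the cleaner one in one respect: it makes $f \mapsto \Phi_f$ a group \emph{homomorphism} ($\Phi_g \circ \Phi_f = \Phi_{gf}$), whereas the paper's formula gives $\Phi_g \circ \Phi_f = \Phi_{fg}$, i.e.\ an anti-homomorphism — though since the lemma only asserts a morphism of presheaves (not of group presheaves), both conventions satisfy the statement, and the paper's downstream computations (\cref{lemma-stratified-automorphism}, \cref{lift-isotopy-for-groups}) consistently use its own convention. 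Your closing remark about preservation of each $\Gr^{(N)}_{G,X^I}$ is correct but is not actually needed here: an automorphism of the underlying presheaf of an ind-scheme is automatically an automorphism of the ind-scheme, since the embedding of ind-schemes into presheaves is fully faithful (this is also why the paper does not argue it in this lemma, deferring the finer stratified statement to \cref{lemma-stratified-automorphism}, where it is proved via the twisted-product description rather than by the informal Schubert-bound argument you sketch).
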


\begin{proof}
Let $A$ be an $R$-algebra locally of finite type, $\tau: \Spec A \rightarrow \Spec R$. Denote by $f_A$ the base change of $f$ to $X_A$ by $\tau$. 
If $y_I$ is the composition 
$$
\Spec A
\xhookrightarrow{x_I \times \id_A }X^I_A\xrightarrow{(f_A^{-1})^I} X_A^I \xrightarrow{\textup{pr}_{X^I}} X^I,
$$
define
$$
\Phi_{f,A}: (\Gr_{G,X^I})_R (A) \to 
(\Gr_{G,X^I})_R (A), 
\quad
( x_I, \mathcal{F},\alpha, \tau ) \mapsto (y_I,f^*_A\cF,f^*_A\alpha,\tau).
$$
This is well-defined because $f_A^*(\mathcal{F}|_{X_A\setminus \Gamma_{x_I}})\simeq (f_A^*\mathcal{F})|_{X_A\setminus \Gamma_{y_I}}$ and $f^*_A\mathcal{T}_{G,X_A}\simeq\mathcal{T}_{G,X_A}$.
Since the formation of $f_A$ is natural in $A$, so is $\Phi_{f, A}$.

\end{proof}

\begin{lem}\label{lemma-stratified-automorphism}
Let $R$ be a $\C$-algebra locally of finite type.
Let $f$ be an automorphism of $X_R$.
For any $N$, the automorphism $\Phi_f$ induces
an automorphism of $\left( (\Gr_{G,X^I}^{(N)})_R, (\mathfrak{s}_I)_R \right)$.
In particular $\Phi_f$ upgrades to an automorphism  of $\left( (\Gr_{G,X^I})_R, (\mathfrak{s}_I)_R \right)$ in $\PSmall( \Str\Sch_R^\lft /_{(X_R^I, (\Inc_I)_R)} )$.
\end{lem}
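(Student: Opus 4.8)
The plan is to verify that the automorphism of ind-$R$-schemes $\Phi_f$ constructed in \Cref{remark-lift-automorphism} is compatible with two extra pieces of structure: the exhausting filtration $\{(\Gr^{(N)}_{G,X^I})_R\}_N$ by projective $R$-schemes over $X^I_R$, and the stratification $(\mathfrak s_I)_R$, with the \emph{identity} as the induced order map on indexing posets. Granting these, the ``in particular'' clause follows by passing to the colimit over $N$, using that $\bigl((\Gr_{G,X^I})_R,(\mathfrak s_I)_R\bigr)\simeq\colim_N\bigl((\Gr^{(N)}_{G,X^I})_R,(\mathfrak s_I^{(N)})_R\bigr)$ in $\PSmall(\Str\Sch^\lft_R)$ (the stratified version of \Cref{Ran-Gr-is-colim} used in \Cref{Ran-stratification}) and that $\Phi_f$ and $\Phi_{f^{-1}}$ respect the transition maps. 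The only non-formal ingredient — and the step I expect to demand the most care — is a purely \emph{local} assertion: pulling a $G$-torsor-with-trivialization back along the local isomorphism $f_A$ does not alter the ``relative position'' invariants out of which both the stratification and the truncations $\Gr^{(N)}$ are built.

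Concretely, recall from \Cref{BD-stratification} that the stratifying poset of $\mathfrak s_I$ has as elements the pairs $\bigl([\psi\colon I\twoheadrightarrow J],\underline\nu\bigr)$ with $\underline\nu\in(\xt)^{|J|}$, the stratum $\Gr_{G,X^\psi,\underline\nu}$ parametrising data $(x_I,\cF,\alpha)$ whose points collide exactly according to $\psi$ and for which the relative position of $(\cF,\alpha)$ at the $j$-th of the $|J|$ distinct points equals $\nu_j$ (these being assembled, via the factorization isomorphism \eqref{factorization-property}, into the tuple $\underline\nu$); each $\Gr^{(N)}_{G,X^I}$ is a finite union of such strata and their closures. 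Now fix an $R$-algebra $A$ locally of finite type and a point $(x_I,\cF,\alpha,\tau)$ of $(\Gr_{G,X^I})_R(A)$ lying in the stratum $(\psi,\underline\nu)$; by \Cref{remark-lift-automorphism} its image under $\Phi_{f,A}$ is $(y_I,f_A^*\cF,f_A^*\alpha,\tau)$ with $y_I=\textup{pr}_{X^I}\circ(f_A^{-1})^I\circ(x_I\times\id_A)$. Since $f_A\colon X_A\to X_A$ is an isomorphism one has $y_i=y_{i'}$ exactly when $x_i=x_{i'}$, so $y_I$ still has collision pattern $\psi$; moreover $f_A$ carries $\Gamma_{y_I}$ isomorphically onto $\Gamma_{x_I}$, hence restricts to an isomorphism of the respective formal neighbourhoods identifying $(f_A^*\cF,f_A^*\alpha)$ with $(\cF,\alpha)$, so the relative position at each $y_j$ is again $\nu_j$. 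Therefore $\Phi_f^{\Zar}$ sends the locally closed stratum indexed by $(\psi,\underline\nu)$ onto itself, i.e. $s_I\circ\Phi_f^{\Zar}=s_I$, so $(\Phi_f,\id)$ is a map of stratified schemes; and since it preserves the coweight data it preserves the filtration, restricting for each $N$ to an endomorphism of the $R$-scheme $(\Gr^{(N)}_{G,X^I})_R$ with two-sided inverse $\Phi_{f^{-1}}$ (handled identically), hence to an automorphism of $\bigl((\Gr^{(N)}_{G,X^I})_R,(\mathfrak s_I)_R\bigr)$. This automorphism is moreover compatible with the structure map to $(X^I_R,(\Inc_I)_R)$ via the automorphism $\overline f$ of that base induced diagonally by $f$ (the two fit in an obvious commutative square); in particular it yields an automorphism of $(\Gr_{G,X^I})_R$ in $\PSmall(\Str\Sch^\lft_R/_{(X^I_R,(\Inc_I)_R)})$.

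As already flagged, everything hinges on the local invariance of the relative-position datum under the change of coordinate $f_A$. For a \emph{single} point this is exactly the insensitivity of the Cartan decomposition of $G$ over the punctured formal disk to the choice of uniformizer; the general (colliding) case reduces to this by restricting to the formal neighbourhood of $\Gamma_{x_I}$ and invoking the factorization isomorphism \eqref{factorization-property}, which identifies the relevant piece of $\Gr_{G,X^I}$ with a product of copies of $\Gr_{G,X}$. Assembling the truncated automorphisms over $N$ as above then completes the argument.
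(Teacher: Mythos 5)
Your proposal is correct and takes essentially the same approach as the paper: reduce to the one-point case via the factorization isomorphism (using that $\Phi_f$ commutes with the gluing of torsors), then exploit the invariance of the Schubert stratification and truncations under changes of formal coordinate. The one place the paper is more explicit is precisely the step you flag as delicate: rather than simply invoking ``insensitivity of the Cartan decomposition to the choice of uniformizer,'' the paper computes the action of $\Phi_f$ on the twisted-product model $\widehat X\times^{\uAut_\C\C\taylor}\Gr_{G,\nu}$ and shows it sends $[(x,\eta,\widetilde\cF,\widetilde\alpha)]$ to $[(\textup{pr}_X f_A^{-1}x,\,\widehat f_{A,x}^{-1}\circ\eta,\,\widetilde\cF,\,\widetilde\alpha)]$ — altering only the base point and the formal coordinate while leaving the $\Gr_{G}$-component untouched — and this explicit verification is also what makes your assertion that $\Phi_f$ preserves the (possibly non-reduced) truncations $\Gr^{(N)}$ airtight, since $\Gr^{(N)}_{G,X}$ is by definition $\mathfrak{bl}(\widehat X\times^{\uAut_\C\C\taylor}\Gr_G^{(N)})$ and $\Gr_G^{(N)}$ is $\uAut_\C\C\taylor$-stable.
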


\begin{proof}
For the sake of notation, we write the proof for $R=\C$. The general case is analogous.
The map $f^I:X^I\to X^I$ respects the incidence stratification on $X^I$,  so $\Phi_f$ restricts to $\left ( \prod_{j=1}^{|J|}\Gr_{G,X} \right )_\disj$. 
Moreover, since pulling back along an automorphism commutes with the operation of gluing torsors, for any $X^\phi\subset X^I$, $\Phi_f$ commutes with the factorization isomorphism \eqref{factorization-property}
\[
\mathfrak{f}_\phi : \left ( \prod_{j=1}^{|J|} \Gr_{G,X} \right )_{\disj} \xrightarrow{\sim} \Gr_{G, X^\phi}.
\]
Since this isomorphism restricts to each $N$-level, it is enough to check the statement for $\Gr_{G, X}^{(N)}$.

Let us then consider the stratum $\Gr_{G,X,\nu}$, $\nu\leq N$, together with the isomorphism 
\[
\mathfrak{bl}  : \widehat{X}\times^{\uAut_\C \C \taylor } \Gr_{G,\nu} \xrightarrow{\sim} \Gr_{G,X,\nu}
\]
defined in \Cref{characterization-X-hat}.
Let $A$ be a $\C$-algebra and pick an $A$-point 
$$
[(x,\eta,\widetilde{\mathcal{F}},\widetilde{\alpha})] \in \left( (\widehat{X}\times \Gr_{G,\nu} )/ \uAut_\C\C \taylor \right) (A).
$$
Let $(x,\mathcal{F},\alpha)$ be the image of $[(x,\eta,\widetilde{\mathcal{F}},\widetilde{\alpha})]$ in $\Gr_{G,X,\nu}$, characterized by \eqref{theisomoorphism} as the pair $(\mathcal{F},\alpha)$ such that (see the notation in \Cref{formalcoordandtorsorrecall})
$$
\eta^*i_{\widehat{x}}^*\mathcal{F} \cong \widetilde{\mathcal{F}}, \quad 
(i_{\widehat{x}}\circ \eta)|_{\Spec A\laurent}^*  \alpha \cong \widetilde \alpha.
$$ 
Now $\Phi_{f,A}(x,\mathcal{F},\alpha)=( f^{-1} x, f_A^*\mathcal{F},f_A^*\alpha )$.
In particular
\begin{equation}\label{isomorphismonpairs}
    (f_A^{-1}\circ i_{\widehat{x}}\circ \eta)^* f_A^*\mathcal{F} \cong \widetilde{\mathcal{F}}, \quad 
     (f_A^{-1}\circ i_{\widehat{x}}\circ \eta)|_{\Spec A\laurent}^*  f_A^*|_{\Spec A\laurent} \alpha \cong \widetilde \alpha.
\end{equation}
Using the cartesian diagram
\begin{equation}\label{diagramforeta}
\begin{tikzcd}
\Spec A\taylor
\arrow[d, equal]
\arrow[r, "{\widehat{f}_{A, x}^{-1}\circ \eta}"]
&
\underline{\Spec}_{X_A}( \widehat{ \cO}_{\Gamma_{f^{-1} x} })
\arrow[d, "{\widehat{f}_{A, x}}"]
\arrow[r,"{i_{\widehat{f^{-1} x}}}"]
& 
X_A
\arrow[d,"{f_A}"]
\\
\Spec A\taylor
\arrow[r,"\eta"]
&
\underline{\Spec}_{X_A}( \widehat{ \cO}_{\Gamma_x})
\arrow[r,"{i_{\widehat{x}}}"]
& 
X_A,
\end{tikzcd}
\end{equation}
equalities in \eqref{isomorphismonpairs} can be rewritten as
$$
(\widehat{f}_{A,x}^{-1}\circ\eta)^* i_{\widehat{f^{-1}x}}^* ( f_A^*\mathcal{F}) \cong \widetilde{\mathcal{F}} , \quad
(\widehat{f}_{A,x}^{-1}\circ\eta|_{\Spec R\laurent} )^* i_{\widehat{f^{-1}x}}^*
(f_A^*\alpha ) \cong \widetilde \alpha,
$$
which means that $\Phi_f$ acts on $\widehat{X} \times \Gr_{G,\nu} / \uAut_\C \C\taylor$ sending
\begin{equation}\label{mapPhifatloclevel}
[(x,\eta, \widetilde{\mathcal{F}}, \widetilde{\alpha})] \mapsto 
[( \textup{pr}_{X}f_A^{-1}x, \widehat{f}_{A,x}^{-1}\circ\eta, \widetilde{\mathcal{F}}, \widetilde \alpha  )]
\end{equation}
Passing to the sheafification, this implies that $\Phi_f$ only modifies the first component of $\widehat{X}\times^{\uAut_\C \C \taylor } \Gr_{G,\nu}$ and therefore preserves the stratification.
\end{proof}

\begin{defin}\label{defin-algebraic-isotopy}
Let $Y$ be an object in $\PSh(\Aff^\lft_\C)$.
An \emph{algebraic isotopy} of $Y$ is a morphism in $\PSh(\Aff^\lft_\C)$
$$
    F:U \to \uAut_{\C}(Y),
$$
where $U$ is an open of $\Ac$ such that $[0,1]\subset U^{\an}$.
\end{defin}

\begin{rem}\label{lift-isotopy}  
Given an algebraic isotopy of $X$, by \Cref{remark-lift-automorphism} we get an algebraic isotopy
$$
    \Phi\circ F: U \to \uAut_\C(\Gr_{G,X^I}).
$$
Let us consider $U$ as a stratified scheme with the trivial stratification. 
Composing with the evaluation
$$
    \textup{ev}: \uAut_{\C}(\Gr_{G,X^I})\times_{\C}  \Gr_{G,X^I} \to \Gr_{G,X^I}, \quad \quad
    (f,x)\mapsto f(x)
$$
we get a map of ind-$\C$-schemes
\[
    \textup{ev}\, \circ  \, (\Phi\circ F, \id ) : U \times_{\C} \Gr_{G,X^I}\xrightarrow{} \Gr_{G,X^I}.
\] 
By \Cref{lemma-stratified-automorphism}, this map is actually stratified, giving a map in $\PSmall(\Str\Sch^\lft_\C/_{(X_I,\Inc_I)})$ 
\begin{equation}\label{A1-isotopy}
\textup{ev}\, \circ  \, (\Phi\circ F, \id ) : \left(U \times_{\C} \Gr_{G,X^I}, \textup{triv}\times \mathfrak{s}_I\right) \xrightarrow{} \left( \Gr_{G,X^I}, \mathfrak{s}_I\right).
\end{equation}
Let us take the analytification $(-)^\an_{\PSh\Str}$ of \eqref{A1-isotopy}
\begin{equation}\label{isotopy} 
\Psi_U \coloneqq \textup{ev}\, \circ  \, ( (\Phi\circ F)^\an, \id ): \left( U^\an \times \Gr_{G,X^I}^\an, \textup{triv}\times \mathfrak{s}_I^\an \right)\to \left(\Gr_{G,X^I}^\an, \mathfrak{s}_I^\an \right).
\end{equation} 
Therefore, for every $t\in U^\an$, the map $\Psi_U(t,-)$ is equal to $\Phi_{F(t)}^\an(-)$.
Note that a priori $(-)^\an_{\PSh\Str}$ does not preserve fiber products: however, since $U^\an$ is Hausdorff and locally compact we can apply \cref{stratified-Harpaz} because we are first taking the analytification at the $\Gr_{G,X^I}^{(N)}$-level and then taking the $N$-colimit.

Restricting \eqref{isotopy} to $[0,1]$, we get a stratified map  
    \begin{equation}\label{I-isotopy}
    \Psi_{[0,1]} = \Psi_U|_{[0,1]}:\left( [0,1]\times \Gr_{G,X^I}^\an, \textup{triv} \times \mathfrak{s}_I^\an\right)\to \left( \Gr_{G,X^I}^\an, \mathfrak{s}_I^\an\right).
    \end{equation}
\end{rem}

\begin{defin}Let $f,g:(Y,s_Y)\to (W,s_W)$ be two maps of stratified topological spaces. 
Let $\textup{triv}\times s_Y$ be the stratification of $[0,1]\times Y$ induced by the projection $[0,1]\times Y \to Y$ (and hence trivial in the first component). 
A \textit{stratified homotopy} between $f$ and $g$ is a stratified map 
$$
H:([0,1]\times Y, \textup{triv}\times s_{Y})\to (W,s_W)
$$ 
such that $H(0,-)=f, H(1,-)=g$.
It is said to be a \emph{stratified isotopy} if $H(t,0)$ is a closed embedding for any $t\in [0,1]$.
\end{defin}

Note that the morphism \eqref{I-isotopy} is a stratified isotopy.

\begin{defin}\label{defin-stratified-homotopy-equivalence}
A \textit{stratified homotopy equivalence} of stratified topological spaces is then a stratified map $f:(Y,s_Y)\to (W,s_W)$ such that there exist a stratified map $g:(W,s_W)\to (Y,s_Y)$ and stratified homotopies $gf\sim \id_{(Y,s_Y)}, fg\sim \id_{(W,s_W)}$.
\end{defin}

\begin{lem}\label{prop-alg-isotopy}
Consider two opens $D' \overset{i}{\subset} D \subset X^{\an}$.
If there exists an algebraic isotopy $F:U\to \uAut_\C(X)$ such that
    \begin{enumerate}
        \item for every $t\in [0,1] \subset U^\an$ we have $F_t^\an(D')\subset D'$ and $F_t^\an(D)\subset D$,
        \item $F^\an_0|_D=\id_D$ and $F_1^\an(D)=D'$,
    \end{enumerate}
then the open inclusions 
    \begin{gather*}
    i_I^{(N)}:\left( \Gr_{G,{D'}^I}^{(N)}, \mathfrak{s}_I^\an \right) \hookrightarrow \left( \Gr_{G,D^I}^{(N)}, \mathfrak{s}_I^\an \right),\quad \textup{and}\quad 
    i_I:\left( \Gr_{G,{D'}^I} , \mathfrak{s}_I^\an \right) \hookrightarrow \left( \Gr_{G,D^I}, \mathfrak{s}_I^\an \right),
    \end{gather*}
are stratified homotopy equivalences and the homotopies involved can be taken to be isotopies.
\end{lem}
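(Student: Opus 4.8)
The plan is to run the lifting machinery of \Cref{remark-lift-automorphism} and \Cref{lift-isotopy} on the \emph{inverse} algebraic isotopy $t\mapsto F_t^{-1}$. Since $\uAut_\C(X)$ is a presheaf of groups, post-composing $F$ with inversion produces again an algebraic isotopy $F^{-1}\colon U\to\uAut_\C(X)$ in the sense of \Cref{defin-algebraic-isotopy}; applying \Cref{lift-isotopy} to it yields stratified isotopies
\[
\Psi_{[0,1]}\colon \bigl([0,1]\times \Gr_{G,X^I}^{(N),\an},\,\textup{triv}\times\mathfrak{s}_I^\an\bigr)\longrightarrow \bigl(\Gr_{G,X^I}^{(N),\an},\,\mathfrak{s}_I^\an\bigr)
\]
at each level $N$ (and, after the colimit in $N$, on $\Gr_{G,X^I}^\an$), whose time-$t$ slice is the stratified automorphism $\Phi_{F_t^{-1}}^{\an}$ provided by \Cref{lemma-stratified-automorphism}. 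The crucial elementary observation is that, by the formula for $\Phi_f$ in \Cref{remark-lift-automorphism}, this automorphism acts on the base $X^I$ by $x_I\mapsto f^{-1}(x_I)$; hence for every open $A\subseteq X^{\an}$ the automorphism $\Phi_{F_t^{-1}}^{\an}$ maps the open subspace $\Gr_{G,A^I}$ isomorphically onto $\Gr_{G,(F_t^\an(A))^I}$.

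Specializing $A=D$ and $A=D'$ and using hypothesis~(1), $\Psi_{[0,1]}$ therefore restricts to stratified isotopies of $\Gr_{G,D^I}$ and of $\Gr_{G,{D'}^I}$ (and of their level-$N$ truncations), compatibly with the open inclusion $i_I$ (resp. $i_I^{(N)}$); its time-$0$ slice is the identity (since $F_0^\an|_D=\id_D$), and by hypothesis~(2) its time-$1$ slice restricts to a stratified isomorphism
\[
r\coloneqq \Phi_{F_1^{-1}}^{\an}\big|_{\Gr_{G,D^I}}\colon \Gr_{G,D^I}\xrightarrow{\ \sim\ } \Gr_{G,(F_1^\an(D))^I}=\Gr_{G,{D'}^I},
\]
which is stratified because $\Phi_{F_1^{-1}}$ is (\Cref{lemma-stratified-automorphism}), and similarly at each level $N$. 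Then $r$ (resp. $r^{(N)}$) is a homotopy inverse of $i_I$ (resp. $i_I^{(N)}$): the composite $i_I\circ r$ is exactly the time-$1$ slice of $\Psi_{[0,1]}$ viewed as a self-map of $\Gr_{G,D^I}$, so $\Psi_{[0,1]}|_{[0,1]\times\Gr_{G,D^I}}$ is a stratified homotopy from $\id_{\Gr_{G,D^I}}$ to $i_I\circ r$; symmetrically, $r\circ i_I$ is the time-$1$ slice of $\Psi_{[0,1]}$ viewed as a self-map of $\Gr_{G,{D'}^I}$, so $\Psi_{[0,1]}|_{[0,1]\times\Gr_{G,{D'}^I}}$ is a stratified homotopy from $\id_{\Gr_{G,{D'}^I}}$ to $r\circ i_I$. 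By \Cref{defin-stratified-homotopy-equivalence} this makes $i_I$ (resp. $i_I^{(N)}$) a stratified homotopy equivalence; moreover the two homotopies are isotopies, each time-$t$ slice being the restriction of the automorphism $\Phi_{F_t^{-1}}^{\an}$, hence an (open) embedding onto $\Gr_{G,(F_t^\an(D))^I}$, resp. $\Gr_{G,(F_t^\an(D'))^I}$.

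The main point requiring care is getting the directions right: one must lift $F_t^{-1}$ rather than $F_t$, precisely so that the induced automorphisms of $\Gr_{G,X^I}^\an$ \emph{restrict} to self-maps of $\Gr_{G,D^I}$ and $\Gr_{G,{D'}^I}$ — lifting $F_t$ would instead send $\Gr_{G,D^I}$ onto the strictly larger $\Gr_{G,((F_t^\an)^{-1}(D))^I}$, and the restriction would not be defined. All the stratification-theoretic content — that $\Phi_f$ preserves the Schubert-type stratifications, both at each finite level and at the ind-level, and that the lifted evaluation map is a morphism of stratified (pre)sheaves — is already packaged in \Cref{lemma-stratified-automorphism} and \Cref{lift-isotopy}; and the only genuine topological input, already invoked inside \Cref{lift-isotopy} to realize $\Psi_{[0,1]}$ on $\Gr_{G,X^I}^\an=\colim_N\Gr_{G,X^I}^{(N),\an}$, is the compatibility of $\N$-indexed colimits along closed embeddings with fiber products over a Hausdorff base (\Cref{stratified-Harpaz}), applicable because the $\Gr_{G,X^I}^{(N),\an}$ are analytifications of varieties projective over $\C^I$, hence locally compact Hausdorff. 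Beyond this, the proof is a direct application of the three lemmas.
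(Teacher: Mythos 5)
Your proposal follows the same blueprint as the paper's proof — lift the algebraic isotopy through \Cref{remark-lift-automorphism} and \Cref{lift-isotopy}, restrict to $\Gr_{G,D^I}$ and $\Gr_{G,{D'}^I}$, and read a homotopy inverse off the time-$1$ slice — and your argument is correct. The one substantive divergence, lifting $F_t^{-1}$ instead of $F_t$, is not cosmetic: under the convention of \Cref{remark-lift-automorphism}, $\Phi_f$ moves base points by $f^{-1}$, so $\Phi_{F_t}^\an$ carries $\Gr_{G,D^I}$ isomorphically onto $\Gr_{G,(F_t^{\an,-1}(D))^I}$, and hypothesis~(1) (namely $F_t^\an(D)\subset D$) gives $F_t^{\an,-1}(D)\supset D$, so this image \emph{contains} $\Gr_{G,D^I}$ rather than being contained in it. In particular the paper's assertion that $\Psi_{[0,1]}(t,-)|_{\Gr_{G,D^I}}$ stays inside $\Gr_{G,D^I}$, and that its time-$1$ slice lands in $\Gr_{G,{D'}^I}$, does not follow from the hypotheses as stated. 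Your fix — lifting $F_t^{-1}$ so that the induced base map is $F_t$ itself — is exactly what makes hypotheses~(1) and~(2) produce the required containments, and the rest of the proof then runs as in the paper. (One could equivalently reverse the inclusions in hypothesis~(1), or flip the sign in the construction of $\Phi_f$ in \Cref{remark-lift-automorphism}; yours is the most local repair, and it also keeps the application in \Cref{homotopy-equivalence-D-D'} unchanged.) The remaining ingredients you invoke — stratification-preservation from \Cref{lemma-stratified-automorphism}, compatibility of the analytification with the $N$-colimit via \Cref{stratified-Harpaz}, and the fact that each time slice is an embedding onto $\Gr_{G,(F_t^\an(D))^I}$ — match the corresponding steps in the paper.
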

\begin{proof}
    Consider the stratified map $\Psi_{[0,1]}$ from \eqref{I-isotopy}. 
    By condition 1, for any $t\in [0,1]$ the image of $\Psi_{[0,1]}(t,-)|_{\Gr_{G, D^I}}$ lies all in $\Gr_{G,D^I}$.
    Moreover, condition 2 reads as
    $$
    \Psi_{[0,1]}(0,-)|_{\Gr_{G, D^I}} = \id_{\Gr_{G,D^I}}, 
    \quad \textup{and} \quad
    \textup{Im}\left(\Psi_{[0,1]}(1,-)|_{\Gr_{G,{D}^I}}\right) \subset \Gr_{G,{D'}^I} \xhookrightarrow{i_I} \Gr_{G,D^I}.
    $$
    Therefore, the map $\Psi_{[0,1]}|_{\Gr_{G,D^I}}$
    gives a stratified isotopy between $\id_{\Gr_{G,D^I}}$ and $i_I \circ \Psi_{[0,1]}(1,-)|_{\Gr_{G,D^I}}$.   

    Consider now $\Psi_{[0,1]}(1,-)|_{\Gr_{G,D^I}} \circ i_I$ which is the same as $\Psi_{[0,1]}(1,-)|_{\Gr_{G,{D'}^I}}$.
    Again by condition 1, for any $z$ the image of $\Psi_{[0,1]}(t,-)|_{\Gr_{G,{D'}^I}}$ is all contained in $\Gr_{G,{D'}^I}$. 
    Then 
    $$
    \Psi_{[0,1]}|_{\Gr_{G,{D'}^I}}: \left( [0,1]\times \Gr_{G,{D'}^I}, \textup{triv}\times \mathfrak{s}_I^\an\right) \to \left(\Gr_{G,{D'}^I}, \mathfrak{s}_I^\an\right)
    $$ 
    gives a stratified isotopy between $\id_{\Gr_{G,{D'}^I}}$ and $\Psi_{[0,1]}(1,-)|_{\Gr_{G,D^I}} \circ i_I$.

    Therefore $\Psi_{[0,1]}(1,-)|_{\Gr_{G, D^I}} : \Gr_{G,D^I}\to \Gr_{G,{D'}^I}$ is a stratified homotopy inverse of the inclusion $i_I:\Gr_{G,{D'}^I}\hookrightarrow \Gr_{G,D^I}$. 

    The proof for $i_I^{(N)}$ is analogous (thanks to \Cref{lemma-stratified-automorphism}).
\end{proof}

\begin{thm}\label{homotopy-equivalence-D-D'} 
Let $z_0,z_0'\in \C$, and $ r>r'\in \R_{>0}$ such that $B(z_0',r')\subset B(z_0,r)\subset \C$.
Denote by $D'$ the ball $B(z_0',r')$, and by $D$ the ball $B(z_0,r)$.
The induced open embeddings 
\begin{gather*}
    i_I^{(N)}:\left( \Gr_{G,{D'}^I}^{(N)}, \mathfrak{s}_I^\an \right) \hookrightarrow \left( \Gr_{G,D^I}^{(N)}, \mathfrak{s}_I^\an \right),\quad  
    i_I:\left( \Gr_{G,{D'}^I} , \mathfrak{s}_I^\an \right) \hookrightarrow \left( \Gr_{G,D^I}, \mathfrak{s}_I^\an \right),
\end{gather*}
are stratified homotopy equivalences, and the homotopies involved can be taken to be isotopies.
\end{thm}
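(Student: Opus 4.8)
The plan is to deduce the statement entirely from \Cref{prop-alg-isotopy}, by exhibiting an explicit algebraic isotopy of $X=\Ac$ that moves $D$ onto $D'$ while keeping both disks invariant throughout. Recall that every $R$-automorphism of $\mathbb{A}^1_R$ is an affine transformation $z\mapsto\alpha z+\beta$ with $\alpha$ a unit, so to give a morphism of presheaves $F\colon U\to\uAut_\C(\Ac)$ it suffices to give regular functions $\alpha,\beta$ on an open $U\subseteq\Ac$ with $\alpha$ nowhere vanishing: the automorphism of $\mathbb{A}^1_R$ attached to a point $u\colon\Spec R\to U$ is $z\mapsto (u^*\alpha)z+u^*\beta$, which is invertible precisely because $u^*\alpha\in R^\times$. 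Setting $\rho\coloneqq r'/r\in(0,1)$, I would take
\[
\alpha_t = 1-t(1-\rho),\qquad \beta_t = t\,(z_0'-\rho\,z_0),
\]
and let $U=\{\,t\in\Ac : \alpha_t\neq 0\,\}=\Ac\setminus\{(1-\rho)^{-1}\}$. On $U$ the function $\alpha_t$ is a unit, so $F$ is a well-defined morphism $U\to\uAut_\C(\Ac)$, and since $\alpha_t\in[\rho,1]$ for $t\in[0,1]$ we have $[0,1]\subseteq U^\an=\C\setminus\{(1-\rho)^{-1}\}$; hence $F$ is an algebraic isotopy in the sense of \Cref{defin-algebraic-isotopy}.

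Next I would verify the two hypotheses of \Cref{prop-alg-isotopy}. Since $F_t^\an$ is the affine bijection $z\mapsto\alpha_t z+\beta_t$ and $\alpha_t\in(0,1]$ for $t\in[0,1]$, it carries the open ball $B(c,s)$ onto $B(\alpha_t c+\beta_t,\,\alpha_t s)$; a short computation gives
\[
F_t^\an(D)=B\!\bigl(z_0+t(z_0'-z_0),\ r-t(r-r')\bigr),\qquad
F_t^\an(D')=B\!\bigl(z_0'+t\rho(z_0'-z_0),\ (1-t(1-\rho))\,r'\bigr).
\]
Using that $B(c_1,s_1)\subseteq B(c_2,s_2)$ whenever $|c_1-c_2|+s_1\le s_2$, both inclusions $F_t^\an(D)\subseteq D$ and $F_t^\an(D')\subseteq D'$ reduce, after cancelling the factor $t\ge 0$, to the single inequality $|z_0'-z_0|\le r-r'$, which is exactly the hypothesis $D'\subseteq D$; this gives condition~(1). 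For condition~(2), $\alpha_0=1$ and $\beta_0=0$ give $F_0^\an|_D=\id_D$, while $F_1^\an$ is $z\mapsto\rho z+(z_0'-\rho z_0)$, which maps $B(z_0,r)$ onto $B(z_0',\rho r)=B(z_0',r')=D'$.

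Finally, \Cref{prop-alg-isotopy} applied to this $F$ yields directly that $i_I^{(N)}$ and $i_I$ are stratified homotopy equivalences, with homotopy inverses the restrictions of $\Phi_{F_1}^\an$ and the homotopies given by the restrictions of the isotopy $\Psi_{[0,1]}$ from \eqref{I-isotopy}; this is the desired conclusion. I do not expect any genuine obstacle: the two points worth a line of care are that $F$ really lands in $\uAut_\C(\Ac)$ (which follows from $\alpha_t$ being invertible on $U$) and the elementary disk inclusions above, both of which are routine.
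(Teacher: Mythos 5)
Your proposal is correct and takes essentially the same approach as the paper: your affine isotopy $F_t(z)=\alpha_t z+\beta_t$ with $\alpha_t=1-t(1-\rho)$, $\beta_t=t(z_0'-\rho z_0)$ is exactly the paper's $F_t(z)=z\bigl(\tfrac{r'}{r}t+(1-t)\bigr)+t\bigl(z_0'-\tfrac{r'}{r}z_0\bigr)$, your $U$ is the same open, and you conclude from \Cref{prop-alg-isotopy} just as the paper does. The only difference is presentational: you spell out the elementary disk-inclusion checks for hypotheses (1) and (2) of \Cref{prop-alg-isotopy}, which the paper leaves implicit.
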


\begin{proof}
Consider the map
$$
    F: \Ac \to  \uEnd_\C(\C[z]) \cong \uEnd_{\C}(\Ac)
$$ 
defined at the level of $R$-points as 
$$
t\in R \mapsto F_t, \textup{ where } F_t(z) = 
z\Big(\frac{r'}{r}t + (1-t) \Big)+t\Big(z_0'-\frac{r'}{r}z_0\Big).
$$
Note that $F_t^*$ is an automorphism of $\mathbb{A}^1_R$ if and only if the scaling factor $\lambda(t)= \frac{r'}{r}t + (1-t)$ is in $R^{\times}=\mathbb{G}_{m,\C}(R)$. 
This happens if and only if $\lambda(t)$ belongs to the open $U\subseteq \Ac$ obtained as the fiber product
$$
    \begin{tikzcd}
        U
        \arrow[r]
        \arrow[d, hook]
        &
        \mathbb{G}_{m, \C}
        \arrow[d, hook]
        \\
        \Ac
        \arrow[r, "{\lambda}"]
        &
        \Ac.
    \end{tikzcd}
$$
If $t\in \C$, then $\lambda(t)\notin \C^{\times}$ if and only if $t=\frac{r}{r-r'}$: since $r>r'$, then  $[0,1]\subset U^{\an}$. 
Then $F|_U$ is an algebraic isotopy in the sense of \Cref{defin-algebraic-isotopy} and it satisfies the hypotheses of \Cref{prop-alg-isotopy}.
\end{proof}

\begin{rem}\label{lifting-for-Ran}
The map $$\uAut_\C(X)\to \uAut_\C(\Gr_{G, X^I})$$ in \Cref{remark-lift-automorphism} is natural in $I\in \fs^\op$. 
Therefore, it upgrades to a morphism of presheaves $$\Psi^\Ran:\uAut_\C(X)\to \uAut_\C(\Gr_{G,\Ran(X)}).$$
\end{rem}

\begin{cor}\label{homotopy-invariance-Ran}
Let $D'\subset D \subset \C$ be as in \Cref{homotopy-equivalence-D-D'}.
The induced open embedding 
$$
i_\Ran : \left(\Gr_{G,\Ran(D')}, \mathfrak{s}_\Ran^\an\right)\hookrightarrow   \left(\Gr_{G,\Ran(D)}, \mathfrak{s}_\Ran^\an\right)
$$
is a stratified homotopy equivalence, and the homotopies involved can be taken to be isotopies.
\end{cor}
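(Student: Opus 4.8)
The plan is to run the argument that proves \Cref{homotopy-equivalence-D-D'} one categorical level up, replacing $(\Gr_{G,X^I},\mathfrak{s}_I)$ throughout by $(\Gr_{G,\Ran(X)},\mathfrak{s}_\Ran)$ and using the Ran-level lift $\Psi^\Ran$ of \Cref{lifting-for-Ran} in place of $\Phi$. First I would record the Ran analogue of \Cref{lemma-stratified-automorphism}: for a $\C$-algebra $R$ locally of finite type and an automorphism $f$ of $X_R$, the automorphism $\Phi^\Ran_f$ produced by \Cref{lifting-for-Ran} is a stratified automorphism of $(\Gr_{G,\Ran(X)})_R$ over $(\Ran(X))_R$, i.e.\ it lives in $\PSmall(\Str\Sch^\lft_R/_{((\Ran(X))_R,(\Inc_\Ran)_R)})$ and covers the automorphism of $(\Ran(X))_R$ induced by $f$. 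This is immediate from naturality in $I$: by \Cref{remark-lift-automorphism} the automorphisms $\Phi_f$ of the $\Gr_{G,X^I}$ are compatible with the $\mathcal{U}_I$'s and $\delta_\phi$'s, each is stratified by \Cref{lemma-stratified-automorphism}, and $(\Gr_{G,\Ran(X)},\mathfrak{s}_\Ran)\simeq\colim_{I\in\fs^{\op}}(\Gr_{G,X^I},\mathfrak{s}_I)$ in $\PSmall(\Str\Sch^\lft_\C)$ by \Cref{Ran-stratification}, so $\Phi^\Ran_f$ is just the induced map on colimits.

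Next I would reproduce \Cref{lift-isotopy} at the Ran level. Take $F\colon U\to\uAut_\C(\Ac)$ to be the algebraic isotopy built in the proof of \Cref{homotopy-equivalence-D-D'}, so that $[0,1]\subset U^\an$; form $\Psi^\Ran\circ F\colon U\to\uAut_\C(\Gr_{G,\Ran(\Ac)})$, compose with the evaluation map, and apply $(-)^\an_{\PSh\Str}$. By the first step this yields a stratified isotopy
\[
\Psi^\Ran_{[0,1]}\colon\left([0,1]\times\Gr_{G,\Ran(\Ac)}^\an,\ \textup{triv}\times\mathfrak{s}_\Ran^\an\right)\longrightarrow\left(\Gr_{G,\Ran(\Ac)}^\an,\ \mathfrak{s}_\Ran^\an\right)
\]
with $\Psi^\Ran_{[0,1]}(t,-)=\Phi^{\Ran,\an}_{F_t}(-)$ for $t\in[0,1]$. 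As flagged in \Cref{lift-isotopy} and \Cref{warning-Ran}, $(-)^\an_{\PSh\Str}$ need not preserve the fiber product $U\times_\C\Gr_{G,\Ran(\Ac)}$; but writing the latter as $\colim_{I\in\fs^{\op},\,N\in\nn}(U\times_\C\Gr^{(N)}_{G,X^I})$ and using that $U^\an$ and the $\Gr^{(N),\an}_{G,X^I}$ are locally compact Hausdorff, one concludes with \Cref{stratified-Harpaz} (the stratified refinement of \Cref{Har15}), exactly as in \Cref{lift-isotopy}.

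Finally I would reproduce \Cref{prop-alg-isotopy}. Since $\Phi^\Ran_{F_t}$ covers the self-map of $\Ran(\Ac)$ induced by $F_t$, compatibly with the $\mathcal{U}_I$'s and just as $\Phi_{F_t}$ covers the one of $(\Ac)^I$, the two conditions on $F$ already verified in the proof of \Cref{homotopy-equivalence-D-D'} — namely $F^\an_t(D')\subseteq D'$ and $F^\an_t(D)\subseteq D$ for $t\in[0,1]$, $F^\an_0|_D=\id_D$, and $F^\an_1(D)=D'$ — imply that for every $t\in[0,1]$ the map $\Psi^\Ran_{[0,1]}(t,-)$ restricts both to $\Gr_{G,\Ran(D)}$ and to $\Gr_{G,\Ran(D')}$ (the open subspaces of $\Gr_{G,\Ran(\Ac)}^\an$ over the opens $\Ran(D),\Ran(D')\subseteq\Ran(\Ac)^\an$), that $\Psi^\Ran_{[0,1]}(0,-)|_{\Gr_{G,\Ran(D)}}=\id$, and that $\textup{Im}(\Psi^\Ran_{[0,1]}(1,-)|_{\Gr_{G,\Ran(D)}})\subseteq\Gr_{G,\Ran(D')}$. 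Restricting $\Psi^\Ran_{[0,1]}$ to $\Gr_{G,\Ran(D)}$ and to $\Gr_{G,\Ran(D')}$ then produces stratified isotopies exhibiting $\Psi^\Ran_{[0,1]}(1,-)|_{\Gr_{G,\Ran(D)}}\colon\Gr_{G,\Ran(D)}\to\Gr_{G,\Ran(D')}$ as a stratified homotopy inverse to $i_\Ran$, word for word as in the proof of \Cref{prop-alg-isotopy}.

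I expect no essentially new obstacle: the stratified analytification of \Cref{theoremanalyti}, the Harpaz-type commutation of filtered colimits with fiber products, and the lifting of automorphisms are all already in place, and the three steps above are formal transpositions of the $X^I$-level argument. The one delicate point is the one highlighted in \Cref{warning-Ran}: since $(-)^\an_{\PSh\Str}$ is applied to objects presented as colimits over $\fs^{\op}$ and $\nn$ of the $\Gr^{(N)}_{G,X^I}$, the identification of $U\times_\C\Gr_{G,\Ran(\Ac)}$ with a colimit of fiber products — which is what makes $\Psi^\Ran_{[0,1]}$ meaningful in the first place — must be justified through \Cref{stratified-Harpaz}, invoking local compactness and Hausdorffness of $\Gr^{(N),\an}_{G,X^I}$, $X^{I,\an}$ and $U^\an$; one must also keep track of $\Phi^\Ran_{F_t}$ lying over the induced self-map of $\Ran(\Ac)$, so that the base-level statements about $F$ transfer upward.
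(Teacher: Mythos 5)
Your proposal is correct and follows essentially the same route as the paper: lift the affine isotopy of $\mathbb{A}^1_\C$ via $\Psi^\Ran$ from \Cref{lifting-for-Ran}, compose with evaluation, analytify using \Cref{stratified-Harpaz} to handle the failure of $(-)^\an_{\PSh\Str}$ to preserve fiber products, and repeat the restriction argument of \Cref{prop-alg-isotopy} with $\Gr_{G,\Ran(D)}$ and $\Gr_{G,\Ran(D')}$ in place of $\Gr_{G,D^I}$ and $\Gr_{G,{D'}^I}$. The one point you flag as delicate is the same one the paper flags in \Cref{warning-Ran}; the paper handles the $\fs^\op$-direction of the colimit by reindexing via the $\Ran_{\leq n}$-filtration and the perfect-quotient machinery of \Cref{construction-Rann}--\Cref{remark-Rann-Hausdorff}, which is what justifies the application of \Cref{stratified-Harpaz} to the mixed $(I,N)$-colimit, but your intended argument is the same and your proposal already points at the right ingredients.
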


\begin{proof}  
With \cref{lifting-for-Ran} at hand, one can argue as in \Cref{lift-isotopy}. That is, given any algebraic isotopy $U\to \uAut_\C(X)$, we obtain a stratified map 
$$
\left(
U^\an\times\Gr_{G,\Ran(X)}^\an, \textup{triv}\times\mathfrak{s}_\Ran^\an
\right)
\to 
\left(
\Gr_{G,\Ran(X)}^\an, \textup{triv}\times\mathfrak{s}_\Ran^\an\right)
$$ 
and hence a stratified isotopy 
$$
\Psi^\Ran_{[0,1]}:
\left(
[0,1]\times\Gr_{G,\Ran(X)}^\an, \textup{triv}\times\mathfrak{s}_\Ran^\an
\right)
\to
\left(
\Gr_{G,\Ran(X)}^\an, \mathfrak{s}_\Ran^\an\right).
$$
Let $X$ be $\mathbb{A}^1_\C$: the analogues of \Cref{prop-alg-isotopy} and \Cref{homotopy-equivalence-D-D'} for $\Psi^\Ran_{[0,1]}$ are deduced in the same way as above.
\end{proof}

\subsection{Equivariance}
\begin{rem}\label{lift-isotopy-for-groups}
Using the same notation as in \Cref{formalcoordandtorsorrecall}, by the same arguments as in \Cref{remark-lift-automorphism}), let us define the following morphisms of presheaves
\begin{gather*}
\forall m\in \nn,  \quad   
\Phi^{\LmG} : \uAut_\C(X)\to \uAut_\C(\LmG_{X^I}),
\quad
f\mapsto \Phi^{\LmG}_f : (x_I,g)\mapsto ((f^{-1})^I(x_I),f|_{\Gamma^m_{(f^{-1})^I(x_I)}}^*g),
\\
\Phi^{\LpG}:\uAut_\C(X)\to \uAut_\C(\LpG_{X^I}),
\quad
f\mapsto \Phi^{\LpG}_f: (x_I,g)\mapsto 
((f^{-1})^I(x_I), \widehat{f}_{x_I}^*g)
\\
\Phi^{\LpG_{\Ran(X)}}:\uAut_\C(X)\to \uAut_\C(\LpG_{\Ran(X)}),
\quad
f\mapsto \Phi^{\LpG}_f: (\underline x,g)\mapsto (f^{-1}(\underline x),\widehat{f}_{\underline x}^*g).
\end{gather*}
\end{rem}
Following the same steps of the proofs of \Cref{prop-alg-isotopy}, \Cref{homotopy-equivalence-D-D'} and \Cref{homotopy-invariance-Ran}, we have the following result as well.

\begin{prop}\label{homotopy-equivalence-D-D'-groups} 
Let $D'\subset D \subset \C$ be as in \Cref{homotopy-equivalence-D-D'}. Let $N\in \nn$ and $m\geq m_{N,I}$. 
Then the induced open embeddings 
\begin{gather*}
i^m_I:\left(\LmG_{D'^I}, \Inc_I\right) \hookrightarrow  \left(\LmG_{D^I}, \Inc_I\right), \quad 
i^+_I:\left(\LpG_{D'^I}, \Inc_I\right)\hookrightarrow  \left(\LpG_{D^I}, \Inc_I\right),
\\
i^+_{\Ran}: \left(\LpG_{\Ran(D')}, \Inc_\Ran\right)\hookrightarrow  \left(\LpG_{\Ran(D)}, \Inc_\Ran\right)
\end{gather*} 
are stratified homotopy equivalences, and the homotopies involved can be taken to be isotopies. 
\end{prop}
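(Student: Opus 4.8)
The plan is to repeat, \emph{mutatis mutandis}, the arguments of \cref{lemma-stratified-automorphism}, \cref{lift-isotopy}, \cref{prop-alg-isotopy}, \cref{homotopy-equivalence-D-D'} and \cref{homotopy-invariance-Ran}, using the lifting morphisms $\Phi^{\LmG}$, $\Phi^{\LpG}$, $\Phi^{\LpG_{\Ran(X)}}$ of \cref{lift-isotopy-for-groups} in place of $\Phi$ and $\Psi^\Ran$. The first point to establish is the analogue of \cref{lemma-stratified-automorphism}: for a $\C$-algebra $R$ locally of finite type and an automorphism $f$ of $X_R$, the induced automorphisms of $(\LmG_{X^I})_R$, $(\LpG_{X^I})_R$ and $(\LpG_{\Ran(X)})_R$ respect the incidence stratification. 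This is in fact \emph{softer} than \cref{lemma-stratified-automorphism}: by construction $\Phi^{\LmG}_f$ lies over the map $(f^{-1})^I\colon X^I_R\to X^I_R$, the stratification on $\LmG_{X^I}$ is pulled back from $(X^I,\Inc_I)$ along the structure map, and $(f^{-1})^I$ is $\Inc_I$-stratified; hence the defining square of a stratified map commutes. The cases of $\LpG_{X^I}=\lim_m\LmG_{X^I}$ and of $\LpG_{\Ran(X)}$ (via the colimit presentation of \cref{LpGRan-is-group}) follow formally. Thus $\Phi^{\LmG}$ and $\Phi^{\LpG}$ upgrade to morphisms into $\uAut$ of the corresponding objects relative to $(X^I,\Inc_I)$, and $\Phi^{\LpG_{\Ran(X)}}$ to one relative to $(\Ran(X),\Inc_\Ran)$.

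Next, given an algebraic isotopy $F\colon U\to\uAut_\C(X)$ (\cref{defin-algebraic-isotopy}), composing with $\Phi^{\LmG}$ and evaluating produces, by the previous step, a stratified map of schemes $(U\times_\C\LmG_{X^I},\,\textup{triv}\times\Inc_I)\to(\LmG_{X^I},\Inc_I)$ over $(X^I,\Inc_I)$. Since $\LmG_{X^I}$ is of finite type over $\C$ (it is affine of finite type over $X^I$), its analytification is computed by $(-)^\an_{\Str,\lft}$, which preserves finite limits, so we obtain directly a stratified isotopy $\Psi^{\LmG}_{[0,1]}\colon([0,1]\times\LmG_{X^I}^\an,\textup{triv}\times\Inc_I)\to(\LmG_{X^I}^\an,\Inc_I)$. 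Passing to the limit in $m$ uses that $(-)^\an_\Str$ preserves small limits (\cref{theoremanalyti}) and that products commute with limits; this yields the $\LpG_{X^I}$ version. For the Ran version one analytifies at each truncation level $\Ran_{\leq n}$ and passes to the colimit in $n$, exactly as in \cref{lift-isotopy} and \cref{extension-of-analytification}: the required commutation of analytification with the product by $U^\an$ and with the colimits in $|I|\leq n$ and in $n$ is provided by \cref{stratified-Harpaz}, since $\LmG_{X^n}^\an$ and $\LpG_{\Ran_{\leq n}(X)}^\an$ are locally compact Hausdorff (\cref{remark-Rann-Hausdorff}) and $U^\an$ is locally compact Hausdorff with trivial stratification.

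Finally, specialize $X=\Ac$ and take the affine isotopy of the proof of \cref{homotopy-equivalence-D-D'}: $F_t(z)=z\big(\tfrac{r'}{r}t+(1-t)\big)+t\big(z_0'-\tfrac{r'}{r}z_0\big)$, defined over the open $U\subseteq\Ac$ on which the scaling factor $\tfrac{r'}{r}t+(1-t)$ is invertible, so that $[0,1]\subset U^\an$ because $r>r'$, and such that $F^\an_0|_D=\id_D$, $F^\an_1(D)=D'$, $F^\an_t(D)\subset D$ and $F^\an_t(D')\subset D'$ for $t\in[0,1]$. Feeding this into the analogue of \cref{prop-alg-isotopy}, whose proof is now purely formal, shows that the restriction $\Psi^{\LmG}_{[0,1]}|_{\LmG_{D^I}}$ together with its further restriction to $\LmG_{D'^I}$ exhibit $i^m_I$ as a stratified homotopy equivalence with homotopies that are isotopies; the same argument with $\Phi^{\LpG}$ gives $i^+_I$, and with $\Phi^{\LpG_{\Ran(X)}}$ (which is natural in $I\in\fs^\op$, cf. \cref{lifting-for-Ran}) gives $i^+_\Ran$, arguing as in \cref{homotopy-invariance-Ran}.

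The main obstacle is not the geometric content — which is identical to \cref{homotopy-equivalence-D-D'} — but the bookkeeping of the order of operations in the analytification: $\LpG$ is an inverse limit of the finite-type groups $\LmG$, while $\LpG_{\Ran(X)}$ is a filtered colimit of such inverse limits, and $(-)^\an_{\PSh\Str}$ preserves neither limits nor products in general. One has to check that each $\LmG_{X^n}$ is quasi-projective over $\C$ so that \cref{stratified-Harpaz} and \cref{remark-Rann-Hausdorff} apply, and then carry out the interchanges in the right order (analytify at finite level, then the limit in $m$, then the colimit in $|I|\leq n$, then the colimit in $n$), so that at every stage one is within the hypotheses of the available commutation lemmas; once this is set up, everything else is formal.
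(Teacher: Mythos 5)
Your proposal is correct and is essentially a detailed write-out of what the paper itself leaves to the reader: the paper's ``proof'' of \Cref{homotopy-equivalence-D-D'-groups} is the single sentence ``Following the same steps of the proofs of \Cref{prop-alg-isotopy}, \Cref{homotopy-equivalence-D-D'} and \Cref{homotopy-invariance-Ran}, we have the following result as well,'' immediately after introducing the lifting morphisms $\Phi^{\LmG}$, $\Phi^{\LpG}$, $\Phi^{\LpG_{\Ran(X)}}$ in \cref{lift-isotopy-for-groups}, which is exactly the route you take. Your observations that the stratification check is formally trivial here (the stratification on $\LmG_{X^I}$ being pulled back from $X^I$), and that the analytification bookkeeping must proceed in the order $m$-limit, then $|I|\leq n$-colimit, then $n$-colimit using \cref{stratified-Harpaz} and the perfect-quotient/locally-compact-Hausdorff facts of \cref{remark-Rann-Hausdorff}, match the paper's intent.
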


By their definition, the open embedding $i_I^+$ and $i_I$ fit in the commutative diagram
    $$
        \begin{tikzcd}
        \left( \LpG_{{D'}^I} \times_{{D'}^I} \Gr_{G, {D'}^I}, \mathfrak{s}_I\right)
        \arrow[r, hook, "{i^+_I\times i_I}"]
        \arrow[d, "\act_I"]
        &
        \left(\LpG_{D^I} \times_{D^I} \Gr_{G, D^I}, \mathfrak{s}_I\right)
        \arrow[d, "\act_I"]
        \\ 
        \left(\Gr_{G, {D'}^I}, \mathfrak{s}_I\right)
        \arrow[r, hook, "i_I"]
        &
        \left(\Gr_{G, D^I}, \mathfrak{s}_I\right)
        \end{tikzcd}
    $$
where the vertical maps are the action maps. 
Analogous versions for $\LmG$ and $\LpG_\Ran$ are true as well. 

Actually, furthermore, all the mentioned isotopies in \Cref{homotopy-equivalence-D-D'} and \Cref{homotopy-equivalence-D-D'-groups} are compatible with the above diagram, in the following sense.

\begin{thm}\label{isotopies-are-equivariant}
    Let $D'\subset D$ be metric disks in $\C$ and let $I\in\fs$.
    Let $i_I$ and $i_I^+$ be as in \Cref{homotopy-equivalence-D-D'} and \Cref{homotopy-equivalence-D-D'-groups} respectively.
    There exists a stratified map 
    \[
    \Psi^{\textup{equiv}}_{[0,1]}: \left( [0,1]\times  \LpG_{\C^I} \times_{\C^I} \Gr_{G, \C^I}, \textup{triv}\times \mathfrak{s}_I^\an \right)\to 
    \left( \LpG_{\C^I} \times_{\C^I} \Gr_{G, \C^I}, \mathfrak{s}_I^\an\right)
    \]
    such that 
    \begin{enumerate}
    \item for any $t\in [0,1]$, $\Psi^{\textup{equiv}}_{[0,1]}(t,-)$ is a closed embedding, and 
    \item makes the diagram \[
    \begin{tikzcd}[column sep=3cm]
    \left( [0,1]\times \LpG_{D^I}\times_{D^I}\Gr_{G, D^I}, \textup{triv}\times \mathfrak{s}_I^\an\right)
    \arrow[r, "{\Psi^{\textup{equiv}}_{[0,1]}|_{\Gr_{G, D^I}}}"]
    \arrow[d, "\id_{[0,1]}\times\act_I"]
    &
    \left( \LpG_{D^I}\times_{D^I}\Gr_{G,D^I}, \mathfrak{s}_I^\an\right)
    \arrow[d, "\act_I"]
    \\
    \left( [0,1]\times \Gr_{G,D^I}, \textup{triv}\times \mathfrak{s}_I^\an \right)
    \arrow[r, "{\Psi_{[0,1]}|_{\Gr_{G, D^I}}}"]
    &
    \left( \Gr_{G, D^I}, \mathfrak{s}_I^\an \right),
    \end{tikzcd}
    \]
    commute. 
    \end{enumerate}
    In particular, the morphisms $\Psi^{\textup{equiv}}_{[0,1]}|_{\Gr_{G, D^I}}$ and $\Psi^{\textup{equiv}}_{[0,1]}|_{\Gr_{G, {D'}^I}}$ show that $(i_I^+\times i_I)$ is a stratified homotopy equivalence (whose homotopies can be taked to be isotopies).

    An analogous statement holds for $\LmG_{\C^I}\times_{\C^I} \Gr^{(N)}_{G,\C^I}$ for any $N\in \nn$ and $m\geq m_{N,I}$.
\end{thm}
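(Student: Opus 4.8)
The plan is to rerun the argument behind \Cref{prop-alg-isotopy} and \Cref{homotopy-equivalence-D-D'}, but now on the relative fibre product $\LpG_{\C^I}\times_{\C^I}\Gr_{G,\C^I}$, once we have observed that the lifting maps $\Phi$ of \Cref{remark-lift-automorphism} and $\Phi^{\LpG}$ (resp.\ $\Phi^{\LmG}$) of \Cref{lift-isotopy-for-groups} are compatible with the action map $\act_I$. Concretely, the first step I would carry out is to check that for any $\C$-algebra $R$ locally of finite type and any $R$-linear automorphism $f$ of $X_R$ the square
\[
\begin{tikzcd}[column sep=2.4cm]
(\LpG_{X^I})_R\times_{(X^I)_R}(\Gr_{G,X^I})_R
\arrow[r,"\Phi^{\LpG}_f\times\Phi_f"]
\arrow[d,"\act_I"']
&
(\LpG_{X^I})_R\times_{(X^I)_R}(\Gr_{G,X^I})_R
\arrow[d,"\act_I"]
\\
(\Gr_{G,X^I})_R
\arrow[r,"\Phi_f"]
&
(\Gr_{G,X^I})_R
\end{tikzcd}
\]
commutes, naturally in $R$, and likewise with $\Phi^{\LmG}$ in place of $\Phi^{\LpG}$ on the $N$-th level $\Gr^{(N)}_{G,X^I}$ for $m\geq m_{N,I}$. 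This is a computation on the moduli descriptions: the action modifies the trivialization $\alpha$ of $\cF$ along $\Gamma_{x_I}$ by a section $g\in G(\widetilde\Gamma_{x_I})$, while $\Phi_f$ and $\Phi^{\LpG}_f$ transport $\Gamma_{x_I}$, $\cF$, $\alpha$ and $g$ along $f$ (resp.\ along $\widehat{f}_{x_I}$); since $f$ is an isomorphism on the relevant (formal) neighbourhoods, transport along $f$ commutes with the Hecke modification, which is exactly the asserted commutativity.

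Granting this, I would take $F\colon U\to\uAut_\C(\Ac)$ to be precisely the algebraic isotopy used in the proof of \Cref{homotopy-equivalence-D-D'} (the affine transformations $F_t(z)=z\lambda(t)+t(z_0'-\tfrac{r'}{r}z_0)$ defined over the open $U$ on which $\lambda$ is invertible), form the morphism of presheaves $U\to\uAut_\C(\LpG_{X^I}\times_{X^I}\Gr_{G,X^I})$ sending $t\mapsto\Phi^{\LpG}_{F_t}\times\Phi_{F_t}$, compose with the evaluation map, analytify via $(-)^\an_{\PSh\Str}$, and restrict to $[0,1]$, exactly as in \Cref{lift-isotopy}. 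As there, $(-)^\an_{\PSh\Str}$ need not commute with the fibre product a priori, so one analytifies first at the $\Gr^{(N)}_{G,\C^I}$- and $\LmG_{\C^I}$-level --- all of whose building blocks are locally compact Hausdorff --- and then applies \Cref{stratified-Harpaz} to pass to the colimit in $N$ (and in $m$). The resulting map $\Psi^{\textup{equiv}}_{[0,1]}$ is stratified because $\Phi^{\LpG}_f\times\Phi_f$ is stratified over $(X^{I,\an},\Inc_I)$ by \Cref{lemma-stratified-automorphism} and its $\LpG$-analogue; each slice $\Psi^{\textup{equiv}}_{[0,1]}(t,-)$ is a closed embedding because $\Phi^{\LpG}_{F_t}\times\Phi_{F_t}$ is an isomorphism; the square with $\act_I$ commutes because it is the analytification, restricted over $[0,1]\times D^I$, of the square displayed above; and the facts that $F^\an_0|_D=\id_D$, $F^\an_1(D)=D'$, $F^\an_t(D)\subset D$ and $F^\an_t(D')\subset D'$ let one conclude, word for word as in \Cref{prop-alg-isotopy}, that $\Psi^{\textup{equiv}}_{[0,1]}|_{\Gr_{G,D^I}}$ and $\Psi^{\textup{equiv}}_{[0,1]}|_{\Gr_{G,{D'}^I}}$ exhibit $i^+_I\times i_I$ as a stratified homotopy equivalence through stratified isotopies. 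The $\LmG$ statement is identical, replacing $\Gr_{G,\C^I}$ by $\Gr^{(N)}_{G,\C^I}$ and using the compatibility at $m\geq m_{N,I}$.

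The main obstacle I expect is the very first step: proving on the level of $G$-torsors that the lifted automorphisms intertwine the action map $\act_I$ --- that is, that pulling back a torsor-with-trivialization along an automorphism of $X$ commutes with the modification of the trivialization by a section of $G$ supported on the graph of the points. Once this naturality is in place, everything else is a formal rerun of \cref{prop-alg-isotopy,homotopy-equivalence-D-D',homotopy-equivalence-D-D'-groups} together with the analytification bookkeeping of \Cref{lift-isotopy} and \Cref{stratified-Harpaz}.
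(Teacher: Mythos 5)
Your proposal is essentially the same approach as the paper: combine the lifts $\Phi$ and $\Phi^{\LpG}$ into $\Phi^{\LpG}\times_{X^I}\Phi$ (which makes sense because both cover the same automorphism of $X^I$), evaluate along the algebraic isotopy $F$ already used for \Cref{homotopy-equivalence-D-D'}, analytify level-wise and pass to the colimit in $N$ via \Cref{stratified-Harpaz}, and reduce the equivariance assertion to the commutativity of the ``action vs.\ lifting'' square for a fixed automorphism $f$. You correctly identify that square as the only non-formal point; the paper's actual proof spends most of its length carrying out that verification explicitly, reducing by the factorization isomorphism \eqref{factorization-property} to $I=\{*\}$ and computing in the twisted-product (formal-coordinates) model $\widehat{X}\times^{\uAut_\C\C\taylor}\Gr_G$ using the description of $\Phi_f$ from equation \eqref{mapPhifatloclevel} and of $\Phi^{\LpG}_f$ from \Cref{lift-isotopy-for-groups}. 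Your heuristic (``transport along $f$ commutes with the Hecke modification on formal neighbourhoods'') is the right explanation for why that computation closes up, so there is no gap in the strategy, only in the detail of the verification.
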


\begin{proof}
By their definitions, the automorphism $\Phi$ acts on the $X^I$-coordinate of $\Gr_{G, X^I}$ in the same way as $\Phi^{\LpG}$ acts on the $X^I$-coordinate of $\LpG_{X^I}$. Therefore they can be combined together to obtain 
\begin{gather*}
    \Phi^{\LpG}\times_{X^I} \Phi: \uAut_\C(X)\to \uAut_\C(\LpG_{X^I}\times_{X^I}\Gr_{G,X^I}).
\end{gather*}
Similarly, for any $N\in\nn, m\geq m_{N,I}$, we have
\begin{gather*}
    \Phi^{\LmG}\times_{X^I} \Phi: \uAut_\C(X)\to \uAut_\C(\LmG_{X^I}\times_{X^I}\Gr_{G,X^I}^{(N)}).
\end{gather*}
Let $F$ and $U$ be as in the proof of \Cref{homotopy-equivalence-D-D'} and consider the evaluation morphism for $\LpG_{X^I}\times_{X^I}\Gr_{G,X^I}$.
Then we get
\begin{equation}\label{mapforequivariantstatement}
\textup{ev}\circ \left(\left(\Phi^{\LpG}\times_{X^I} \Phi\right)\circ F, \id\right): U \times \LpG_{X^I}\times_{X^I}\Gr_{G,X^I} \rightarrow \LpG_{X^I}\times_{X^I}\Gr_{G,X^I}.
\end{equation}
In particular, since the stratification of $\LpG_{X^I}$ is controlled by $\Inc_I$, we have that the map \eqref{mapforequivariantstatement} respects the stratifications.
Therefore when we pass to the analytifications, by applying \cref{stratified-Harpaz} in the usual way we get
\[
\left( U^\an \times \LpG_{X^I}^\an\times_{(X^\an)^I}\Gr_{G,X^I}^\an , \textup{triv}\times \mathfrak{s}_I^\an \right) \rightarrow \left( \LpG_{X^I}^\an\times_{(X^\an)^I}\Gr_{G,X^I}^\an, \mathfrak{s}_I^\an\right).
\]

Let $X$ be the affine line.
Restricting to $[0,1]\subset U^\an$, we finally get
\[
\Psi^{\textup{equiv}}_{[0,1]} \coloneqq \textup{ev} \circ \left( \Phi^{\LpG, \an}_{[0,1]} \times_{\C^I} \Phi^\an_{[0,1]} , \id \right).
\]
By its definition, $\Psi^{\textup{equiv}}_{[0,1]}(1,-)$ restricts to
\[
     \left(\LpG_{D^I} \times_{D^I} \Gr_{G,D^I},\mathfrak{s}_I^\an \right) \to  
     \left( \LpG_{{D'}^I} \times_{{D'}^I}\Gr_{G,{D'}^I},\mathfrak{s}_I^\an \right)
\]
and, by the same proof of \Cref{prop-alg-isotopy} and \Cref{homotopy-equivalence-D-D'}, it gives a stratified homotopy inverse to $i^+_I\times i_I$.

Therefore it remains to show that, for any
$t\in [0,1]$, $\Psi^{\textup{equiv}}_{[0,1]}(t,-)|_{\Gr_{G, D^I}}$ and $\Psi_{[0,1]}(t,-)|_{\Gr_{G, D^I}}$ fit in the commutative diagram
\[
\begin{tikzcd}[column sep=3cm]
\LpG_{D^I}\times_{D^I}\Gr_{G, D^I} 
\arrow[r, "{\Psi^{\textup{equiv}}_{[0,1]}(t,-)|_{\Gr_{G, D^I}}}"]
\arrow[d, "{\act_I}"]
&
\LpG_{D^I}\times_{D^I}\Gr_{G,D^I}
\arrow[d, "{\act_I}"]
\\
\Gr_{G,D^I}
\arrow[r, "{\Psi_{[0,1]}(t,-)|_{\Gr_{G, D^I}}}"]
&
\Gr_{G, D^I}.
\end{tikzcd}
\]
This, in turn, is implied by checking that for any $f\in \Aut_\C(X)$ and each $X^\phi$, the diagram
\begin{equation*}
\begin{tikzcd}[column sep=3cm]
\LpG_{X^I}|_{X^\phi}\times_{X^\phi}\Gr_{G, X^\phi}
\arrow[r, "{(\Phi^{\LpG}_f \times_{X^I} \Phi_f)|_{X^\phi}}"]
\arrow[d]
&
\LpG_{X^I}|_{X^\phi}\times_{X^\phi}\Gr_{G, X^\phi}
\arrow[d]
\\
\Gr_{G, X^\phi}
\arrow[r, "{\Phi_f|_{X^\phi}}"]
&
\Gr_{G, X^\phi},
\end{tikzcd}
\end{equation*} 
is well-defined and commutes. As done in the proof of \Cref{lemma-stratified-automorphism}, by the factorization property \eqref{factorization-property}, it is enough to deal with the case $I=\{*\}$ using the formal coordinates
\[
\mathfrak{bl} : \widehat{X}\times^{\uAut_\C \C\taylor}\Gr_{G}  \triv \Gr_{G,X}.
\]
Recall that at the level of the presheaf quotient $( \widehat{X}\times\Gr_{G} )/\uAut_\C \C\taylor$, the map $\Phi_f$ sends
\[
[(x,\eta,\widetilde{\mathcal{F}},\widetilde{\alpha})]\mapsto [(f^{-1}x, \widehat{f}^{-1}_x\circ \eta, \widetilde{\mathcal{F}},\widetilde{\alpha})]
\]
(see equation \eqref{mapPhifatloclevel}).
Therefore given $(x,g)\in \LpG_X$, on one side we have 
\[
    \begin{tikzcd}[column sep=3cm]
        (x,g), [(x,\eta,\widetilde{\mathcal{F}},\widetilde{\alpha})] 
        \arrow[d, "{\act_{\{*\}}}"]
        &
              \\
        {[(x,\eta,\widetilde{\mathcal{F}}, \eta^* g|_{\widetilde{\Gamma}_x\setminus \Gamma_x}\circ (\eta^{-1})^*\widetilde{\alpha})]}
        \arrow[r, "{\Phi_f}"]
        &
        (f^{-1}x, \widehat{f}^{-1}_x\circ \eta, \widetilde{\mathcal{F}}, \eta^* g|_{\widetilde{\Gamma}_x\setminus \Gamma_x}\circ (\eta^{-1})^*\widetilde{\alpha}).
    \end{tikzcd}
\]
On the other side, we have
\[
    \begin{tikzcd}[column sep=3cm]
        (x,g),(x,\eta,\widetilde{\mathcal{F}},\widetilde{\alpha}) 
        \arrow[r, "{\Phi_f^{\LpG}\times_X \Phi_f}"]
        &
        (f^{-1}x, \widehat{f}_x^*g),(f^{-1}x, \widehat{f}^{-1}_x\circ \eta, \widetilde{\mathcal{F}},\widetilde{\alpha})
        \arrow[d, "{\act_{\{*\}}}"]
        \\
         &
        (f^{-1}x, \widehat{f}^{-1}_x\circ \eta, \widetilde{\mathcal{F}}, (\widehat{f}^{-1}_x\circ \eta)^* (\widehat{f}_x^*g)|_{\widetilde{\Gamma}_{f^{-1}x}\setminus \Gamma_{f^{-1}x}}\circ ((\widehat{f}^{-1}_x\circ \eta)^{-1})^*\widetilde{\alpha}).
    \end{tikzcd}
\]
One concludes computing explicitly the last term:
\begin{align*}
    (\widehat{f}^{-1}_x\circ \eta)^* (\widehat{f}_x^*g)|_{\widetilde{\Gamma}_{f^{-1}x}\setminus \Gamma_{f^{-1}x}}\circ ((\widehat{f}^{-1}_x\circ \eta)^{-1})^*\widetilde{\alpha} = & \  \eta^*  (\widehat{f}^{-1}_x)^*(\widehat{f}_x^*g)|_{\widetilde{\Gamma}_{f^{-1}x}\setminus \Gamma_{f^{-1}x}}\circ \widehat{f}_x^* (\eta^{-1})^*\widetilde{\alpha} \\
    =& \ \eta^* g|_{\widetilde{\Gamma}_{f^{-1}x}\setminus \Gamma_{f^{-1}x}}\circ (\eta^{-1})^*\widetilde{\alpha}.
\end{align*}
The analogous statement holds for the $(N,m)$-truncated objects by an identical argument.
\end{proof}

\begin{thm}\label{equivariance-theorem-Ran}Let $D$ be a metric disk in $\C$. There exists a stratified map $\Psi^{\textup{equiv},\Ran}_{[0,1]}$
\[
\left([0,1]\times \LpG_{\Ran(\C)} \times_{\Ran(\C)} \Gr_{G, \Ran(\C)}, \textup{triv}\times \mathfrak{s}_{\Ran} \right)\to 
\left( \LpG_{\Ran(\C)} \times_{\Ran(\C)} \Gr_{G, \Ran(\C)}, \mathfrak{s}_\Ran^\an\right)
\]
   such that 
   \begin{enumerate}
       \item for any $t\in [0,1]$, $\Psi^{\textup{equiv},\Ran}_{[0,1]}(t,-)$ is a closed embedding, and
       \item the following square commutes:
       \[
       \begin{tikzcd}[column sep=3cm]
        {[0,1]\times \LpG_{\Ran(D)}\times_{\Ran(D)}\Gr_{G, \Ran(D)} }
        \arrow[r, "{\Psi^{\textup{equiv}, \Ran}_{[0,1]}|_{\Gr_{G, \Ran(D)}}}"]
        \arrow[d, "\id_{[0,1]}\times\act_{\Ran}"]
        &
        \LpG_{\Ran(D)}\times_{\Ran(D)}\Gr_{G,\Ran(D)}
        \arrow[d, "\act_{\Ran}"]
        \\
        {[0,1]\times \Gr_{G,\Ran(D)}}
        \arrow[r, "{\Psi^\Ran_{[0,1]}|_{\Gr_{G, \Ran(D)}}}"]
        &
        \Gr_{G, \Ran(D)}.
        \end{tikzcd}
       \]
   \end{enumerate}
\end{thm}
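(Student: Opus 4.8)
The plan is to repeat, at the level of ideas, the proof of \Cref{isotopies-are-equivariant}, replacing the Beilinson--Drinfeld objects $\LpG_{X^I}$, $\Gr_{G,X^I}$ by their Ran counterparts, and handling the passage to the complex-analytic realization exactly as in the proof of \Cref{extension-of-analytification} --- i.e. through the filtration by the $\Ran_{\leq n}$-pieces of \Cref{construction-Rann} and the locally-compact-Hausdorff input of \Cref{remark-Rann-Hausdorff}, so that \Cref{stratified-Harpaz} applies --- rather than by naively applying $(-)^\an_{\PSh\Str}$ to a fiber product, which as recalled in \Cref{warning-Ran} need not commute with it.

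First I would assemble the lift at the presheaf level. The morphisms $\Phi^{\LpG_{\Ran(X)}}$ of \Cref{lift-isotopy-for-groups} and $\Psi^\Ran$ of \Cref{lifting-for-Ran} both send an automorphism $f$ of $X$ to an automorphism acting on the $\Ran(X)$-coordinate by $f^{-1}$ and compatible with the structure maps to $\Ran(X)$, so they combine into a morphism of presheaves
$$
\Phi^{\LpG_{\Ran(X)}}\times_{\Ran(X)}\Psi^\Ran:\ \uAut_\C(X)\longrightarrow \uAut_\C\!\big(\LpG_{\Ran(X)}\times_{\Ran(X)}\Gr_{G,\Ran(X)}\big).
$$
Pulling back along each $\cU_I:X^I\to\Ran(X)$ recovers $\Phi^{\LpG}_f\times_{X^I}\Phi_f$, which is stratified by \Cref{lemma-stratified-automorphism} (it alters only the $X^I$-coordinate and the formal-coordinate datum, never the Schubert index); by universality of colimits the assembled automorphism is therefore stratified for $\Inc_\Ran$ and $\mathfrak s_\Ran$. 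Composing with the evaluation map and with the algebraic isotopy $F:U\to\uAut_\C(\Ac)$ from the proof of \Cref{homotopy-equivalence-D-D'} (so $X=\Ac$) produces a stratified map of presheaves
$$
U\times_\C\big(\LpG_{\Ran(\Ac)}\times_{\Ran(\Ac)}\Gr_{G,\Ran(\Ac)}\big)\longrightarrow \LpG_{\Ran(\Ac)}\times_{\Ran(\Ac)}\Gr_{G,\Ran(\Ac)}.
$$

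The analytification is the delicate step, and the one I expect to be the main obstacle. Writing $\LpG_{\Ran(\Ac)}\times_{\Ran(\Ac)}\Gr_{G,\Ran(\Ac)}$ as the colimit over $n\geq 1$ and $N\geq 0$ of $\LpG_{\Ran_{\leq n}(\Ac)}\times_{\Ran_{\leq n}(\Ac)}\Gr^{(N)}_{G,\Ran_{\leq n}(\Ac)}$ (\Cref{construction-Rann}), \Cref{remark-Rann-Hausdorff} shows all these pieces, as well as $U^\an$, are locally compact Hausdorff, so by \Cref{stratified-Harpaz} the analytification of each fiber product is the colimit of the analytified fiber products; as in the proof of \Cref{extension-of-analytification}, one first analytifies at the $(\leq n,N)$-level (where the perfect-quotient maps $\widetilde\cU^+_n\times_{\cU_n}\widetilde\cU^{(N)}_n$ of \Cref{lem:widetildecUisperfquoz} let the $X^n$-level map of \Cref{isotopies-are-equivariant} descend) and then takes the colimit in $n$ and $N$. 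Restricting to $[0,1]\subset U^\an$ then defines
$$
\Psi^{\textup{equiv},\Ran}_{[0,1]}:=\textup{ev}\circ\big(\big(\Phi^{\LpG_{\Ran(\Ac)}}\times_{\Ran(\Ac)}\Psi^\Ran\big)^\an\circ F^\an,\ \id\big)\big|_{[0,1]}.
$$
Property (1) is then immediate: for $t\in[0,1]$ the scaling factor $\tfrac{r'}{r}t+(1-t)$ is a unit (its only zero is $t=r/(r-r')>1$, since $r>r'$), so $F_t\in\Aut_\C(\Ac)$ and $\Psi^{\textup{equiv},\Ran}_{[0,1]}(t,-)$ is an automorphism of $\LpG_{\Ran(\C)}\times_{\Ran(\C)}\Gr_{G,\Ran(\C)}$, hence a closed embedding.

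Finally, property (2) --- commutativity with $\act_\Ran$ --- I would check at the presheaf level and then transport. Since, as in the proof of \Cref{homotopy-equivalence-D-D'}, one has $F_t^\an(D)\subseteq D$ for $t\in[0,1]$, the map $\Psi^{\textup{equiv},\Ran}_{[0,1]}$ restricts to the part over $\Ran(D)$. By universality of colimits it suffices to verify, for each $f\in\Aut_\C(X)$ and each stratum $X^\phi$, that the square relating $\act_I$ with $(\Phi^{\LpG}_f\times_{X^I}\Phi_f)|_{X^\phi}$ and $\Phi_f|_{X^\phi}$ commutes; this is exactly the diagram verified at the end of the proof of \Cref{isotopies-are-equivariant}, which by the factorization property reduces to $I=\{*\}$ and the explicit formal-coordinate computation carried out there. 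Taking the colimit over $I\in\fs^\op$ gives $\act_\Ran\circ\big(\Phi^{\LpG_{\Ran(X)}}_f\times_{\Ran(X)}\Psi^\Ran_f\big)=\Psi^\Ran_f\circ\act_\Ran$ for every $f$, and applying $(-)^\an_{\PSh\Str,\lft}$ --- using \Cref{extension-of-analytification} to identify the analytification of $\act_\Ran$ with the continuous stratified action constructed there --- yields the commuting square of (2). The $(N,m)$-truncated statement follows by the same argument with $\Phi^{\LmG}$ in place of $\Phi^{\LpG}$ and the $(N)$-truncated perfect quotients, and in both cases the maps $\Psi^{\textup{equiv},\Ran}_{[0,1]}|_{\Gr_{G,\Ran(D)}}$ and $\Psi^{\textup{equiv},\Ran}_{[0,1]}|_{\Gr_{G,\Ran(D')}}$ exhibit $i^+_\Ran\times i_\Ran$ as a stratified homotopy equivalence, just as in \Cref{isotopies-are-equivariant}.
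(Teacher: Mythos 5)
Your proposal is correct and follows essentially the same route as the paper: the paper's own proof consists of a brief remark that one builds $\Psi^{\textup{equiv},\Ran}_{[0,1]}$ exactly as in \cref{extension-of-analytification}, by filtering through the $\Ran_{\leq n}$-pieces and descending along perfect quotients, so that the equivariance holds by construction. Your write-up merely spells out the steps the paper leaves implicit — assembling $\Phi^{\LpG_{\Ran(X)}}\times_{\Ran(X)}\Psi^{\Ran}$, invoking \cref{remark-Rann-Hausdorff} and \cref{stratified-Harpaz} to handle the analytification subtlety flagged in \cref{warning-Ran}, and reducing the equivariance check to the $X^\phi$ computation already done in \cref{isotopies-are-equivariant} — and is fully consistent with the paper's argument.
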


\begin{proof}
    The only difference with respect to the previous proof is that one builds the map $\Psi^{\textup{equiv},\Ran}_{[0,1]}$ in the same way as \cref{extension-of-analytification}, by filtering $\Gr_{G,\Ran(X)}^\an$ and then inducing maps on perfect quotients. Therefore, by construction, $\Psi^{\textup{equiv}, \Ran}_{[0,1]}$ agrees with the action of $\LpG_{\Ran(X)}^\an$.
\end{proof}

\begin{rem}\label{rem-Hecke}
    A nice way to rephrase the \cref{equivariance-theorem-Ran} is the following. One can form a stratified topological stack defined as the quotient stack, relative to $\Ran(D)$,
    $$\hck_{G,\Ran(D)}=\Gr_{G,\Ran(D)}/\LpG_{\Ran(D)}$$ for any metric disk, and then use \cref{equivariance-theorem-Ran} to prove that the induced embedding $$\hck_{G,\Ran(D')}\to \hck_{G,\Ran(D)}$$ is a stratified homotopy equivalence of stacks.
    We chose not to delve into this formalism in the present paper, but the reader can find all the needed terminology in \cite[Appendix B.3]{E3}, \cite{Jansen}.
\end{rem}

\subsection{\texorpdfstring{$\E_2$} --algebra structure}

The aim of this final subsection is to prove \cref{GrGRanisE2}.

\begin{recall}\label{preliminaries-operads}Let $\Fin_*$ be the category of pointed finite sets, and denote by $\langle n\rangle$ the pointed set $\{*,1,\ldots,n\}$. 
For $1 \leq i\leq n$ denote by $\rho_i:\langle n\rangle \to \langle 1\rangle$ the morphism sending $i$ to $1$ and every other element to $*$. 
This morphism is \emph{inert} in $\Fin_*$ (see \cite[Definition 2.1.1.8]{HA}).

Let $N:\cat\to\Cat$ be the simplicial nerve functor. 
Recall that a functor of $\infty$-categories $p:\mathcal{O}^\otimes\to \ner(\Fin_*)$ is an
$\infty$-\emph{operad} if it satisfies the conditions of \cite[Definition 2.1.1.10]{HA}, 

and a \emph{map of $\infty$-operads} $\alpha: \mathcal{O}^\otimes \to {\mathcal{O}'}^\otimes$ is a functor of $\infty$-categories over $\ner(\Fin_*)$ satisfying the conditions of \cite[Definition 2.1.2.7]{HA}.
\end{recall}

We are here interested in $\infty$-operads of the form $\ner(\mathcal{C})\to \ner(\Fin_*)$. 
In this case, we can check whether this map is an $\infty$-operad at the level of $1$-morphisms.

\begin{defin}
    Let $p:\mathcal{C}\rightarrow \Fin_*$ be a functor between categories. Given
    \[
    x,y\in \mathcal{C}, f \in \Hom_{\cc}(x, y),
    \]
    we say that $f$ is $p$-\textit{coCartesian} if for every $z\in \cc$, $g \in \Hom_\cc(x, z)$ and $\overline h\in \Hom_{\Fin_*}(p(y),p(z))$ such that $\overline h\circ p(f)=p(g)$, there exists a unique $h\in \Hom_\cc(y, z)$ such that $h\circ f=g$ and $p(h)=\overline h$.

    We say that $f$ as above is \textit{inert} if it is $p$-cocartesian and $p(f)$ is inert in $\Fin_*$.

    Finally, given $x,y \in\cc$, $\overline{f} \in \Hom_{\Fin_*}(p(x),p(y))$, let $\Hom^{\overline{f}}_{\cc}(x,y)$ be the subset of $\Hom_{\cc}(x,y)$ consisting of morphisms lying over $\overline{f}$. 
\end{defin}

\begin{lem}\label{1-operads}
Let $p:\mathcal{C}\rightarrow \Fin_*$ be a functor between categories.
Suppose that $p$ satisfies the following properties:
\begin{enumerate}
    \item Given an inert morphism $\overline f \in \Hom_{\Fin_*}( \langle m\rangle, \langle n\rangle)$ and $x\in \cc$ s.t. $p(x)=\langle m \rangle$, there exists a $p$-coCartesian morphism $f : x \to y$ s.t. $p(f)=\overline f$.
    
    \item Let $x,y \in\cc$, $\overline{f} \in \Hom_{\Fin_*}(p(x),p(y))$.
    Consider the inert morphism $\rho_i$ and let $y\to y_i$ be a $p$-coCartesian morphism lying over $\rho_i$. 
    Then the induced map $\Hom^{\overline{f}}_{\cc}(x,y) \to \prod_{i}\Hom^{\rho_i\circ \overline{f}}_{\cc}(x,y_i)$ is a bijection.
    
    \item For every finite collection of objects $y_1,\dots,y_n \in \cc$ lying over $\langle 1\rangle$, there exists an object $x\in\cc$ lying over $\langle n\rangle$ and a collection of $p$-coCartesian morphisms $x \to y_i$ lying over $\rho_i$.
\end{enumerate}
Then the induced functor of $\infty$-categories $\ner(p):\ner(\cc)\to \ner(\Fin_*)$ exhibits $\ner(\cc)$ as an $\infty$-operad.
\end{lem}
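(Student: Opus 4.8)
The plan is to verify directly the three conditions of \cite[Definition 2.1.1.10]{HA} for the functor $\ner(p)\colon\ner(\cc)\to\ner(\Fin_*)$, after establishing a dictionary between the $\infty$-categorical notions and the elementary ones used in the statement. The crucial preliminary step is the observation that, since $p$ is a functor between ordinary categories, a $1$-morphism $f$ of $\cc$ is $\ner(p)$-coCartesian in the $\infty$-categorical sense if and only if it is $p$-coCartesian in the elementary sense of the definition immediately preceding the statement. Indeed, $\ner(p)$-coCartesianness of $f\colon x\to y$ means that the canonical map of slice simplicial sets
\[
\ner(\cc)_{f/}\longrightarrow \ner(\cc)_{x/}\times_{\ner(\Fin_*)_{p(x)/}}\ner(\Fin_*)_{p(f)/}
\]
is a trivial Kan fibration; all four simplicial sets occurring here are nerves of $1$-categories (comma categories), so this map is a trivial Kan fibration precisely when the underlying functor is surjective on objects and fully faithful, and unwinding these two conditions on objects and on morphisms reproduces verbatim the existence-and-uniqueness property defining a $p$-coCartesian morphism. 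In particular the notion of ``inert morphism'' in the sense of \cite{HA} coincides with the one fixed before the statement.

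With this identification in hand, conditions (1) and (3) of \cite[Definition 2.1.1.10]{HA} become verbatim the hypotheses (1) and (3) above. For condition (2) of \emph{loc.\ cit.}, fix $x,y\in\cc$ with $p(x)=\langle m\rangle$, $p(y)=\langle n\rangle$, a morphism $\overline f\colon\langle m\rangle\to\langle n\rangle$, and for each $1\le i\le n$ a $p$-coCartesian lift $y\to y_i$ of the inert map $\rho_i$ (which exists by hypothesis (1)); one must check that the induced map
\[
\Map^{\overline f}_{\ner(\cc)}(x,y)\longrightarrow \prod_{1\le i\le n}\Map^{\rho_i\circ\overline f}_{\ner(\cc)}(x,y_i)
\]
is a homotopy equivalence. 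Since $\ner(\cc)$ is the nerve of a $1$-category, every mapping space here is discrete, namely the set $\Hom^{\overline f}_{\cc}(x,y)$, respectively $\Hom^{\rho_i\circ\overline f}_{\cc}(x,y_i)$, and a map of discrete spaces is a homotopy equivalence if and only if it is a bijection; this is exactly hypothesis (2). One also notes that any two $p$-coCartesian lifts of $\rho_i$ with source $y$ are related by a unique isomorphism lying over $\id_{\langle 1\rangle}$, so the construction does not depend on the choices. Assembling the three verifications yields the statement.

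The only genuinely non-formal point is the preliminary comparison of $\ner(p)$-coCartesian edges with elementary $p$-coCartesian morphisms; once that is in place, the three defining conditions of an $\infty$-operad turn into the three hypotheses essentially by inspection, the sole extra input being that mapping spaces in the nerve of a $1$-category are discrete. Accordingly I would isolate the coCartesian-edge comparison as a short self-contained sublemma and then carry out the three matchings in order, with hypothesis (2) handled through this discreteness.
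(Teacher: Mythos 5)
Your proof is correct and takes essentially the same approach as the paper: both hinge on the fact that mapping spaces in the nerve of a $1$-category are discrete, so that the homotopy-theoretic conditions in \cite[Definition 2.1.1.10]{HA} reduce to set-level conditions. You are somewhat more explicit than the paper in spelling out that $\ner(p)$-coCartesian edges coincide with $p$-coCartesian morphisms in the elementary sense (via the slice-category criterion and the fact that all four slices are nerves of $1$-categories), a point the paper's proof leaves implicit when it asserts that the operad conditions ``all translate in conditions on (products of) subsets of morphisms.''
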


\begin{proof}
Let $x$ and $y$ be two objects of $\mathcal{C}$.
Recall that the topological space $\Hom^R_{\ner(\mathcal{C})}(x,y)$ of the \emph{right homomorphisms} (see its definition at \cite[page 27]{HTT}) describes the homotopy type $\textup{Map}_{\ner(\mathcal{C})}(x,y)$.
Furthermore the topological space $\Hom^R_{\ner(\mathcal{C})}(x,y)$ is a discrete space in bijection with $\Hom_{\mathcal{C}}(x,y)$.
In particular, the conditions on (products of) mapping subspaces involved in the definition of $\infty$-operad for $\ner(\mathcal{C})\rightarrow \ner(\Fin_*)$ all translate in conditions on (product of) subsets of morphisms in $\mathcal{C}$.
\end{proof}

By analogous consideration we have the following lemma.

\begin{lem}
    Let $f:\mathcal{C}\rightarrow \mathcal{C}'$ be a morphism of categories over $\Fin_*$. 
    If $f$ sends inert morphisms to inert morphisms, then 
    $\ner(f):\ner(\mathcal{C})\rightarrow \ner(\mathcal{C}')$ is a map of $\infty$-operads.
\end{lem}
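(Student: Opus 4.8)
The plan is to argue exactly as in the proof of \cref{1-operads}, exploiting once more that the nerve of a $1$-category has discrete mapping spaces. Recall from \cite[Definition 2.1.2.7]{HA} that a map of $\infty$-operads is nothing more than a functor of $\infty$-categories over $\ner(\Fin_*)$ which carries inert morphisms to inert morphisms. Since $f$ is a functor over $\Fin_*$ and $\ner$ is functorial, $\ner(f)$ is automatically a functor over $\ner(\Fin_*)$; so the only thing I would need to check is that $\ner(f)$ preserves inert edges.

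First I would record the standard fact that an edge $g\colon x\to y$ of $\ner(\mathcal{C})$ is $\ner(p)$-coCartesian in the $\infty$-categorical sense if and only if it is $p$-coCartesian in the elementary sense of the definition preceding \cref{1-operads}. This follows from the mapping-space characterization of (co)Cartesian edges \cite[Proposition 2.4.4.3]{HTT}: $g$ is $\ner(p)$-coCartesian precisely when, for every object $z$, the square
\[
\begin{tikzcd}
\Map_{\ner(\mathcal{C})}(y,z) \arrow[r]\arrow[d] & \Map_{\ner(\mathcal{C})}(x,z)\arrow[d]\\
\Map_{\ner(\Fin_*)}(p(y),p(z))\arrow[r] & \Map_{\ner(\Fin_*)}(p(x),p(z))
\end{tikzcd}
\]
is a homotopy pullback. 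As all four spaces are discrete, a homotopy pullback here is just a pullback of sets, and unwinding its universal property reproduces verbatim the unique-lifting condition defining a $p$-coCartesian morphism of $1$-categories. The same holds with $\mathcal{C}'$ and $p'$ in place of $\mathcal{C}$ and $p$. Since an edge of $\ner(\Fin_*)$ is the same datum as a morphism of $\Fin_*$, an inert edge of $\ner(\mathcal{C})$, i.e. a $\ner(p)$-coCartesian edge lying over an inert morphism of $\Fin_*$, is exactly an inert morphism of $\mathcal{C}$ in the sense of the definition preceding \cref{1-operads}, and likewise for $\mathcal{C}'$.

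Granted this, the conclusion is immediate: if $g$ is an inert edge of $\ner(\mathcal{C})$ then $g$ is inert in $\mathcal{C}$, so $f(g)$ is inert in $\mathcal{C}'$ by hypothesis, whence $\ner(f)(g)=f(g)$ is an inert edge of $\ner(\mathcal{C}')$; moreover $\ner(f)$ commutes with the structure maps to $\ner(\Fin_*)$, so it does not alter the underlying morphism of $\Fin_*$. Thus $\ner(f)$ is a functor over $\ner(\Fin_*)$ sending inert morphisms to inert morphisms, i.e. a map of $\infty$-operads. I do not expect any genuine obstacle here; the one point that requires care is to keep the two a priori distinct notions of ``coCartesian'' (hence of ``inert'') apart until the discreteness of the mapping spaces identifies them.
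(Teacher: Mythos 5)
Your proof is correct and takes essentially the same approach as the paper, which proves this lemma ``by analogous consideration'' to its Lemma on when $\ner(\mathcal{C}) \to \ner(\Fin_*)$ is an $\infty$-operad, i.e.\ by using the discreteness of mapping spaces in the nerve of a $1$-category to identify the $\infty$-categorical lifting condition for (co)Cartesian edges with its elementary $1$-categorical counterpart. You have simply spelled out the ``analogous consideration'' in full, which is a reasonable thing to do.
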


\begin{recall}\cite[Definition 5.4.4.1]{HA}
Denote by $\textup{Surj}$ the full subcategory of $\Fin_*$ with only surjective maps. 
Given an $\infty$-operad $p:\mathcal{O}^\otimes\longrightarrow \ner(\Fin_*)$, its \emph{non-unital} version $p_{\nun}:\cO^\otimes_\nun\longrightarrow \ner(\Fin_*)$ is defined via the fiber product over $\ner(\textup{Surj})$:
\[
\begin{tikzcd}
\cO^\otimes_\nun
\arrow[d, "{p_{\nun}}"]
\arrow[r]
&
\cO^\otimes
\arrow[d, "{p}"]
\\
\ner(\textup{Surj})
\arrow[r, hook]
&
\ner(\Fin_*).
\end{tikzcd}
\]
\end{recall}
\begin{rem}
By \cite[Remark 2.1.1.3]{HA} $p$ above is a categorical fibration. Hence the above homotopy fiber product coincides with the strict pullback in the category of simplicial sets. 
The composition of $p_{\textup{nu}}$ with the inclusion $\ner(\textup{Surj})\xhookrightarrow{} \ner(\Fin_*)$ exhibits $\cO_{\nun}^\otimes$ as an $\infty$-operad.
\end{rem}

\begin{recall}{\cite[cf. Definition 2.4.1.1, Construction 2.4.1.4 and Corollary 2.4.1.8]{HA}}
Let $\cc$ be a category with finite products.
The product structure induces an $\infty$-operad $q:\ner(\mathcal{C})^{\times} \longrightarrow \ner(\Fin_*)$ such that the $\langle 1\rangle$-fiber (which again coincides with the pull-back in $\sets_\Delta$)
\[
\begin{tikzcd}
\ner(\mathcal{C})^{\times}_{\langle 1\rangle}
\arrow[r]
\arrow[d]
&
\ner(\mathcal{C})^{\times}
\arrow[d,"q"]
\\
\{*\}
\arrow[r, "{\langle 1 \rangle}"]
&
\ner(\Fin_*)
\end{tikzcd}
\]
is isomorphic to the simplicial nerve $\ner(\mathcal{C})$.
More generally the $\langle n\rangle$-fiber $\ner(\mathcal{C})^{\times}_{\langle n\rangle}$ is isomorphic to the product (in $\sets_\Delta$) of $n$-copies of $\ner(\mathcal{C})$.
\end{recall}

\begin{recall}\label{defin-locally-constant-algebra}
    Let $p:\mathcal{O}^{\otimes}\to \ner(\Fin_*)$ be an $\infty$-operad. 
    Let $\cc$ be a category with finite products.
    An $\cO^\otimes$-\textit{algebra object in} $\ner(\cc)^\times$ is a map of $\infty$-operads $\alpha: \cO^\otimes\to \ner(\cc)^\times$. 
    These form an $\infty$-category $\Alg_{\cO^\otimes}(\ner(\cc)^\times)$.
    A \textit{non-unital} $\cO^\otimes$-algebra object in $\ner(\cc)^\times$ is a $\cO^\otimes_\nun$-algebra object in $\ner(\cc)^\times$.

    A $\mathcal{O}_\nun^\otimes$-algebra object $\alpha$ in $\ner(\cc)^\times$ is \emph{locally constant} if the map
    \[
    (\cO_\nun^\otimes)_{\langle 1 \rangle}\xrightarrow{\alpha_{\langle 1 \rangle}} \ner(\mathcal{C})^{\times}_{\langle 1 \rangle}\rightarrow \ner(\mathcal{C})^{\times}
    \]
    sends every morphism of $(\cO^\otimes_{\nun})_{\langle 1\rangle}$ to an isomorphism of $\ner(\mathcal{C})^{\times}$.    
\end{recall}

Let $\Disk(\R^2)$ be the category of opens $U\subset \R^2$
homeomorphic to $\R^2$, where morphisms are the inclusions. 
Let $\MDisk(\R^2)$ be its full subcategory of metric disks $D\subset \R^2$.
\begin{defin}
Let $\Disk(\R^2)^\otimes$ be the fiber category over $\Fin_*$ whose objects are $n$-uples of opens $(U_1,\dots,U_n)$ and whose morphisms $(U_1,\dots,U_m)\to (U_1',\dots,U_n')$ consist of $\overline{f} : \langle m\rangle \to \langle n\rangle$
such that
\begin{enumerate}
    \item $\forall\ 1\leq i \leq n$, if $\overline{f}(j)=i$ then $U_j\subset U_i'$;
    \item $\forall\ 1\leq j'<j\leq m$ s.t. $\overline{f}(j')=\overline{f}(j)=i$ we have   
    $U_{j'}\cap U_j = \varnothing$.
\end{enumerate}
The map $\Disk(\R^2)^\otimes \to \Fin_*$ sends $(U_1,\dots,U_n)\mapsto \langle n\rangle$ (and is the identity on morphisms).
Denote by $\MDisk(\R^2)^\otimes$ the full subcategory of $\Disk(\R^2)^\otimes$ spanned by tuples of metric disks $(D_1,\dots,D_n)$. 
\end{defin}
Taking the simplicial nerve of $\Disk(\R^2)^\otimes \to \Fin_*$ we get a map of $\infty$-categories  $\ner(\Disk(\R^2)^{\otimes}) \to \ner(\Fin_*)$. 
Either checking the conditions of \cref{1-operads} or by noticing that $\ner(\Disk(\R^2)^{\otimes}) $ coincides with the $\infty$-operad $\ner(\Disk(\R^2))^\otimes$ (see \cite[Definition 5.4.5.6]{HA}), we have that $\ner(\Disk(\R^2)^\otimes)$ is an $\infty$-operads.
The same holds true for $\ner(\MDisk(\R^2)^\otimes)$.

\begin{rem}
Let $\Disk(\mathbb{R}^2)^{\otimes}_{\nun}$ be subcategory of $\Disk(\mathbb{R}^2)^{\otimes}$ defined as the fiber product $$\Disk(\mathbb{R}^2)^{\otimes}\times_{\Fin_*} \textup{Surj}.$$
Since the nerve commutes with limits, the nerve $\ner(\Disk(\mathbb{R}^2)^{\otimes}_{\nun})$ coincides with $\ner(\Disk(\R^2)^\otimes)_{\nun}$.
Same definition and property hold for $\MDisk(\R^2)$.
\end{rem}

\begin{recall}\label{E2}
Recall the definition of the \emph{little 2-disks $\infty$-operad} $\E_2$ from \cite[Definition 5.1.0.2]{HA}. 
Its objects are the same as $\Fin_*$, but $\Map_{\E_2}(\langle m\rangle,\langle n\rangle)$ is the homotopy type of 
$$
\coprod_{\overline{f} :\langle m\rangle \to \langle n\rangle} 
\prod_{i=1}^n \textup{Rect}((-1,1)^2\times \overline{f}^{-1}(\{i\}),(-1,1)^2)$$ 
where $(-1,1)$ is the interval in $\R$ and $\textup{Rect}$ stays for the space of \textit{rectilinear embeddings} (see \textit{loc. cit.}).
\end{recall}

\begin{recall}\label{E2-vs-Disk}
Unlike $\ner(\Disk(\R^2)^\otimes), \ner(\MDisk(\R^2)^\otimes)$, $\E_2$ is not the nerve of a category.
However, by \cite[Theorem 5.4.5.15]{HA} there is an equivalence between the $\infty$-category of $(\E_{2})_{\nun}$-algebra objects in $\ner(\cc)^\times$ and the $\infty$-category of locally constant $\ner(\Disk(\R^2)^\otimes)_{\nun}$-algebra objects in $\ner(\cc)^\times$ (where $\cc$ is a category with finite products). 
\end{recall}

The following slight modification of \cref{E2-vs-Disk} is the main tool of the present subsection. 

\begin{prop}\label{E2-vs-MDisk}
Let $\cc$ be a category with finite products.
There is an equivalence between the $\infty$-category of $(\E_2)_\nun$-algebra objects in $\ner(\mathcal{C})^\times$ and the $\infty$-category of locally constant $\ner(\MDisk(\R^2)_{\nun}^\otimes)$-algebra objects in $\ner(\mathcal{C})^\times$.
\end{prop}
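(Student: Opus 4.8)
The plan is to deduce \Cref{E2-vs-MDisk} from \Cref{E2-vs-Disk} by showing that the inclusion of (nerves of) operads $\ner(\MDisk(\R^2)^\otimes)\hookrightarrow \ner(\Disk(\R^2)^\otimes)$ induces an equivalence on $\infty$-categories of locally constant non-unital algebras. The idea is that a metric disk is homotopy-equivalent — in fact, by a straightforward scaling/translation isotopy, ``isotopy-equivalent'' — to any open disk (homeomorph of $\R^2$) containing it, so the two operads should be ``Morita equivalent'' as far as locally constant algebras are concerned. More precisely, I would first record that $\MDisk(\R^2)^\otimes \hookrightarrow \Disk(\R^2)^\otimes$ is a map of categories over $\Fin_*$ preserving inert morphisms, hence induces a map of $\infty$-operads $\ner(\MDisk(\R^2)^\otimes)\to \ner(\Disk(\R^2)^\otimes)$ (and it restricts to the non-unital versions, since the nerve commutes with the relevant fiber products, as already observed in the excerpt). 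Then the claim reduces to: restriction along this map induces an equivalence
\[
\Alg^{\mathrm{loc.const.}}_{\ner(\Disk(\R^2)^\otimes)_{\nun}}(\ner(\cc)^\times) \xrightarrow{\ \sim\ } \Alg^{\mathrm{loc.const.}}_{\ner(\MDisk(\R^2)^\otimes)_{\nun}}(\ner(\cc)^\times).
\]

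The key step is an \emph{approximation} (cofinality) argument in the sense of \cite[§2.3.3]{HA}: I would show that the inclusion $\ner(\MDisk(\R^2)^\otimes)\to \ner(\Disk(\R^2)^\otimes)$ is an approximation to the target $\infty$-operad, so that restriction of (locally constant) algebras along it is fully faithful, and then identify the essential image. The point is that every object $(U_1,\dots,U_n)$ of $\Disk(\R^2)^\otimes$ receives morphisms from tuples $(D_1,\dots,D_n)$ of metric disks with $D_j\subset U_j$, and the relevant slice categories (of metric-disk tuples mapping to a fixed object, with fixed underlying map of pointed sets) are \emph{weakly contractible} — indeed filtered, since a finite collection of metric disks inside a fixed $\R^2$-homeomorph can always be simultaneously shrunk/refined, and nested metric disks form a directed system. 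Local constancy is exactly what allows us to extend an algebra defined on metric disks to all of $\Disk(\R^2)^\otimes$: given a locally constant $\ner(\MDisk(\R^2)^\otimes)_{\nun}$-algebra $A$ and an open $U\cong\R^2$, set $A(U) := \colim_{D\subset U} A(D)$ over metric disks $D\subset U$; local constancy forces this colimit to be computed by any cofinal chain and to be isomorphic to $A(D)$ for a single $D$, so it is well-defined up to coherent equivalence, and functoriality in inclusions is induced. One checks this extension is again locally constant and non-unital, and that the two constructions are mutually inverse up to natural equivalence.

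The main obstacle I anticipate is making the ``extend by colimit over metric subdisks'' construction \emph{coherent} at the $\infty$-categorical level — i.e. producing an honest inverse functor between $\infty$-categories of algebras, not merely a bijection on equivalence classes — and checking that it interacts correctly with the operadic (tensor) structure encoded over $\ner(\Fin_*)$, including the disjointness condition (2) in the definition of $\Disk(\R^2)^\otimes$ which must be matched by the disjointness condition on metric-disk tuples. I expect the cleanest route is to avoid building the inverse by hand and instead invoke the operadic approximation/cofinality machinery of \cite[Theorem 2.3.3.23]{HA} (or its variant for $\infty$-operads): verifying that $\ner(\MDisk(\R^2)^\otimes)\to\ner(\Disk(\R^2)^\otimes)$ is a \emph{weak approximation} — which amounts to the contractibility of the relevant comma categories spelled out above, a purely combinatorial/point-set check — and then concluding that restriction is fully faithful, with the locally constant algebras being precisely the ones in the image because local constancy is detected on $\langle 1\rangle$-fibers and the inclusion $\MDisk(\R^2)\hookrightarrow\Disk(\R^2)$ is already cofinal and homotopically full there. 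Composing the resulting equivalence with \Cref{E2-vs-Disk} yields the statement.
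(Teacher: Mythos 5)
Your approach takes a genuinely different route from the paper's, and the paper's route is both shorter and safer. The paper does not compare the operads $\ner(\MDisk(\R^2)^\otimes_{\nun})$ and $\ner(\Disk(\R^2)^\otimes_{\nun})$ at all. Instead it observes that Lurie's proof of \cite[Theorem 5.4.5.15]{HA} rests on \cite[Lemmas 5.4.5.10 and 5.4.5.11]{HA}, and that both of those lemmas invoke the categorical Seifert--Van Kampen theorem \cite[Theorem A.3.1]{HA}. Since Seifert--Van Kampen applies to \emph{any} basis of the topology of $\R^2$ (not only the collection of all opens homeomorphic to $\R^2$), one may simply rerun Lurie's proof with $\Disk(\R^2)$ replaced throughout by the smaller basis $\MDisk(\R^2)$, and \cite[Theorem 5.4.5.15]{HA} holds verbatim for $\MDisk(\R^2)^\otimes_{\nun}$. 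No comparison of disk operads is needed.

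Your proposal instead tries to deduce the $\MDisk$-version from the $\Disk$-version by showing that restriction along $\ner(\MDisk(\R^2)^\otimes_{\nun})\hookrightarrow\ner(\Disk(\R^2)^\otimes_{\nun})$ is an equivalence on locally constant non-unital algebras. This is plausible in spirit, but the machinery you cite does not apply as you use it: \cite[Theorem 2.3.3.23]{HA} gives an equivalence $\Alg_{\cO}(\cD)\simeq\Alg_{\cC}(\cD)$ from a weak approximation $\cC^\otimes\to\cO^\otimes$ only under the hypothesis that the induced map on $\langle 1\rangle$-fibers is an equivalence of $\infty$-categories. Here $\MDisk(\R^2)\hookrightarrow\Disk(\R^2)$ is a proper full subcategory inclusion, so that hypothesis fails; the theorem therefore does not supply the full faithfulness you claim, and it certainly does not formally identify the essential image with the locally constant algebras. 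The fallback you sketch — defining $A(U)$ as a colimit over metric subdisks $D\subset U$ and then checking $\infty$-categorical coherence by hand — is real work, and is essentially the content of Lurie's own Seifert--Van Kampen argument; carrying it out would amount to reproving \cite[Theorem 5.4.5.15]{HA} with extra steps rather than bypassing it. This is precisely why the paper's observation, that the relevant lemmas are basis-independent, is the economical move.
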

\begin{proof}
The aforementioned \cite[Theorem 5.4.5.15]{HA} rests upon \cite[Lemma 5.4.5.10, Lemma 5.4.5.11]{HA}. 
Both lemmas hold if one replaces $\Disk(\R^2)^\otimes$ with $\MDisk(\R^2)^\otimes$: indeed, they rely on the categorical Seifert-Van Kampen Theorem \cite[Theorem A.3.1]{HA}, and therefore one can consider any subbase of the collection of all disks of $\R^2$. 
This means that \cite[Theorem 5.4.5.15]{HA} holds with $\MDisk(\R^2)^\otimes_\nun$ in place of $\Disk(\R^2)^\otimes_\nun$.
\end{proof}

\begin{thm}\label{corollary-algebra-structure}
Let $W$ be the class of stratified homotopy equivalences in $\Str\Top$. The functor 
$$
    \Gr_{G,\Ran(-)}: \MDisk(\R^2)\to \Str\Top,\quad \quad D\mapsto (\Gr_{G,\Ran(D)}, \mathfrak{s}_{\Ran}^\an)
$$
upgrades to a locally constant $\ner(\MDisk(\R^2)_\nun^{\otimes})$-algebra object
\[
\Gr_{G,\Ran(-)}^{\otimes} : \ner(\MDisk(\R^2)_\nun^{\otimes}) \to \ner(\Str\Top[W^{-1}])^\times.
\]
Therefore, for any $D\in \MDisk(\R^2)$, $\Gr_{G,\Ran(D)}$ carries a non-unital $\mathbb{E}_2$-algebra structure in $\Str\Top[W^{-1}]^\times$, independent of the choice of $D$.
\end{thm}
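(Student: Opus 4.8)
The plan is to build the functor $\Gr_{G,\Ran(-)}^\otimes$ on the operadic level and then invoke \cref{E2-vs-MDisk}. First I would construct an honest functor of ordinary categories
\[
\Gr_{G,\Ran(-)}^\otimes : \MDisk(\R^2)_\nun^\otimes \to \Str\Top,
\]
sending a tuple $(D_1,\dots,D_n)$ of pairwise operations to the product $\prod_{i=1}^n (\Gr_{G,\Ran(D_i)},\mathfrak{s}_\Ran^\an)$ in $\Str\Top$ (this product exists by \cref{StrTopiscocomplete}), and sending a morphism $\overline f:\langle m\rangle\to\langle n\rangle$ with $D_{j'}\cap D_j=\varnothing$ whenever $\overline f(j')=\overline f(j)$ to the map induced on factors by the open embeddings $\Gr_{G,\Ran(D_j)}\hookrightarrow \Gr_{G,\Ran(D_i)}$ (for $\overline f(j)=i$) together with the ``disjoint union'' multiplication $\prod_{\overline f(j)=i}\Gr_{G,\Ran(D_j)} \to \Gr_{G,\Ran(D_i)}$ coming from the factorization structure of the Ran Grassmannian: two finite subsets $\underline x_j\subset D_j$ with disjoint supports combine into $\bigcup_j \underline x_j\subset D_i$, and the torsors-with-trivialization glue because the supports are disjoint. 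One checks this is functorial (associativity of gluing disjoint configurations), so taking nerves we obtain $\ner(\MDisk(\R^2)_\nun^\otimes)\to \ner(\Str\Top)$ over $\ner(\Fin_*)$. That this is a map of $\infty$-operads follows from the last lemma before \cref{defin-locally-constant-algebra}: one just verifies that the structure maps lying over the inert $\rho_i$ (which are the projections to the $i$-th factor, an open embedding composed with nothing) are sent to inert morphisms, i.e. to coCartesian lifts of $\rho_i$ in $\ner(\Str\Top)^\times$; this is immediate from the product description of $\ner(\Str\Top)^\times$.

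Next I would postcompose with the localization $\Str\Top\to \Str\Top[W^{-1}]$, i.e. with $\ner(\Str\Top)^\times \to \ner(\Str\Top[W^{-1}])^\times$, to obtain
\[
\Gr_{G,\Ran(-)}^\otimes : \ner(\MDisk(\R^2)_\nun^\otimes)\to \ner(\Str\Top[W^{-1}])^\times,
\]
a non-unital $\ner(\MDisk(\R^2)^\otimes)$-algebra object in $\ner(\Str\Top[W^{-1}])^\times$ in the sense of \cref{defin-locally-constant-algebra}. The crucial point is \emph{local constancy}: every morphism of $(\ner(\MDisk(\R^2)_\nun^\otimes))_{\langle 1\rangle}$ — which is exactly an inclusion of metric disks $D'\subset D$ — must be sent to an isomorphism in $\ner(\Str\Top[W^{-1}])^\times$. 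By construction this morphism is sent to the open embedding $i_\Ran:(\Gr_{G,\Ran(D')},\mathfrak s_\Ran^\an)\hookrightarrow (\Gr_{G,\Ran(D)},\mathfrak s_\Ran^\an)$, and \cref{homotopy-invariance-Ran} says precisely that $i_\Ran$ is a stratified homotopy equivalence, hence lies in $W$, hence becomes an isomorphism after localizing. This is where the main work of the paper is used, and it is the step I expect to be least formal to set up cleanly — one must make sure the morphism of the $\MDisk$-operad between two disks is genuinely sent to $i_\Ran$ and not to some reshuffled map, but the unary morphisms are just inclusions so there is no disjointness condition to worry about and the identification is direct.

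Finally I would apply \cref{E2-vs-MDisk} with $\cc=\Str\Top[W^{-1}]$ (a category with finite products — products in $\Str\Top$ descend to the localization, or one simply notes the equivalence only uses finite products which exist here): the $\infty$-category of locally constant $\ner(\MDisk(\R^2)_\nun^\otimes)$-algebras in $\ner(\Str\Top[W^{-1}])^\times$ is equivalent to the $\infty$-category of $(\E_2)_\nun$-algebras in $\ner(\Str\Top[W^{-1}])^\times$. Transporting $\Gr_{G,\Ran(-)}^\otimes$ across this equivalence yields a non-unital $\E_2$-algebra object whose underlying object is $\Gr_{G,\Ran(D)}$ for any $D$; independence of $D$ is built in, since all the $\Gr_{G,\Ran(D)}$ are canonically identified in $\Str\Top[W^{-1}]$ via the maps above (any two metric disks embed into a common larger one). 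The only genuine obstacle is bookkeeping the functoriality of the gluing-of-disjoint-configurations operation at the level of ordinary categories so that the nerve is literally a map of $\infty$-operads; once that is in place, everything else is a formal consequence of \cref{homotopy-invariance-Ran} and \cref{E2-vs-MDisk}.
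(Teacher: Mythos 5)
Your proposal follows the same overall route as the paper (build a functor out of $\MDisk(\R^2)^\otimes_\nun$ using factorization, localize, check local constancy via \cref{homotopy-invariance-Ran}, invoke \cref{E2-vs-MDisk}), but it leaves a genuine gap at the step you dismiss as "bookkeeping."

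The problem is that the multiplication map $\prod_{\overline f(j)=i}\Gr_{G,\Ran(D_j)} \to \Gr_{G,\Ran(D_i)}$ is described only on underlying sets ("two finite subsets with disjoint support combine into their union, and the torsors glue"). That description is correct as a map of sets, but $\Gr_{G,\Ran(D)}$ is a \emph{stratified topological space}, and it is not at all automatic that this map is continuous, let alone stratified. The source is not representable: $\Gr_{G,\Ran(D)}$ is realized as a sequential colimit of spaces $\Gr^{(N)}_{G,\Ran_{\leq n}(D)}$, each of which is itself a perfect quotient of the analytification $\Gr^{(N),\an}_{G,D^n}$. The factorization isomorphism $\mathfrak{f}_{(I_i)}$ that gives the gluing is only defined at the finite level $\Gr^{(N)}_{G,X^{I_i}}$. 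To descend it to the Ran level and then pass to the colimit in $N$ and $n$, one must verify (i) that the product of perfect quotients is a perfect quotient, so the map factors through the $\Ran_{\leq n}$ quotients (this is \cref{properties-of-perfect-quotients} together with \cref{remark-Rann-Hausdorff}), and (ii) that sequential colimits along closed embeddings of locally compact Hausdorff spaces commute with the relevant fiber products, so that assembling the $N$- and $n$-levels actually yields a continuous map on the full $\Gr_{G,\Ran(D_i)}$ (this is the Harpaz result \cref{Har15} / \cref{stratified-Harpaz}). Neither of these is a categorical formality — universality of colimits fails in $\Top$ and $\Str\Top$, as already flagged in \cref{warning} and \cref{warning-Ran} — and the bulk of the paper's proof of this theorem is devoted precisely to this point. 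Characterizing the obstacle as "bookkeeping the functoriality of the gluing... at the level of ordinary categories" misidentifies what can go wrong: functoriality of the pointwise gluing \emph{is} trivial; continuity of the assembled map is not.

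A secondary, more cosmetic imprecision: you take the nerve of $\mathfrak{G}:\MDisk^\otimes_\nun\to\Str\Top$ and immediately say you have a map "over $\ner(\Fin_*)$," but $\ner(\Str\Top)$ does not live over $\ner(\Fin_*)$. The paper instead observes that $\ner(\mathfrak{G}_{[W^{-1}]})$ is a \emph{lax Cartesian structure} on the $\infty$-operad $\ner(\MDisk^\otimes_\nun)$ in the sense of \cite[Definition 2.4.1.1]{HA} (because the inert maps exhibit $\mathfrak{G}(D_1,\dots,D_n)$ as $\prod_i\mathfrak{G}(D_i)$ and localization at a product-stable class $W$ preserves products), and then applies \cite[Proposition 2.4.1.7]{HA} to manufacture the desired map of $\infty$-operads into $\ner(\Str\Top[W^{-1}])^\times$. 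This step is formal once the lax Cartesian condition is verified, but your formulation skips it in a way that doesn't quite type-check. The rest of your argument — local constancy from \cref{homotopy-invariance-Ran}, transport across \cref{E2-vs-MDisk} — matches the paper and is sound.
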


\begin{proof}
First of all, let us define a functor of 1-categories $\mathfrak{G}:
\MDisk(\R^2)^\otimes_\nun \to \Str\Top
$, sending
$$(D_1,\dots, D_n)\mapsto \prod_{i=1}^n\left( \Gr_{\Ran(D_i)}, \mathfrak s_{\Ran}^\an \right).$$
On morphisms, we define it by steps.
For maps $(D_1,\dots, D_n)\rightarrow D'$ over the inert morphism $\rho_i$, it is defined as the projection on the $i$-th component followed by the inclusion $i_{\Ran}$:
\[
\prod_{j=1}^n \left(\Gr_{G,\Ran(D_j)}, \mathfrak{s}_{\Ran}^\an \right)
\xrightarrow{\pi_i} \left(\Gr_{G,\Ran(D_i)}, \mathfrak{s}_{\Ran}^\an \right) \xrightarrow{i_{\Ran}}
\left( \Gr_{G,\Ran(D')}, \mathfrak{s}_{\Ran}^\an \right).
\]
Consider now maps $(D_1,\dots, D_n)\rightarrow D'$ over the \textit{active} morphism
\[
a_n: \langle n\rangle \to \langle 1 \rangle, \quad *\mapsto *, \langle n\rangle\setminus \{*\} 
\mapsto 1,
\]
where $D_i$'s are then all disjoint and contained in $D'$. 
Let $I_1,\dots,I_n\in \fs$, and consider  $({\C}^{I_1}\times \dots \times \C^{I_n})_\disj$ (see definition in \Cref{binary-factorization-property}).
Fix $N\geq 0$. 
By using the factorization property \eqref{factorizationpropertyfordifferentsetsofpoints} and then analytifying (recall that $(-)^{\an}_{\Str, \lft}$ preserves finite limits), consider the isomorphism
$$
\mathfrak{f}_{(I_i)_{i=1}^n}^{(N), \an} : \prod_{i=1}^n
\left(
\Gr_{G,\C^{I_i}}^{(N)},
\mathfrak{s}_{I_i}^{(N)}
\right)_\disj
\to
\left( 
\Gr_{G, \C^{ \sqcup_i I_i}}^{(N)}, \mathfrak{s}_{\sqcup_i I_i}^{(N)} 
\right).
$$
Restricting to $ \prod_i D_i^{I_i} = (\prod_i D_i^{I_i})_{\disj} \subset (\C^{I_1}\times\C^{I_2})_{\textup{disj}}$ on the LHS and to $(D')^{\sqcup_i I_i} \subseteq  \C^{\sqcup_i I_i}$ on the RHS induces a map
$$
\prod_{i=1}^n
\left(
\Gr_{G,D_i^{I_i}}^{(N)},
\mathfrak{s}_{I_i}^{(N)}
\right) 
\to 
\left(
\Gr_{G, {D'}^{\sqcup_i I_i}}^{(N)}, \mathfrak{s}_{\sqcup_i I_i}^{(N)}
\right).
$$
Thanks to \cref{Har15}, taking the colimit of these maps in $N$ gives in turn a map
$$
\prod_{i=1}^n
\left(
\Gr_{G,D_i^{I_i}},
\mathfrak{s}_{I_i}
\right) 
\to 
\left(
\Gr_{G, {D'}^{\sqcup I_i}}, \mathfrak{s}_{\sqcup I_i}
\right).
$$ 
Post-composing by the quotient map into $\Gr_{G, \Ran_{\leq |\sqcup_i I_i|}(D')}$, we thus obtain a morphism
\begin{equation}\label{mapinran2n}
\prod_{i=1}^n \left(
\Gr_{G,D_i^{I_i}},
\mathfrak{s}_{I_i}
\right) 
\to 
\left(
\Gr_{G, \Ran_{\leq |\sqcup_i I_i|}(D')}, \mathfrak{s}_{\Ran}
\right).
\end{equation}
Recall that the relation which defines the quotient map 
$
\left(
\Gr_{G,D^{I}}, \mathfrak{s}_I
\right)
\to 
\left(
\Gr_{G, \Ran_{\leq |I|}(D)}, \mathfrak{s}_{\Ran}
\right)
$
is 
$$(x_I, \cF,\alpha) \sim (x'_I, \cF', \alpha') \iff \{x_1,\dots, x_{|I|}\}=\{x_1',\dots, x_{|I|}'\}, \cF\simeq \cF', \alpha\simeq \alpha'.$$ 
Since also the product map $\prod_{i=1}^n \Gr_{G, D_i^{I_i}} \rightarrow \prod_{i=1}^n \Gr_{G, \Ran_{\leq |I_i|}(D_i)}$ is a quotient map (by \Cref{remark-Rann-Hausdorff} and \Cref{properties-of-perfect-quotients}), the morphism \eqref{mapinran2n}
factors as 
$$
\prod_{i=1}^n
\left(
\Gr_{G, \Ran_{\leq |I_i|}(D_i)}, 
\mathfrak{s}_{\Ran_{\leq |I_i|}}
\right)
\to 
\left(
\Gr_{G, \Ran_{\leq |\sqcup_i I_i|}(D')},
\mathfrak{s}_{\Ran_{\leq |\sqcup_i I_i|}}
\right).
$$
Note that this map is also stratified by the same argument at the end of the proof of \cref{extension-of-analytification}.
We can now use \cref{Har15} again and obtain a continuous map at the level of $\Ran$'s:
$$
\prod_{i=1}^n
\left(
\Gr_{G, \Ran(D_i)}, 
\mathfrak{s}_\Ran
\right)
\to 
\left(
\Gr_{G, \Ran(D')}, 
\mathfrak{s}_{\Ran}
\right).
$$ 
Note also that this assignment on active morphisms respects composition, because the operation of gluing torsors via trivializations away from disjoint systems of points is associative (see the description in \cref{BD-stratification} and \cref{binary-factorization-property}).
Finally, note that any morphism in $\MDisk(\R^2)^\otimes_\nun$ can be written uniquely as a product of inert morphisms followed by a product of active morphisms.

Let now $\mathfrak{G}_{[W^{-1}]}:\MDisk(\R^2)^\otimes_\nun\to \Str\Top[W^{-1}]$ be the functor obtained by postcomposing $\mathfrak{G}$ with the (1-categorical) localization at $W$.
Taking the nerve we get a functor of $\infty$-categories
\[
\ner(\mathfrak{G}_{[W^{-1}]}):
\ner(\MDisk(\R^2)^\otimes_\nun)
\to 
\ner(\Str\Top[W^{-1}]).
\]
It turns out that $\ner(\mathfrak{G}_{[W^{-1}]})$ is \emph{lax} \cite[Definition 2.4.1.1]{HA}: 
for any object $(D_1,\dots, D_n) \in \MDisk(\R^2)^\otimes_\nun)_{\langle n\rangle}$ the inert maps $\mathfrak{G}(\rho_i): \mathfrak{G}(D_1,\dots, D_n) \to \mathfrak{G}(D_i)$ exhibit $\mathfrak{G}(D_1,\dots, D_n)$ as a product $\prod_i \mathfrak{G}(D_i)$. 
Localizing by a class $W$ of maps closed under products preserves products, and so does taking the nerve.
Hence $\ner(\mathfrak{G}_{[W^{-1}]})$ is lax.
By \cite[Proposition 2.4.1.7]{HA} we then obtain a map of $\infty$-operads 
$$
\Gr_{\Ran(-)}^\otimes:
\ner(\MDisk(\R^2)^\otimes_\nun)
\to 
\ner(\Str\Top[W^{-1}])^\times
$$ 
such that $\pi\circ \Gr_{\Ran(-)}^\otimes$ is $\ner(\mathfrak{G}_{[W^{-1}]})$, where $\pi$ is defined in \cite[Proposition 2.4.1.5]{HA}.

Thanks to \cref{E2-vs-MDisk}, in order to conclude the proof it remains to check that $\Gr_{\Ran(-)}^\otimes$ is locally constant: this is a property at the level of the $\langle 1 \rangle$-fiber, over which the functor $\pi|_{\langle 1 \rangle}$ is the identity (see its definition in \cite[Notation 2.4.1.2 and Proposition 2.4.1.5]{HA}).
Therefore it is enough to check that  
\[
\ner(\mathfrak{G}_{[W^{-1}]})_{\langle 1\rangle}:
\ner(\MDisk(\R^2)^\otimes_\nun)_{\langle 1 \rangle} 
\to
\ner(\Str\Top[W^{-1}])_{\langle 1 \rangle}
\]
sends any morphism to an isomorphism of $\ner(\Str\Top[W^{-1}])$.
This is precisely \Cref{homotopy-invariance-Ran} which says that, for $D'\subset D$ metric disks, the induced map $\Gr_{\Ran(D')} \xhookrightarrow{i_\Ran} \Gr_{\Ran(D)}$ is a stratified homotopy equivalence.  
\end{proof}

Note that underlying stratified space (up to stratified homotopy equivalence) of our algebra object is given by the value $\Gr_{\Ran(D_0)}$, for any choice of $D_0\in\MDisk(\R^2)$ (different choices induce values stratified homotopy equivalent to each other. The equivalence is also canonical if the two chosen disks are one contained into the other).

\begin{rem}\label{remark-algebra-infinity-localization}
The same statement of \cref{corollary-algebra-structure} is true if one replaces the 1-categorical localization $\Str\Top[W^{-1}]$ with the $\infty$-categorical localization $\ner(\Str\Top)[W^{-1}]$ together with its Cartesian symmetric monoidal structure. 
The proof is \textit{verbatim} the same until the end of the definition of $\mathfrak{G}$. 
Then, one considers the functor $\ner(\mathfrak{G})$ and post-composes it with the $\infty$-categorical localization at $W$, $\ner(\Str\Top) \to \ner(\Str\Top)[W^{-1}]$, thus obtaining a functor $\ner(\mathfrak{G})_{[W^{-1}]}$. 
One can then apply \cite[Proposition 2.4.1.7]{HA} to $\ner(\mathfrak{G})_{[W^{-1}]}$ in the same way as we applied it to $\ner(\mathfrak{G}_{[W^{-1}]})$, and conclude in the same way.
\end{rem}

\begin{rem}
Note that in general, the universal property of localizations induces a canonical functor of $\infty$-categories $\ner(\Str\Top)[W^{-1}]\to \ner(\Str\Top[W^{-1}])$.
In this sense, the statement of \cref{corollary-algebra-structure} is formally weaker than its $\infty$-categorical version in \cref{remark-algebra-infinity-localization}.
\end{rem}

\begin{rem}
    In the setting of stratified topological stacks mentioned in \cref{rem-Hecke}, one can prove in the same way a statement analogous to \cref{corollary-algebra-structure} involving the $\Hck_{\Ran(D)}$'s, by means of \cref{homotopy-invariance-Ran} and \cref{rem-Hecke}. 
\end{rem}

\begin{appendices}
\section{Recollections and complements on the Beilinson-Drinfeld Grassmannian}\label{recollection}
\noindent In this Appendix, we recall some definitions and properties needed in the paper, stressing some details and proving some folklore properties.
Two sources containing very good introductions to the affine Grassmannian and to the Beilinson--Drinfeld Grassmannians are \cite{Zhu} and \cite{Baumann-Riche}. 
Other useful properties of the Ran Grassmannian can be found in \cite{James}.

\subsection{The stratification of the affine Grassmannian}

\begin{recall}[Definition of $\Gr_G$]\label{recall-Gr}
\cite[(1.2.1)]{Zhu} The \emph{affine Grassmaniann} is the presheaf
$$
    \Gr_G:\Aff_\C^\op\to \sets, \quad \Spec R \mapsto \{(\mathcal{F},\alpha):\,  
    \mathcal{F}\in \Bun_G(\Spec R \taylor), \, \alpha:\mathcal{F}|_{\Spec R \laurent }\triv\cT_{G,\Spec R \laurent }\}/_\sim
$$
where $(\cF, \alpha)\sim (\cG,\beta)$ if and only if there is an isomorphism $\psi:\cF\xrightarrow{\sim} \cG$ whose restriction makes the following diagram commute 
$$
\begin{tikzcd}
    \cF|_{\Spec R \laurent }
    \arrow[rr,"{\psi|_{\Spec R \laurent } }"]
    \arrow[dr, "{\alpha}"']
    &
    &
    \cG|_{\Spec R \laurent }
    \arrow[dl, "{\beta}"]
    \\
    &
    \cT_{G,{\Spec R \laurent }}.    
    &
\end{tikzcd}
$$
By \cite[Theorem 1.22]{Zhu}, $\Gr_G$ is ind-representable by $\underset{N\geq 0}{\colim}\, \Gr_G^{(N)}$,
where each $\Gr_G^{(N)}$ is a projective $\C$-scheme and the transition maps are closed embeddings.
By \cite[Proposition 1.3.6]{Zhu}, it can also be described as the \'etale sheafification 
\begin{equation}\label{equationGrG}
\Gr_G \simeq \left [\faktor{\LG}{\LpG} \right ]_{\ett}\end{equation}
where $\LpG, \LG$ are \'etale sheaves in groups defined as
$$
    \begin{tikzcd}[row sep=0.5mm]
        \LpG : \Aff_\C^\op\to \Grp, \quad
        \Spec R \mapsto G(R\taylor),
    \end{tikzcd}
    \quad  \textup{ and }  \quad
    \begin{tikzcd}[row sep=0.5mm]
        \LG : \Aff_\C^\op\to \Grp, \quad
        \Spec R \mapsto G(R\laurent).
    \end{tikzcd}
$$
By \cite[Proposition 1.3.2]{Zhu}, the presheaf $\LpG$ is representable by the inverse limit
$$
    \LpG \cong \underset{m\geq 0}{\lim}\, \LmG,
$$
where $\LmG$ is the affine group-scheme of finite type over $\C$ representing the functor
$$
    \begin{tikzcd}[row sep=0.5mm] 
        \LmG : \Aff_\C^\op\to \Grp, \quad
        \Spec R \mapsto G(R[t]/(t^m)).
    \end{tikzcd}
$$
\end{recall}

\begin{fact}\label{sheaf-is-presheaf}
As proven in \cite[Theorem 3.4]{Cesnavitius}, the quotient presheaf $\LG/\LpG$ is already an \'etale sheaf.
Indeed every reductive group over a separably closed field is split because it contains a maximal torus \cite[(22.23)]{Milne-iAG} and every torus over a separably closed field is split \cite[(14.25)]{Milne-iAG}. 
Hence $G$ is totally isotropic (see \cite[Example 3.2]{Cesnavitius}).
Therefore in equation \eqref{equationGrG} we do not need to sheafify.
\end{fact}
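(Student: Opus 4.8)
The plan is to verify étale descent directly for the quotient presheaf $R\mapsto G(R\laurent)/G(R\taylor)$, reducing to the cohomological input of \cite[Theorem 3.4]{Cesnavitius}. First I would record that both $\LG$ and $\LpG=\underset{m\geq 0}{\lim}\,\LmG$ are already étale sheaves of groups (the former being ind-representable, each $\LmG$ being an affine $\C$-scheme of finite type), so that for every $\C$-algebra $R$ the projection $\LG\to\LG/\LpG$ exhibits $\LG/\LpG$ as the presheaf quotient of $\LG$ by the free right $\LpG$-action. Consequently the failure of $\LG/\LpG$ to be an étale sheaf is governed by the usual exact sequence of pointed sets
$$
\LG(R)/\LpG(R)\longrightarrow [\LG/\LpG]_\ett(R)\longrightarrow H^1_\ett(\Spec R,\LpG)\xrightarrow{\ \iota\ } H^1_\ett(\Spec R,\LG),
$$
where $[\LG/\LpG]_\ett$ is the étale sheafification; hence it is enough to show that $\ker(\iota)=\{*\}$ for all $R$, i.e. that an étale-locally-trivial $\LpG$-torsor on $\Spec R$ which becomes trivial after extension of structure group to $\LG$ is already trivial.

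Unwinding this along $\LpG(R')=G(R'\taylor)$ and $\LG(R')=G(R'\laurent)$, the claim becomes: for any étale cover $\Spec R'\to\Spec R$ and any $1$-cocycle $c\in G(R''\taylor)$, with $R''=R'\otimes_R R'$ and $c$ satisfying the usual cocycle identity over $R'\otimes_R R'\otimes_R R'$, if $c=p_1^{*}\gamma\cdot(p_2^{*}\gamma)^{-1}$ for some $\gamma\in G(R'\laurent)$ then already $c=p_1^{*}\beta\cdot(p_2^{*}\beta)^{-1}$ for some $\beta\in G(R'\taylor)$. This is exactly the statement of \cite[Theorem 3.4]{Cesnavitius} under the hypothesis that $G$ is \emph{totally isotropic} in the sense of \cite[Example 3.2]{Cesnavitius}. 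To check that hypothesis I would note that a complex reductive group is split — it contains a maximal torus by \cite[(22.23)]{Milne-iAG}, and a torus over the separably closed field $\C$ is split by \cite[(14.25)]{Milne-iAG} — and that a split reductive group is totally isotropic. Feeding this into \cite[Theorem 3.4]{Cesnavitius} gives $\ker(\iota)=\{*\}$, so $\LG/\LpG$ is an étale sheaf and the sheafification in \eqref{equationGrG} is superfluous.

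The step I expect to be the genuine obstacle is this last vanishing: the reduction in the first paragraph is pure bookkeeping about quotients of sheaves of groups, but triviality of $\ker(\iota)$ for general reductive $G$ is a substantial fact that rests on the structural results of \cite{Cesnavitius} concerning torsors over rings of the form $R\taylor$ and concerning loop groups of isotropic groups. Accordingly, in the write-up I would make the reduction and the verification of total isotropy explicit, but cite \cite[Theorem 3.4]{Cesnavitius} for the cohomological core, exactly as the excerpt does.
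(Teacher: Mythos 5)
Your proposal is correct and follows the same route the paper takes: verify that a complex reductive group is split (hence totally isotropic) and then cite \cite[Theorem 3.4]{Cesnavitius} for the sheaf property. The cohomological reformulation via the vanishing of $\ker\bigl(H^1_{\ett}(\Spec R,\LpG)\to H^1_{\ett}(\Spec R,\LG)\bigr)$ is a sound unwinding of what the cited theorem amounts to, logically equivalent via the exact sequence of pointed sets you record, but the paper simply invokes the sheaf statement directly.
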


\noindent Thanks to \Cref{sheaf-is-presheaf}, the schemes $\Gr_G^{(N)}$ have a very explicit description.
\begin{recall}[Cartan decomposition]\label{theactionfactors}
Fix a maximal torus $T\subset \GL_n$ and let $\mathbb{X}_{\bullet}(T)$ be the group $\Hom(\mathbb{G}_m,T)$ of coweights of $T$. 
Fix a set of positive coroots $\Psi^+$ of $T$ and denote by $\xt$ the set of dominant coweights of $T$. 
Endow $\mathbb{X}_{\bullet}(T)$ by its usual partial order, namely 
$$
    \nu \leq \mu \iff \mu - \nu \in \mathbb{N}\cdot \Psi^+ .
$$
This restricts to a partial order on $\xt$.
Finally fix an embedding of posets $\xt\hookrightarrow \N^n$.
Then:
\begin{gather*}
\Gr_{\GL_n}^{(N)}(R) \simeq \big\{[M]\in \GL_n(R\laurent)/\GL_n(R\taylor) : M
\textup{ has a Cartan decomposition } M=ADB,\\
\textup{where } A,B\in \GL_n(R\taylor) \textup{ and }
D=\textup{diag}(t^{-\nu_1},\dots,t^{-\nu_n}) \textup{ with } 0\leq \nu_n\leq\dots\leq \nu_1\leq N\big\}. 
\end{gather*}
In the case of an arbitrary $G$, fix a faithful representation $\rho:G\to \GL_n$ for some $n$, and this induces a closed embedding $\Gr_G\hookrightarrow\Gr_{\GLn}$ (see \cite[Proposition 1.2.5, 1.2.6]{Zhu}).
One then defines the $\Gr_G^{(N)}$'s as the preimage of $\Gr_{\GL_n}^{(N)}$ in $\Gr_G$. Note that $\rho$ also provides an embedding of posets $\xt\hookrightarrow\N^n$.
\end{recall}

\begin{recall}[Stratification on $\Gr_G$ and $\LpG$-action]\label{Schubert}
Consider the action $\LpG\times \Gr_G\to \Gr_G$ by left multiplication $(g, \mathcal{F}, \alpha) \mapsto (\mathcal{F}, g|_{t\neq 0} \circ \alpha)$: by \cite[§ 2.1, Proposition 2.1.5]{Zhu}, its orbits are smooth quasi-projective schemes of finite type over $\C$.
They are called \emph{Schubert cells} $\Gr_{G, \mu}$ and they are indexed by $\mu\in \xt$.
Given $\mu=( \mu_n\leq \dots \leq \mu_1)\in \xt$ then
\begin{gather*}
\Gr_{\GL_n, \mu}(R) \cong \{ [M]\in \Gr_{\GL_n}(R) : M=ADB, 
\textup{with } A,B\in \GL_n(R\taylor)\ \& \
D=\textup{diag}(t^{-\mu_1},\dots,t^{-\mu_n}) \}.
\end{gather*}
In general, $\Gr_{G, \mu}$ is the preimage of $\Gr_{\GL_n, \mu}$ via the closed embedding $\Gr_G\hookrightarrow\Gr_{\GL_n}$ mentioned in \Cref{theactionfactors}.
In particular, 
$$
\overline{\Gr_{G, \mu}}= \bigcup_{\nu\leq\mu} \Gr_{G,\nu},\quad \textup{ and }\quad {(\Gr_{G}^{(N)})}_{\textup{red}} = \bigcup_{\mu_1\leq N} \Gr_{G,\mu}.
$$
Therefore the collections $\{\Gr_{G, \mu}\}_{\mu\in \xt, \mu_1\leq N}$ (resp. $\{\Gr_{G, \mu}\}_{\mu\in \xt}$) give a stratification of $\Gr_G^{(N)}$ (resp. $\Gr_{G}$), making $(\Gr_G^{(N)}, \xt_{\leq N}, \mathfrak{s}^{(N)} : \Gr_G^{(N), \Zar} \to \Alex( \xt_{\leq N} ) )$ (resp. $(\Gr_G, \xt,\mathfrak{s} : \left( \Gr_G \right)^\Zar \to \Alex( \xt ))$), into an element of $\PSmall(\Str\Sch^{\lft}_\C)$.

Endow $\LpG$ with the trivial stratification: by the definition of the strata as the $\LpG$-obits, the left multiplication 
\begin{equation}\label{eq:Lpgagctsstratifies}
(\LpG, \textup{triv} )\times (\Gr_G, \mathfrak{s}) \to  
(\Gr_G, \mathfrak{s}), \quad (g, \mathcal{F}, \alpha)\mapsto (\mathcal{F}, g|_{t\neq 0}\circ \alpha)  
\end{equation}
is a stratified action.
\end{recall}

\begin{rem}\label{nonreducedness}
Let us note that $\Gr_G$ is reduced, for example, when $G$ is semisimple and simply connected (\cite[Theorem 1.3.11]{Zhu}), but for instance it is not for $G=\Gm$ (\cite[Example 1.3.12]{Zhu}).
Indeed, $\Gr_G$ and $\Gr_G^{(N)}$ are not reduced in general, while the $\Gr_{G,\mu}$'s are {by definition}. 
\end{rem}

\begin{recall}[Action of $\LmG$ on $(\Gr_G^{(N)}, \mathfrak{s}^{(N)})$]\label{action-restricts}
    The action of $\textup{L}^+\GL_n$ on $\Gr_{\GL_n}$ restricts to each $\Gr_{\GL_n}^{(N)}$: indeed the action is a left-multiplication by a matrix with coefficients in $R\taylor$, so the order of the poles does not increase.
    Moreover left-multiplication by a matrix of the form $A'+t^N B' \in \textup{L}^+\GL_n(R)$, where $A'\in \GL_n(R), B'$ an $n\times n$ matrix with coefficients in $R$, sends $M\in \Gr_{\GL_n}^{(N)}(R)$ to $A'M C$ with $C\in \GL_n(R\taylor)$ (and not simply $\GL_n(R\laurent)$ because $t^N$ solves the poles in $M$).

    Hence the action factors through $\GL_n(R\taylor / t^N R\taylor)\cong \GL_n(R[t]/t^N)$: so we get 
    \[
    (\textup{L}^N\GL_n, \textup{triv}) \times (\Gr_{\GL_n}^{(N)}, \mathfrak{s}^{(N)}) \to (\Gr_{\GL_n}^{(N)}, \mathfrak{s}^{(N)}).
    \]
    Thanks to the closed embedding $\Gr_G\xhookrightarrow{} \Gr_{\GL_n}$, we recover the general case:
    \begin{equation*}
    \forall N\in \nn,\ \forall m\geq m_N,\quad (\LmG, \textup{triv} )\times (\Gr_G^{(N)}, \mathfrak{s}^{(N)}) \to  
    (\Gr_G^{(N)}, \mathfrak{s}^{(N)})\quad \textup{in } \Str\Sch^\lft_\C.
    \end{equation*}
\end{recall}

\subsection{The stratification of the Beilinson--Drinfeld Grassmannian}

Denote by $\fs$ the category of non-empty finite sets with surjective maps between them.
\begin{notation}[Graphs of points]\label{graphs-of-points}
    Let $R$ be a $\C$-algebra, $I\in \fs$ and $x_I\in X^I(R)$. 
    Let $\textup{pr}_i:X^I\to X$ be the projection onto the $i$-th coordinate and denote by $x_i$ the composite $\textup{pr}_i\circ x_I$. 
    
    We denote by $\Gamma_{x_I}$ the closed (possibly not reduced) subscheme of $X_R$ corresponding to $R$-point of $\textup{Hilb}^{|I|}_X$ via
    $$
        \Spec R\to X^I\to \textup{Sym}^{|I|}_X\simeq \textup{Hilb}^{|I|}_X.
    $$ 
    This subscheme is supported over the union of the graphs $\Gamma_{x_i}$. 
    For instance, if $R=\C$, $I=\{1,2\}$ and $x_1=x_2$ is a closed point of $X$, then $\Gamma_{x_I}$ is the only closed subscheme supported at the point and of length $2$.
\end{notation}

\begin{recall}[Beilinson--Drinfeld Grassmannian]\label{BD-Grassmannian}
\cite[§3.1]{Zhu} For any $I\in \fs$, the \emph{Beilinson--Drinfeld Grassmannian} of power $I$ is the presheaf 
\begin{gather*}
    \Gr_{G, X^I}:\Aff_\C^\op\to \sets,
    \\
   \Spec R\mapsto \big\{(x_I,\mathcal{F},\alpha):\,
   x_I\in X^I(R),\,
   \cF\in \Bun_G(X_R) \textup{ and } \alpha:\cF|_{X_R\setminus \Gamma_{x_I}}\triv \cT_{G,X_R\setminus\Gamma_{x_I}}\big\}/_\sim,
\end{gather*}
where $(x_I, \cF, \alpha)\sim (y_I,\cG,\beta)$ if and only if $x_I=y_I$ in $X^I(R)$ and there is an isomorphism $\psi:\cF\xrightarrow{\sim} \cG$ whose restriction to $X_R\setminus \Gamma_{x_I}$ makes the following diagram commute:
$$
\begin{tikzcd}
    \cF_{X_R\setminus \Gamma_{x_I}}
    \arrow[rr,"{\psi|_{X_R\setminus  \Gamma_{x_I} }}"]
    \arrow[dr, "{\alpha}"']
    &
    &
    \cG_{X_R\setminus  \Gamma_{x_I}}
    \arrow[dl, "{\beta}"]
    \\
    &
    \cT_{G, X_R\setminus \Gamma_{x_I}}.   
    &
\end{tikzcd}
$$
As shown in \cite[Theorem 3.1.3]{Zhu}, the functor $\Gr_{G,X^I}$ is ind-representable by a colimit of projective $X^I$-schemes $\Gr_{G, X^I}^{(N)}$, and the transition maps are closed embedding.
\end{recall}

\noindent If $I=\{*\}$, for any point $x_0:\Spec \C \to X$ we have $\Gr_{G,X}\times_{X}\{x_0\}\simeq\Gr_G$ (\cite[§3.1]{Zhu}): if $X=\mathbb{A}^1_\C$, using the translation automorphism of $\Ac$, we get a splitting $\Gr_{G,\mathbb{A}^1_\C}\simeq \mathbb{A}^1_\C \times \Gr_G$.
However, in general no such splitting is guaranteed: what we have instead is that $\Gr_{G,X}$ is isomorphic to a ``twisted product'', as we now recall.

\begin{recall}[Formal coordinates and the torsor $\widehat{X}$]\label{recall-infinitesiam-formal-nbd}\label{formalcoordandtorsorrecall}
Given an $R$-point $x_I:\Spec R\to X^I$, denote by $\widehat \cO_{\Gamma_{x_I}}$ the sheaf of rings $\underset{n\geq 0}{\lim} \, \mathcal{O}_{X_R}/\mathcal{I}_{\Gamma_{x_I}}^n$. Recall that this limit does not depend on the scheme structure of the closed $\Gamma_{x_I}$ but only on its topology.
Denote by $\widetilde{\Gamma}_{x_I}$ the relative spectrum $\underline{\Spec}_{X_R}( \widehat \cO_{\Gamma_{x_I}} )$: then we get
$$
\begin{tikzcd}
    \Gamma_{x_I}
    \arrow[r, hook, "i_{x_I}"]
    \arrow[d, hook]
    &
    X_R .
    \\
    \widetilde{\Gamma}_{x_I} 
    \arrow[ur, "{i_{\widehat{x}_I}}"']
    &
\end{tikzcd}
$$
If $I=\{*\}$, denote by $\eta_x$ the isomorphism $\Spec R \to \Gamma_x$.
A \emph{formal coordinate at} $x$ is a map $\widehat{x}: \Spec R\taylor \to X$ such that $\widehat{x}|_{t =0} =x$ and such that it factors as 
\[
\begin{tikzcd}
    \Spec R\arrow[r, "\eta_x", "\sim"']
    \arrow[d, hook]
    &
    \Gamma_x
    \arrow[d, hook]
    \arrow[r, hook, "{i_x}"]
    &
    X_R
    \\
    \Spec R\taylor
    \arrow[rru, bend right=60, "{\widehat{x}}"']
    \arrow[r, "{\eta}", "\sim"']
    &
    \widetilde{\Gamma}_{x} 
    \arrow[ur, "{i_{\widehat{x}}}"']
    &
\end{tikzcd}
\]
where $\eta$ is an isomorphism.
Hence $\widetilde \Gamma_{x}$ (and by extension $\widetilde \Gamma_{x_I}$) can be viewed as an \emph{infinitesimal formal neighborhood} of $\Gamma_{x}$ (resp. $\Gamma_{x_I}$).

By abuse of notation, we will denote by $i_{\widehat x_I}$ also its restriction to the open $\widetilde{\Gamma}_{x_I} \setminus \Gamma_{x_I}$.

\medskip\noindent 
The presheaf of formal coordinates $\widehat{X}: \Aff_{\C}^\op\to \sets$ is then defined as
\begin{gather*}
    \Spec R \mapsto \widehat{X}(R)=\{ (x,\eta) : x\in X(R), \eta: \Spec R\taylor \overset{\sim}{\to}  \underline{\Spec}_{X_R}( \widehat{\mathcal{O}}_{\Gamma_x}) \textup{ such that }  \eta|_{t=0}=\eta_x \}.
\end{gather*}
Let $\pi:\widehat{X}\to X$ be the projection $(x,\eta)\mapsto x$.
Then we have an action of the ind-group-scheme $\uAut_{\C} \C\taylor$ on it by
$$
    \uAut_{\C} \C\taylor \times_X \widehat{X}\to \widehat{X},\quad  (g, x,\eta)\mapsto (x, \eta\circ g).
$$
This makes $\widehat{X}$ into a right $\uAut_{\C} \C\taylor$-torsor over $X$ (see \cite[§5.3.11]{BD-Hitchin}).

\end{recall}

\begin{recall}[Twisted product]\cite[\S 0.3.3]{Zhu}. 
Consider the right-action of $\uAut_{\C} \C\taylor$ on $\Gr_G$ by pull-back, $g\cdot ( \mathcal{F}, \alpha) \mapsto ( g^{*} \mathcal{F}, g^{*} \alpha )$.
Given the $\uAut_{\C} \C\taylor$-torsor $\widehat{X}$ and the $\uAut_{\C} \C\taylor$-functor $\Gr_G$, their \emph{twisted product}
is
$$
    \widehat{X}\times^{\uAut_\C\C\taylor} \Gr_G = \left( \widehat{X} \times \Gr_G / \uAut_\C\C\taylor \right)_{\ett}
$$
with $\uAut_\C\C\taylor$ acting diagonally.
The twisted product is also called \emph{contracted product}.
\end{recall}

\begin{rem}\label{remarkXhatisatorsor}
    The functor $\widehat X$ is an \'etale torsor. Indeed, the e curve $X$ is  \'etale-locally isomorphic to $\mathbb{A}^1_{\C}$.
    In this setting $X_R$ is $\Spec R[t]$, the ideal $\mathcal{I}_{\Gamma_x}$ is $(t-r)$, $r\in R$, and thus $ \widehat \cO_{\Gamma_x}\simeq R\taylor$.
    Moreover when $X=\Ac$ the twisted product $\widehat X\times^{\uAut_\C\C\taylor}\Gr_G$ indeed trivializes as $\Ac\times\Gr$. Hence, the twisted product is \'etale-locally a product $X\times \Gr_G$.
\end{rem}

\begin{prop}\label{characterization-X-hat}
There is a (noncanonical) isomorphism 
$$
\mathfrak{bl} : \widehat X\times^{\uAut_\C\C\taylor}\Gr_G \triv \Gr_{G,X}.$$
\end{prop}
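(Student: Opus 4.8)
The strategy is to construct the isomorphism $\mathfrak{bl}$ by exhibiting an inverse-limit / descent description of both sides and matching them. Recall that $\widehat X$ is a right $\uAut_\C\C\taylor$-torsor over $X$ (étale-locally trivial by \Cref{remarkXhatisatorsor}), and $\Gr_G$ carries a right $\uAut_\C\C\taylor$-action by pullback along reparametrizations. The twisted product $\widehat X\times^{\uAut_\C\C\taylor}\Gr_G$ is then, by definition, an étale-locally trivial $\Gr_G$-bundle over $X$. So the first step is to produce, for every $\C$-algebra $R$ and every $R$-point $(x,\eta,\widetilde{\cF},\widetilde\alpha)$ of $\widehat X\times\Gr_G$, a canonical element of $\Gr_{G,X}(R)$, i.e.\ a pair $(\cF,\alpha)$ with $\cF\in\Bun_G(X_R)$ and $\alpha$ a trivialization on $X_R\setminus\Gamma_x$, in a way that is invariant under the diagonal $\uAut_\C\C\taylor$-action and therefore descends to the étale sheafification.

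\textbf{Construction of the map.} Given $(x,\eta)\in\widehat X(R)$ we have the infinitesimal formal neighborhood $\widetilde\Gamma_x=\underline{\Spec}_{X_R}(\widehat\cO_{\Gamma_x})$ together with the isomorphism $\eta:\Spec R\taylor\triv\widetilde\Gamma_x$ and the map $i_{\widehat x}:\widetilde\Gamma_x\to X_R$. A $G$-torsor $\cF$ on $X_R$ equipped with a trivialization $\alpha$ away from $\Gamma_x$ is the same datum, by Beauville--Laszlo gluing (here one uses that the curve is noetherian and $\Gamma_x$ is a Cartier-type relative divisor, so the formal/open cover of $X_R$ is effective), as a $G$-torsor $\widetilde\cF$ on $\widetilde\Gamma_x\cong\Spec R\taylor$ together with a trivialization $\widetilde\alpha$ on the punctured formal disk $\Spec R\laurent$ — which is exactly an $R$-point of $\Gr_G$. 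Concretely: set $\cF$ to be the gluing of the trivial torsor $\cT_{G,X_R\setminus\Gamma_x}$ and $(i_{\widehat x})_*\widetilde\cF$ along $\widetilde\alpha$ over the overlap, and let $\alpha$ be the tautological trivialization of the first piece. One checks that replacing $(x,\eta,\widetilde\cF,\widetilde\alpha)$ by $(x,\eta\circ g, g^*\widetilde\cF,g^*\widetilde\alpha)$ for $g\in\uAut_\C\C\taylor(R)$ produces a canonically isomorphic $(\cF,\alpha)$ (the reparametrization $g$ and its inverse cancel in the gluing), so the assignment factors through the quotient presheaf and then, being a map to an étale sheaf, through the étale sheafification. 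This yields a natural transformation $\mathfrak{bl}:\widehat X\times^{\uAut_\C\C\taylor}\Gr_G\to\Gr_{G,X}$.

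\textbf{Bijectivity.} To see $\mathfrak{bl}$ is an isomorphism it suffices to check it on étale-local sections, so we may assume $X=\Ac$; then by \Cref{remarkXhatisatorsor} the left-hand side trivializes as $\Ac\times\Gr_G$, and via the translation automorphism of $\Ac$ the right-hand side is $\Ac\times\Gr_G$ as well, and one verifies directly that $\mathfrak{bl}$ becomes the identity (this is where the ``noncanonical'' in the statement enters: the identification depends on the étale-local coordinate). Alternatively, and more invariantly, one constructs the inverse directly: given $(x,\cF,\alpha)\in\Gr_{G,X}(R)$, étale-locally choose a formal coordinate $\eta$ at $x$, set $\widetilde\cF=\eta^*i_{\widehat x}^*\cF$ and $\widetilde\alpha=\eta^*(i_{\widehat x}|_{\text{punctured}})^*\alpha$, and observe that the resulting point of $\widehat X\times\Gr_G$ is well-defined in the contracted product independently of the choice of $\eta$ (different choices differ by an element of $\uAut_\C\C\taylor$ which is exactly quotiented out). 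The two constructions are mutually inverse by Beauville--Laszlo again. The main obstacle is the careful bookkeeping of the Beauville--Laszlo gluing equivalence in families over an arbitrary $\C$-algebra $R$ — in particular verifying that the formal completion $\widehat\cO_{\Gamma_x}$ along the possibly non-reduced, possibly non-flat divisor $\Gamma_x$ still gives an effective descent datum, and that this is compatible with the $\uAut_\C\C\taylor$-action on both factors — but this is exactly the content of \cite[\S0.3.3, \S3.1]{Zhu} and \cite[\S5.3.11]{BD-Hitchin}, from which the result is extracted.
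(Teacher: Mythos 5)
Your proof takes essentially the same route as the paper: construct $\mathfrak{bl}$ from the Beauville--Laszlo gluing equivalence, check $\uAut_\C\C\taylor$-equivariance so the map descends to the contracted product and its \'etale sheafification, and then verify bijectivity by passing to \'etale charts $X\to\Ac$ where both sides trivialize to $\Ac\times\Gr_G$ and the map becomes the identity. Your ``alternative'' description of the inverse (pull back along a chosen formal coordinate $\eta$ and quotient out the ambiguity) is exactly the implicit characterization of $(\cF,\alpha)$ the paper uses to define the map in the first place, so the two presentations are just the two directions of the Beauville--Laszlo equivalence; the remaining remarks on noetherianity and the relative Cartier divisor are correct but not spelled out in the paper.
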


\begin{proof}Let $x:\Spec R\to X$ be an $R$-point.
Recall that the Beauville-Laszlo theorem \cite{Beauville-Laszlo} tells us that the restriction map $\Bun_G(X_R)\to \Bun_G( X_R\setminus \Gamma_{x} )$ fits in the equivalence of categories
\begin{equation}\label{BeauvilleLazloeq}
    \Bun_G(X_R) \simeq \Bun_G( \Spec R\taylor ) 
    \times_{ \Bun_G( \Spec R\laurent ) } 
    \Bun_G( X_R\setminus \Gamma_{x} ).
\end{equation} 
This induces a morphism of presheaves
\begin{equation}\label{theisomoorphism}
    \widehat X\times\Gr_G \to \Gr_X, \quad \quad [(x,\eta,\widetilde\cF,\widetilde\alpha)] \mapsto [(x,\cF,\alpha)]
\end{equation}
where $(\cF,\alpha)$ is a pair such that 
$$
\eta^*i_{\widehat{x}}^*\mathcal{F} \cong \widetilde{\mathcal{F}}, \quad 
\eta|_{\Spec R\laurent}^* i_{\widehat{x}}^* \alpha \cong \widetilde \alpha,
$$
which is uniquely determined (up to isomorphism) by \eqref{BeauvilleLazloeq}.
Note that \eqref{theisomoorphism} is $\uAut_\C\C\taylor$-equivariant, because for $[(x,\eta \circ g, g^{*}\widetilde{\mathcal{F}}, g^{*}\widetilde \alpha)]$ the same pair $(\mathcal{F},\alpha)$ works fine:  
$$
     g^{*} \widetilde{\mathcal{F}} = g^* ( \eta^* i_{\widehat{x}}^*\mathcal{F} ), \quad  g^{*} \widetilde \alpha = g^* ( \eta^* i_{\widehat{x}}^* \alpha ).
$$
Therefore we get a map of presheaves
$$
\widehat X\times\Gr_G / \uAut_\C\C\taylor \to \Gr_{G, X},
$$
which then induces a map between the \'etale sheaves 
\begin{equation}\label{map-twisted-product-to-GrX}
\mathfrak{bl} : \widehat X\times^{\uAut_\C\C\taylor}\Gr_G\to \Gr_{G, X}.
\end{equation} 
The map \eqref{map-twisted-product-to-GrX} is an isomorphism. Indeed, up to passing to an \'etale chart parametrized by $\Ac$, it can be rewritten as the identity map $$\Ac\times \Gr\to \Ac\times \Gr$$ (the fact that it is the identity comes from the fact that the identification of $\Gr_{\Ac}$ with $\Ac\times\Gr$ is exactly the Beauville-Laszlo gluing procedure used in the definition of the map \eqref{map-twisted-product-to-GrX}). 
\end{proof}

\begin{recall}[Stratification of $\Gr_{G, X}$]\label{stratification-GrX}
(\cite[\S 2.1 and Theorem 1.1.3]{Zhu})
By definition of $\Gr_{G, \nu}$ and $\Gr_{G}^{(N)}$, the action of $\uAut_\C \C\taylor$ on $\Gr_G$ restricts to each $\Gr_{G, \nu}$ and to each $\Gr_{G}^{(N)}$: therefore one can set 
\begin{gather*}
    \Gr_{G,X,\nu} \coloneqq \mathfrak{bl}(\widehat X\times^{\uAut_\C \C\taylor}\Gr_{G,\nu}),
    \quad
    \Gr_{G,X,\leq \mu} \coloneqq \mathfrak{bl}(\widehat X\times^{\uAut_\C \C\taylor}\Gr_{G,\leq \mu}),
    \\
    \Gr_{G,X}^{(N)} \coloneqq \mathfrak{bl}(\widehat X\times^{\uAut_\C \C\taylor}\Gr_{G}^{(N)}).
\end{gather*}
With this description, it is clear that $\{ \Gr_{G,X,\mu} \}_{\mu\leq N}$ are reduced schemes defining stratifications on the $\Gr_{G,X}^{(N)}$'s, which are compatible with the transition maps in $N$:
therefore we have
\[
(\Gr_{G,X}^{(N)}, \mathfrak{s}^{(N)}) \in \Str\Sch^{\lft}_\C, \quad 
(\Gr_{G,X}, \mathfrak{s}) \in \PSmall(\Str\Sch^{\lft}_\C).
\]
\end{recall}

\begin{recall}[Stratification on $\Gr_{G,X^I}^{(N)}$ and on $\Gr_{G,X^I}$]\label{BD-stratification} 
(\cite[\S 4.2]{Nadler-Perverse-real}, \cite[\S 4.3]{Cass-et-al} and \cite[\S 3.2]{Zhu}) 
Given $I\in \fs$, consider a surjection $\phi: I\twoheadrightarrow J$ of non-empty sets: call $\Delta_\phi$ the associated diagonal embedding
$$
    \Delta_{\phi}:
    X^J\xhookrightarrow{} X^I,
    \quad \quad 
    (x_1', \dots , x_{|J|}') \mapsto x_I \textup{ where } x_i = x'_{\phi(i)}. 
$$
This defines the so-called \textit{incidence stratification} $(X^I, \Inc_I)$, whose stratifying poset consists of partitions of $I$, partially ordered by refinement.
Given $\phi$, let $X^{\phi}$ be the locally closed subschemes of $X^I$ defined as
$$
    X^{\phi} \coloneqq \{ x_I\in X^I: x_i= x_j \textup{ iff } \phi(i)=\phi(j), \textup{ and } \Gamma_{x_i}\cap \Gamma_{x_j}=\varnothing \textup{ otherwise}\}.
$$ 
Furthermore, denote by $\left( \prod^{|J|}_{j=1}\Gr_{G,X} \right)_{\disj}$ the restriction of $\left( \prod^{|J|}_{j=1} \Gr_{G,X} \right)$ to the open $X^{\id_J}$, which is explicitly
$\{ x_J \in X^{J} : \Gamma_{x_i} \cap \Gamma_{x_j} = \varnothing \ \forall \, i\neq j \}$. 
Let $\Gr_{G, X^{\phi}}$ be the restriction of  $\Gr_{G, X^I}$ to ${X^\phi}$. 
By \cite[Proposition 4.2.1]{Nadler-Perverse-real}, over $X^\phi$ we have an isomorphism
\begin{equation}\label{factorization-property}
\mathfrak{f}_\phi : \left ( \prod_{j=1}^{|J|}\Gr_{G,X} \right )_{\disj} \triv \Gr_{G, X^{\phi}},
\end{equation} 
which is usually referred to as the \emph{factorization property}.
On points, it is defined as
\[
\left( (x_1,\mathcal{F}_1, \alpha_1) ,\dots,  (x_{|J|},\mathcal{F}_{|J|}, \alpha_{|J|}) \right) \mapsto ( \Delta_\phi^{-1}(x_1,\dots, x_{|J|}), \mathcal{F}, \alpha )
\]
where $\mathcal{F}$ is the torsor obtained by gluing $(\mathcal{F}_i, \bigcap_{j\neq i}\Gamma_{x_j}^c)$ with $(\mathcal{F}_{i'}, \bigcap_{j\neq i'}\Gamma_{x_j}^c)$ using $\alpha_{i'}^{-1} \circ \alpha_i$ on $\bigcap_{j}\Gamma_{x_j}^c$.  
By the definition of $\Gr_{G, X^I}^{(N)}$, the isomorphism $\mathfrak{f}_\phi$ restricts to $\Gr_{G, X^I}^{(N)}$: 
$$
\mathfrak{f}_\phi^{(N)} : \left( \prod_{j=1}^{|J|} \Gr_{G, X}^{(N)} \right)_{\disj}
\triv
\Gr_{G, X^\phi}^{(N)}\coloneqq \Gr_{G, X^I}^{(N)}|_{X^\phi} 
$$
(see \cite[Thm. 3.1.3]{Zhu}).
For any $\underline\nu=(\nu^1,\dots,\nu^{|J|})\in(\xt)^{|J|}$ we denote by $\Gr_{G,X^{\phi},\underline{\nu}}$ the locally closed subsheaf of $\Gr_{G,X^{\phi}}$ defined as the $\mathfrak{f}_\phi$-image of
\begin{equation}\label{eq:factorizationprop}
    \left ( \prod_{j=1}^{|J|}\Gr_{G,X} \right )_{\disj} \bigcap \prod_{j=1}^{|J|}\Gr_{G,X,\nu^j}. 
\end{equation}
Let $P_I$ be the set $\{(\phi:I\twoheadrightarrow J, \underline{\nu})\}_{\phi, \underline{\nu}}$: we say that  $(\phi: I \twoheadrightarrow J, \underline{\nu})\leq (\phi': I \twoheadrightarrow J', \underline{\nu}')$ if and only if there exists a surjection $\psi: J' \twoheadrightarrow J$ such that $\phi = \psi \circ \phi'$ (so $\phi$ identifies more coordinates than $\phi'$) and for every $j\in J$ 
$$
    \nu_j \leq 
    \sum_{j'\in\, \psi^{-1}\{j\}} \nu_{j'}'.
$$ 
Note that for any $(\phi,\underline{\nu})\in P_I$ we have $\Gr_{G, X^{\phi},\underline{\nu}} \subseteq \Gr_{G, X^{I}}^{(N)}$ for $N$ big enough, which in particular means that 
\[
\Gr_{G, X^{\phi},\underline{\nu}}  = \mathfrak{f}_\phi^{(N)} \left( \left ( \prod_{j=1}^{|J|}\Gr_{G,X}^{(N)} \right )_{\disj} \bigcap \prod_{j=1}^{|J|}\Gr_{G,X,\nu^j}  \right).
\]
The stratification on $\Gr_{G,X^I}^{(N)}$ (resp. on  $\Gr_{G,X^I}$) induced by $\Gr_{G, X^{\phi},\underline{\nu}}$'s will be denoted as:
\[
(\Gr_{G,X^I}^{(N)}, \mathfrak{s}_I)\in {\Str\Sch_\C^\lft}/_{ (X^I,\Inc_I)},\ 
(\Gr_{G,X^I}, \mathfrak{s}_I)\in \PSmall( {\Str\Sch_\C^\lft}/_{ (X^I,\Inc_I)} ).
\]

Then by definition, the isomorphisms $\mathfrak{f}_\phi$ and $\mathfrak{f}_\phi^{(N)}$ are of stratified presheaves.
Note that the restriction to the fiber at any diagonal point $(x,\dots,x)$ is the scheme $\Gr_{G}^{(|I|N)}$ (resp. the ind-scheme $\Gr_{G}$) with their original stratifications from \cref{Schubert}.
\end{recall}

\begin{rem}\label{binary-factorization-property}
Let $I_1,\dots, I_n\in \fs$. 
The same proof as the one for the factorization property shows that a similar isomorphism holds over the open 
\[
\left( X^{I_1}\times \dots \times X^{I_n}\right)_\disj = \{ (x_{I_1}, \dots , x_{I_n})\in X^{\sqcup_i I_i}: \Gamma_{x_{I_i}}\cap \Gamma_{x_{I_j}}=\varnothing\ \forall i\neq j \}.
\]
Gluing torsors along $ \cap_{j\neq i,i'}  \Gamma_{x_{I_j}}^c$ induces an isomorphism of stratified presheaves 
\begin{gather}\label{factorizationpropertyfordifferentsetsofpoints}
\mathfrak{f}_{(I_j)_{j=1}^n} : 
\left( \prod_{i=1}^n \Gr_{G, X^{I_i}}, \prod_{i=1}^n \mathfrak{s}_{I_i} \right)|_{\left( X^{I_1}\times \dots \times X^{I_n} \right)_\disj}
\triv 
\left( \Gr_{G, X^{ \sqcup_i I_i }} , \mathfrak{s}_{ \sqcup_i I_i} \right)|_{\left( X^{I_1}\times \dots \times X^{I_n} \right)_\disj}.
\end{gather}
\end{rem}

\subsection{Action of \texorpdfstring{$\mathrm{L}^+G_{X^I}$}{} on \texorpdfstring{$(\mathrm{Gr}_{G,X^I}, \mathfrak{s}_I)$}{}}
In \Cref{Schubert} we have seen that we have a stratified action of $(\LpG, \textup{triv})$ on $(\Gr_G, \mathfrak{s})$. 
This can be extended to $(\Gr_{G, X^I}, \mathfrak{s}_I)$.

\begin{recall}[Beilinson-Drinfeld version of $\LpG$]\label{defin-BD-arc-group}
For $I\in \fs$, define
\begin{gather*}
    \LpG_{X^I}:\Aff_\C^\op\to \sets, \quad \quad
    \Spec R \mapsto \{(x_I,g):\, x_I\in X^I(R), g\in G(\widetilde{\Gamma}_{x_I} ) \}.
\end{gather*}
Note that $G(\widetilde{\Gamma}_{x_I})\simeq \uAut( \cT_{G, \widetilde{\Gamma}_{x_I}})$, because any $G$-equivariant automorphism $G\times \widetilde{\Gamma}_{x_I}\to 
G\times \widetilde{\Gamma}_{x_I}$ over $\widetilde{\Gamma}_{x_I}$ is determined by $\{e_G\}\times \widetilde{\Gamma}_{x_I}\to G$.
\end{recall}

\begin{rem}\label{compatibilityLpGXandLpG}
It is indeed an extension of $\LpG$:let $I=\{*\}$, $X=\mathbb{A}^1_{\C}$ and consider the point $0: \Spec \C \to \mathbb{A}^1_{\C}$. 
Since $R\taylor\simeq  \widehat \cO_{\Gamma_{0}}$ then $\Aut( \cT_{G, \widetilde{\Gamma}_{0} } ) \simeq \Aut( \cT_{G, \Spec R \taylor } )$ and $\LpG_{\mathbb{A}^1_{\C}}|_{0}\simeq \LpG$.
\end{rem}

\begin{rem}\label{LmGXI}
Consider $$
    \LmG_{X^I}:\Aff^{\op}_{\C}\to \sets, \quad \quad \Spec R\mapsto \{(x_I,g): x_I\in X^I(R), g\in G(\Gamma_{x_I}^m)\}
$$
where $\Gamma_{x_I}^m$ is a short-hand for $\Spec_{X_R}\mathcal{O}_{X_R}/\mathcal{I}_{\Gamma_{x_I}}^m$. 
These are smooth group $X^I$-schemes (locally of finite type) and there is an isomorphism 
$$
\LpG_{X^I}\simeq\lim_{m\geq 0}\LmG_{X^I}
$$ 
(see \cite[Lemma 2.5.1]{Raskin-Principal-Series-II}).
Consider the forgetful functor $\LmG_{X^I}\to X^I$: pulling back the incidence stratification on $X^I$, we get a stratified presheaf $(\LmG_{X^I},\Inc_I)$.
Moreover since 
\[
\LmG_{X^I} \times_{X^I} \LmG_{X^I} \to \LmG_{X^I}, \quad (x_I,g)\cdot (x_I,g') \mapsto (x_I, gg')
\]
respects the incidence stratification, we get that $(\LmG_{X^I},\Inc_I) \in \Grp\left( {\Str\Sch^\lft_\C}/_{(X^I,\Inc_I)} \right)$.
Since all the $(\LmG_{X^I}, \Inc_I)$ have the same stratification, by \Cref{StrTopiscocomplete} we have {$(\LpG_{X^I}, \Inc_I)\in \Grp \left( {\Str\Sch_\C}/_{(X^I,\Inc_I)} \right)$}. 
\end{rem}

\begin{rem}\label{compstrbetweenLpGXand LpG}
    Over $X$ the incidence stratification is trivial: thus, when restricted to the fiber $0:\Spec \C \to \mathbb{A}^1_\C$, by \Cref{compatibilityLpGXandLpG} we get that $(\LmG_{\mathbb{A}^1_\C}, \textup{triv})|_{0} \cong (\LmG, \textup{triv})$, and by \Cref{StrTopiscocomplete} the same is true for $(\LpG_{\mathbb{A}^1_\C}, \textup{triv})$.
\end{rem}    

In order to define a global action of $(\LpG_{X^I}, \Inc_I)$ on $(\Gr_{G, X^I}, \mathfrak{s}_I)$, we recall the definition of $\Gr_{G,X^I}^\loc$.

\begin{defin}\label{defin-Grloc}
    For $I\in\fs$, we denote by $\Gr_{G,X^I}^\loc$ the presheaf    \begin{gather*}
        \Gr_{G,X^I}^\loc: \Aff_{\C}^{\op} \to \sets,
        \\
        \Spec R \mapsto \{ (x_I,\widetilde \cF, \widetilde \alpha) : x_I\in X^I(R),
        \widetilde \cF \in \Bun_G( \widetilde \Gamma_{x_I} ), 
        \widetilde \alpha : \widetilde \cF |_{ \widetilde \Gamma_{x_I} \setminus \Gamma_{x_I} } \triv \cT_{G,\, \widetilde \Gamma_{x_I}\setminus \Gamma_{x_I} }\}/_\sim
    \end{gather*}
    where the equivalence relation is the analogue of the one for $\Gr_{G, X^I}$. 
\end{defin}

\begin{lem}\label{lemma-glob-loc}
    The restriction map 
    $$
        \mathfrak{r}_I : \Gr_{G, X^I} \to \Gr_{G, X^I}^\loc , \quad \quad
        (x_I,\cF,\alpha)\mapsto (x_I, i_{\widehat{x_I}}^* \cF, i_{\widehat{x_I}}^*\alpha )
    $$
    is an isomorphism of presheaves.
\end{lem}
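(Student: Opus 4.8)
The plan is to exhibit an explicit inverse to $\mathfrak{r}_I$ using the Beauville--Laszlo gluing theorem, exactly as in the proof of \Cref{characterization-X-hat}. First I would recall the Beauville--Laszlo equivalence in the relative setting: for $x_I\in X^I(R)$ with associated graph $\Gamma_{x_I}\subset X_R$ and formal neighborhood $\widetilde\Gamma_{x_I}$, restriction induces an equivalence of groupoids
\[
\Bun_G(X_R) \triv \Bun_G(\widetilde\Gamma_{x_I}) \times_{\Bun_G(\widetilde\Gamma_{x_I}\setminus\Gamma_{x_I})} \Bun_G(X_R\setminus \Gamma_{x_I}),
\]
and likewise for trivializations along $X_R\setminus\Gamma_{x_I}$. (Strictly speaking Beauville--Laszlo is stated for $\Spec R\taylor$, but one reduces to that case \'etale-locally on $X$ using $\widehat{\mathcal{O}}_{\Gamma_{x_I}}$, exactly as in \Cref{remarkXhatisatorsor}; this reduction is the only mildly technical point.)

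Next I would define the candidate inverse $\mathfrak{g}_I : \Gr_{G,X^I}^\loc \to \Gr_{G,X^I}$ on $R$-points by
\[
(x_I, \widetilde\cF, \widetilde\alpha) \mapsto (x_I, \cF, \alpha),
\]
where $(\cF,\alpha)$ is obtained by gluing $\widetilde\cF$ on $\widetilde\Gamma_{x_I}$ to the trivial torsor $\cT_{G, X_R\setminus\Gamma_{x_I}}$ along $\widetilde\Gamma_{x_I}\setminus\Gamma_{x_I}$ via $\widetilde\alpha$; the trivialization $\alpha$ on $X_R\setminus\Gamma_{x_I}$ is the tautological one coming from this gluing datum. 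By the equivalence above this $(\cF,\alpha)$ is determined up to the equivalence relation defining $\Gr_{G,X^I}$, so $\mathfrak{g}_I$ is well-defined, and naturality in $R$ is immediate since pullback of torsors and of the Beauville--Laszlo gluing datum commute with base change along $R\to R'$.

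Then I would check $\mathfrak{r}_I$ and $\mathfrak{g}_I$ are mutually inverse. For $\mathfrak{g}_I\circ\mathfrak{r}_I = \id$: starting from $(x_I,\cF,\alpha)$, the pair $(i_{\widehat{x_I}}^*\cF, i_{\widehat{x_I}}^*\alpha)$ together with the original $(\cF|_{X_R\setminus\Gamma_{x_I}}, \alpha)$ and the trivialization $\alpha$ itself form a Beauville--Laszlo gluing datum whose reglued torsor is canonically $\cF$ with trivialization $\alpha$, so we recover the original point. Conversely $\mathfrak{r}_I\circ\mathfrak{g}_I = \id$ because restricting the reglued $(\cF,\alpha)$ to $\widetilde\Gamma_{x_I}$ recovers $(\widetilde\cF,\widetilde\alpha)$ by construction; here one uses that in the fiber product description the restriction-to-$\widetilde\Gamma_{x_I}$ leg is literally projection onto the first factor. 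Both composites are thus the identity on the nose, so $\mathfrak{r}_I$ is an isomorphism of presheaves.

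I do not expect a serious obstacle here: the only real content is the correct bookkeeping in the relative Beauville--Laszlo theorem and the observation that the equivalence relations on both sides match up. The main point to be careful about is that Beauville--Laszlo as classically stated applies to the affine formal disk $\Spec R\taylor$, so one must pass to an \'etale cover of $X$ trivializing $\widehat X$ (as in \Cref{remarkXhatisatorsor}) to apply it and then descend; once that is in place the rest is a direct verification.
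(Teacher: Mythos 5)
Your plan matches the paper's proof in substance: both rest entirely on Beauville--Laszlo, and your explicit inverse $\mathfrak{g}_I$ is precisely what the paper extracts from the equivalence of groupoids it writes down. The paper phrases things slightly differently — it first reduces to checking an isomorphism on fibers over each $x_I\in X^I(R)$ (possible because $\mathfrak{r}_I$ commutes with the projections to $X^I$), then identifies both fibers as $\pi_0$ of fiber products of groupoids, and finally invokes the family version of Beauville--Laszlo, citing \cite[Remark~2.3.7]{BD-Hitchin}, once to get that the groupoid map is an equivalence. Your version spells out the resulting inverse and checks both composites by hand, which is fine.

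The one place where you go slightly off track is the parenthetical claim that one must reduce to $\Spec R\taylor$ by passing to an \'etale cover of $X$ trivializing $\widehat X$ ``exactly as in \Cref{remarkXhatisatorsor}.'' That remark only treats $I=\{*\}$; for general $I$ the completion $\widehat{\cO}_{\Gamma_{x_I}}$ is not $R\taylor$, even on an \'etale chart isomorphic to $\Ac$, since $\Gamma_{x_I}$ is a degree-$|I|$ relative divisor whose local structure depends on how the points of $x_I$ collide (disjoint points give $\Spec R\taylor\sqcup\dots\sqcup\Spec R\taylor$, coinciding points give thickenings, and mixed patterns give something in between). So the reduction you call ``the only mildly technical point'' is actually more delicate than you suggest, and in any case unnecessary: the family version of Beauville--Laszlo applies directly to any relative effective Cartier divisor $\Gamma_{x_I}\subset X_R$, which is what the paper uses. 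If you keep the \'etale-descent route, you would need a descent argument for the gluing datum that tracks the collision pattern of the points, which is strictly more work than citing the relative statement.
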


\begin{proof}
The restriction map commutes with the forgetful functor towards $X^I$: so it is enough to check it is an isomorphism on fibers. 
So let us fix $x_I\in X^I(R)$ and compare the two fibers
    \begin{gather*}
        \Gr_{G, X^I}|_{x_I}(R) = \{ \cF \in \Bun_G( X_R ), \alpha : \cF |_{X_R\setminus \Gamma_{x_I} } \triv \cT_{G,\, X_R\setminus \Gamma_{x_I} } \}_{/\sim},\\
        \Gr_{G, X^I}^\loc|_{x_I}(R) =\{ \widetilde \cF \in \Bun_G( \widetilde \Gamma_{x_I} ), \widetilde \alpha : \widetilde \cF |_{ \widetilde \Gamma_{x_I} \setminus \Gamma_{x_I} } \triv \cT_{G,\, \widetilde \Gamma_{x_I} \setminus \Gamma_{x_I} } \}_{/\sim}.
    \end{gather*}
At the level of fibers the map $\mathfrak{r}_I$ coincides with taking the $\pi_0$ of the restriction map of groupoids 
    \begin{equation}\label{comparison}
        \Bun_G(X_R) \times_{\Bun_G (X_R\setminus \Gamma_{x_I})} \{ \cT_{G,\, X_R\setminus \Gamma_{x_I}} \}
        \to 
        \Bun_G( \widetilde \Gamma_{x_I} ) \times_{ \Bun_G ( \widetilde \Gamma_{x_I} \setminus \Gamma_{x_I} ) } \{ \cT_{G,\,  \widetilde \Gamma_{x_I} \setminus \Gamma_{x_I} }\},
    \end{equation}
again given by restricting via $\widehat x_I : \widetilde \Gamma_{x_I} \setminus \Gamma_{x_I} \to X_R \setminus \Gamma_{x_I}$. 
It thus suffices to show that the map at the level of groupoids is an equivalence: this is exactly the ``family'' version of the Beauville-Laszlo theorem \cite[Remark 2.3.7]{BD-Hitchin}.
Indeed, it says that the restriction map gives an equivalence between $\Bun_G(X_R) \times_{ \Bun_G(X_R\setminus \Gamma_{x_I})}\{ \cT_{G,\, X_R\setminus \Gamma_{x_I}} \}$ and
\begin{gather*}
    \Bun_G( \widetilde \Gamma_{x_I} ) \times_{ \Bun_G(  \widetilde \Gamma_{x_I} \setminus \Gamma_{x_I} ) } 
    \Bun_G(X_R\setminus \Gamma_{x_I})
    \times_{\Bun_G(X_R\setminus \Gamma_{x_I})} \{ \cT_{G,\, X_R\setminus \Gamma_{x_I}} \}
\end{gather*}
which is in turn equivalent to the right-hand side of \eqref{comparison}
    $$
    \Bun_G( \widetilde \Gamma_{x_I} ) \times_{ \Bun_G ( \widetilde \Gamma_{x_I} \setminus \Gamma_{x_I} ) } \{ \cT_{G,\,  \widetilde \Gamma_{x_I} \setminus \Gamma_{x_I} }\}.
    $$
\end{proof}

\begin{rem}\label{remarkaboutGrLocbeingthetwisted product}
In particular the functor $\Gr_{G,X^I}^\loc$ is an \'etale sheaf.
Furthermore, for $I=\{*\}$, it is canonically isomorphic to the twisted product $\widehat{X}\times^{\uAut_\C \C\taylor} \Gr_G$.
Indeed pick an affine \'etale cover of $X$ made of $\mathbb{A}^1_\C$: over the affine line the two descriptions are the same via 
\[
     \mathfrak{d}: \widehat{X}\times^{\uAut_\C \C\taylor} \Gr_G \triv \Gr_{G,X}^\loc, \quad (x,\eta, \widetilde{\mathcal{F}},\widetilde{\alpha})\mapsto (x, (\eta^{-1})^*\widetilde{\mathcal{F}}, (\eta^{-1})^*\widetilde{\alpha}).
\] 
\end{rem}

\begin{rem}\label{action-BD}
The functor $\LpG_{X^I}$ acts on $\Gr_{G,X^I}^{\loc}$ over $X^I$ by modification of the trivialization $\widetilde{\alpha}\mapsto g|_{\tilde \Gamma_{x_I}\setminus \Gamma_{x_I}}\circ \widetilde{\alpha}$. 
By \Cref{lemma-glob-loc} we get an induced action $\act_I$ over $X^I$ via pullback by $\mathfrak{r}_I$:
\begin{equation}\label{actionofLpGonGrGXI}
\begin{tikzcd}
\LpG_{X^I} \times_{X^I} \Gr_{G,X^I}
\arrow[r, "\act_I"]
\arrow[d, "\id\times \mathfrak{r}_I", "\wr"']
& 
\Gr_{G,X^I}
\arrow[d, "\mathfrak{r}_I", "\wr"']
\\
\LpG_{X^I} \times_{X^I} \Gr_{G,X^I}^\loc 
\arrow[r, "\act_I^\loc"]
& 
\Gr_{G,X^I}^\loc . 
\end{tikzcd}
\end{equation}
\end{rem}

\begin{prop}\label{stratifiedactions}
The action $\act_I$ is stratified.   
Moreover, for every $N\geq 0$ there exists an integer $m_{N,I}$ such that for any $m\geq m_{N,I}$ the action $\act_I$ factors as a stratified action over $X^I$:
$$
\act_I^{(N)}: (\LmG_{X^I}, \Inc_I) \underset{(X^I, \Inc_I)}{\times} (\Gr_{G,X^I}^{(N)}, \mathfrak{s}_I^{(N)}) \to  (\Gr_{G,X^I}^{(N)}, \mathfrak{s}_I^{(N)}).
$$
\end{prop}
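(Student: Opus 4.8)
The plan is to use the factorization property \eqref{factorization-property} to reduce the statement over an arbitrary stratum of $X^I$ to the case $I = \{*\}$, then use the isomorphism $\mathfrak{r}_I$ of \cref{lemma-glob-loc} (or rather \cref{remarkaboutGrLocbeingthetwisted product}) to transfer the action from $\Gr_{G,X^I}^\loc$ to $\Gr_{G,X^I}$, and finally reduce the case $I=\{*\}$ to the already-known stratified action of $(\LpG,\textup{triv})$ on $(\Gr_G,\mathfrak{s})$ from \cref{Schubert} and \cref{action-restricts} via the étale-local description $\widehat X \times^{\uAut_\C\C\taylor}\Gr_G$. First I would check that the action $\act_I$ preserves the locally closed subschemes $\Gr_{G,X^\phi}$ of $\Gr_{G,X^I}$: this is immediate, since $\act_I$ does not modify the $X^I$-coordinate $x_I$, and the decomposition into $X^\phi$'s is induced from the incidence stratification of $X^I$, which $\LpG_{X^I}$ respects by construction (\cref{LmGXI}). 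So it suffices to check that, over each $X^\phi$ (with $\phi: I \twoheadrightarrow J$), the induced action on $\Gr_{G,X^\phi}$ preserves the finer strata $\Gr_{G,X^\phi,\underline\nu}$.

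Next I would invoke the factorization isomorphism $\mathfrak{f}_\phi : \left(\prod_{j=1}^{|J|}\Gr_{G,X}\right)_\disj \triv \Gr_{G,X^\phi}$ of \eqref{factorization-property}. Since $\LpG_{X^I}$ restricted to $X^\phi$ factors (by the analogue of \cref{LpG-Ran-is-colim} / the factorization property for the arc group) as the restriction of $\prod_{j=1}^{|J|}\LpG_X$ to the disjoint locus, and the action $\act_I$ is given by gluing torsors away from disjoint systems of points — which is compatible with $\mathfrak{f}_\phi$, because modifying a trivialization over the formal neighborhood $\widetilde\Gamma_{x_i}$ of one point commutes with the gluing operation along the disjoint complements — the action on $\Gr_{G,X^\phi}$ is identified, through $\mathfrak{f}_\phi$, with the product of the actions of $\LpG_X$ on $\Gr_{G,X}$. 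The strata $\Gr_{G,X^\phi,\underline\nu}$ correspond under $\mathfrak{f}_\phi$ to the products $\left(\prod_j \Gr_{G,X}\right)_\disj \cap \prod_j \Gr_{G,X,\nu^j}$, so it remains to show that the action of $\LpG_X$ on $\Gr_{G,X}$ preserves each $\Gr_{G,X,\nu}$, i.e. that it is stratified for $I=\{*\}$.

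For $I=\{*\}$: by \cref{remarkaboutGrLocbeingthetwisted product} and \cref{characterization-X-hat}, étale-locally on $X$ (passing to an $\Ac$-chart) we have $\Gr_{G,X}^\loc \cong \widehat X \times^{\uAut_\C\C\taylor}\Gr_G$, and $\LpG_X$ acts on the $\Gr_G$-factor through its action on $G(\widetilde\Gamma_x)$, which under the trivialization $\eta$ becomes the standard action of $\LpG = G(R\taylor)$ on $\Gr_G$ by modification of trivialization. Since the strata $\Gr_{G,X,\nu}$ are defined precisely as $\mathfrak{bl}(\widehat X \times^{\uAut_\C\C\taylor}\Gr_{G,\nu})$ (\cref{stratification-GrX}), and since $\Gr_{G,\nu}$ is an $\LpG$-orbit and hence $\LpG$-stable by \cref{Schubert}, the action preserves each stratum. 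Transferring back along $\mathfrak{r}_I$ (an isomorphism of presheaves over $X^I$) gives that $\act_I$ is stratified. For the truncated statement, one uses \cref{action-restricts}: the action of $\LpG$ on $\Gr_G^{(N)}$ factors through $\LmG$ for $m \geq m_N$; globalizing over $X^I$ via the same étale-local argument and the fact that $\LpG_{X^I} \cong \lim_m \LmG_{X^I}$ (\cref{LmGXI}), one obtains an integer $m_{N,I}$ (depending on $N$ and on $|I|$, since the fiber over a diagonal point is $\Gr_G^{(|I|N)}$) such that $\act_I$ factors through a stratified action of $(\LmG_{X^I},\Inc_I)$ on $(\Gr_{G,X^I}^{(N)},\mathfrak{s}_I^{(N)})$ over $(X^I,\Inc_I)$. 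The main obstacle I anticipate is bookkeeping the compatibility of the $\LpG_X$-action with the gluing maps inside $\mathfrak{f}_\phi$ — one must verify carefully that modifying trivializations at disjoint formal neighborhoods and then gluing yields the same result as gluing and then modifying, which is true because the modifications happen on disjoint open sets, but requires being precise about the domains of definition (the $\widetilde\Gamma_{x_i}\setminus\Gamma_{x_i}$ versus the $\bigcap_{j\neq i}\Gamma_{x_j}^c$).
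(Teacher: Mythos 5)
Your argument for the stratified-ness of $\act_I$ is essentially the paper's own: reduce over $X^\phi$ to the case $I=\{*\}$ via the factorization isomorphism $\mathfrak{f}_\phi$, then use the identification $\Gr_{G,X}^\loc\cong \widehat X\times^{\uAut_\C\C\taylor}\Gr_G$ together with the fact that $\Gr_{G,\nu}$ is by definition an $\LpG$-orbit, and transfer back along $\mathfrak{r}_I$. The paper is more explicit — it tracks a point $(x,\eta,\widetilde\cF,\widetilde\alpha)$ through $\mathfrak{bl}$, $\mathfrak{r}_{\{*\}}$, and $\mathfrak{d}$ and verifies $\mathfrak{r}_{\{*\}}\circ\mathfrak{bl}=\mathfrak{d}$ — but the underlying reasoning is the same.

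The treatment of the second half (existence of $m_{N,I}$ so that $\act_I$ factors through $\LmG_{X^I}$) is where your proposal has a real gap. You appeal to ``globalizing over $X^I$ via the same \'etale-local argument,'' estimating $m_{N,I}$ via the diagonal fiber being $\Gr_G^{(|I|N)}$. But what you actually get from the factorization argument and \cref{action-restricts} is a \emph{fiberwise} or stratumwise bound; turning it into a factoring of the action map $\LpG_{X^I}\times_{X^I}\Gr_{G,X^I}^{(N)}\to \Gr_{G,X^I}^{(N)}$ through the quotient $\LpG_{X^I}\twoheadrightarrow \LmG_{X^I}$ over all of $X^I$, including over the deep diagonals and for arbitrary test rings $R$, requires a separate argument, which is precisely what \cite[Corollary 2.7]{Richarz-New-Satake} supplies. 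The paper simply cites that result; you would need to either cite it too or reproduce its proof. Incidentally, your heuristic that $m_{N,I}$ should depend on $|I|$ (because the diagonal fiber is $\Gr_G^{(|I|N)}$) is misleading: the paper remarks, again following Richarz, that $m_{N,I}$ in fact depends on $N$ only and not on $I$. This does not invalidate your existence claim — a larger bound still proves the statement — but it signals that your globalization is being done at the level of intuition rather than proof.
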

\begin{proof}
Let us prove that the action is stratified.
First restrict the action to $X^\phi$, $\phi:I\twoheadrightarrow J$: by factorization property \eqref{factorization-property} we get 
$$
\LpG_{X^I}|_{X^\phi} \times_{X^\phi} \left( \prod_{j=1}^{|J|} \Gr_{G, X} \right)_\disj 
\xrightarrow[\sim]{\id \times \mathfrak{f}_\phi}
\left( \LpG_{X^I} \times_{X^I} \Gr_{G, X^I} \right)|_{X^\phi}
\xrightarrow{\act_I} 
\Gr_{G,X^I}. 
$$
Hence it is enough to deal with the $I=\{*\}$ case. 
Consider the stratum $\Gr_{G, X, \nu}$
and the diagram
\[
\begin{tikzcd}
\LpG_{X} \times_{X} \widehat{X} \times^{\uAut_\C \C\taylor} \Gr_{G, \nu}
\arrow[d, "\id\times \mathfrak{bl}", "\wr"']
& 
\widehat{X}\times^{\uAut_\C \C\taylor} \Gr_G
\arrow[d, "\mathfrak{bl}", "\wr"']
\\
\LpG_{X^I} \times_{X^I} \Gr_{G,X, \nu} 
\arrow[r, "\act_{\{*\}}"]
& 
\Gr_{G,X}. 
\end{tikzcd}
\]
We want to check that $\act_{\{*\}} (\LpG_{X^I} \times_{X^I} \Gr_{G,X, \nu} )$ lies in $\Gr_{G, X, \nu}$.
So let us pick $\left( (x,g), (x,\eta, \widetilde{\mathcal{F}},\widetilde{\alpha}) \right) \in \LpG_{X^I} \times_{X^I} \Gr_{G,X, \nu}$.
Via $\id\times \mathfrak{bl}$, it maps to $\left( (x,g),(x,\cF,\alpha) \right)$ where $\mathcal{F},\alpha$ are such that 
$$
i_{\widehat{x}}^*\mathcal{F} \cong (\eta^{-1})^*\widetilde{\mathcal{F}}, \quad 
i_{\widehat{x}}^* \alpha \cong (\eta|_{t\neq 0}^{-1} )^* \widetilde \alpha.
$$
The restriction isomorphism $\id\times \mathfrak{r}_{\{*\}}$ sends it to $\left( (x,g), (x, i_{\widehat{x}}^*\mathcal{F}, i^*_{\widehat{x}}\alpha) \right)$, which is then equal to $\id\times \mathfrak{d}\left((x,g), (x, i_{\widehat{x}}^*\mathcal{F}, i^*_{\widehat{x}}\alpha)  \right)$, by the above equalities. In particular $\mathfrak{r}_{\{*\}}\circ \mathfrak{bl} = \mathfrak{d}$.
Hence we have
\[
\begin{tikzcd}
\left((x,g), (x,\eta, \widetilde{\mathcal{F}},\widetilde{\alpha}) \right) 
\arrow[d, "\id\times \mathfrak{d}"]
&
\\
\left( (x,g), (x, (\eta^{-1})^*\widetilde{\mathcal{F}}, (\eta^{-1})^*\widetilde{\alpha}) \right)
\arrow[r, "\act_{\{*\}}^\loc"]
&
(x, (\eta^{-1})^*\widetilde{\mathcal{F}}, g|_{\widetilde{\Gamma}_X\setminus \Gamma_x} \circ (\eta^{-1})^*\widetilde{\alpha}).
\end{tikzcd}
\]
Since $g_{\widetilde{\Gamma}_X\setminus \Gamma_x}$ is the same as $(\eta^{-1})^*(g_{t\neq 0})$ (where $g$ is now viewed as an element of $\Aut(\mathcal{T}_{G, t\neq 0})$) we have that $\mathfrak{d}^{-1}\left( (x, (\eta^{-1})^*\widetilde{\mathcal{F}}, g|_{\widetilde{\Gamma}_X\setminus \Gamma_x} \circ (\eta^{-1})^*\widetilde{\alpha}) \right) = (x, \widetilde{\mathcal{F}}, g|_{t\neq 0} \circ \widetilde{\alpha})$. 
This belongs to $\widehat{X}\times^{\uAut_\C \C\taylor} \Gr_{G, \nu}$ by \Cref{eq:Lpgagctsstratifies}.
The same argument implies that the restriction map is compatible with the stratification on $\Gr_{G,X^I}^{(N)}$.

The fact that $\act_I^{(N)}$ factors through the quotient $\LpG _{X^I}\twoheadrightarrow \LmG_{X^I}$ for any $m\geq m_{N, I}$ has been proven in \cite[Corollary 2.7]{Richarz-New-Satake}. Note that from the proof of \cite[Corollary 2.7]{Richarz-New-Satake} one can see that $m_{N, I}$ depends on $N$ but not on $I$.
\end{proof}

\end{appendices}

\bibliographystyle{alpha}
\bibliography{WM.bib}

\makeatletter
\def\printauthorinfo{%
    \par\vfill
    \hrule\vspace{1em}
    \address{\noindent\emph{Guglielmo Nocera.} Institut des Hautes \'Etudes Scientifiques, 35 Rte de Chartres, 91440 Bures-sur-Yvette, France. \emph{Email:} \texttt{nocera-at-ihes.fr}}\\[\baselineskip]
    \@author
    \address{\noindent \emph{Morena Porzio.} Department of Mathematics, University of Toronto, 40 St. George Street, Toronto, ON M5S 2E4, Canada. \emph{Email:} \texttt{m.porzio-at-utoronto.ca}}\\[\baselineskip]
    \@author
}
\makeatother

\printauthorinfo

\end{document}